    \newcommand\Id{\mathop{\rm Id}\nolimits}
 \let\subest\subset
\long\def\symbolfootnote[#1]#2{\begingroup%
\def\thefootnote{\fnsymbol{footnote}}\footnote[#1]{#2}\endgroup}
\newcommand\CD{\check{D}}
  \newcommand\Proj{\mathop{\rm Proj}\nolimits}
        \newcommand\reg{\mathop{\rm reg}\nolimits}
\newcommand\fbul{F^\bullet}
\newcommand\bsm{ \begin{smallmatrix}}
\newcommand\bspm{ \left(\begin{smallmatrix}}
\newcommand\esm{\end{smallmatrix} }
\newcommand\espm{\end{smallmatrix} \right)}
\newcommand\bbm{\left[\begin{smallmatrix}}
\newcommand\ebm{\end{smallmatrix}\right]}
\newcommand\bcs{\begin{cases}}
\newcommand\ecs{\end{cases}}
\newcommand\wbul{{W_\bullet}}
 \newcommand{\lra}[1]{\left\langle#1\right\rangle}
\newcommand{\lrc}[1]{\left\{ #1\right\}}
\newcommand{\lrp}[1]{\left(#1\right)}
 \newcommand\bp[1]{\bigl(#1\bigr)}
  \newcommand\Bp[1]{\Bigl(#1\Bigr)}
 \newcommand\wt[1]{\widetilde{#1}}
 \newcommand\V{\mathbb{V}}
  \newcommand\sideeq{\rotatebox{90}{$=$}}
    \newcommand\sidein{\rotatebox{90}{$\in$}}
 \newcommand\ssn[1]{\medbreak\noindent {\bf #1}}
\newcommand\hh{\mathfrak{h}}
\newcommand\mm{\mathfrak{m}}
\newcommand{\C}{{\mathbb{C}}}
\newcommand{\G}{\mathbb{G}}
\renewcommand{\P}{\mathbb{P}}
\newcommand{\Q}{{\mathbb{Q}}}
\newcommand{\R}{\mathbb{R}}
\newcommand{\Z}{\mathbb{Z}}
\newcommand{\cC}{{\mathscr{C}}}
\newcommand{\cE}{{\mathscr{E}}}
\newcommand{\cF}{{\mathscr{F}}}
\newcommand{\cG}{\mathfrak{g}}
\newcommand{\cH}{{\mathscr{H}}}
\newcommand{\cI}{{\mathscr{I}}}
\newcommand{\cL}{{\mathscr{L}}}
\newcommand{\cM}{{\mathscr{M}}}
\newcommand{\cO}{{\mathscr{O}}}
\newcommand{\cU}{{\mathscr{U}}}
\newcommand\wtcx{\widetilde{\cX}}
\newcommand{\cX}{{\mathscr{X}}}
\newcommand{\cZ}{{\mathscr{Z}}}
\newcommand\bpm{\begin{pmatrix}}
\newcommand\epm{\end{pmatrix}}
\newcommand\phs{polarized Hodge structure}
\newcommand\mhs{mixed Hodge structure}
\newcommand\lmhs{limiting mixed Hodge structure}
\newcommand\hn{Hodge number}
\newcommand\hb{Hodge bundle}
\newcommand\HR{Hodge-Riemann}
\newcommand\hs{Hodge struc\-ture}
\renewcommand\mod{\mathop{\rm mod}\nolimits}
\newcommand\Ad{\mathop{\rm Ad}\nolimits}
\newcommand\rank{\mathop{\rm rank}\nolimits}
\newcommand\rsl{\mathop{\rm sl}\nolimits}
 \newcommand{\Gr}{\mathop{\rm Gr}\nolimits}
 \newcommand\Aut{\mathop{\rm Aut}\nolimits}
\newcommand\End{\mathop{\rm End}\nolimits}
\newcommand\rim{\mathop{\rm Im}\nolimits}
\newcommand\sing{{\mathop{\rm sing}\nolimits}}
\newcommand\Sym{\mathop{\rm Sym}\nolimits}
\newcommand\id{\mathop{\rm id}\nolimits}
\newcommand\prim{{\rm prim}}
\newcommand\diag{\mathop{\rm diag}\nolimits}
\newcommand{\Tr}{\mathop{\rm Tr}\nolimits}
 \newcommand{\Hom}{\mathop{\rm Hom}\nolimits}
\newcommand\ad{\mathop{\rm ad}\nolimits}
\newcommand{\AJ}{{\rm AJ}}
 \renewcommand{\part}{\partial}
\newcommand{\la}{{\lambda}}
\newcommand\ga{{\gamma}}
\newcommand\Ga{{\Gamma}}
\newcommand{\La}{{\Lambda}}
\newcommand{\Om}{{\Omega}}
\newcommand{\om}{{\omega}}
\newcommand\Sig{{\Sigma}}
\newcommand\vp{\varphi}
\newcommand\sig{\sigma}
\newcommand{\chs}{\hskip-10pt}
\newcommand\bsl{\backslash}
\newcommand\eps{\epsilon}
\newcommand{\lab}{\label}
\newcommand\pref[1]{{\rm (\ref{#1})}}
\newcommand{\hensp}[1]{\enspace\hbox{#1}\enspace}
\newcommand{\opplus}{\mathop{\oplus}\limits}
\newcommand{\ottimes}{\mathop{\otimes}\limits}
  \renewcommand\thesection{\arabic{section}}
\newcounter{demo}[equation]
 \newtheoremstyle{mytheo}
 {3pt}
  {3pt}
  {\itshape}
  {}
  {\scshape}
  {:}
  {.5em}
  {}
\theoremstyle{mytheo}
\newtheorem*{cor}{Corollary}
\newtheorem{Cor}[equation]{Corollary}
\newtheorem{Thm}[equation]{Theorem}
  \newtheorem{Conj}[equation]{Conjecture}
\newtheorem{Lem}[equation]{Lemma}
\newtheorem{claim}[equation]{Claim}
 \newtheorem*{quest}{Question}
\newtheorem{Prop}[equation]{Proposition}
 \newtheorem{Concls}[equation]{Conclusions}
\newtheoremstyle{note}
  {3pt}
  {3pt}
  {}
  {}
  {\bfseries}
  {:}
  {.5em}
  {}
\theoremstyle{note}
\newtheorem{Defin}[equation]{Definition}
\newtheorem*{defin}{Definition}
 \newtheorem*{exam}{Example}
\newtheorem{Exam}[equation]{Example}
\newtheorem{example}{Example}
\newtheorem*{rem}{Remark}
\newtheorem{Rem}[equation]{Remark}
\theoremstyle{remark}
\newcommand\simto{\xrightarrow{\sim}}
\newcommand\xri[1]{\xrightarrow{#1}}
\newcommand\ol[1]{\overline{#1}}
\newcommand\olb{{\overline{\beta}}}
  \renewcommand\thesection{\arabic{section}}
 \numberwithin{equation}{subsection}
 \newcommand{\oltm}{\wt{\ol{M}}}
 \newcommand\vspth{\vspace*{-3pt}}
 \newcommand\cope{\cO_{\P E}}
 \newcommand\loc{{\rm loc}}
 \newcommand\met{{\rm met}}
  \newcommand\num{{\rm num}}
   \newcommand\gr{{\rm gr}}
\newcommand\mmax{\mathop{\rm max}\limits_m}
\newcommand\rk{\mathop{\rm rk}\nolimits}
\newcommand\hlb{Hodge line bundle}
\newcommand\lb{line bundle}
\newcommand\vb{vector bundle}
\newcommand\hvb{Hodge vector bundle}
\newcommand{\Var}{\mathop{\rm Var}\nolimits}
\newcommand\nproof{\subsubsection*{Proof}}
 \renewcommand\thesection{\Roman{section}}
  \newcommand\ssni[1]{\medbreak\noindent {\it #1\/}:}
  \newcommand\bmp[2]{\hbox{\begin{minipage}[t]{#1in}  #2 \end{minipage}}}
   \newcommand\bmpc[2]{\hbox{\begin{minipage}[c]{#1in}  #2 \end{minipage}}}
  \theoremstyle{mytheo}
\newtheorem{prop}[equation]{Proposition}
\begin{document}
 \  \title[Positivity of  vector bundles and Hodge theory]{Positivity of  vector bundles and Hodge theory} 
 \author{Mark Green and  Phillip Griffiths}
 \thanks{These are   notes prepared by the authors and  largely based on joint work in progress with  Radu Laza and Colleeen Robles (cf.\ \cite{GGLR17}).}
\begin{abstract}
It is well known that positivity properties of the curvature of a \vb\ have implications on the algebro-geometric properties of the bundle, such as numerical positivity, vanishing of higher cohomology leading to  existence of global sections etc.  It is also well known that bundles arising in Hodge theory tend to have positivity properties.  From these considerations several issues arise:
\begin{enumerate}[{\rm (i)}]
\item For a bundle  that  is semi-positive but not strictly positive; what further natural conditions lead to the existence of sections of its symmetric powers?
\item In Hodge theory the Hodge metrics generally have singularities; what can be said about these and their curvatures, Chern forms etc.?
\item What are some algebro-geometric applications of positivity of \hb s?
\end{enumerate} 

 The purpose of these partly expository notes is fourfold.  One is to summarize some of the general measures and types of positivity that have arisen in the literature.  A second is to introduce and give some applications of   \emph{norm positivity}.  This is a concept that implies the different notions of metric semi-positivity  that are present in many of  the standard examples and one that has an algebro-geometric interpretation in these examples.  A third purpose is to discuss and compare some of the types of metric singularities that arise in algebraic geometry and in Hodge theory.  Finally we shall present some applications of the theory from both the classical and recent literature.
\end{abstract}

 \maketitle
 \vspace*{-24pt}
  \section*{Outline}
  \begin{itemize}
  \item[I.] \emph{Introduction and notation and terminology}
  \begin{itemize}
  \item[A.] Introduction
  \item[B.] General notations and terminology
  \item[C.] Notations and terminology from Hodge theory
  \end{itemize}
  \item[II.] \emph{Measures and types of positivity}
  \begin{itemize}
  \item[A.] Kodaira-Iitaka dimension
  \item[B.] Metric positivity
  \item[C.] Interpretation of the curvature form
  \item[D.] Numerical positivity
  \item[E.] Numerical dimension
  \item[F.] Tangent bundle
  \item[G.] Standard  implications
  \item[H.] A further result
  \end{itemize}
  \item[III.] \emph{Norm positivity}
  \begin{itemize}
  \item[A.] Definition and first properties
  \item[B.] A result using norm positivity  
  \end{itemize}
  \item[IV.] \emph{Singularities}
  \begin{itemize}
  \item[A.] Analytic singularities
  \item[B.] Logarithmic and mild singularities
 
  \end{itemize}
  \item[V.]  {\em Proof of Theorem \ref{4b8}}
  \begin{itemize}
  \item[A.] Reformulation of the result
  \item[B.] Weight filtrations, representations of $\rsl_2$ and \lmhs s
  \item[C.] Calculation of the Chern forms $\Om$ and $\Om_I$\end{itemize}

  \item[VI.]  \emph{Applications, further results and some open questions}
  \begin{itemize}
  \item[A.] The  Satake-Baily-Borel  completion of period mappings
  \item[B.] Norm positivity and the cotangent bundle to the image of a period mapping
  \item[C.] The Iitaka conjecture 
  \item[D.] The Hodge vector bundle may detect extension data  
  \item[E.] The exterior differential system defined by a Chern form\end{itemize}   \end{itemize}
  
  \section{Introduction and notation and terminology}
  \subsection{Introduction}  \hfill
  
  The general purpose of these notes is to give an   account of some aspects and applications  of the concept of positivity of holomorphic vector bundles, especially   those that  appear in Hodge theory.  The  applications use the positivity of Hodge \lb\ (Theorem \ref{1a7}), the semi-positivity of the cotangent bundle to the image of a period mapping (Theorem \ref{6b1}), and the semi-positivity of the Hodge \vb\ (Theorem \ref{1a14}).  Also discussed is the numerical positivity of the Hodge bundles.
  
  The overall subject  of positivity is one   in which there is an extensive and rich literature and   in which there is currently  active research occurring with interesting results appearing regularly.  We shall only discuss a few particular topics  and shall use \cite{Dem12a} as our  reference for general background material concerning positivity in complex analytic geometry as well as an account of some recent work, and refer to \cite{Pau16} as a source for a summary of some  current research and as an overall  guide to the  more recent literature.  The recent paper \cite{Den18} also contains an extensive bibliography.
    For Hodge theory we shall use  \cite{PeSt08} and \cite{CM-SP} as our main general references.
  
  Following this introduction and the establishment of general notations and terminology in Sections I.B and I.C, in Section II we shall give a synopsis of some of the standard measures and types of positivity of holomorphic \vb s.  In Section II.H we present the further result \pref{1a5} below.  It is this result that enables us to largely replace the cohomological notion of weak positivity in the sense of Viehweg \cite{Vie83a}, \cite{Vie83b} with purely differential geometric considerations.
  
One   purpose of these notes is to give an informal account of   some of the   results in \cite{GGLR17} and to discuss   topics that are directly related to and/or grew out of that work.  One application to algebraic geometry of   Theorems 1.2.2 and   1.3.10 there is the following:\footnote{Significant amplification and complete proofs of the results in \cite{GGLR17} are currently being prepared.  Below we shall give a short algebro-geometric argument for the case when $\dim B=2$.} 
  \begin{equation}\lab{1a1}
  \bmp{5}{\em Let $\cM$ be the KSBA moduli space  for algebraic surfaces $X$ of general type and with given $p_g(X), q(X)$ and $K^2_X$.\footnotemark\/ Then the \hlb\ $\La_e\to \ol\cM$ is defined  over the canonical completion $\ol\cM$ of $\cM$.  Moreover $\Proj(\La_e)$ exists and defines the Satake-Baily-Borel completion $\ol M$ of the image of $M=\Phi(\cM)$ of the period mapping $\Phi:\cM\to\Ga\bsl D$.\footnotemark}\end{equation}
  \setcounter{footnote}{2}
  \footnotetext{We shall use \cite{Kol13} as our general reference for the topic of moduli.} 
  \setcounter{footnote}{3}
  \footnotetext{The definition of a Satake-Baily-Borel completion will be explained below. Here we are mainly considering that part of the period mapping that arises from the \phs\ on $H^2(X)$.  When $p_g(X)\geqq 2$ we are in a non-classical situation, and the new phenomena that arise in this case are the principal aspect of interest.}

\noindent The period mapping extends to
  \[
  \Phi_e :\ol\cM\to\ol M\]
  and set-theoretically the image of the boundary $\part\cM=\ol\cM\bsl \cM$ consists of the associated graded to the limiting mixed structures along the boundary strata of $\part \cM$.  As has been found in applying this result to the surfaces analyzed in \cite{FPR15a}, \cite{FPR15b}, \cite{FPR15c} and to similar surfaces studied in our work and in discussions with the authors of those papers, the extended period mapping   may serve as an  effective method for organizing and understanding the boundary structure of $\ol\cM$ and in suggesting how to desingularize it.\footnote{In contrast to the case of curves, $\ol\cM$ seems almost never to be smooth even when $\cM$ is.}
  
  The construction of $\ol M$ is general and in brief outline proceeds as follows:
  
  (i) One begins with a variation of Hodge structure over a smooth quasi-projective variety $B$ given by a period mapping
  \begin{equation}\lab{1a2n}
  \Phi:B\to\Ga\bsl D.\end{equation}
  Here $B$ has a smooth completion $\ol B$ where $Z= \ol B\bsl B$ is a reduced normal crossing divisor with irreducible component $Z_i$ around which the local monodromies $T_i$ are assumed to be unipotent.  We may also assume that $\Phi$ has been extended across any $Z_i$ for which $T_i$ are the identity; then $\Phi$ is proper and the image $\Phi(B)=M\subset \Ga\bsl D$ is a closed analytic subvariety (\cite{Som78}).
  
  (ii)  Along the smooth points $Z^\ast_I$ of the strata $Z_I:= \bigcap_{i\in I}Z_i$ of $Z$ there is a \lmhs\ (\cite{CKS86}), and passing to the associated graded gives a variation of Hodge structure
  \[
  \Phi_I:Z^\ast_I\to\Ga_I\bsl D_I.\]
  Extending $\Phi_I$ across the boundary components of $Z^\ast_I$ around which the monodromy is finite, from the image of the extension of $\Phi_I$ we obtain a complex analytic variety~$M_I$.  Then as a set
  \[
  \ol M=M\cup \Bp{\bigcup_I M_I},\]
  where on the right-hand side there are identifications made corresponding to strata $Z^\ast_I$, $Z^\ast_J$ where $Z_I\cap Z_J\ne \emptyset$ and where $\Phi_I,\Phi_{J}$ have both been  extended across intersection points.
  
  (iii) This defines $\ol M$ as a set, and it is not difficult to show that the resulting $\ol M$ has the structure of a compact Hausdorff topological space and that the extended period mapping
  \begin{equation}\lab{1a3n}
  \Phi_e:\ol B\to\ol M\end{equation}
  is proper with compact analytic subvarieties $F_x:= \Phi^{-1}_e(x)$, $x\in \ol M$, as fibres.  If we define
  \[
  \cO_{\ol M,x}= \left\{
  \bmpc{3}{ring of functions $f$ that are continuous in a neighborhood  $\cU$ of $x\in \ol M$ and which are holomorphic in $\Phi^{-1}_e(\cU)$ and constant on fibres of $\Phi_e$}\right\}\]
  then the issue is to show that \emph{there are enough functions in $\cO_{\ol M,x}$ to define the structure of a complex analytic variety on $\ol M$.}
  
  (iv) Using \cite{CKS86} the \emph{local} structure of $\Phi_e$ along $F_x\subset \Phi^{-1}_e(\cU)$ can be analyzed; this will be briefly recounted in Section V below, the main point being to use implications of the relative weight filtration property (RWFP) of \lmhs s.
  
  (v) This leaves the issue of the \emph{global} structure of $\Phi_e$ along $F_x$.  We note that there is a map
  \[
  \mm_x/ \mm^2_x\to H^0\lrp{N^\ast_{F_x/\ol B}}\]
  where $\mm_x\subset \cO_{\ol M,x}$ is the maximal ideal and $N^\ast_{F_x/\ol B} \to F_x$ is the co-normal bundle of $F_x$ in $\ol B$.  Thus one expects some ``positivity" of $N^\ast_{F_x/\ol B}$.  From $F_x\subset Z^\ast_{I,e}\subset \ol B$ we obtain
  \[
  0\to N^\ast_{Z^\ast_{I,e}/\ol B}\big|_{F_x}\to N^\ast_{F_x/\ol B}\to N^\ast_{F_x/Z^\ast_{I,e}} \to 0.\]
  Since $\Phi_I :Z^\ast_{I,e}\to \Ga_I\bsl D_I$ is defined and maps $F_x$ to a point, we obtain some positivity of $N^\ast_{F_x/Z^\ast_{I,e}}$.
    Denoting by $\mm_{I,x}$ the maximal ideal in $\cO_{\Ga_I\bsl D_{I,x}}$ at $x = \Phi_I(F_x)$, from the local knowledge of $\Phi_e$ along $F_x$ we are able to infer that the positivity of $N^\ast_{F_x/Z^\ast_{I,e}}$ that arises from $\mm_{I,x}/\mm^2_{I,x}$ lifts to sections in $H^0\lrp{N^\ast_{F_x/\ol B}}$; thus the main issue is that of the positivity of $N^\ast_{Z^\ast_{I,e}/\ol B} \big|_{F_x}$.
  
  (vi) Here, as will be explained in the revised and expanded version of  \cite{GGLR17}, some apparently new Hodge theoretic considerations arise.  In the literature, and in this work, there have been numerous consequences drawn from the positivity properties of the Hodge line bundle, and also from some of the positivity properties of the cotangent bundle of the smooth points of the  image $M=\Phi(\ol B)$ of a period mapping.  However we are not aware of similar applications of positivity properties of  the bundles constructed from the \emph{extension data}  $\cE$ associated to a \lmhs.  As will be explained in loc. cit., there is an ample line bundle $\cL\to \cE$ and an isomorphism
    \begin{equation}\lab{1a4n}
 \nu^\ast\cL\cong  N^\ast_{Z^\ast_{I,e/\ol B} \big|_{F_x}}\end{equation}
  where
  \begin{equation}\lab{1a5n}
  \nu:F_x\to\cE\end{equation}
  associates to each point of $F_x$ the extension data associated to the LMHS at that point.  Put another way, assuming $\Phi_\ast$ is generically injective, since the associated graded to the \lmhs\ is constant along $F_x$ one may expect non-trivial variation in the extension data to the LMHS's, i.e.\ in  $\nu$ in \pref{1a5n} (or possibly some jet of $\nu$) should be non-constant.  The map \pref{1a4n} then converts sections of the ample bundle $\cL\to\cE$ into sections of $N^\ast_{F_x/\ol B}.$\footnote{In the classical case of curves this extension data is given by the Jacobian variety of a smooth, but generally reducible, curve and $\cL\to\cE$ is the ``theta" line bundle given by the polarization.  It is interesting and we think noteworthy that constructions akin to classical theta functions can be given in non-classical situations.}

Returning to the discussion of \pref{1a2n}, a 
   central ingredient in its proof consists of the positivity properties of the Hodge line bundle $\La_e$.  Referring to \cite{GGLR17} and to Sections IV.B, V, and VI.A below for details, the Chern form $\om_e$ of $\La_e$  is a singular differential form on a desingularization $\ol B$ of $\ol \cM$, and the exterior differential system
 \begin{equation}\lab{1a2}
 \om_e=0\end{equation}
 defines a complex analytic fibration whose quotient captures the \phs\ on $H^2(X)$ when $X$ is smooth (or has canonical singularities), and when $X=\lim X_t$ is a specialization of smooth $X_t$'s it captures the associated graded to the \lmhs.\footnote{Of course one must make sense of \pref{1a2} where $\om_e$ has singularities; this is addressed in Section VI.E below.}
As discussed above it is   the extension data in the LMHS that is not detected  by the extended period mapping.  In this way the Satake-Baily-Borel completion is   a  minimal completion of the  image of the period mapping.
  
  The positivity of the \hlb\ raises naturally the issue of the degree  of positivity of the \hvb.  It is this question that provides some  of the background motivation for these notes.  A \vb\ $E\to X$ over a complex manifold will be said to be \emph{semi-positive} if it has a Hermitian metric whose curvature form
  \begin{equation}\lab{1a3}
  \Theta_E(e,\xi)\geqq 0\end{equation}
  (cf.\ Sections II.A and II.B for notations).    We shall usually write this condition as $\Theta_E\geqq 0$.  The \hvb\ is semi-positive, but the curvature form is generally not   positive,    even at a general $x\in X$ and  general $e\in E_x$.  An interesting algebro-geometric question is: \emph{How positive is it?}
  
  We shall say that a general bundle  $E\to X$ is \emph{strongly semi-positive} if   there is a metric where \pref{1a3} is satisfied and where
  \begin{equation}\lab{1a4}
  \Tr \Theta_E = \Theta_{\det E}>0\end{equation}
  on an open set.  For example, the \hvb\ is strongly semi-positive if the end piece $\Phi_{\ast,n}$ of the differential of the period mapping is injective at a general point, a condition that  is frequently satisfied in practice.\footnote{For families of algebraic curves and surfaces this condition is the same as the differential being injective.  In general, using the augmented Hodge bundle there is a similar interpretation using the full differential $\Phi_\ast$.}
    One of the main observations in these notes is
  \begin{equation}\lab{1a5}\bmp{5}{\em If  $X$ is compact and $E\to X$ is strongly semi-positive, then for some $r_0\leqq \rank E$ we have for $m\geqq r_0$
  \[
  \Theta_{\Sym^m E}>0\]
  on an open set.}\end{equation}
As a corollary,   $$ \Sym^m E \hensp{\emph{is big for}} m\geqq r_0.$$
  As the proof of \pref{1a5} will show, although $\Sym^mE$ ``gets more sections'' as $m$ increases, in general   $\Sym^m E$ will never  become entirely positive.  We will see in Section VI.E that  the $\Sym^m E$ have an intrinsic amount of ``flatness" no matter what metric we use for $E$.\footnote{This is not surprising as one can always add a trivial bundle to one that is strongly semi-positive   and it will remain strongly semi-positive.  Somewhat more subtly, a strongly semi-positive bundle may have ``twisted" sub-bundles on which the curvature form vanishes.}
  
  Among the algebro-geometric measures of positivity of a vector bundle $E\to X$ one may single out 
  \begin{enumerate}
  \item $E$ is nef (see Section II.D);
  \item some $\Sym^m E$ is big;
  \item some $\Sym^m E$ is free (i.e., $\Sym^mE$ is semi-ample).
  \end{enumerate}
  There are natural curvature conditions that imply both (i) and (ii), but other than the assumption of strict positivity which implies ampleness, we are not aware of natural curvature conditions that imply (iii).  For a specific question concerning this point, suppose that $L\to X$ is a line bundle and $Z\subset X$ is a reduced normal crossing divisor.  Assume that $h$ is a smooth metric in $L\to X$ whose Chern form has the properties
  \begin{itemize}
  \item[(a)] $\om\geqq 0$;
  \item[(b)] for $\xi\in T_xX$, $\om(\xi)=0\iff x\in Z$ and $\xi\in T_x Z\subset T_x X$.\end{itemize}
  Briefly, $\om\geqq 0$ and  the exterior differential system $\om=0$ defines $Z\subset X$.  Then by (a) we see that $L$ is nef, and as a consequence of (b) $L$ is big (cf.\ II.G).  Now $\om $ defines a K\"ahler metric $\om^\ast$ on $X^\ast:=X\bsl Z$, and one may pose the
  \begin{equation} 
 \lab{1a10n}
  \bmp{5}{{\bf Question:} Are there conditions on the curvature $R_{\om^\ast}$ of $\om^\ast$ that imply that $L$ is free?}\end{equation}
  
  We note that by (b) the Chern form $\om$ defines a norm in the normal bundle $N_{Z/X}$.  It seems reasonable to ask if the curvature form of this norm may be computed from $\lim_{x\to Z} R_{\om^\ast}(x)$, and if there are sign properties of this curvature form that imply freeness?\footnote{In a special case this question has been treated in the interesting paper \cite{Mok12}.  We refer to \pref{1a38} at the end of this introduction for similar questions concerning the Hodge vector bundles.}
  
  Remark that the construction of the completion $\ol M$ of the image of a period mapping \pref{1a2n} could have been accomplished   directly if one could show  that the canonically extended \hlb\ $\La_e\to \ol B$, which satisfies (i) and (ii), is free.\footnote{Of course the semi-ampleness of $\La_e\to \ol B$ is a~posteriori a consequence of \pref{1a2n}.}  Here one has the situation of the above question in the case where $h$ and $\om$ have singularities along $Z$.  However, as discussed in Section V these singularities are mild; in fact the singularities tend to ``increase" the positivity of the Chern form of~$\La_e$.

  In Section III we introduce the notation of \emph{norm positivity}.\footnote{The origin of the term is that a quantity (such as a Hermitian form) will be non-negative if it is expressed as a norm.}  This means that $E\to X$ has a metric whose curvature matrix is of the form
  \[
  \Theta_E=-{}^t \ol A\wedge A\]
  where $A$ is a matrix of $(1,0)$ forms arising from a holomorphic bundle map
  \begin{equation} \lab{n1a6}
  A:E\otimes TX\to G\end{equation}
  for $G$   a holomorphic vector bundle having a Hermitian metric.  The curvature form is then
  \begin{equation}\lab{n1a7}
  \Theta_E(e,\xi) = \| A(e\otimes \xi)\|^2_G\end{equation}
  where $\|\; \|_G$ is the norm in $G$.  Many of the bundles that arise naturally in algebraic geometry, such as the \hb\ and any globally generated bundle, have this property: The mapping $A$ in \pref{n1a6} generally has algebro-geometric meaning, e.g., as the differential of a map.  Such bundles are semi-positive,\footnote{Both in the sense of \pref{1a3} and in the sense of Nakano positivity (\cite{Dem12a}).} and their degree of positivity will have an algebro-geometric interpretation.
  
  \begin{rem}
  One may speculate as to the reasons for what might be called ``the unreasonable effectiveness of curvature in algebraic geometry."  After all, algebraic geometry is in some sense basically a $1^{\rm st}$ order subject---the Zariski  tangent space being   the primary infinitesimal invariant---whereas differential geometry is a $2^{\rm nd}$ order subject---the principle invariants are the  $2^{\rm nd}$ fundamental form  and the curvature.  One explanation is that for bundles that have the norm positivity property, the curvature form measures the \emph{size} of the $1^{\rm st}$ order quantity $A$ in \pref{n1a6}.\footnote{In this regard in the Hodge-theoretic situation we note that curvature $R_{\om^\ast}$ in \pref{1a10n} is a $2^{\rm nd}$, rather than a $3^{\rm rd}$, order invariant.}\end{rem}

  Another principal topic in these notes concerns the singularities   of metrics and their curvatures and   Chern forms.  This also is a very active area that continues to play a prominent role in complex algebraic geometry (cf.\ \cite{Dem12a} and \cite{Pau16}).  Reflecting the fact that in families of algebraic varieties there are generally interesting  singular members, the role of the singularities of Hodge metrics is central in Hodge theory.  Here there are two basic principles:
  \begin{enumerate}[{\rm (i)}]
  \item the singularities are mild, and
  \item singularities increase positivity.
  \end{enumerate}
  The first is explained in Section IV.B.  Intuitively it means that although the Chern polynomials are singular differential forms, they behave in their essential aspects as if they were smooth.  In particular, although they have distribution coefficients they may be multiplied and restricted to particular subvarieties as if they were smooth forms.  This last property is central to the proof of \pref{1a1} above.
  
    The first  major  steps in the general analysis of the singularities of the Chern forms of \hb s for several parameter variations of Hodge structure were taken by Cattani-Kaplan-Schmid (\cite{CKS86}), with subsequent refinements and amplifications by a number of people including Koll\'ar (\cite{Kol87}).  In the paper \cite{GGLR17}   further steps are taken, ones that may be thought of  as further refining the properties of    the wave front sets of the Chern forms.  This will be explained in Section IV.B and will be applied in Section V where an alternate proof of one of the two main ingredients in the proof of \pref{1a7} below will be given.
  
  The result \pref{1a1} is an application to a desingularization of  a KSBA moduli space  of a result that we now explain, referring to Sections I.B and I.C for explanations of notation and terminology.  Let $B$ be a smooth projective variety with smooth completion $\ol B$ such that $\ol B\bsl B=Z=\bigcup Z_i$ is a reduced normal crossing divisor.  We denote by $Z_I=\bigcap_{i\in I} Z_i$ the strata of $Z$ and by $Z^\ast_I = Z_{I,\reg}$ the smooth points of $Z_I$.  We consider a variation of \hs\ over $B$ given by a period mapping
  \begin{equation}\lab{1a6}
  \Phi:B\to\Ga\bsl D.\end{equation}
  We assume that the local monodromies $T_i$ around the $Z_i$ are unipotent with logarithms $N_i$, an assumption that may always be achieved by passing to a finite covering of $B$.\footnote{We will use \cite{PeSt08}, \cite{CM-SP} and \cite{CKS86} as general references for Hodge theory, including limiting \mhs s.}  We also suppose that the end piece
  \[
  \Phi_{\ast,n}:TB \to \Hom(F^n,F^{n-1}/F^n)\]
  of the differential of $\Phi$ is generically injective.\footnote{By using the \emph{augmented \hlb}\ $ \ottimes^{\lfloor n-1/2 \rfloor}_{p=0}  \det (F^{n-p}) $ 
  rather than just the \hlb\ $\La=\det F^n$, this assumption may be replaced by the injectivity of $\Phi_\ast$ (cf.\ \cite{GGLR17}).}  Finally assuming as we may that all $N_i\ne 0$, the image
  \[
  \Phi(B):= M\subset \Ga\bsl D\]
   of the period mapping \pref{1a6} is a closed analytic subvariety of $\Ga\bsl D$.  Beginning with \cite{Som78} there have been results stating that under certain conditions  $M$ is a quasi-projective variety and  the \hlb\  $\La\to M$ is at least big.  The following result from \cite{GGLR17} serves  to extend and clarify the previous work in the literature: 
   \begin{Thm}\lab{1a7} There exists a canonical completion $\ol M$ of $M$ as a  compact complex analytic space that has the properties
   \begin{enumerate}[{\rm (i)}]
   \item the \hb\ extends to $\La_e\to \ol M$ and there it is ample; and
   \item $\ol M$ is a Satake-Baily-Borel completion of $M$.\end{enumerate}
   \end{Thm}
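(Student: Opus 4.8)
The plan is to make precise the six-step construction sketched in the introduction. First I would build $\ol M$ as a set by adjoining to $M=\Phi(B)$ the images $M_I$ of the associated-graded period mappings $\Phi_I:Z^\ast_I\to\Ga_I\bsl D_I$ along the strata, each extended across the loci of finite monodromy, with the identifications forced by the incidences $Z_I\cap Z_J\ne\emptyset$; one then checks, using the several-variable $\SL_2$-orbit theorem of \cite{CKS86}, that $\ol M$ is a compact Hausdorff space and that the extended period mapping $\Phi_e:\ol B\to\ol M$ of \pref{1a3n} is proper with compact analytic fibres $F_x=\Phi_e^{-1}(x)$. The core of the argument is then to upgrade $\ol M$ from a topological space to a complex analytic variety, i.e.\ to show that the ring $\cO_{\ol M,x}$ of fibre-constant holomorphic functions near $F_x$ separates points and gives local embeddings. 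Via the map $\mm_x/\mm^2_x\to H^0\bigl(N^\ast_{F_x/\ol B}\bigr)$ this reduces to two things: producing enough sections of the conormal bundle $N^\ast_{F_x/\ol B}$, and verifying that such sections actually lift to elements of $\cO_{\ol M,x}$ — the latter being where the local analysis of $\Phi_e$ along $F_x$, controlled by the relative weight filtration property of \lmhs s \cite{CKS86}, enters.

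For the conormal positivity I would use the exact sequence
\[
0\to N^\ast_{Z^\ast_{I,e}/\ol B}\big|_{F_x}\to N^\ast_{F_x/\ol B}\to N^\ast_{F_x/Z^\ast_{I,e}}\to 0
\]
and treat the two ends separately. The quotient $N^\ast_{F_x/Z^\ast_{I,e}}$ inherits positivity because $\Phi_I$ is defined on $Z^\ast_{I,e}$ and contracts $F_x$ to a point, so pulling back $\mm_{I,x}/\mm^2_{I,x}$ supplies the needed sections — this is the already-understood part, in the spirit of \cite{Som78} and the known big-ness results for $\La$. The subbundle $N^\ast_{Z^\ast_{I,e}/\ol B}\big|_{F_x}$ is the genuinely new point: here one introduces the bundle $\cE\to F_x$ of extension data of the \lmhs, the ample line bundle $\cL\to\cE$ coming from the polarization, and the classifying map $\nu:F_x\to\cE$ of \pref{1a5n}, for which \pref{1a4n} gives $\nu^\ast\cL\cong N^\ast_{Z^\ast_{I,e}/\ol B}\big|_{F_x}$. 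Pulling sections of $\cL$ back along $\nu$ — using that $\nu$, or some jet of it, is non-constant once $\Phi_\ast$ is generically injective — furnishes the missing conormal sections and completes the proof that $\ol M$ is a complex analytic variety.

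Granting this, part (i) follows by transporting the \hlb\ across $\Phi_e$. The metric on $\La$ has a canonical extension to $\La_e\to\ol B$ whose Chern form $\om_e$ is a semi-positive singular $(1,1)$-form, and by the mildness of the singularities (Section IV.B) $\om_e$ may be multiplied and restricted as if it were smooth; combined with the generic injectivity of $\Phi_{\ast,n}$ this shows that the top exterior power of $\om_e$ is strictly positive off a proper analytic subset, and since the exterior differential system $\om_e=0$ is by construction the fibration $\Phi_e$, the bundle $\La_e$ descends along $\Phi_e$ to a bundle on $\ol M$ that is ample (after passing to a power to clear $\Q$-Cartier issues). Finally (ii) is essentially formal: one identifies $\ol M$ with $\Proj\bigl(\bigoplus_m H^0(\ol M,\La_e^{\otimes m})\bigr)$, and the minimality built into the construction — precisely the fact, stressed in the introduction, that the extension data $\cE$ is \emph{not} recorded by $\Phi_e$ — yields the Satake--Baily--Borel characterization.

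I expect the main obstacle to be the positivity of $N^\ast_{Z^\ast_{I,e}/\ol B}\big|_{F_x}$ together with the construction of the ample bundle $\cL\to\cE$ and the isomorphism \pref{1a4n}: this demands a precise description of how the extension data of a \lmhs\ varies along $F_x$ and how to organize it into a polarized family, a Hodge-theoretic input that goes beyond \cite{CKS86}. A secondary difficulty is analytic rather than Hodge-theoretic, namely justifying that products and restrictions of the singular Chern form $\om_e$ behave as in the smooth case, so that the numerical positivity of $\La_e$ really produces ampleness on the possibly singular space $\ol M$.
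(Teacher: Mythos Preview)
Your proposal faithfully reproduces the global architecture that the paper lays out in its introduction, and you correctly identify the two hard points (the extension-data positivity giving sections of $N^\ast_{Z^\ast_{I,e}/\ol B}\big|_{F_x}$, and the mildness of $\om_e$). In that sense there is no error.

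However, the paper's own technical contribution toward this theorem is concentrated on an ingredient you pass over: the explicit \emph{local} construction of analytic charts for $\ol M$ via monomial maps. In Section~VI.A the paper reduces the local problem to a nilpotent orbit $\Phi(t)=\exp\bigl(\sum_j\ell(t_j)N_j\bigr)\cdot F$ on $\Delta^{\ast k}$ and shows (Proposition~\ref{6a1}) that the fibres of $\Phi_e$ coincide with the fibres of an explicit monomial map $\mu:\Delta^k\to\C^N$. The monomials $t^B$ entering $\mu$ are determined by the condition $A\cdot B=0$ for all relations $\sum a_iN_i=0$ among the monodromy logarithms (Proposition~\ref{6a7}); existence of enough such monomials with non-negative exponents is a linear-programming fact (Farkas' theorem, Proposition~\ref{6a8}). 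On a boundary stratum $\Delta^\ast_I$ the relevant relations become $\sum_{j\in I^c}b_jN_j\in W_{-1}(N_I)\cG$ (Proposition~\ref{6a9}), and the crucial compatibility of these monomial charts as one passes from $\Delta^\ast_I$ to a deeper stratum $\Delta^\ast_J$ (Proposition~\ref{6a12}) is exactly where the relative weight filtration property is invoked. This monomial-chart construction is what actually produces the local ring $\cO_{\ol M,x}$; your conormal-sheaf picture is the global complement to it, and the paper itself flags that global step (a3) as ``work still in progress'' relying on the extension-data input you describe.

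So: your outline is sound, but if you want to match what the paper actually proves here, you should replace the abstract ``local analysis controlled by RWFP'' with the concrete monomial-map construction and its stratum-to-stratum compatibility via RWFP. The paper also gives, at the end of Section~V, a short self-contained argument for $\dim B=2$ using the Hodge index theorem to verify Grauert's contractibility criterion --- a useful sanity check you might incorporate.
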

   The second statement means the following:
   The period mapping \pref{1a6} extends to
   \begin{equation}\lab{1a8}
   \Phi_e :\ol B\to \ol M.\end{equation}
   Along the   non-singular strata $Z^\ast_I$ the exented period mapping $\Phi_e$  induces variations of graded polarized \lmhs s, and passing to the associated graded of these \mhs s gives period mappings
   \begin{equation}\lab{1a9}
   \Phi_I :Z^\ast_I\to \Ga_I\bsl D_I.\end{equation}
   Then \emph{the restriction to $Z^\ast_I$ of $\Phi_e$ in \pref{1a8} may be identified with} $\Phi_I$. 
   Setting $M_I=\Phi_I(Z^\ast_I)\subset \Ga_I \bsl D_I$, as a set $\ol M=M\amalg (\coprod_I M_I)$ 
    The precise meaning of this is explained in \cite{GGLR17}; among other things it means that $\Proj(\La_e\to \ol B)$ exists and on the boundary strata exactly detects the variation of the associated graded to the \lmhs s.\footnote{We remark that in the non-classical case when $\Ga\bsl D$ is not an algebraic variety, the construction of $\ol M$ is necessarily accomplished by gluing together local extensions of $M$ at the points of $\part M=\ol M\bsl M$. This requires both a \emph{local} analysis, based on \cite{CKS86},  of  local neighborhoods in $\olb$ along the fibres $F$ of the set-theoretically extended period map together with \emph{global} analysis of those fibres, and especially of the above mentioned positivity of the co-normal bundle $N^\ast_{F/\ol B}$.  As mentioned above, global issues necessitate  apparently new Hodge-theoretic constructions arising from the extension data in a limiting mixed Hodge structure.  Even in the classical case this   seems to give a new perspective on the Satake-Baily-Borel compactification.}
      The exterior differential system  \pref{1a2} defined by the singular differential form $\om_e$ may be made sense of on $\ol B$ (cf.\ Section VI.E), and there it defines a fibration by complex analytic subvarieties whose quotient is just $\ol M$.  The restriction property of $\om_e$ referred to above may be summarized as saying that
   \begin{equation}\lab{1a10}
   \om_e\big|_{Z^\ast_I}\hensp{\em is defined and is equal to} \om_I\end{equation}
   where $\om_I$ is the Chern form of the \hlb\ associated to \pref{1a9}.
   
   An implication of  the above is 
   \begin{equation}\lab{1a11}
   \om_e\hensp{\em is defined on $\ol M$ and there} \om_e>0.\footnotemark\end{equation}
   \footnotetext{In general, given a fibration $f\!:\!A\!\to\! B$ between manifolds $A,B$ and a smooth differential form $\Psi_A$ on $A$, the necessary and sufficient conditions that
   \[
   \Psi_A=f^\ast \Psi_B\]
   for a smooth differential form $\Psi_B$ on $B$ are that both $\Psi_A$ and its exterior derivative $d \Psi_A$ restrict to zero on
   \[
   \ker\{f_\ast :TA\to TB\}\subset TA.\]
   The above results extend this to a situation where $\Psi_A$ is a possibly singular differential form.}%
   
   One aspect of   this is that given $B$ and $\ol B$ as above the EDS
   \begin{equation}\lab{1a12}
   \bcs \om=0&\hensp{on} B\\
   \om_I = 0&\hensp{on the} Z^\ast_I\ecs\end{equation}
   on the smooth strata defines on each a   complex analytic fibration and a natural question is
   \begin{equation}\lab{1a13}
   \bmp{5}{\em For $I\subset J$ so that we have $Z^\ast_J\subset \ol Z^\ast_I$, does the closure of a fibre of $\om_I=0$ in $Z^\ast_I$ intersect $Z^\ast_J$ in a fibre of $\om_J=0$?  Are the limits of the fibres of $\Phi_I$ contained in the fibres of $\Phi_J$?}\end{equation}
   In other words, \emph{do the period mappings given by $\Phi$ on $B$ and $\Phi_I$ on $Z^\ast_I$ fit 
   together in an analytic way?}  That this is the case is proved in Section 3 of \cite{GGLR17} (cf.\ Step 1 in the proof of Theorem 3.15 there).  Although not directly related to the positivity of \hb s,   because of the subtle way in which the relative weight filtration property of a several parameter degeneration of \phs s enters in the argument  we think the result is of interest in its own right and in Section VI.A  we have  given a proof in the main  special case of a positive answer to \pref{1a13}.

   In Section VI we shall give applications of some of the bundles that naturally arise in Hodge theory.  As explained above, one such application is the use of the \hlb\ in proving that its ``Proj" defines the Satake-Baily-Borel completion of the image  of a period mapping.  The next application is to the cotangent bundle $\Om^1_{M^o}$ over the smooth points $M^o$ of $M$.  It is classical \cite{CM-SP} that the  induced metric on $M^o\subset \Ga\bsl D$ is K\"ahler and has   holomorphic sectional curvatures
   \[
   R(\xi)\leqq -c,\qquad \xi\in TM^o\]
   bounded above by a constant $-c$ where $c>0$.  On the other hand the more basic curvature form for $TM^o$ is given by the holomorphic bi-sectional curvatures $R(\eta,\xi)$, and we shall prove (cf.\ Theorem \ref{6b1}) that they satisfy
   \begin{equation} 
   \lab{n1a16} \begin{split}
   &R(\eta,\xi)\leqq 0\hensp{\em and} R(\eta,\xi)<0\hensp{\em is an open $\cU$ set in} TM^o\times_{M^o} TM^o.\\
   &
   \hbox{\em Moreover, $\cU$ projects onto each factor in $M^o\times M^o$.}\end{split}\end{equation}
 
   An interesting point here is that the induced curvature on the horizontal sub-bundle $(TD)_h\subset TD$ is a difference of non-negative terms, each of which has the norm positivity property \pref{n1a7}.  On \emph{integrable}  subspaces $I$ of $(TD)_h$ the positive term drops out leading to \pref{n1a16}.\footnote{In general the curvature matrices of \hb s are differences of non-negative curvature operators, and the paper \cite{Zuo} isolated the important point that on the kernels of Kodaira-Spencer maps one of the two terms drops out and on these subspaces the curvature forms have a sign.  Here the relevant bundles are constructed from the $\Hom(F^p/F^{p+1},F^{p-1}/F^p)$ bundles, and the relevant Kodaira-Spencer maps vanish in the integrable subspaces (cf.\ Proposition 2.1 in loc.\ cit.).  This issue is discussed below in some detail in Section VI.B.}
   
   A consequence of \pref{n1a16} is
   \begin{equation} \lab{n1a17}
   \bmp{5}{\em the curvature form $\Theta_{T^\ast M^o}\geqq 0$ and it  is positive on an open set.}\end{equation}
    To apply \pref{n1a17} two further steps are required.  One is  that curvatures decrease on holomorphic sub-bundles, which in the case at hand is used in the form 
       \begin{equation}\lab{n1a18}
       \Theta_I \leqq \Theta_{(TD)_h} \big|_I.\end{equation}
       The second involves the singularities that arise both at the singular point $M_{\sing} = M\bsl M^o$, and at the boundary $\part M=\ol M\bsl M$ of $M$.  Once these are dealt with we find the following, which are variants and very modest improvements of results in the literature due to Zuo in \cite{Zuo} and others (cf.\ \cite{Bru16} for a recent paper on this general topic and \cite{Den18} for related results and further bibliography).
       
       \begin{equation}\lab{n1a19}
  M\hensp{\em is of log general type,}\end{equation}
       and there exists an $m_0$ such that
       \begin{equation}\lab{n1a20}
       \Sym^m \Om^1_{\ol M}(\log)\hensp{\em is big for} m\geqq m_0.\end{equation}
       The precise meaning of these statements will be explained in Section VI.B below.\footnote{In fact, as will be discussed there $M$ is of \emph{stratified-log-general} type, a notion that is a refinement of log-general type.}

   The  theme of these notes is the positivity of vector bundles, especially those  arising from Hodge theory, and some  applications of  this   positivity to algebraic geometry.  Above we have discussed     applications of the  positivity of the Hodge \emph{line} bundle  and of the cotangent bundle at the smooth points to the image of a period mapping. We next turn to an application of positivity of the Hodge \emph{vector} bundle to the \emph{Iitaka conjecture}, of which a special case is
   \begin{equation}\lab{1a14}\bmp{5}{\em Let $f: X\to Y$ be a morphism between two smooth projective varieties and assume that
   \begin{enumerate}[{\rm (i)}]
   \item the general fibre  $X_y =f^{-1}(y)$ has Kodaira  dimension $\kappa(X_y)=\dim X_g$,
   \item $\Var f=\dim Y$.\footnotemark\end{enumerate}
   Then the Kodaira dimension is sub-additive}\end{equation} \footnotetext{This means that at a general point the Kodaira-Spencer map $\rho_y:T_yY\to H^1(TX_y)$ is injective.}
   \begin{equation}
   \lab{1a15} \kappa(X)\geqq \kappa(Y)+\kappa(X_y).\end{equation}
  This result was proved with one assumption, later seen to be unnecessary, by Viehweg (\cite{Vie83a}, \cite{Vie83b}).  His work built on \cite{Fuj78}, \cite{Uen75}, \cite{Uen77}, \cite{Kaw82}, \cite{Kaw83}, and \cite{Kaw85}, and was extended by \cite{Kol87}.  The sub-additivity of the Kodaira  dimension plays a central role in the classification of algebraic varieties, and over the years refinements of the result and alternative approaches to its proof have stimulated a very active and interesting literature (cf.\ \cite{Kaw02} and the more recent papers \cite{Sch15}, \cite{Pau16}).  
  It is not our purpose  here to survey this work but rather it is to focus on some of  the Hodge theoretic aspects of the result.
  
  One of these aspects is that the original proofs showed the necessity for understanding the singularities of the Chern form $\om$ of the \hlb. Here \cite{CKS86} is fundamental; as noted there by those authors    their work was in part motivated by establishing what was required for the proof of the Iitaka conjecture.

  In this particular discussion we shall mainly concentrate on the Hodge-theoretic aspect of the problem where everything is  smooth.  The issues that arise from the singularities may be dealt with using Theorem \ref{4b8}. 
  In Section VI.B following  general remarks on  what is needed to establish the result we first observe that if generic local Torelli\footnote{This means that the end piece $\Phi_{\ast,n}$ of the differential of the period mapping is generically injective.}  holds for $f:X\to Y$, then \pref{1a15} is a direct consequence of Theorem \ref{3b7} applied to the \hvb.  
  
  However, in general  the assumption $\kappa(X_y)=\dim X_y$ pertains to the $H^0(K^m_{X_y})$ for $m\gg 0$, not to  $H^0(K_{X_y})$ itself.  So the question becomes
  \begin{quote}
  \em How can Hodge theory be used to study the pluricanonical series $H^0(K^m_{X_y})$?\footnote{Here, as will be explained below, we are referring to both the  traditional use of the positivity of the Hodge line bundle, and also to the semi-positivity of the Hodge vector bundle which may be used in place of the concept of weak positivity introduced by Vieweg \cite{Vie83a} and \cite{Vie83b}.}\end{quote}
To address this an   idea in \cite{Kaw82} was  expanded and used in \cite{Vie83b} with further refinement and amplification in \cite{Kol87}.  It is this aspect that we shall briefly discuss here with further details given in Section VI.C.
  
  First a general comment.  Given a smooth variety $W$ of dimension $n$ and an ample \lb\ $L\to W$, there are two variations of Hodge structure  associated to the linear systems $|mL|$ for $m\gg 0$:
  \begin{itemize}
  \item[(a)] the smooth divisors $D\in |mL|$ carry \phs s;
  \item[(b)] for each $s\in H^0(L^m)$ with smooth divisor $(s)=D\in |mL|$ there is a cyclic covering $\wt W_s\xri{\pi} W$ branched over $D$ and with a distinguished section $\tilde s \in H^0(\wt W_s,\pi^\ast L)$ where $\tilde s^m =\pi^\ast s$.
  \end{itemize}
  We may informally think of (b) as the correspondence 
  \begin{equation}\lab{1a16}
(W,s^{1/m})\longleftrightarrow   (\wt W_s,\tilde s)  \end{equation}
obtained by extracting an $m^{\rm th}$ root of $s$.

  We   note that for  each $\la\in \C^\ast$ there is an isomorphism
  \begin{equation}\lab{1a18}
  \wt W_s \simto \wt W_{\la s},\end{equation}
  so that we should think of (b) as giving the smooth fibres in  a family
  \[
  \{ \wt W_{[s]} :[s] \in \P H^0(L^m)\}.\]
  A variant of  the construction (b) is then given by
  \begin{itemize}
  \item[(c)] $\xymatrix@C=.5pt@R=1.6pc{\wt W\ar[d]&\hspace*{-22pt}=\hbox{desingularization of }{\displaystyle\bigcup_{[s]\in \P H^0(L^m)} \wt W_{[s]}}\\
  \P H^0(L^m).&}$
  \end{itemize}

  It is classical that a suitable form of  local Torrelli for the end piece of the differential of the period mapping holds for the families of each of the  types (a) and (c) when $m\gg 0$ (cf.\ \cite{CM-SP}).  Moreover, the methods used to prove these results may be readily extended to show that 
  \begin{equation}\lab{1a17}\bmp{5}{\em local Torelli holds for both families of types (a) and  (c) when $W$ also varies.}\end{equation}

  In the case $L=K_W$  there is  the special feature
  \begin{equation}\lab{1a19}\bmp{5}{$H^0(K^m_W)$ \em is related to  the $H^{n,0}$-term of the \phs\ on $H^n(\wt W)$ arising in  the construction  (c).}\end{equation}
This means that, setting $\P = \P H^0(K^m_W)
  $
  \[
  \bmp{5}{\em $H^0(K^m_W)$ is a   summand of $F\otimes \cO_\P(-1)$ where $F\to \P$ is the \hvb\ associated to the family.}\]
As a consequence     
  \begin{equation}\lab{1a21}\bmp{5}{\em $H^0(K^m_W)\otimes H^0(\cO_\P(1))$ is a   summand of $H^0(F)$.}\end{equation}
   
There is also a metric Hodge theoretic interpretation of the pluricanonical series. For $\psi \in H^0(K^m_W)$ the Narashimhan-Simka \cite{NarSim68} Finsler-type norm
  \begin{equation}\lab{1a22}
  \|\psi\| = \int_W (\psi\wedge \ol\psi)^{1/m}\end{equation}
  has the Hodge-theoretic interpretation
  \begin{equation}\lab{1a23}
  \|\psi\|  = \int_{\wt W_\psi} \wt\psi\wedge\ol{\wt \psi}\end{equation}
  where  the RHS is a constant times the square of the Hodge length of $\wt\psi\in H^0(K_{\wt W_\psi})$. 
  The curvature properties of $\|\psi\|$  arising from its Hodge-theoretic interpretation were used by Kawamata \cite{Kaw82} in  in his proof of the Iitaka conjecture when $Y$ is  a curve.
  The metric properties of direct images of the pluricanonical systems have recently been an active and highly  interesting subject; cf.\ \cite{Pau16} for a summary of some of this work and references to the literature (cf.\ also \cite{Den18}).  It is possible that growing out of this work the Kawamata argument could be extended to give a proof of the full Iitaka conjecture.    We will comment further on this at the end of Section VI.C where a precise conjecture \pref{6c24} will be formulated.

 Returning to the discussion of \pref{1a15}, the essential point  is to show that
 \begin{equation}\lab{n1a25}
 f_\ast \om^m_{X/Y}\hensp{\em has lots of sections for} m\gg 0.\end{equation}
 The idea is that ``positivity $\implies$ sections'' and ``assumptions (i) and (ii) in \pref{1a14}$\implies f_\ast \om^m_{X/Y}$ has positivity.''  As noted above, when $m=1$ and local Torelli holds using Theorem \ref{3b7} there is sufficient positivity to achieve \pref{n1a25}.  The issue then becomes how to use the Hodge-theoretic interpretation of the pluricanonical series and semi-positivity of   \hvb s to also produce sections of $f_\ast \om^m_{X/Y}$.
 
 If we globalize the cyclic covering construction taking $W$ to be a typical $X_y$ we arrive at a commutative diagram 
  that  is essentially 
  \begin{equation*} 
 \begin{split}
 \xymatrix{\wtcx\ar[dr]^{\tilde f}\ar[d]_g\ar[r]^\pi&X\ar[d]^f
 \\ \P\ar[r]^p&Y}\end{split}\end{equation*}
 where the fibres are given by
  \begin{itemize}
 \item $\P_y = p^{-1}(y)=\P H^0(\om^m_{X_y})$,
 \item $\wtcx_y = \tilde f^{-1}(y)$, where  \item $\pi:\wtcx_y\to X_y$ is the family of cyclic coverings $\wt X_{y,[\psi]}\to X_y$ as  $[\psi]\in \P H^0(\om^m_{X_y})$ varies.\footnote{See \pref{6c12} for the complete definition and explanation of this diagram.}\end{itemize}
 If it were possible to  say that from \pref{1a19}      
 \begin{equation}\lab{1a24} f_\ast \om^m_X\hensp{\em is a direct factor of}  \tilde f_\ast\om_{\wtcx/Y}\end{equation}
 then     local Torelli-type statements   would imply positivity of $f_\ast \om^m_{X/Y}$.
   However, due to the twisting that occurs as $\la$ varies in the scaling identification \pref{1a18} essentially what happens is  that
 \begin{equation}\lab{1a25}
 f_\ast \om^m_{X/Y}\hensp{\em is a direct factor of} \tilde f_\ast \om_{\wtcx/\P}\otimes p_\ast \cO_\P(1).\end{equation}
 Now $\cO_\P(1)$ is positive along the fibres of $\P\to Y$, but the hoped for positivity of $f_\ast \om^m_{X/Y}$ means that $\cO_\P(1)$ tends to  be negative in the directions  normal  to $\P_y$ in $\P$.  Thus the issue has an additional subtlety that, as will be explained in Section VI.C, necessitates bringing in properties of the maps
 \[
 \Sym^k \tilde f_\ast \om^m_{\wtcx/\P} \to \tilde f_\ast \om^{k m}_{\wtcx/\P}.\]
   
   A final question that arises is this:
   \begin{equation} \lab{1a38}
   \bmp{5}{\em Let $f:X\to Y$ be a map between smooth, projective varieties, and assume that the locus of $y\in Y$ where $X_y = f^{-1}(y)$ is singular is a reduced normal crossing divisor.  Then
   \begin{enumerate}
   \item[{\rm (i)}] if $\det f_\ast \om^m_{X/Y}$ is non-zero, is it free?
   \item[{\rm (ii)}] if $f_\ast \om^m_{X/Y}$ is non-zero, if it free?\end{enumerate}
   }
   \end{equation}
   We note that (i) for $m=1$ follows from \pref{1a1}.  So far as we know, (ii) is not known for $m=1$.
   
 Turning to Section VI.D, from the description following \ref{1a7} of the Satake-Baily-Borel completion $\ol M$ of the image $M$ of a period mapping, we may say \emph{that the extended Hodge  line  bundle does  not detect extension data in \lmhs s.}  One may ask whether the same is or is not the case for the extended Hodge  vector  bundle.  In Section VI.C we give examples to show that in fact  this \vb\ may   detect both discrete and continuous extension data in \lmhs s.
 
 As noted above  the \hlb\ lives on the canonical completion $\ol\cM$ of the KSBA moduli space for surfaces of general type.  Interestingly, as will be seen by example in a sequel to \cite{GGLR17}  the \hvb\ does \emph{not} live on $\ol\cM$.  This is in contrast to the case of curves where the Hodge vector bundle is defined    on the moduli space $\ol\cM_g$.
 
  In the final section VI.E we shall revisit the exterior differential system \pref{1a2} defined by a Chern form, this time for  the Chern form $\om_E$ of the \lb\ $\cope(1)\to \P E$.  For a bundle $E\to X$ with $\Theta_E\geqq 0$ so that  $\om_E\geqq 0$ on $\P E$, the failure of strict positivity, or the degree of flatness, of the bundle $E$ is reflected by the foliation given by the integral varieties of the exterior differential system
  \[
  \om_E=0.\]
  The result here is Proposition \pref{6e2}, and it suggests a conjecture giving conditions under which equality might hold in the inequality
  \[
  \kappa(E)\leqq n(E)\]
  between the Kodaira-Iitaka dimension $\kappa(E)$ and numerical dimension $n(E)$ of the bundle.

  \subsection{General notations}
  \begin{itemize}
  \item $X,Y,W,\ldots$ will be compact, connected complex manifolds.
  \end{itemize}
  In practice they will      be smooth, projective varieties.
  \begin{itemize}
  \item $E\to X$ is a holomorphic \vb\ with fibres $E_x$, $x\in X$ and  rank $r=\dim E_x$;
  \item $A^{p,q}(X,E)$ denotes the global  smooth $E$-valued $(p,q)$ forms;
  \item we will not distinguish between a bundle and its sheaf of holomorphic sections; the context should make the meaning clear;
  \item $L\to X$ will   be a line bundle.
  \end{itemize}
  Associated to $L\to X$ are the standard notions
  \begin{enumerate}[{\rm (i)}]
  \item $\xymatrix{\vp_{L}: X\ar@{-->}[r]& \P H^0(X,L)^\ast}$ is the rational mapping given for $x\in X$ by
  \begin{equation}\lab{1b1}
  \vp_L(x)= [s_0(x),\dots,s_N(x)]\end{equation}
  where  $s_0,\dots, s_N$ is a basis for $H^0(X,L)$; in terms of a local holomorphic trivialization of $L\to X$  the $s_i(x)$ are given by holomorphic functions which are used to give the homogeneous coordinates on  the right-hand side of \pref{1b1};
  \item the \lb\ $L\to X$ is \emph{big}  if one of the equivalent conditions
  \begin{itemize}
  \item $h^0(X,L^m)=C m^d +\cdots$ where $C>0$, $\dim X=d$ and $\cdots$ are lower order terms;
  \item $\dim \vp_L(X)=\dim X$
  \end{itemize}
  is satisfied;
  \item $L\to X$ is \emph{free}\footnote{The term \emph{semi-ample} is also used.} if one of the equivalent conditions
  \begin{itemize}
  \item for some $m>0$,   the evaluation maps 
   \begin{equation}\lab{1b2}
  H^0(X,L^m)\to L^m_x\end{equation}
 are surjective for all $x\in X$;
  \item $\vp_{mL}(x)$ is a morphism; i.e., for all $x\in X$ some $s_i(x)\ne 0$;
  \item the linear system $|mL|:= \P H^0(X,L^m)^\ast$ is base point free for $m\gg 0$
  \end{itemize}
  is satisfied. 
  
  If the map \pref{1b2} is only  surjective for a general $x\in X$, we say that $L^m \to X$ is \emph{generically globally generated}.
   \item $L\to X$ is \emph{nef} if $\deg \lrp{L\big|_C} \geqq 0$ for all curves $C\subset X$; here $L\big|_C=L\otimes_{\cO_X}\cO_C$ is the restriction of $L$ to $C$;
  \item we will say that $L\to X$ is \emph{strictly nef} if 
 $  \deg \lrp{L\big|_C}>0$ for all curves $C\subset X$; \begin{itemize}
 \item for a vector bundle $E\to X$, we denote the $k^{\rm th}$ symmetric product
 by
 \[
 S^k E:=\Sym^k E;\]
  \item $\P E\xri{\pi} X$ is the projective bundle of 1-dimensional quotients of the fibres of $E\to X$; thus for $x\in X$
 \[
 (\P E)_x = \P E^\ast_x;\]
 \item $\cope(1)\to \P E$ is the tautological line bundle; then  
 \[
 \pi_\ast \cope(m) = S^m E \]
gives
 \[
 H^0(X,S^m E)\cong H^0(\P E,\cope(m))\]
 for all $m$.\footnote{The higher direct images $R^q_\pi\cope(m)=0$ for $q>0$, $m\geqq -r$; we shall note make use of this in these notes.}\end{itemize}
   \end{enumerate}

   \subsection{Notations from Hodge theory}
   We shall follow the generally standard notations and conventions as given in \cite{CM-SP} and are used in \cite{GGLR17}.  Further details concerning the  structure of \lmhs s (LMHS) will be given in Section IV.
   {\setbox0\hbox{(1)}\leftmargini=\wd0 \advance\leftmargini\labelsep
 \begin{itemize}
   \item $B$ will denote a smooth quasi-projective variety;
   \item a \emph{variation of Hodge structure} (VHS) parametrized by $B$ will be given by the equivalent data
   \begin{itemize}
   \item[(a)] a \emph{period mapping}
   \[
   \Phi:B\to \Ga\bsl D\]
   where $D$ is the period domain of weight $n$ \phs s $(V,Q,\fbul)$ with fixed \hn s $h^{p,q}$, and where the infinitesimal period relation (IPR)
   \[
   \Phi_\ast :TB\to I\subset T(\Ga\bsl D)\]
   is satisfied;
   \item[(b)] $(\V,\fbul,\nabla;B)$ where $\V\to B$ is a  local system with Gauss-Manin connection
   \[
   \nabla:\cO_B(\V)\to\Om^1_B(\V)\]
   and $\fbul=\{F^n\subset F^{n-1}\subset\cdots\subset F^0\}$ is a filtration of $\cO(\V)$ by holomorphic sub-bundles satisfying the IPR in the form  
   \[
   \nabla F^p\subset \Om^1_B(F^{p-1}),\]
   and where at each point $b\in B$ the data $(\V_b,\fbul_b)$ defines a \phs\ (PHS) of weight $n$.
   \end{itemize}
    In the background in both (a) and (b) is a bilinear form $Q$ that polarizes the \hs s; we shall   suppress the notation for it when it is not being explicitly used.
   \item The parameter space $B$ will have a smooth projective completion $\ol B$ with the properties
   \begin{itemize}
   \item[] $Z:= \ol B\bsl B$ is a reduced normal crossing divisor $Z=\bigcup Z_i$ having strata $Z_I:= \bigcap_{i\in I}Z_i$   with   $Z^\ast_I\subset Z_I$  denoting the non-singular points $Z^\ast_{I,\text{reg}}$ of $Z_I$, and where
    the local monodromies $T_i$ around the irreducible branches $Z_i$ of $Z$ are unipotent with logarithms $N_i$;
  \end{itemize}   \item the \emph{Hodge vector bundle} $F\to B$ has fibres $F_b:= F^n_b$;
   \item the \emph{Hodge line bundle} $\La:= \det F = \wedge^{h^{n,0}}F$;
   \item the polarizing forms induce Hermitian metrics in $F$ and $\La$;
   \item the differential of the period mapping is
   \[
   \Phi_\ast :TB\to \bigoplus_{p\geqq \left[\begin{smallmatrix} \frac{n}{2}\end{smallmatrix}\right]} \Hom(F^p,F^{p-1}/F^p)\]
   where $F^p\to B$   denotes the Hodge filtration bundles;
   \item setting $F=F^n$ and $G=F^{n-1}/F^n$ the \emph{end piece} of $\Phi_\ast$ is
   \[
   \Phi_{\ast,n}:TB\to \Hom(F,G); \]
     \item the Hodge filtration bundles have canonical extensions
   \[
   F^p_e\to\ol B;
   \]
 using the Hodge metrics on $B$, the holomorphic sections of $F^p_e\to \ol B$ are those whose Hodge norms have logarithmic growth along $Z$;
   \item the \emph{geometric case} is when the VHS arises from the cohomology along the fibres in a smooth projective family
   \[
   \cX\xri{f} B;
   \]
   \item such a family has a completion to
   \[
   \ol{\cX}\xri{\bar f} \ol B\]
   where this map has  the Abramovich-Karu (\cite{AK00}) form of semi-stable reduction (cf.\ Section 4 in \cite{GGLR17} for more details and for the notations to be used here); then the canonical extension of the \hvb\ is given by
   \[
   F_e = \bar f_{\ast} \om_{\ol{\cX}/\ol B}.\]
    \end{itemize}}
   
   We conclude this introduction with an observation and a question.  For line bundles $L\to X$ over a smooth projective variety $X$ of dimension $d$, there are three important properties:
  {\setbox0\hbox{(iii)}\leftmargini=\wd0 \advance\leftmargini\labelsep
  \begin{enumerate}
   \item $L$ if nef;
   \item $L$ is big;
   \item $L$ if free (and therefore it is semi-ample).
   \end{enumerate}}  \noindent 
   Clearly (iii)$\implies$(i) and (ii),  and (iii)$\implies |mL|$ gives a birational morphism for $m\gg 0$.

   In this paper we have considered the case where $L\to X$ has a metric $h$  that may be singular, with Chern  $\om$ that defines a $(1,1)$ current representing $c_1(L)$.  The singularities of $h$ are generally of the following types:
{\setbox0\hbox{()}\leftmargini=\wd0 \advance\leftmargini\labelsep
    \begin{itemize}
   \item $h$ vanishes along a proper subvariety of $X$;
   \item $h$ becomes infinite, either logarithmically or analytically (in the sense explained below) along a proper subvariety of $X$.
   \end{itemize}}  \noindent 
   We note that
   \[
   \om \geqq 0\hbox{ (as a currrent)}\implies \hbox{(i)},\]
   and that  in a Zariski open $X^0\subset X$ where $h$ is a smooth metric
   \[
   \om^d >0\implies \hbox{(ii).}\]
   The Kodaira theorem states that if $X^0=X$, then $h$ is ample.  As discussed above in relation to the question \pref{1a10n}, for a number of purposes, including applications to Hodge theory, it would be desirable to have conditions on $\om$ that imply (iii).

   \section{Measures of and types of positivity}
   In this section we will
   \begin{enumerate}[{\rm (i)}]
   \item define two measures of positivity---the Kodaira-Iitaka dimension and the numerical dimension;
   \item define two types of positivity---metric positivity and numerical positivity.
   \end{enumerate}
   Each of these will be done first for line bundles and then for   \vb s, and the   definitions will be related via the canonical association
   \[
   E\to X\leadsto \cope(1)\to \P E\]
   of the tautological \lb\ $\cope(1)\to \P E$ to the \vb\ $E\to X$.\footnote{We will use the   convention whereby $\P E$ is the bundle of 1-dimensional \emph{quotients} of the fibres of $E$; thus the fibre
   \[
   \P E_x = \P (E^\ast_x).\]}
   
   For each of the types of positivity there will be two notions,   \emph{strict positivity} denoted $>0$ and \emph{semi-positivity} denoted $\geqq 0$.  For metric positivity there will be a third, denoted   by $E^0_{\rm met}>0$ and $L^0_{\rm met}>0$ respectively  for  \vb s and \lb s,  and which means that we have $\geqq 0$ everywhere and $>0$ on an open set.  We also use the term   \emph{strong semi-positivity} to mean that $E_{\met}\geqq 0$ and $\det E^0_{\met}>0$.  
    Finally, since metric positivity will be the main concept used in these notes, in some of the later sections we will just write $E>0$ and $E^0>0$ to mean $E_{\met} >0$ and $E^0_{\met}>0$.
   
   \subsection{Kodaira-Iitaka dimension}
 In algebraic geometry positivity  traditionally suggests ``sections," and one standard measure of the amount of sections of a \lb\ $L\to X$ is given by its \emph{Kodaira-Iitaka dimension} $\kappa(L)$. This is defined by
   \begin{equation}\lab{2a1}
   \kappa(L)= \mmax \dim \vp_{mL}(X)
\end{equation}
where 
\[
\vp_{mL}:\xymatrix{X\ar@{-->}[r]&\P H^0(mL)^\ast}\]
is the rational mapping given by the linear system $|mL|$.  If $h^0(mL)=0$ for all $m$ we set $\kappa(L)=-\infty$.
From \cite{Dem12a} we have
\begin{equation}\lab{2a2}
h^0(mL)\leqq O(m^{\kappa(L)})\qquad \hbox{ for }m\geqq 1,\end{equation}
and  $\kappa(L)$ is the smallest exponent for which this estimate holds.\footnote{Including $\kappa(L)=-\infty$ where we set $m^{-\infty}=0$ for $m>0$.}  We  will sometimes write \pref{2a2} as
\[
h^0(mL)\sim Cm^{\kappa(L)} , \quad C>0.\]
   We note that 
\[
\kappa(L) = \dim X\iff L\to X\hensp{is big.}\]

\subsection{Metric positivity}  Given a Hermitian metric $h$ in the fibres of a holomorphic \vb\ $E\to X$ there is a canonically associated Chern connection
\[
D:A^0 (X,E)\to A^1 (X,E)\]
characterized by the properties (\cite{Dem12a})
\begin{equation}\lab{2b1}
\bcs
D'' =\ol\part\\
d(s,s') = (Ds,s')+(s,Ds')\ecs\end{equation}
where $s,s'\in A^0(X,E)$ and $(\;,\;)$ denotes the Hermitian inner product in $E$.  The curvature   
\[
\Theta_E:=D^2\]
is linear over  the functions; hence it is pointwise an algebraic operator.  Using \pref{2b1} it is given by a \emph{curvature operator}
\[
\Theta_E \in A^{1,1}(X,\End E)\]
which satisfies
\[
(\Theta_E e,e')+(e,\Theta_E e')=0\]
where $e,e'\in E_x$.  Relative to a local holomorphic frame $\{s_\alpha\},h=\|h_{\alpha\bar\beta}\|$ is a Hermitian matrix and the corresponding connection and curvature matrices are given by
\[
\bcs \theta= h^{-1}\part h\\
\Theta_E=\ol\part (h^{-1}\part h)=\Big\|{\displaystyle\sum_{\alpha,\beta,i,j}}\Theta^\alpha_{\bar\beta i \bar j} s_\alpha\otimes s^\ast_\beta  \otimes dz^i \wedge d\bar z^j\Big\| \ecs.\]
For line bundles the connection   and   curvature matrices   are respectively $\theta = \part\log h $  and  
$
\Theta_L =-\part\ol\part \log h$.  If $h=e^{-\vp}$, then
\[
\Theta_L\geqq 0\iff (i/2) \part\ol\part \vp \geqq 0\iff \vp\hensp{is plurisubharmonic.}\]

\begin{defin}
The \emph{curvature form} is given for $x\in X$, $e\in E_x$ and $\xi \in T_x X$ by
\begin{equation}\lab{2b2}
\Theta_E(e,\xi) = \lra{\lrp{\Theta_E(e),e},\xi\wedge \bar \xi}.\end{equation}
When written out in terms of the curvature matrix $\Theta_E(e,\xi)$ is the bi-quadratic form
\[
\sum_{\alpha,\beta,i,j} \Theta^\alpha_{\bar\beta i \bar j} e_\alpha \bar e_\beta \xi_i \bar\xi_j.\]
The bundle $E\to X$ is \emph{positive},\footnote{We should say \emph{metrically positive}, but since this is the main type of  positivity used in these notes we shall drop the ``metrically."} written $E_{\met}>0$, if there exists a metric such that $\Theta_E(e,\xi)>0$ for all non-zero $e,\xi$.  For simplicity we will write $\Theta_E>0$.   If we have just $\Theta_E(e,\xi)\geqq 0$, then we shall say that $E\to X$ is \emph{semi-positive} and write $E_\met \geqq 0$.  It is \emph{strongly semi-positive} if $E_{\met}\geqq 0$ and $(\det E)_{\met}>0$ on an open set.  

The bundle is \emph{Nakano positive} if there exists a metric such that for all non-zero $\psi\in E_x\otimes T_xX$ we have
\begin{equation}\lab{2b3}
\lrp{\Theta_E(\psi),\psi}>0.\end{equation}
The difference between positivity and Nakano positivity is that the former involves only the decomposable tensors in $E\otimes T X$ whereas the latter involves all tensors.  In \cite{Dem12a} there is the concept of $m$-positivity that involves the curvature acting on tensors of rank $m$ and which interpolates between the two notions defined above.
\end{defin}

Positivity and semi-positivity have functoriality properties (\cite{Dem12a}).  For our purposes  the two most important are
\begin{align} & \lab{2b4}\bmp{5}{the tensor product of positive bundles is positive, and similarly for semi-positive;}
\\
&\lab{2b5}\bmp{5}{the quotient of  a positive bundle  is positive, and similarly for semi-positive.}\end{align}
The second follows from an important formula that we now recall (cf.\   \cite{Dem12}).  If we have an exact sequence of holomorphic \vb s
\begin{equation}\lab{2b6}
0\to S\to E\to Q\to 0,\end{equation}
then a metric in $E$ induces metrics in $S,Q$ and there is a canonical \emph{second fundamental form} 
\[
\beta\in A^{1,0}(X,\Hom(S,Q))\]
that measures the deviation  from being holomorphic of the $C^\infty$ splitting of \pref{2b6} given by the metric.  For $j:Q\hookrightarrow E$ the inclusion given by the $C^\infty$ splitting and $q\in Q_x,\xi\in T_xX$ the formula is (loc.\ cit.) 
\begin{equation}\lab{2b7}
\Theta_Q(q\otimes \xi)=\Theta_E(j(q)\otimes \xi) +\|\beta^\ast(q)\otimes \xi\|^2 \end{equation}
where by definition the last term is  $ -\lra{ (\beta^\ast(q),\beta^\ast(q))_{S}, \xi\wedge \bar\xi}$ and  $(\enspace,\enspace)_S$ is the induced metric in $S$.  The minus sign is because the Hermitian adjoint $\beta^\ast$ is of type (0,1).

\subsection*{Examples} \hfill

(i) The universal quotient bundle $Q\to G(k,n)$ with fibres $Q_\La=\C^n/\La$ over the Grassmannian $G(k,n)$ of $k$-planes $\La\subset \C^n$ has a metric induced by that in $\C^n$, and with this metric
\[
\Theta_Q\geqq 0\hensp{and} \Theta_Q>0\iff k=n-1.\]
Similarly, the dual $S^\ast \to G(k,n)$ of the universal sub-bundle has $\Theta_{S^\ast}\geqq 0$ and $\Theta_{S^\ast}>0\iff k=1$.  

Geometrically, for a $k$-plane $\La\in\C^n$ we have the usual   identification
\[
T_\La  G(k,n)\cong \Hom(\La,\C^n/\La).\]
Then for $\xi\in \Hom(\La,\C^n/\La)$ and $v\in \La$
\[
\Theta_{S^\ast} (v,\xi) = 0\iff \xi(v)=0.\]
Here the RHS means that for the infinitesimal displacement $\La_\xi$ of $\La$ given by $\xi$ we have
\[
v\in \La\cap \La_\xi.\]
The picture for $G(2,4)$ viewed as the space of lines in $\P^3$ is
\[
\begin{picture}(60,80)
\put(10,0){\includegraphics{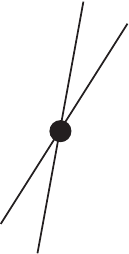}}
\put(0,0){$\La$}
\put(25,0){$\La_\xi$}
\put(35,30){$v$}
\end{picture}\]

There are similar  semi-positivity properties for any globally generated \vb, since such bundles are induced from holomorphic mappings  to a Grassmannian, and positivity and semi-positivity have the obvious functoriality properties.

(ii) The \hb\ $F\to B$ with the metric given by the \HR\ bilinear relation satisfies $\Theta_F\geqq 0$ \cite{Gri72}, but unless $h^{n,0}=1$ very seldom do we have $\Theta_F>0$.

These examples will be further discussed in Section III.A.

(iii) In the geometric case of a family $\cX\xri{\pi}B$ with smooth fibres, the Narashimhan-Simka \cite{NarSim68} Finsler type metrics \pref{1a22} in $\pi_\ast \om^m_{\cX/B}$ have the Hodge theoretic interpretation \pref{1a23}.  As a consequence
\begin{quote} \em There is a metric $h_m$ in $\cO_{\P f_\ast \om^m_{X/Y}} (1)$ whose Chern form $\om_m\geqq 0$.\end{quote}
Some care must both be taken here as although $h_m$ is continuous it is not smooth and so $\om_m = (i/2) \ol\part\part \log h_m$ and the inequality $\om_m\geqq 0$ must be taken in the sanse of currents (cf.\ \cite{Dem12} and \cite{Pau16}).  Metrics of this sort were used in \cite{Kaw82} and have been the subject of numerous recent works, including \cite{Ber09}, \cite{BerPau12}, \cite{PauTak14}, \cite{MorTak07}, \cite{MorTak08}, and also \cite{Pau16} where a survey of the literature and further references are given.

\subsection{Interpretation of the curvature form}
Given a holomorphic \vb\ $E\to X$ there is the associated projective bundle $\P E\xri{\pi} X$ of 1-dimensional quotients of the fibres of $E$; thus $(\P E)_x = \P E^\ast_x$.  Over $\P E$ there is the tautological \lb\ $\cope(1)$.  A metric in $E\to X$ induces one in $\cope(1)\to \P E$, and we denote by $\om_E$ the corresponding curvature form.  Then $\Om_E:=(i/2\pi)\om_E$ represents the Chern class  $c_1(\cope(1))$ in $H^2(\P E$).

Since $\om_E\big|_{(\P E)_x}$ is a positive $(1,1)$ form, the vertical  sub-bundle 
\[
V:= \ker \pi_\ast :T\P E\to T X\]
to the fibration $\P E\to X$ has a $C^\infty$ horizontal complement $H=\om^\bot_E$.  Thus as $C^\infty$ bundles    
\[
\bcs
T\P E&\chs\cong V\oplus H,\hbox{ and}\\
\pi_\ast :H&\chs\simto \pi^\ast T X.\ecs\]

In more detail, using the metric we have a complex conjugate linear identification $E^\ast_x \cong E_x$, and using this we shall write points in $\P E$ as $(x,[e])$ where $e\in E_x$ is a non-zero vector.
Then we have an isomorphism
\begin{equation}\lab{2c1}
\pi_\ast :H_{(x,[e])}\simto T_x X.\end{equation}
Using this identification and normalizing to have $\|e\|=1$, the interpretation of the curvature form is given by the equation 
\begin{equation}\lab{2c2}
\Theta_E(e,\xi) = \lra{\om_E,\xi\wedge \bar\xi}=: \om_E(\xi) \end{equation}
where $\xi\in T_xX\cong H_{(x,[e])}$ and  the RHS is evaluated at $(x,[e])$. 
Thus \begin{equation}\lab{2c3}
\Theta_E>0\iff \om_E >0,\end{equation}
and similarly for $\geqq 0$.  There are the evident extensions of \pref{2c3} to open sets in $\P E$ lying over open sets in $X$.  For semi-positive \vb s we summarize by saying that \emph{ the curvature form $\Theta_E$ measures the degree of positivity of $\om_E$ in the horizontal directions}.

For later use we conclude with the observation that using $\cO_X(E)\cong \pi_\ast \cope(1)$, given $s\in \cO_{X,x}(E)$ there is  the identification of (1,1) forms
\begin{equation}
\lab{2c4}
(-\part\ol\part \log \|s\|^2) (x)=\om_E(x,[s(x)]) \end{equation}
where the RHS is the (1,1) form $\om_E$ evaluated at the point $(x,[s(x)])\in \P E$ in the total tangent space (both vertical and horizontal directions).

\subsection{Numerical positivity}\hfill

In this section we shall discuss various measures of numerical positivity, one main point being that these will apply to bundles arising from Hodge theory.  The basic reference here is \cite{Laz04}.  A  conclusion will be that the \hvb\ $F$ is numerically semi-positive; i.e., $F_{\num}\geqq 0$ in the notation to be introduced below.

\subsubsection{Definition of numerical positivity}  We first recall the definition of the cone $\cC = \oplus \cC_d$ of positive polynomials $P(c_1,\dots,c_r)$ where $c_i$ has weighted degree $i$.  For this we consider partitions $\la=(\la_1,\dots,\la_n)$ with $0\leqq \la_n\leqq \la_{n-1}\leqq \cdots\leqq \la_1\leqq r$, $\Sig \la_i =n$, of $n=\dim X$.  For each such $\la$ the \emph{Schur polynomial} $s_\la$ is defined by the determinant 
\begin{equation}\lab{2d1}
s_\la = \left|
\begin{matrix}
c_{\la_1}& c_{\la_1+1}&\cdot&\cdot&c_{\la_1+n-1}\\
\cdot&&&\cdot\\
\cdot&&&\cdot\\
c_{\la_n-n+1}&\cdot&\cdot&\cdot& c_{\la_n}\end{matrix}\right|.\end{equation}
Then (\cite{Laz04}) $\cC$ is generated over $\Q^{>0}$ by the $s_\la$.  It contains the Chern monomials 
\[
c^{i_1}_1\cdots c^{i_r}_r,\qquad i_1+2i_2+\cdots+ri_r\leqq n\]
as well as some  combinations of these  with negative  coefficients, the first of which is  $c^2_1-c_2$.

For each $P\in \cC_d$ and $d$-dimensional subvariety $Y\subset X$ we consider
\begin{equation}
\lab{2d2} 
\int_Y P(c_1(E),\dots,c_r(E)) =P(c_1(E),\dots,c_r(E))[Y] \end{equation}
where the RHS is the value of  the cohomology class  $P(c_1(E),\dots,c_r(E))\in H^{2d}(X)$ on the fundamental class $[Y]\in H_{2d}(X)$.

 \begin{defin} $E\to X$ is \emph{numerically positive}, written $E_{\num}>0$, if \pref{2d2} is positive for all $P\in \cC_d$ and  subvarieties $Y\subset X$.
 \end{defin}
 
 We may similarly define $E_\num\geqq 0$. 
 
 A non-obvious result (\cite{Laz04}) is
 \begin{equation}\lab{2d4} 
 \bmp{5}{\em For line bundles $L\to X$, we have
 \[
 L_{\num}\geqq 0\iff L\hensp{is nef.}\]}
 \end{equation}
 The essential content of this statement is 
 {\setbox0\hbox{(1)}\leftmargini=\wd0 \advance\leftmargini\labelsep
 \begin{quote} \em $
 c_1(L)[C] \geqq 0\hensp{for all curves} C\implies c_1(L)^d [Y] \geqq 0$ for all $d$-dimensional subvarieties $Y\subset X.$\end{quote}}  \noindent 
 This is frequently formulated as saying that if $L$ is nef then it is in the closure of the ample cone.
 
 \subsubsection{Relation between $E_\num>0$ and $\cope(1)_\num>0$}\hfill

  For the  fibration  $\P E\xri{\pi}X$ there is a Gysin or  \emph{integration over the fibre} map
 \[
 \pi_\ast :H^{2(d-r+1)}(\P E)\to H^{2d}(X).\]
 It is defined by moving cohomology to homology via Poincar\'e duality, taking the induced map on homology and then again using  Poincar\'e duality.  In de~Rham cohomology the mapping is given by what the name suggests. 
 The $d^{\rm th}$ \emph{Segre polynomial} is defined by
 \begin{equation}\lab{2d5}
 S_d(E)=\pi_\ast \lrp{c_1\lrp{\cope(1)}^{d-r+1}}.\end{equation}
Then (\cite{Laz04}):  (i) $S_d(E)$ is a polynomial in  the Chern classes $c_1(E),\dots,c_r(E)$, and (ii)   $S_d(c_1,\dots,c_r)\in \cC_d$. That it is a polynomial in the Chern classes  is a consequence  of the \emph{Grothendieck relation}
 \begin{equation}\lab{2d6} c_1\lrp{\cope(1)}^r - c_1\lrp{\cope(1)}^{r-1} \pi^\ast e_1(E)+\cdots +(-1)^r \pi^\ast c_r(E)=0.\end{equation}
 The first few Segre polynomials are
 \[
 \bcs
 S_1=c_1\\
 S_2=c_1^2-c_2\\
 S_3 = c_1 c_2\\
 S_4= c^4_1 -2c^2_1c_2+c_1c_3-c_4.\ecs\]
 An important implication is 
 \begin{equation}
 \lab{2d7}
 \cope(1)_{\num} >0\implies E_\num>0.\end{equation}
 
  \begin{proof}
 By Nakai-Moishezon (cf.\ \eqref{2f4} below), $\cope(1)$ and hence $E$ are ample.  Then $E_\num>0$ by the theorem of Bloch-Gieseker (\cite{Laz04}).\end{proof}
 
 As will be noted below, the converse implication is not valid. 
 \subsection{Numerical dimension}
 Let $L\to X$ be a line bundle with $L_\num\geqq 0$; i.e., $L$ is nef.
 
 \begin{defin}[cf. \cite{Dem12a}]
 The \emph{numerical dimension} is the largest integer $n(L)$ such that
 \[
 c_1 (L)^{n(L)}\ne 0.\]
 In practice in these notes there will be a semi-positive (1,1) form $\om$ such that $[(i/2\pi)\om]=c_1(L)$, and then $n(L)$ is the largest integer such that $\om^{n(L)+1}\equiv 0$ but
 \[
 \om^{n(L)}\ne 0\]
 on an open set.\end{defin}

 Relating the Kodaira-Iitaka and numerical dimensions, from \cite{Dem12a} we have
 \begin{equation}
 \kappa(L)\leqq n(L)\end{equation}
 with equality if $n(L)=\dim X$, but where equality may not hold if $n(L)<\dim X$.
 
\begin{exam} [loc.\ cit.]
Let $C$ be an elliptic curve and $p,q\in C$ points such that $p-q$ is not a torsion point.  Then the line bundle $[p-q]$ has a flat unitary metric, but $h^0(C,m[p-q])=0$ for all $m>0$.  For any nef line bundle $L'\to X'$ we set
\[
X=C\times X',\qquad L=[p-q]\boxtimes L'.\]
Then $\kappa(X)=-\infty$ while $n(L)$ may be any integer with $n(L)\leqq \dim X-1$.\end{exam}

In Section VI.E we  will   discuss that  understanding the reason  we may have the strict inequality $\kappa(L)<n(L)$  seems to involve some sort of flatness as in the above example.\footnote{Cf.\ \cite{CD17a} and \cite{CD17b} for a related discussion involving certain Hodge bundles.} 

For a \vb\ $E\to X$ with $E_\num\geqq 0$ we  have the
\begin{defin} The \emph{numerical dimension} $n(E)$ of the \vb\ $E\to X$ is given by $n(\cope(1))$.  
\end{defin}

Since $\cope(1)$ is positive on the fibers of $\P E\to X$ we have
\[
r-1\leqq n(E)\leqq \dim \P E= \dim X + r-1.\]
Conjecture \ref{6e6}  below suggests  conditions under which equality will hold. 

 From \pref{2d5} we have  
\begin{equation}
\lab{2e2}
n(E)\hensp{\em is the largest integer with} S_{n(E)-r+1}(E)\ne 0,\end{equation}
which serves to define the numerical dimension of a semi-positive \vb\  in terms of its Segre classes.
 We remark that as noted above
 \begin{equation}\lab{2e3}
 \cope(1)_\num \geqq 0\implies S_q(E)\geqq 0 \hensp{for any} q\geqq 0;\end{equation}
 we are not aware of results concerning the converse implication.
 
  \subsection{Tangent bundle}
 When $E=TX$ and the metric on $E\to X$ is given by a K\"ahler metric on $X$ the curvature form has
 the interpretation
 \begin{equation}\lab{2g1}
 \Theta_{TX}(\xi,\eta) = \lrc{\begin{matrix}\hbox{holomorphic bi-sectional curvature}\\
 \hbox{ in the complex 2-plane $\xi\wedge \eta$ }\\
 \hbox{spanned by $\xi,\eta\in T_xX$}\end{matrix}}.\end{equation}
 When $\xi=\eta$ we have
 \begin{equation}\lab{2g2}
  \Theta_{TX}(\xi,\xi)=\lrc{\begin{matrix}\hbox{holomorphic sectional curvature in}\\
 \hbox{the complex line spanned by $\xi$}\end{matrix}}.\end{equation}
 Of particular interest and importance in Hodge theory and in other aspects of algebraic geometry is the case when $TX$ has some form of negative curvature.   
 \begin{prop}[\cite{BKT14}] \lab{2g3}
 Assume there is $c>0$ such that {\rm (i)} $\Theta_{TX}(\xi,\xi)\leqq -c$ for all $\xi$,   and  {\rm (ii)} $ \Theta_{TX}(\la,\eta)\leqq 0$ for all $\la,\eta$.  Then there exists $\xi,\eta$ such that $\Theta_{TX}(\xi,\eta)\leqq -c/2$ for all $\eta$.\end{prop}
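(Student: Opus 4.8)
\emph{Proof plan.} The assertion is pointwise, so I fix $x\in X$ and put $V:=T_xX$ with its Hermitian inner product; write $R(\xi,\eta):=\Theta_{TX}(\xi,\eta)$ for the curvature form and $H(\xi):=R(\xi,\xi)$ for the holomorphic sectional curvature. Since $R$ is bi-quadratic, $H$ is a real-analytic function on $V$, homogeneous of degree $4$, hence determined by its values on the unit sphere $S\subset V$; throughout I normalize vectors to unit length (equivalently, (i) and the conclusion are read as $H(\xi)\le -c\,|\xi|^4$ and $R(\xi,\eta)\le -\tfrac{c}{2}|\xi|^2|\eta|^2$). The plan is to take $\xi$ to be a direction of \emph{largest} holomorphic sectional curvature and then to invoke the classical first- and second-variation identities of Berger for holomorphic sectional curvature. (Hypothesis (ii) will not actually be needed; (i) alone suffices.)

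First I would pick $e\in S$ maximizing $H|_S$ --- possible since $S$ is compact --- and set $H_{\max}:=H(e)$, so $H_{\max}\le -c$ by (i). If $\dim X=1$ the conclusion is immediate with $\xi:=e$, since every unit $\eta$ is then a unit multiple of $e$; so assume $\dim X\ge 2$.

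The core of the argument consists of two consequences of the maximality of $e$, each a short computation using the K\"ahler symmetries $R_{i\bar j k\bar l}=R_{k\bar j i\bar l}=R_{i\bar l k\bar j}$ and $\overline{R_{i\bar j k\bar l}}=R_{j\bar i l\bar k}$:
\begin{enumerate}[{\rm (a)}]
\item $R(e,\bar e,e,\bar f)=0$ for every unit $f\perp e$: the function $t\mapsto H\bigl((e+tf)/|e+tf|\bigr)$, $t\in\C$, has a critical point at $t=0$, and its $\partial/\partial t$-derivative there computes to $2\,\overline{R(e,\bar e,e,\bar f)}$, which therefore vanishes;
\item $R(e,\bar e,f,\bar f)\le\tfrac12 H_{\max}$ for every unit $f\perp e$: averaging the inequality $H\bigl((\cos\theta)\,e+(\sin\theta)\,e^{i\phi}f\bigr)\le H_{\max}$ over $\phi\in[0,2\pi)$ annihilates every term carrying a non-trivial power of $e^{i\phi}$, and by the symmetries above the survivors collapse to
\[
\cos^4\!\theta\;H(e)+\sin^4\!\theta\;H(f)+4\cos^2\!\theta\,\sin^2\!\theta\;R(e,\bar e,f,\bar f)\ \le\ H_{\max};
\]
since $H(e)=H_{\max}$, dividing by $\sin^2\!\theta$ and letting $\theta\to0$ gives $4\,R(e,\bar e,f,\bar f)\le 2H_{\max}$.
\end{enumerate}
I expect (b) to be the main obstacle: it is the one place where the index bookkeeping for the K\"ahler curvature tensor must be carried out with care --- this is precisely Berger's lemma on the maximum of the holomorphic sectional curvature.

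To conclude, set $\xi:=e$ and let $\eta$ be any unit vector. If $\eta$ is proportional to $e$ then $R(e,\bar e,\eta,\bar\eta)=H(e)\le -c<-c/2$. Otherwise write $\eta=\alpha e+\beta f$ with $f\perp e$ a unit vector, $\beta\ne0$ and $|\alpha|^2+|\beta|^2=1$; expanding $R(e,\bar e,\eta,\bar\eta)$ over $\{e,f\}$ and using (a) to kill the $\alpha\bar\beta$ and $\bar\alpha\beta$ cross terms yields
\[
\Theta_{TX}(e,\eta)=R(e,\bar e,\eta,\bar\eta)=|\alpha|^2\,H(e)+|\beta|^2\,R(e,\bar e,f,\bar f)\ \le\ -c\,|\alpha|^2-\tfrac{c}{2}|\beta|^2\ \le\ -\tfrac{c}{2},
\]
using $H(e)\le -c$ from (i) and $R(e,\bar e,f,\bar f)\le\tfrac12 H_{\max}\le-\tfrac{c}{2}$ from (b). Hence $\xi=e$ satisfies $\Theta_{TX}(\xi,\eta)\le -c/2$ for all $\eta$, which is the assertion.
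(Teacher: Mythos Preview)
Your argument is correct; it is precisely Berger's averaging lemma applied at a direction of maximal holomorphic sectional curvature, and your bookkeeping in (a), (b) and the final decomposition $\eta=\alpha e+\beta f$ is right. Note, however, that the paper does not actually supply a proof of this proposition: it simply quotes the result from \cite{BKT14} and moves on to the corollary, so there is no ``paper's own proof'' to compare with. What you have written is essentially the argument in \cite{BKT14} (Lemma~2.1 there), which in turn is Berger's classical computation; your observation that hypothesis (ii) is not used is also correct and matches the original source.
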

 
 In other words, if the holomorphic sectional curvatures are negative and the holomorphic bi-sectional curvatures are non-positive, then they are negative on an open set in $G(2,TX)$, the Grassmann bundle of 2-planes in $TX$, and this open set maps onto $X$.  Noting that $\Gr(2,TX)$ maps to an open subset of the horizontal sub-bundle in the fibration $\P TX\to X$, from \eqref{2f2} below we have the 
 
 \begin{Cor}\lab{2g4}
 If the assumptions in \ref{2g3} are satisfied, then $T^\ast X$ is big.\end{Cor}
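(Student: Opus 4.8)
The plan is to deduce the bigness of $T^\ast X$ from that of the tautological line bundle $\cO_{\P T^\ast X}(1)$ on $\P T^\ast X$, by verifying that this line bundle is nef and has strictly positive top self‑intersection, the strict positivity being supplied by Proposition~\ref{2g3}. Recall the conventions: with the paper's normalization $\P T^\ast X\xrightarrow{\pi} X$ has fibre $(\P T^\ast X)_x=\P\,T_xX$ (the bundle of lines in $TX$), so a point may be written $(x,[\xi])$ with $0\neq\xi\in T_xX$; using the Kähler metric on $X$ to metrize $T^\ast X$ and then $\cO_{\P T^\ast X}(1)$, let $\om_{T^\ast X}$ denote the resulting Chern form, a $(1,1)$-form on the $(2n-1)$-dimensional manifold $\P T^\ast X$, where $n=\dim X$. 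First I would record the semi-positivity. Under the conjugate-linear identification $(T^\ast X)_x\cong\overline{T_xX}$ coming from the metric, the curvature operator of $T^\ast X$ is minus the transpose of that of $TX$, so its curvature form satisfies $\Theta_{T^\ast X}(\xi,\eta)=-\Theta_{TX}(\xi,\eta)$ for $\xi,\eta\in T_xX$ (the first slot read through the identification). Hypothesis (ii) of Proposition~\ref{2g3} gives $\Theta_{TX}(\xi,\eta)\le 0$ for all $\xi,\eta$, hence $\Theta_{T^\ast X}\ge 0$; by \pref{2c3} (and the extension of that equivalence over all of $\P T^\ast X$) this yields $\om_{T^\ast X}\ge 0$, so $\cO_{\P T^\ast X}(1)$ is nef, i.e.\ $T^\ast X$ is nef.

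Next I would use the full strength of Proposition~\ref{2g3}: it asserts that at each point there is a tangent direction $\xi$ with $\Theta_{TX}(\xi,\eta)$ bounded above by $-c/2$ on unit vectors $\eta$, and by continuity of the curvature these ``good'' directions sweep out, over each $x$, a non-empty open set. Thus
\[
\Om \;=\; \bigl\{(x,[\xi])\in\P T^\ast X : \ \eta\longmapsto -\Theta_{TX}(\xi,\eta)\ \text{is positive definite on}\ T_xX\bigr\}
\]
is a non-empty open subset of $\P T^\ast X$ dominating $X$ — this is precisely the ``in other words'' reformulation following \ref{2g3} together with the remark preceding \ref{2g4} that $\Gr(2,TX)$ maps onto an open subset of the relevant horizontal sub-bundle. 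By \pref{2c2} and the duality sign above, at a point $(x,[\xi])\in\Om$ the horizontal block of $\om_{T^\ast X}$ is the positive definite Hermitian form $\eta\mapsto -\Theta_{TX}(\xi,\eta)$, while its vertical block on the fibre $\P\,T_xX$ is positive definite in all cases; invoking the extension of \pref{2c3} to open subsets of $\P T^\ast X$, this promotes to the statement that $\om_{T^\ast X}$ is a strictly positive $(1,1)$-form at every point of $\Om$.

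Now I would conclude. Since $\om_{T^\ast X}\ge 0$ everywhere and $\om_{T^\ast X}>0$ on the non-empty open set $\Om$, the top form $(\om_{T^\ast X})^{2n-1}$ is $\ge 0$ pointwise and $>0$ on $\Om$, whence
\[
\int_{\P T^\ast X}(\om_{T^\ast X})^{2n-1}\;\ge\;\int_{\Om}(\om_{T^\ast X})^{2n-1}\;>\;0 ,
\]
and this integral is a positive multiple of the top self‑intersection number $c_1\bigl(\cO_{\P T^\ast X}(1)\bigr)^{2n-1}$. A nef line bundle with strictly positive top self‑intersection is big (a standard implication of the type recorded in Section II.G), so $\cO_{\P T^\ast X}(1)$ is big, which is exactly the statement that $T^\ast X$ is big.

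The only genuinely substantive input is Proposition~\ref{2g3}, and the one step that calls for care is the passage above from ``there is a good direction $\xi$'' to ``$\om_{T^\ast X}$ is a strictly positive $(1,1)$-form on an open subset of $\P T^\ast X$''. The openness and the uniformity of the negativity are immediate from the explicit constant $-c/2$ in \ref{2g3}; the promotion from positivity of the horizontal block to positivity of the whole form is the content of \pref{2c3} and its extension to open sets, which rests on the Griffiths curvature computation for $\cO_{\P E}(1)$ in which the mixed horizontal–vertical terms are absorbed by a Cauchy–Schwarz estimate once the horizontal block is positive. Everything else — the sign flip under duality, the semi-positivity and nefness, and ``nef plus positive top self‑intersection implies big'' — is routine, which is why this is stated as a corollary.
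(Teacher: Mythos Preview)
Your proof is correct and follows essentially the same route as the paper. Both arguments first use the hypotheses of Proposition~\ref{2g3} to show that $\om_{T^\ast X}\geqq 0$ everywhere and $\om_{T^\ast X}>0$ on a non-empty open set of $\P T^\ast X$; the only cosmetic difference is in the concluding step: the paper invokes \pref{2f2} (the Siu--Demailly criterion $L^0_{\met}>0\implies L$ big) directly, while you instead compute $\int(\om_{T^\ast X})^{2n-1}>0$ and use the standard fact that a nef line bundle with positive top self-intersection is big, which is equivalent content.
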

 
 \subsection{First implications}
 In this section for easy reference we will summarize the first implications of the two types of positivity on the Kodaira-Iitaka dimension and numerical dimension.  These are either well known or easily inferred from what is known.
 
 \ssn{Case of a \lb\ $L\to X$}
 \begin{align}\label{2f1}
 L_\met>0  &\implies L\hbox{ ample}.\end{align}
 This is the  Kodaira theorem which initiated the relation between metric positivity and sections.  
 
 The next is   the  Grauert-Riemenschneider conjecture, established by Siu and Demailly (cf.\ \cite{Dem12a} and the references there):
 \begin{equation}
 \lab{2f2} L^0_\met>0 \implies \kappa(L)=\dim X.\end{equation}
 Here we recall that $L^0_\met>0$ means that there is a metric in $L\to X$ whose curvature form $\om\geqq 0$ and where $\om>0$ on an open set. 
   The result may be  phrased as $$L^0_\met >0\implies L \hbox{ is big.}$$
 For these notes this variant of the Kodaira theorem will play a central role as bundles constructed from the extended \hvb\ tend to be big and perhaps free,\footnote{The issue of additional conditions that will imply that an $L$ which is nef and big is also free is a  central one in birational geometry (cf.\ \cite{Ko-Mo}).  The results there   seem to involve assumptions on $mL-K_X$.  In Hodge theory $K_X$ is frequently one of the things that one wishes to establish properties of.} but just exactly what their ``Proj" is seems to be an interesting issue. Because of this for later use, in the case when $X$ is projective we now give a 
 
 \begin{proof}[Proof of \pref{2f2}]
 Let $H\to X$ be a very ample line bundle chosen so that $H-K_X$ is ample.  Setting $F= L+H$ we have $F_{\met}>0$.  For $D\in |F|$ smooth using the Kodaira vanishing theorem  we have
 \[
 \bcs h^q (X,mF)=0,&q>0,\\
 h^q(D,mF\big|_D)=0,&q>0. \ecs\]
 We note that the  vanishing theorems    will remain true if we replace $L$ by a positive
  multiple.
 
 Let $D,\dots,D_m\in |H|$ be distinct smooth divisors.  From the exact sequence $0\to m(F-H)\to mF\to \opplus^m_{j=1} mF\big|_{D_j}$ we have
 \[
 0\to H^0(X,m(F-H))\to H^0(X,mF)\to \opplus^m_{j=1} H^0(D_j,mF).\]
 This gives
 \[
 h^0(X,mL) = h^0(X,m(F-H))\geqq h^0(X,mF)-mh^0(D,mF).\]
 Using the above vanishing results  
 \[
 h^0(X,mL)\geqq \chi(X,mF)-m\chi(D,mF).\]
 For $d=\dim X$ and letting $\sim$ denote modulo lower order terms, from the Riemann-Roch theorem we have
 \begin{align*}
 \chi(X,mF)&\sim \frac{m^d}{d!} F^d\\
 m\chi(D,mF)&\sim \frac{m^d}{d!} (dF^{d-1}\cdot H).\end{align*}
 For $m\gg 0$ this gives
 \[
 h^0(X,mL)\geqq \frac{m^d}{d!} (F^d-dF^{d-1}\cdot H)+o(m^d).\]
From
 \begin{align*}
 F^d-dF^{d-1}\cdot H& = (L+H)^d -d(L+H)^{d-1}\cdot H\\
 &= (L+H)^{d-1} \cdot (L-(d-1)H) \end{align*}
 replacing $L$ by a multiple we may make this expression positive.\footnote{This argument is due to Catanese; cf.\   \cite{Dem12a}.}
 \end{proof}

    The next result is  
 \begin{equation}\lab{2f3}
\quad L_\met >0\implies L_\num>0, \hbox{ and similarly for }\geqq 0\hspace*{1in} \hbox{(obvious)}.\end{equation}
  The  inequality in  \pref{2f3}  is sometimes phrased as $$L_\met \geqq 0\implies L\hbox{ is nef.}$$
 The theorem of Nakai-Moishezon is 
 \begin{align}\lab{2f4}
 L_\num&>0\iff  L\hensp{is ample.} \end{align}
  The next inequality was noted above:
 \begin{align}
 \lab{2f5}
 \kappa(L)&\leqq n(L),\hbox{ with equality if }n(L)=\dim X;\end{align}
 \ssn{Case of  a \vb\ $E\to X$}
 
 \begin{equation}\lab{2f6}  \bmp{5}{$E_\met>0\implies E$ ample, and $E^0_\met >0\implies \kappa(E)=\dim X+r-1$.}\end{equation}
In words,  $E^0_\met>0$ implies that $E$ is big.

We next have
 \begin{equation}
 \lab{2f7}  E_\met>0\implies E_\num>0. \end{equation}
 This result may be found in \cite{Laz04}; the proof is not obvious from the definition.
 We are not aware of any  implication along the lines of 
   $$E_\met\geqq 0\implies E_\num\geqq 0.$$
 
  Next we have 
 \begin{equation}
 \lab{2f9}  \bmp{5}{$\kappa(E)\leqq n(E)$ with equality if $n(E)=\dim X+r-1$    (this follows from \pref{2f5}).}\end{equation}
This leads to 
  \begin{equation}
 \lab{2f8}    E_\met \geqq 0 \hensp{and} E_\num>0\implies \kappa(E)= \dim X+r-1.\end{equation}
 This follows from \pref{2e2} and \pref{2f9}.  
 
 It is \emph{not} the case that
 \[
 E_\num >0\implies E \hbox{ ample;}\]
   there is an example due to Fulton  of a numerically positive vector bundle over a curve that is not ample (cf.\ \cite{Laz04}).

 We will conclude this section with a sampling of well-known results whose proofs illustrate some of the traditional uses of positivity.
 
 \begin{Prop}\lab{2g10}
 If $E\to X$ is a Hermitian \vb\ with $\Theta_E>0$, then $H^0(X,E^\ast)=0$.\end{Prop}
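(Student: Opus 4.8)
The plan is to use the classical Bochner-Kodaira argument adapted to sections of $E^\ast$. Suppose $s \in H^0(X, E^\ast)$ is a nonzero holomorphic section. The idea is to produce, from the positivity $\Theta_E > 0$, a contradiction by showing that $\|s\|^2$ (measured in the metric on $E^\ast$ dual to the given metric on $E$) is a subharmonic-type function on the compact manifold $X$ that cannot be constant, or alternatively that its integral against a suitable volume form must be both $\geqq 0$ and $\leqq 0$. Concretely, the dual bundle $E^\ast$ carries the curvature $\Theta_{E^\ast} = -{}^t\Theta_E$, so the hypothesis $\Theta_E > 0$ translates into $\Theta_{E^\ast} < 0$ in the appropriate sense: for nonzero $e^\ast \in E^\ast_x$ and nonzero $\xi \in T_x X$ one has $\Theta_{E^\ast}(e^\ast, \xi) < 0$.

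First I would compute $i\,\partial\bar\partial \log\|s\|^2$ away from the zero locus of $s$. Using the Chern connection on $E^\ast$ and the holomorphicity $D''s = \bar\partial s = 0$, the standard identity gives
\[
i\,\partial\bar\partial \log \|s\|^2 = \frac{i\,(\Theta_{E^\ast}s, s)}{\|s\|^2} - \frac{i\bigl(\|s\|^2 (D's, D's) - |(D's, s)|^2\bigr)}{\|s\|^4}.
\]
By the Cauchy-Schwarz inequality in $E^\ast$, the second term is $\leqq 0$, and by $\Theta_{E^\ast} < 0$ the first term is $\leqq 0$ as well (strictly negative wherever $s(x) \neq 0$, since then $(\Theta_{E^\ast}s,s)$ is a strictly negative $(1,1)$-form). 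Hence $i\,\partial\bar\partial \log\|s\|^2 \leqq 0$ on $X \setminus \{s = 0\}$, i.e.\ $\log\|s\|^2$ is plurisuperharmonic there. Then $\varphi := \|s\|^2$ itself (not just its log) is also seen to satisfy $i\,\partial\bar\partial\varphi \leqq 0$ wherever $s \neq 0$, after clearing denominators; and in fact $\varphi$ extends continuously across the zero locus of $s$ with value $0$ there. One checks that $\varphi$ is globally plurisuperharmonic on $X$: near a zero of $s$ it is a continuous nonnegative function which is plurisuperharmonic off a proper analytic set and hence plurisuperharmonic everywhere by the standard removable-singularity statement for superharmonic functions.

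Now $X$ is compact and connected, so the maximum principle forces $\varphi$ to be constant. If that constant is positive then $s$ is nowhere zero and $i\,\partial\bar\partial\varphi \equiv 0$ forces, via the displayed formula, both $D's \equiv 0$ and $(\Theta_{E^\ast}s, s) \equiv 0$; but the latter contradicts the strict inequality $\Theta_{E^\ast} < 0$ at any point (take any nonzero $\xi$). If the constant is $0$ then $s \equiv 0$. Either way $s = 0$, so $H^0(X, E^\ast) = 0$. The step I expect to be the main obstacle is the bookkeeping at the zero locus of $s$: one must be careful that the Cauchy-Schwarz term and the curvature term combine correctly after passing from $\log\|s\|^2$ (which blows down to $-\infty$ along $\{s=0\}$) to $\|s\|^2$ (which extends continuously), and invoke the correct form of the removable-singularity theorem so that the maximum principle applies on all of $X$. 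An alternative that avoids this entirely is to integrate: pair $i\,\partial\bar\partial\log\|s\|^2$ (as a current, well-defined by the Lelong–Poincar\'e formula since $\log\|s\|^2$ is quasi-psh) against $\omega^{n-1}$ for a K\"ahler form $\omega$; Stokes gives total integral zero, while the pointwise expression is $\leqq 0$ with strict negativity on the nonempty open set where $s \neq 0$ — a contradiction unless $s \equiv 0$.
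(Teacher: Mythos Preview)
Your proof is correct and follows essentially the same route as the paper: both exploit the maximum principle applied to $\log\|s\|^2$ for a holomorphic section $s$ of $E^\ast$, using that $\Theta_{E^\ast}<0$. The paper's version is a one-line sketch --- evaluate $\partial\bar\partial\log\|s\|^2$ at a (strict) maximum of $\|s\|^2$ and read off a contradiction from the curvature identity \pref{2c4} --- while you spell out the global plurisuperharmonicity of $\log\|s\|^2$ and then invoke the maximum principle on compact $X$.

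One small caveat: your intermediate claim that $\varphi=\|s\|^2$ itself satisfies $i\partial\bar\partial\varphi\leqq 0$ is not correct in general, since $\partial\bar\partial\|s\|^2=(D's,D's)+(\Theta_{E^\ast}s,s)$ has competing signs. But you do not actually need this step: $\log\|s\|^2$ is already plurisuperharmonic on $X\setminus\{s=0\}$, equals $-\infty$ on $\{s=0\}$, and hence extends to a global plurisuperharmonic function on $X$; the maximum principle then forces it to be constant (either $-\infty$, so $s\equiv 0$, or finite, whence $s$ is nowhere zero and the strict curvature inequality gives the contradiction). Your alternative integration argument against $\omega^{n-1}$ is also fine and sidesteps the issue entirely.
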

 
 \begin{proof} 
Using \pref{2c4} applied to $E^\ast\to X$, if $s\in H^0(X,E^\ast)$ then when we evaluate $\part\ol\part\log\|s\|^2$ at a strict maximum point  where the Hessian is definite we obtain a contradiction.
 If the maximum is not strict then the usual perturbation argument may be used.
 \end{proof}
\begin{Prop}\lab{2g11}
If $E\to X$ is a Hermitian \vb\ of rank $r\leqq \dim X$ and with $\Theta_E>0$,  then every seciton $s\in H^0(X,E)$ has a zero.\end{Prop}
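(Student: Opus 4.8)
The plan is to argue by contradiction, with the obstruction carried by the top Chern class $c_r(E)$. Suppose $s\in H^0(X,E)$ never vanishes; I will show this forces $c_r(E)=0$, which is incompatible with the positivity that $\Theta_E>0$ imposes once $r\leqq\dim X$. One should not expect the maximum-principle argument used for \pref{2g10} to work here: applied to the sub-bundle $\cO_X\hookrightarrow E$ cut out by a nowhere-vanishing $s$, the second-fundamental-form formula produces only an inequality of the shape $\part\ol\part\log\|s\|^2\geqq-\Theta_E(s/\|s\|,\cdot)+\|\beta(s)\|^2$, which is consistent with $\Theta_E>0$. This is unavoidable, since the statement is false without the rank hypothesis: $\cO_{\P^n}(1)^{\oplus N}$ with $N\geqq n+1$ carries a metric with $\Theta>0$ (block-diagonal Fubini--Study metrics) yet has nowhere-vanishing sections. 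So the bound $r\leqq\dim X$ must be used through a global topological invariant.

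First I would produce the quotient bundle. A nowhere-vanishing section $s$ defines a holomorphic injection $\cO_X\to E$ whose image is a sub-bundle, because $E_x/\C\,s(x)$ has constant rank $r-1$; hence $Q:=E/\cO_X s$ is a holomorphic vector bundle of rank $r-1$. The Whitney sum formula for $0\to\cO_X\to E\to Q\to 0$ gives $c(E)=c(\cO_X)\,c(Q)=c(Q)$, and therefore $c_r(E)=c_r(Q)=0$ in $H^{2r}(X)$ because $\rank Q=r-1<r$.

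Next I would bring in the hypothesis. Since $\Theta_E>0$ means $E_{\met}>0$, the bundle is numerically positive by \pref{2f7}, $E_{\num}>0$. As $c_r$ is one of the Chern monomials it lies in the cone $\cC_r$ of positive polynomials, and as $r\leqq\dim X$ we may pick an $r$-dimensional subvariety $Y\subseteq X$ (take $Y=X$ when $r=\dim X$, and a general complete intersection of $\dim X-r$ very ample divisors otherwise). Taking $P=c_r$ and $\dim Y=r$ in \pref{2d2}, the definition of $E_{\num}>0$ gives $\int_Y c_r(E)>0$, so $c_r(E)\neq 0$ in $H^{2r}(X)$ — contradicting the previous paragraph. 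Hence no nowhere-vanishing section can exist, and every $s\in H^0(X,E)$ must have a zero.

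Granting \pref{2f7}, the argument is entirely formal, and the hypothesis $r\leqq\dim X$ is used solely to furnish the subvariety $Y$ on which $c_r(E)$ is detected. The one ingredient of real substance is thus the positivity of the relevant Chern classes — the implication $E_{\met}>0\Rightarrow E_{\num}>0$, or equivalently that $\Theta_E>0$ makes $E$ ample and an ample bundle of rank $\leqq\dim X$ has $c_r\neq 0$ (Bloch-Gieseker, \cite{Laz04}) — and that is where I expect the main weight of the proof to lie.
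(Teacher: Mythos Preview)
Your proof is correct, but it takes a genuinely different route from the paper's, and your opening remark that the maximum/minimum-principle argument ``should not be expected to work here'' is precisely where you part company with the authors: the paper \emph{does} use it.

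The paper's argument runs as follows. Assume $s$ is nowhere zero and look at a minimum of $\|s\|^2$. The sub-bundle formula you wrote down is in fact an equality,
\[
\part\ol\part\log\|s\|^2(\xi)\;=\;-\Theta_E(s,\xi)/\|s\|^2\;+\;\|\beta(s)(\xi)\|^2/\|s\|^2,
\]
and the second fundamental form $\beta(s)$ at a point is a linear map $T_xX\to Q_x$ with $\dim T_xX=\dim X\geqq r>r-1=\dim Q_x$. So $\beta(s)$ has a nontrivial kernel; for $\xi$ in that kernel the second term vanishes and $\part\ol\part\log\|s\|^2(\xi)=-\Theta_E(s,\xi)/\|s\|^2<0$. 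This negative eigenvalue contradicts the minimum. The hypothesis $r\leqq\dim X$ enters exactly as the rank inequality that forces $\beta(s)$ to have a kernel --- which is the step you overlooked when you dismissed this approach.

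Comparing the two: the paper's proof is entirely elementary and self-contained (just calculus and a dimension count), while yours outsources the real work to \pref{2f7}, i.e.\ to Bloch--Gieseker, which as the paper itself notes is ``not obvious from the definition.'' Your argument has the virtue of making the obstruction visible as a Chern-class identity ($c_r(E)=0$ versus $c_r(E)>0$), and it generalises cleanly to any positive Schur polynomial in place of $c_r$; the paper's argument has the virtue of needing nothing beyond the curvature formula for sub-bundles.
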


\begin{proof} The argument is similar to the preceding proposition, only this time we assume that $s$ has no zero and evaluate $\part\ol\part\log\|s\|^2$ at a minimum.  This (1,1) form is positive in the pullback to $X$ of the vertical tangent space, and it is negative in the pullback of the horizontal tangent space.  The assumption $r\leqq \dim X$ then guarantees that it has at least one negative eigenvalue.\end{proof}

\begin{Prop}\lab{2g12}
If $E\to X$ is a Hermitian \vb\ with $\Theta_E\geqq 0$ and $s\in H^0(X,E)$ satisfies $\Theta_E(s)=0$, then $Ds=0$.\end{Prop}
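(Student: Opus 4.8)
The plan is to run a standard Bochner-type argument and close with the compactness of $X$.

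First I would record the curvature identity for the holomorphic section $s$. Since $s$ is holomorphic, $D''s=\bar\partial s=0$, so $Ds=D's$. Applying the Leibniz rule \pref{2b1} for the Chern connection twice to the function $\|s\|^2=(s,s)$, and using that the Chern connection has curvature of type $(1,1)$ (so $(D')^2=0$ and $D''D's=\Theta_E s$), one obtains the identity of $(1,1)$-forms
\[
\tfrac{i}{2}\,\partial\bar\partial\|s\|^2=\tfrac{i}{2}(D's,D's)-\tfrac{i}{2}(\Theta_E s,s)
\]
(cf.\ \cite{Dem12a}). Here the first term on the right is the non-negative $(1,1)$-form attached to the Gram matrix of the components of $D's$, while the second term, evaluated on $\xi\wedge\bar\xi$, is exactly the curvature form $\Theta_E(s,\xi)$ of \pref{2b2}.

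Next I would feed in the hypothesis. The assumption $\Theta_E(s)=0$ says precisely that the $(1,1)$-form $(\Theta_E s,s)$ vanishes identically: it is non-negative because $\Theta_E\geqq 0$, so its vanishing on every $\xi\wedge\bar\xi$ is the same as its vanishing as a form. Substituting this into the identity gives
\[
\tfrac{i}{2}\,\partial\bar\partial\|s\|^2=\tfrac{i}{2}(D's,D's)\geqq 0,
\]
so the smooth function $\|s\|^2$ is plurisubharmonic on $X$.

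Finally I would use compactness: a plurisubharmonic function on a compact connected complex manifold attains its maximum and is therefore constant, by the maximum principle for plurisubharmonic functions. (When $X$ is projective one may instead integrate $i\partial\bar\partial\|s\|^2$ against $\om^{n-1}$ for a K\"ahler form $\om$ and apply Stokes.) Hence $i\partial\bar\partial\|s\|^2\equiv 0$, which forces the non-negative form $i(D's,D's)$ to vanish; equivalently every component of $D's$ is zero, i.e.\ $D's=0$. Combined with $D''s=0$ this yields $Ds=0$.

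The only genuinely delicate point is the curvature identity of the first step---fixing the signs and normalization and reading ``$\Theta_E(s)=0$'' as the vanishing of the $(1,1)$-form $(\Theta_E s,s)$ rather than something weaker; everything after that is the textbook Bochner plus maximum-principle package, in the spirit of the proofs of \ref{2g10} and \ref{2g11}. One byproduct worth noting: $\|s\|$ is then constant, so either $s\equiv 0$ or $s$ is nowhere zero and spans a flat line subbundle of $E$, matching the remark after \pref{1a5} about flat ``twisted'' subbundles.
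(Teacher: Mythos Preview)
Your proof is correct and is essentially the paper's argument: the same Bochner identity $\partial\bar\partial\|s\|^2=(Ds,Ds)+(s,\Theta_E(s))$ (your sign convention on the curvature term agrees with the paper's once one uses $(\Theta_E s,s)=-(s,\Theta_E s)$), followed by killing the non-negative $(1,1)$-form. The only cosmetic difference is that the paper goes directly to the integration-against-$\om^{d-1}$ version (your parenthetical alternative) rather than invoking the maximum principle for plurisubharmonic functions.
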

\begin{proof} Let $\om$ be a K\"ahler form on $X$.  Then
\[
\part\ol\part (s,s) = (Ds,Ds)+(s,\Theta_E(s))=(Ds,Ds)\geqq 0,\]
and if $\dim X=d$ using Stokes theorem we have
\[
0=\int_X \om^{d-1} \wedge (i/2) \part\ol\part \|s\|^2 = \int_X \om^{d-1}\wedge (i/2) (Ds,Ds)\]
which gives the result.\end{proof}

 \subsection{A further result}\hfill

 As discussed above, for many purposes  positivity is too strong (see the   examples in Section II.B) and semi-positivity is too weak  (adding the trivial bundle to a semi-positive bundle gives one that  is semi-positive).  One desires a more subtle notion than just $\Theta_E\geqq 0$.  With this in mind, a specific guiding question for these notes has been the 
\begin{quest}  
Suppose that one has $\Theta_E\geqq 0$ and for $\wedge^r E=\det E$ we have $\Theta_{\det E}>0$ on an open set; that is, $E$ is strongly semi-positive. Does this enable one to produce lots of  sections of $\Sym^mE$ for $m\gg0$?\end{quest} 

The following is a response to this question:
\begin{Thm}\lab{2h1}
Suppose that $E\to X$ is a Hermitian \vb\ of rank $r$ that is strongly semi-positive. Then $\Sym^m E\to X$ is big for any $m\geqq r$.\end{Thm}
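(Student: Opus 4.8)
The plan is to pass to the projectivization and translate everything into a statement about the tautological line bundle $\cope(1)\to\P E$. By \pref{2c3} the hypothesis $\Theta_E\geqq 0$ gives $\om_E\geqq 0$ on $\P E$, and by \pref{2f6} it suffices to show that $\cope(m)\to\P E$ is big for $m\geqq r$, equivalently that $\om_E$ has a positive top self-intersection on an open subset of $\P E$ after replacing $\cope(1)$ by $\cope(m)$. Since $\dim\P E=\dim X+r-1$, we must produce an open set on which $\om_E^{\dim X+r-1}>0$. The form $\om_E$ is already strictly positive in the vertical directions (it restricts to a Fubini--Study-type form on each fibre $\P E_x^\ast$), so the only deficiency is in the horizontal directions, where by \pref{2c2} the restriction of $\om_E$ at $(x,[e])$ to the horizontal subspace $H_{(x,[e])}\cong T_xX$ is the semi-positive form $\Theta_E(e,\cdot)$.

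The key step is to exploit the hypothesis $(\det E)^0_\met>0$. Over the open set $U\subset X$ where $\Theta_{\det E}=\Tr\Theta_E>0$, for each $x\in U$ the trace of the Hermitian form $e\mapsto \Theta_E(e,\xi)$ on $E_x$ is positive for every nonzero $\xi\in T_xX$; hence for each such $\xi$ there is at least one direction $e\in E_x$ with $\Theta_E(e,\xi)>0$. The first task is to promote this pointwise-in-$\xi$ statement to: there is a nonempty open subset $\cU\subset\pi^{-1}(U)\subset\P E$ such that at every $(x,[e])\in\cU$ the horizontal form $\Theta_E(e,\cdot)$ on $T_xX$ is \emph{strictly positive definite}. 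This is where one uses that $m$ is allowed to be large: replacing $E$ by $\Sym^m E$, the curvature form of $\cope(1)\to\P(\Sym^m E)$ in the direction $[e^{\otimes m}]$ is $m$ times $\Theta_E(e,\cdot)$ on decomposable powers, but the point is rather that the ``mixing'' among the $\binom{m+r-1}{r}$ monomials allows one, at a generic highest-weight-type point of $\P(\Sym^m E)$, to see a sum of the forms $\Theta_E(e_i,\cdot)$ over a spanning set $e_1,\dots$ of $E_x$, whose sum is (a positive multiple of) $\Tr\Theta_E(\cdot)\cdot(\text{identity-like term})$ and hence positive definite on $T_xX$ for $x\in U$. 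Concretely: choose a local frame diagonalizing $\Theta_E$, and at the point of $\P(\Sym^m E)$ corresponding to the balanced monomial $s_1\cdots s_r$ (with multiplicities) the relevant horizontal curvature is a positive combination of the $\Theta_E(s_i,\cdot)$, and for $m\geqq r$ one can arrange this combination to dominate, on a small open set, any prescribed direction in $T_xX$.

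Granting that open set $\cU\subset\P(\Sym^m E)$ on which $\om_{\Sym^m E}$ is strictly positive in \emph{all} horizontal directions and (always) strictly positive in the vertical directions, we conclude $\om_{\Sym^m E}>0$ as a $(1,1)$-form on the nonempty open set $\cU$, whence $\bigl(\om_{\Sym^m E}\bigr)^{\dim X + r -1}>0$ on $\cU$; thus $(\Sym^m E)^0_\met>0$, and by \pref{2f6} $\Sym^m E$ is big, as desired. The main obstacle is the middle step: making precise, with an honest local computation of $\Theta_{\Sym^m E}$ via $\Theta_{\Sym^m E}=\Sym^m(\Theta_E)$ (the induced curvature on a symmetric power), the claim that strong semi-positivity of $E$ forces $\Sym^m E$ to be \emph{strictly} positive on a nonempty open set once $m\geqq r$ — in other words tracking how the single strictly-positive eigen-direction of $\Tr\Theta_E$ spreads over the monomials of degree $m$. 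I expect that a clean way to organize this is to diagonalize $\Theta_E$ locally as $\Theta_E=\mathrm{diag}(\theta_1,\dots,\theta_r)$ with $\theta_i$ semi-positive $(1,1)$-forms and $\sum\theta_i>0$ on $U$; then $\Theta_{\Sym^m E}$ is diagonal with entries $\sum_i a_i\theta_i$ over multi-indices $(a_1,\dots,a_r)$, $\sum a_i=m$, and choosing $m\geqq r$ guarantees the multi-index $(1,1,\dots,1,m-r+1)$ — or a nearby one — which mixes all the $\theta_i$ and hence gives an entry $\geqq \min_i\theta_i$-type bound summing to something $\succ 0$ on an open subset of $U$; the corresponding coordinate direction in $\P(\Sym^m E)$, together with the vertical positivity, furnishes $\cU$.
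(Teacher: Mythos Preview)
Your overall strategy is the same as the paper's: pass to $\P(\Sym^m E)$, observe $\om_{\Sym^m E}\geqq 0$, and exhibit a single point $(x,[e_1\cdots e_r])$ over the open set where $\Tr\Theta_E>0$ at which $\om_{\Sym^m E}>0$; then invoke \pref{2f6}. The paper's proof is organized exactly this way, and it isolates as a separate lemma the step you take for granted, namely that $\Theta_F(f,\cdot)>0$ as a $(1,1)$-form forces $\om_F>0$ on the full tangent space of $\P F$ at $(x,[f])$.

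There is, however, a real error in your final paragraph. You cannot ``diagonalize $\Theta_E$ locally as $\Theta_E=\mathrm{diag}(\theta_1,\dots,\theta_r)$'' with the $\theta_i$ semi-positive $(1,1)$-forms: for each fixed $\xi\in T_xX$ the Hermitian form $e\mapsto\Theta_E(e,\xi)$ diagonalizes in \emph{some} unitary basis, but that basis depends on $\xi$, and there is in general no frame in which the curvature matrix is diagonal as a matrix of $(1,1)$-forms. Your subsequent claim that $\Theta_{\Sym^m E}$ is diagonal with entries $\sum_i a_i\theta_i$ therefore has no meaning. Fortunately the diagonalization is unnecessary. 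For \emph{any} unitary basis $e_1,\dots,e_r$ of $E_x$ and any $\xi$, the derivation rule gives
\[
\Theta_{S^rE}(e_1\cdots e_r,\xi)\;=\;\sum_{i}\Theta_E(e_i,\xi)\;=\;\Tr\Theta_E(\xi)\;>\;0,
\]
because when one expands $\Theta_{S^rE}(e_1\cdots e_r)=\sum_i e_1\cdots\Theta_E(e_i)\cdots e_r$ and pairs with $e_1\cdots e_r$, every off-diagonal contribution $\Theta_{ij}$ ($i\ne j$) produces a monomial with a repeated $e_j$ and a missing $e_i$, hence orthogonal to $e_1\cdots e_r$. What makes this work is precisely that the monomial has $r$ \emph{distinct} factors, which is why $m\geqq r$ is the threshold; for $m>r$ the same computation at $e_1\cdots e_r\cdot e_j^{m-r}$ yields $\Tr\Theta_E(\xi)+(m-r)\Theta_E(e_j,\xi)\geqq\Tr\Theta_E(\xi)>0$. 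This is the paper's computation (its diagonalization for a fixed $\xi$ is just a way of seeing that the off-diagonal terms drop out), and it replaces your last paragraph cleanly.
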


\subsubsection*{Proof}
Setting $S^r E=\Sym^r E$, we have $\Theta_{S^\ast E}\geqq 0$.  Let $\om_r$ be the curvature form for
$\cO_{\P S^r(E)}(1)$.  Then $\om_r \geqq 0$, and we will show that
\begin{equation}\lab{2h2}
\hbox{$\om_r>0$ on an open set.}\end{equation}
For this it will suffice to find one point $p=(x,[e_1\cdots e_r])\in \P S^r (E)_x$ where \pref{2h2} holds.  Let $x\in X$ be a point where $(\Tr \Theta_E)(x)>0$ and  let $e_1,\dots,e_r$ be a unitary basis for $E_x$.  Then some
\[
\lra{\lrp{\Theta_E(e_i),e_i} ,\xi\wedge \ol\xi}>0.\]
We may assume that the Hermitian matrix
\[
\lra{\lrp{\Theta_E(e_i),e_j},\xi\wedge \ol\xi} = \delta_{ij}\la_i,\quad \la_i\geqq 0\]
is diagonalized.  Then
\begin{align*}
\lra{\lrp{\Theta_{S^r E}(e_1\cdots e_r),e_1\cdots e_r}, \xi\wedge\ol\xi} &= 
\sum_i \Big\langle\Big(\Theta_{S^r E}(e_i) e_1\; \raise8pt\hbox{$\begin{matrix} i\\[-5pt] \hat{\cdots}\end{matrix}$} \; e_r,e_1  \cdots e_r\Big) ,\xi\wedge \ol\xi\Big\rangle\\
&=  \sum_i \la_i>0. \end{align*}
The same argument works for any $m\geqq r$. \hfill\qed \medbreak

Using this we are reduced to proving the
\begin{Lem}\lab{n2h3}
Let $F\to X$ be a  Hermitian vector bundle with $\Theta_F\geqq 0$, and where this is a point $x\in X$ and $f\in F_x$ such that the $(1,1)$ form $\Theta_F(f,\cdot)$ is positive.  Then for  the curvature form $\om_F$ of the \lb\ $\cO_{\P F}(1)\to \P F$, we have $\om_F>0$ at the point $(x,[f])\in \P F$.
\end{Lem}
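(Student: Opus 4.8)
The plan is to reduce the lemma to the curvature computation for $\cO_{\P F}(1)$ in terms of the curvature of $F$, using the formula \pref{2c2}, and then to observe that positivity in the vertical directions is automatic while positivity in the horizontal directions is exactly the hypothesis $\Theta_F(f,\cdot)>0$. First I would recall from Section II.C that at a point $(x,[f])\in\P F$ (with $\|f\|=1$ after normalizing) the tangent space $T_{(x,[f])}\P F$ splits $C^\infty$-wise as $V\oplus H$, where $V=\ker\pi_\ast$ is the vertical tangent space to the fibre $\P F_x=\P F_x^\ast$ and $H\cong T_xX$ via $\pi_\ast$. Since $\om_F\big|_{\P F_x}$ is the Fubini–Study-type form coming from the Hermitian metric on $F_x$, it is a positive $(1,1)$ form on the fibre, so $\om_F$ is strictly positive on $V$ at $(x,[f])$.

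Next I would handle the horizontal part. By \pref{2c2}, for $\xi\in T_xX\cong H_{(x,[f])}$ we have $\om_F(\xi)=\Theta_F(f,\xi)$, which by hypothesis is $>0$ for all non-zero $\xi$; hence $\om_F$ restricted to $H$ is strictly positive as well. The only remaining point is that strict positivity on $V$ and on $H$ separately does not immediately give strict positivity on $V\oplus H$, because $\om_F$ may have mixed terms $V\otimes\bar H$. The standard way around this is the observation that a Hermitian form which is positive definite on a subspace $V$ and induces a positive definite form on the quotient $(V\oplus H)/V\cong H$ is itself positive definite on $V\oplus H$: if $\om_F\big|_V>0$, then the $\om_F$-orthogonal complement $H':=V^{\perp_{\om_F}}$ is a complement to $V$ on which $\om_F$ agrees with the induced form on $H$, so $\om_F\big|_{H'}>0$, and then $\om_F=\om_F\big|_V\oplus\om_F\big|_{H'}>0$ on all of $V\oplus H'=T_{(x,[f])}\P F$.

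So the key steps, in order, are: (1) normalize $\|f\|=1$ and invoke the $C^\infty$ splitting $T\P F\cong V\oplus H$ with $\pi_\ast:H\simto \pi^\ast TX$; (2) note $\om_F$ is strictly positive on the vertical space $V$ because it restricts to the Fubini–Study form on the fibre $\P F_x$; (3) apply \pref{2c2} to get $\om_F(\xi)=\Theta_F(f,\xi)>0$ on the horizontal space; (4) conclude via the linear-algebra fact that a Hermitian form positive on a subspace and on the corresponding quotient is positive definite. Then $\om_F>0$ at $(x,[f])$, which by the extension of \pref{2c3} to open sets (equivalently, by continuity of the curvature form) means $\om_F>0$ on an open neighborhood of $(x,[f])$ in $\P F$. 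I expect step (4) — correctly passing from fibrewise positivity in the vertical and horizontal directions to positivity of the full form — to be the only subtle point; it is a routine but essential piece of linear algebra about Hermitian forms on a direct sum with possibly nonzero off-diagonal block, and the clean way to see it is through the $\om_F$-orthogonal complement of $V$ rather than through $H$ itself.
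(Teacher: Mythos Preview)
Your argument is correct and reaches the same conclusion as the paper, but the route is somewhat different and worth contrasting.

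The paper's proof is an explicit local computation: choose a holomorphic frame $f_1^\ast,\dots,f_r^\ast$ for $F^\ast$ normalized so that $h_{i\bar j}(x)=\delta_{ij}$ and $dh_{i\bar j}(x)=0$, write a section of $\cO_{\P F}(1)$ in fibre coordinates $a_1,\dots,a_r$, and compute $\part\ol\part\log\|\sigma\|^2$ directly. The normalization $dh_{i\bar j}(x)=0$ kills the cross terms, and one is left with the curvature form $\sum_{i,j}(\part\ol\part h_{i\bar j})(x)a_i\bar a_j$ in the base directions plus the Fubini--Study expression in the fibre coordinates; Cauchy--Schwarz then gives positivity. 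So the paper obtains the block-diagonal shape of $\om_F$ by a judicious choice of frame rather than by invoking the abstract splitting.

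Your approach instead leans on the interpretation already set up in Section~II.C and handles the possible off-diagonal block by the linear-algebra observation about the $\om_F$-orthogonal complement of $V$. One small point: in the paper's Section~II.C the horizontal space is \emph{defined} as $H=\om_E^\perp$, so your $H'$ is in fact the paper's $H$, and step~(4) is therefore automatic once you use that definition together with \pref{2c2}. Either way the argument goes through. The trade-off is that the paper's proof is self-contained and makes the structure of $\om_F$ completely explicit (useful later for the EDS $\om_E=0$), while yours is shorter and more conceptual but depends on the earlier discussion. Neither proof actually needs the global hypothesis $\Theta_F\geqq 0$; only the pointwise positivity of $\Theta_F(f,\cdot)$ at $x$ is used.
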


\begin{proof}
Let $f^\ast_1,\dots,f^\ast_r$ be a local holomorphic frame for $F^\ast \to X$ and
\[
\sig = u([a_1,\dots,a_r];x)\sum_i a_if^\ast_i\]
a local holomorphic section of $\cO_{\P E}(1)\to \P F$, where   $a_1,\dots,a_r$ are   variables defined modulo the scaling action $(a_1,\dots,a_r)\to \la(a_1,\dots,a_r)$ and   $u([a_1,\dots,a_r],x)$ is holomorphic.  We have
\[
\| \sig\| = |u|^2 \sum_{i,j} h_{i\bar j}\cdot a_i \bar a_j\]
where $h_{i\bar j}=(f^\ast_i,f^\ast_j)$ is the metric in $F\to X$; up  to a constant
\[
\om_F = \part\ol\part \log \|\sig\|^2.\]
We may choose our frame and scaling parameter so that at the point $(x,[f])$
\begin{equation}\lab{n2h4}
h_{i\bar j}(x)=\delta_{ij},\qquad dh_{ij}(x)=0\hensp{and} \|\sig(x,[f])\|=1.\end{equation}
Computing $\part\ol\part\log \|\sig\|^2$ and evaluating at the point where \pref{n2h4} holds any cross-terms involving $dh_{\ol{ij}}(x)$ drop out and we obtain
\[
\om_F=\sum_{ij}(\part\ol\part h_{i\bar j})(x) a_i\bar a_j+\lrp{
\sum_{i,j} da_i\wedge\bar da_i-\lrp{\sum a_i\cdot da_i} \wedge \lrp{\ol{\sum_j a_j da_j}}}.\]
When we take the scaling action into account and use Cauchy-Schwarz it follows that  that $\om_F>0$ in $T_{(x,[f])}\P F$.
\end{proof}

Remark that the point $(x,[e_1\cdots e_r])$ corresponding to a decomposable tensor in $S^r E_x$ is very special.  Easy examples show that we do not expect to have $\om_r>0$ everywhere.  In fact, the exterior differential system (EDS)
\[
\om_r =0\]
is of   interest and will be discussed in Section VI.E in the situation when the curvature has the norm positivity property to be introduced in Section III.A.

\begin{Exam}  \lab{2h3}We will illustrate the mechanism of how passing to $S^r E$ increases the Kodaira-Iitaka dimension of the bundle.  Let $E\to G(2,4)$ denote the dual of the universal sub-bundle.  As above, points of $G(2,4)$ will be denoted by $\La$ and thought of as lines in $\P^3$.  For $v\in \La$ we denote by $[v]$ the corresponding line in $\C^4$.  Points of $\P E$ will be $(\La,v)$
\[
\begin{picture}(150,45)
\put(0,0){\line(4,3){50}}
\put(26,20){\circle*{4}}
\put(32,14){$v$}
\put(52,35){$\La$}
\end{picture}\]
and then the fibre of $\cope(1)$ at $(\La,v)$ is $[v]$.
The fibre $E_\La\cong \La^\ast$, and we have
\[
\xymatrix@R=1pt{ H^0(G(2,4),E)\ar[r]&E_\La\ar[r] &0\\
\sideeq &\sideeq\\
C^{4\ast} \ar[r]&\La^\ast\ar[r] & 0.}\]
The tangent space
\[
T_\La G(2,4)\cong \Hom(\La,\C^4/\La)\]
is isomorphic to the horizontal space $H_{(\La,v)}\subset T_{(\La,v)} \P E$.  As previously noted,  for $\xi\in T_\La G(2,4)$ we have
\begin{align}\lab{2h4}
\Theta_E(v,\xi)= \om(\xi)=0\iff \xi(v)=0\\ \notag
\begin{picture}(150,50)
\put(0,0){\line(4,3){50}}
\put(26,20){\circle*{4}}
\put(32,14){$v$}
\put(52,35){$\La$}
\dashline[+30]{3}(13,-4)(40,43)
\put(19,-4){$\La_\xi$}
\end{picture}\end{align}
Here $\La_\xi$ is the infinitesimal displacement of $\La$ in the direction $\xi$.

We   observe  that
\begin{equation}\lab{2h5}
\vp_{\cope(1)} :\P E\to \P^3\end{equation}
is the tautological map $(\La,v)\to [v]$, and consequently the fibre of \pref{2h5} through $v$ is the $\P^2$ of lines in $\P^3$ through $v$.  The tangent space to this fibre are the $\xi$'s as pictured above.  We note that $\dim \P E=5$ while $\kappa(\cope(1))=n(\cope(1))=3$.

Points of $\P S^2 E$ are   $(\La,v,v')$
\[
\begin{picture}(150,45)
\put(0,0){\line(4,3){50}}
\put(16,12.5){\circle*{4}}
\put(22,7){$v$}
\put(34,25.5){\circle*{4}}
\put(39,21){$v'$}
\put(52,35){$\La$}\end{picture}\]
and unless $v=v'$ we have 
\[
\Theta_E(v\cdot v',\xi)\ne 0\]
for any non-zero $\xi\in T_\La G(2,4)$.  Thus for $\om_2$ the curvature form of $\cO_{\P S^2 E}(1)$ we have
\[
\om_{2}>0\hensp{at} (\La,v\cdot v')\]
unless $v=v'$; consequently $S^2 E$ is big.
\end{Exam}

We shall give some further observations and remarks concerning the question posed at the beginning of this section.

\begin{Prop}\lab{2h6}
If $E\to X$ is generically globally generated and $\det E$ is big, then $S^k E$ is big for some $k>0$.\end{Prop}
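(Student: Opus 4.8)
\subsubsection*{Proof proposal}

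The plan is to show that $\Sym^2 E$ is already big, so that $k=2$ always works. First I would record the elementary geometry coming from generic global generation: choosing $r=\rk E$ global sections $s_1,\dots,s_r$ of $E$ that are independent at a general point, the element $\sigma:=s_1\wedge\cdots\wedge s_r\in H^0(X,\det E)$ is nonzero, $E$ is trivialized by $s_1,\dots,s_r$ over the Zariski open $X^\circ:=X\setminus(\sigma)_0$, and $\det E\cong\cO_X\big((\sigma)_0\big)$ is effective. Since we are in characteristic $0$, every Schur functor $\mathbb{S}_\mu(E)$ is a direct summand of $E^{\otimes|\mu|}$, so it too is trivialized over $X^\circ$ by \emph{global} sections --- the images of suitable tensor monomials in the $s_i$ under $E^{\otimes|\mu|}\twoheadrightarrow\mathbb{S}_\mu(E)$. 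Thus every $\mathbb{S}_\mu(E)$ possesses $\rk\mathbb{S}_\mu(E)$ global sections forming a holomorphic frame on $X^\circ$.

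Next I would feed this into two standard algebraic facts: the Littlewood plethysm $\Sym^m(\Sym^2 E)=\bigoplus_{\lambda}\mathbb{S}_\lambda(E)$, the sum over partitions $\lambda\vdash 2m$ with at most $r$ parts and all parts even, and the identity $\mathbb{S}_\lambda(E)\cong(\det E)^{\otimes\lambda_r}\otimes\mathbb{S}_{\lambda'}(E)$ with $\lambda'=(\lambda_1-\lambda_r,\dots,\lambda_{r-1}-\lambda_r)$. For a fixed such $\lambda$, let $w_1,\dots,w_\rho$ ($\rho=\rk\mathbb{S}_{\lambda'}(E)$) be global sections framing $\mathbb{S}_{\lambda'}(E)$ on $X^\circ$; then, as $t$ runs over $H^0\big(X,(\det E)^{\otimes\lambda_r}\big)$, the products $t\otimes w_i$ are global sections of $\mathbb{S}_\lambda(E)$, and they are linearly independent: at a general point of $X^\circ$ the $w_i$ are a fibre basis while $(\det E)^{\otimes\lambda_r}$ is a line, so $\sum_i t_i\otimes w_i\equiv0$ forces every $t_i\equiv0$. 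Hence the summand $\mathbb{S}_\lambda(E)$ contributes at least $\rk\big(\mathbb{S}_{\lambda'}(E)\big)\cdot h^0\big(X,(\det E)^{\otimes\lambda_r}\big)$ to $h^0\big(X,\Sym^m(\Sym^2E)\big)$, and (the Littlewood coefficients being $1$) contributions from different $\lambda$ are independent.

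Adding over all even $\lambda$ and reparametrizing ($\lambda=2\kappa$ with $\kappa\vdash m$, $b:=\kappa_r$, $\lambda'=2\kappa'$ with $\kappa'\vdash m-rb$, $\ell(\kappa')\le r-1$) leaves a lower bound of the shape $\sum_{b}h^0\big(X,(\det E)^{\otimes2b}\big)\cdot\big(\sum_{\kappa'\vdash m-rb,\ \ell(\kappa')\le r-1}\rk\mathbb{S}_{2\kappa'}(E)\big)$. Since $\det E$ is big the first factor has order $b^{\,n}$ ($n=\dim X$), and by Weyl's dimension formula $\rk\mathbb{S}_\mu(E)$ is a polynomial of degree $\binom r2$ in $\mu$, so the inner sum has order $(m-rb)^{(r-2)+\binom r2}$; a routine estimate then yields a lower bound of order $m^{\,n+1+(r-2)+\binom r2}=m^{\,n+\binom{r+1}2-1}$, using $(r-1)+\binom r2=\binom{r+1}2-1$. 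As $\dim\P(\Sym^2 E)=\dim X+\rk\Sym^2E-1$ equals exactly $n+\binom{r+1}2-1$, while $h^0\big(X,\Sym^m(\Sym^2E)\big)=h^0\big(\P(\Sym^2E),\cO_{\P(\Sym^2E)}(m)\big)$ is in any case $O\big(m^{\dim\P(\Sym^2E)}\big)$ by \pref{2a2}, this forces $\kappa\big(\cO_{\P(\Sym^2E)}(1)\big)=\dim\P(\Sym^2E)$; that is, $\Sym^2 E$ is big.

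The only genuine work is the last bookkeeping step, and it is precisely there that the two hypotheses interlock: the factor $h^0\big((\det E)^{\otimes2b}\big)\sim b^{n}$ is the entire content of ``$\det E$ big,'' whereas the combinatorial factor $\sum_{\kappa'}\rk\mathbb{S}_{2\kappa'}(E)$ --- the thing that raises the exponent from $n$ all the way to $n+\binom{r+1}2-1$ --- is available only because generic global generation, through the trivialization of $E$ over $X^\circ$, manufactured honest global sections in every Schur summand of $\Sym^m(\Sym^2E)$. I expect the combinatorial estimate to be elementary but slightly fiddly, and the rest to be routine.
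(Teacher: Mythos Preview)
Your argument is correct and takes a genuinely different route from the paper's. The paper proceeds differential-geometrically: after a blowup it reduces to the globally generated case, pulls back the tautological metric from the Grassmannian so that $\Theta_E\geqq 0$, observes that bigness of $\det E$ forces $\Tr\Theta_E>0$ on an open set, and then invokes Theorem~\ref{2h1} (strongly semi-positive $\Rightarrow \Sym^m E$ big for $m\geqq r$). Your proof is purely algebraic: you trivialize $E$ on a Zariski open by $r$ global sections, propagate this to trivializations of all Schur pieces $\mathbb{S}_{\lambda'}(E)$, and then count sections of $\Sym^m(\Sym^2 E)$ via the Littlewood plethysm $\Sym^m(\Sym^2 E)=\bigoplus_{\lambda\text{ even}}\mathbb{S}_\lambda(E)$ together with $\mathbb{S}_\lambda(E)\cong(\det E)^{\lambda_r}\otimes\mathbb{S}_{\lambda'}(E)$.

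What each buys: the paper's route is very short here because the work has been absorbed into Theorem~\ref{2h1}, and it keeps the proposition inside the paper's curvature framework. Your route is metric-free and actually yields the sharper statement that $k=2$ always suffices (the paper only gets $k\geqq r$). The one place to be a little more careful than ``routine'' is the lower bound $g(M):=\sum_{\kappa'\vdash M,\,\ell(\kappa')\leqq r-1}\dim\mathbb{S}_{2\kappa'}(\C^r)\gtrsim M^{\binom{r+1}{2}-2}$; this follows from the Weyl dimension formula (the summand for a typical $\kappa'$ with parts of order $M$ has dimension $\asymp M^{\binom{r}{2}}$, and there are $\asymp M^{r-2}$ such partitions), but it is worth writing out, since the final asymptotic $\sum_b b^n g(m-rb)\gtrsim m^{\,n+\binom{r+1}{2}-1}$ hinges on it.
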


\begin{proof} By standard arguments passing to a blowup of $X$ and pulling $E$ back, we may reduce to the case where $E$ is  globally generated.  Let $N=h^0(X,E)$ and denote  by $Q\to G(N-r,N)$ the universal quotient bundle over the Grassmannian. We then  have a diagram
\begin{equation}\lab{2h7}
\xymatrix{\P E\ar[d]\ar[r]^\alpha&\P Q\ar[d]\ar[r]^\beta&\P^{N-1}\\
X\ar[r]^(.3)f& G(N-r,N)&}\end{equation}
where
\begin{itemize}
\item $f^\ast Q=E$;
\item $\beta\circ \alpha=\vp_{\cope}(1)$ \end{itemize}
where $\vp_{\P E}(1)$ is the map induced by $H^0(\cope(1))$.
As a metric in $E\to X$ we use the one induced by the standard metric on $Q\to G(N-r,N)$.  Then we claim that
\begin{itemize}
\item $\Theta_E\geqq 0$;
\item $\Tr  \Theta_E>0$ on an open set.\end{itemize}
The first of these is clear.  For the second, $\det Q$ is an ample line bundle over $G(N-r,r)$ and $\det E=f^\ast \det Q$.  Thus
\[
\Theta_{\det E}=\Tr  \Theta_E \geqq 0\]
and for $d=\dim X$
\[
(\Tr \Theta_E)^d\equiv 0\]
contradicts the assumption that $\det E$ is big.  The proposition now follows from Theorem \ref{2h1}.
\end{proof}

This proposition will be used in connection with the following well known
\begin{Prop}\lab{2h8}
If $S^k E$ is big for some $k>0$, then there exist arbitrarily large $\ell$ such that $S^{k  \ell}E$ is generically globally generated.\end{Prop}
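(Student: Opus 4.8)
The plan is to deduce \pref{2h8} from the theory of weak positivity of Viehweg, after two elementary reductions. First, since we work in characteristic zero the multiplication map $S^{\ell}(S^kE)\to S^{k\ell}E$ is surjective (its image already contains all powers $v^{k\ell}$ with $v\in E_x$, and these span $(S^{k\ell}E)_x$), and a quotient of a generically globally generated sheaf is again generically globally generated; hence it suffices to prove that $S^{\ell}(S^kE)=\pi_\ast\cO_{\P(S^kE)}(\ell)$ is generically globally generated for arbitrarily large $\ell$. Second, for any coherent sheaf $\cF$ on $X$ the sheaf $\cF$ is generically globally generated exactly when the base locus of $\cO_{\P(\cF)}(1)$ does not dominate $X$: the fibres of $\P(\cF)\to X$ are projective spaces on which $\cO_{\P(\cF)}(1)$ restricts to $\cO(1)$, and the only base-point-free sub-linear-system of $|\cO_{\P^{N}}(1)|$ is the complete one. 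So the whole problem is to control the ``vertical'' part of the base loci of the powers of the big line bundle $L:=\cO_{\P(S^kE)}(1)$ on $Y:=\P(S^kE)$, $\pi\colon Y\to X$.

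To do this I would combine Kodaira's lemma with the multiplication maps. Fix an ample line bundle $H$ on $X$. Since $L$ is $\pi$-ample, $L\otimes\pi^\ast H^{a}$ is ample on $Y$ for $a\gg 0$, and bigness of $L$ then supplies an $N$ with $L^{N}\otimes\pi^\ast H^{-a}$ effective, i.e.\ a nonzero inclusion of sheaves $H^{a}\hookrightarrow S^{N}(S^kE)$ on $X$. When $S^kE$ is semi-positive --- the case that occurs for the Hodge bundles of these notes --- one can say more, and this is exactly where positivity enters: following \cite{Vie83a}, \cite{Vie83b} one gets that $S^{N}(S^kE)\otimes H^{-a}$ is \emph{generically globally generated} for $N\gg 0$, which is the assertion that $S^kE$ is big in Viehweg's sense. (This is strictly stronger than $\kappa\bigl(\cO_{\P(S^kE)}(1)\bigr)=\dim\P(S^kE)$ --- as $\cO_S\oplus K_S$ on a Godeaux surface shows --- so the hypothesis of \pref{2h8} must be understood in this stronger form.) Given this, one iterates: composing the characteristic-zero surjections $S^{b}\bigl(S^{N}(S^kE)\bigr)\to S^{bN}(S^kE)\to S^{kbN}E$, and using that the symmetric algebra of a vector space is an integral domain so that these compositions stay generically surjective, together with the fact that $E$ --- hence each $S^{m}E$ --- becomes globally generated after a sufficiently positive twist by $H$, one cancels the auxiliary twist at a general point of $X$ and obtains that $S^{k\ell}E$ is generically globally generated for $\ell=bN$, $b\gg 0$.

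The step I expect to be the real difficulty is this last cancellation: converting an abundance of sections of a negative twist $S^{k\ell}E\otimes H^{-c}$ into enough sections of $S^{k\ell}E$ itself to generate the fibre at a general point of $X$. This is the core of the weak-positivity machinery; it genuinely uses the interplay of the maps $\Sym^{b}\bigl(\Sym^{m}(\,\cdot\,)\bigr)\to\Sym^{bm}(\,\cdot\,)$ --- the very maps that reappear in Section~VI.C in connection with \pref{1a15} --- and I would invoke \cite{Vie83a}, \cite{Vie83b}, \cite{Kol87} for it rather than reprove anything, after recording that the semi-positivity hypothesis those arguments require holds automatically for the Hodge-theoretic bundles to which \pref{2h8} is applied.
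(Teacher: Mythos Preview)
The paper's argument is a three-liner: it asserts the equivalence ``$F$ generically globally generated $\iff \cO_{\P F}(1)$ generically globally generated,'' then uses the standard fact that high powers of a big line bundle are generically globally generated to conclude that $S^\ell(S^kE)=\pi_\ast\cO_{\P(S^kE)}(\ell)$ is generically globally generated for $\ell\gg 0$, and finally passes to the direct summand $S^{k\ell}E\subset S^\ell(S^kE)$.

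Your worry about the ``vertical part'' of the base locus is exactly where this short argument breaks. Only the forward implication of the claimed equivalence holds: the base locus of $|\cO_{\P F}(1)|$ can be a proper closed subset of $\P F$ that nevertheless dominates $X$ (a section, say), and then $F$ is nowhere globally generated. Your Godeaux example $E=\cO_S\oplus K_S$ witnesses this cleanly: $\cO_{\P E}(1)$ is big in the $L$-sense the paper uses, yet $S^\ell E=\bigoplus_{i=0}^{\ell}K_S^{\,i}$ is never generically globally generated because the summand $K_S$ has $h^0=0$. So the proposition, read with the paper's own definition of ``big,'' is in fact false; your reinterpretation via Viehweg-bigness is a genuine correction, not an alternate route. (Note too that this $E$ is even strongly semi-positive --- $\Theta_E\geqq 0$ and $\det E=K_S$ ample --- so no ambient metric hypothesis in the paper rescues the statement as written.)

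That said, once you adopt the V-big hypothesis, the weak-positivity machinery you invoke is heavier than necessary. From V-bigness extract an $N$ and an ample $H$ with $S^N(S^kE)\otimes H^{-1}$ generically globally generated. Taking $c^{\rm th}$ symmetric powers, $S^c\bigl(S^N(S^kE)\bigr)\otimes H^{-c}$ is generically globally generated for every $c\geqq 1$; choosing $c$ large enough that $H^c$ is globally generated and tensoring back shows $S^c\bigl(S^N(S^kE)\bigr)$ itself is generically globally generated; the characteristic-zero surjection onto $S^{kNc}E$ finishes. This replaces the appeal to \cite{Vie83a}, \cite{Vie83b}, \cite{Kol87} by a two-step untwisting, and avoids the ``cancellation'' difficulty you flagged.
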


\begin{proof}    As a general comment, for any holomorphic \vb\ $F\to X$
\[
\lrc{\begin{array}{c}\hbox{$F$ is generically}\\ \hbox{globally generated} \end{array}} \iff  \lrc{\begin{array}{c}\hbox{$\cO_{\P F}(1)\to \P F$ is generically}\\ \hbox{globally generated}\end{array}}.\]
Since sufficiently high powers of a big \lb\ are generically globally generated, and since by definition $F\to X$ is big if the \lb\ $\cO_{\P F}(1)$ is big, we have
\[
F\hbox{ big}\implies S^n F\to X\hbox{ is generically globally generated.}\]
Taking $F=S^k E$,
our assumption then  implies that there are arbitrarily large $\ell$ such that $S^\ell(S^kE)$ is generically globally generated, consequently  the direct summand $S^{k \ell}E$ of $S^\ell (S^kE)$ is also generically globally generated.\end{proof}

\begin{Cor}\lab{2h9}
If $E\geqq 0$ and $\det E>0$, then $S^mE$ is big for arbitrarily large~$m$.\end{Cor}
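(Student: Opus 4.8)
\subsubsection*{Proof}
The plan is to reduce the statement to Theorem \ref{2h1}, the only substantive point being to promote the hypotheses ``$E_\met\geqq0$ and $\det E>0$'' to the notion of \emph{strong semi-positivity} used there, i.e.\ to a \emph{single} metric for which $\Theta_E\geqq0$ and $\Tr\Theta_E>0$ on an open set. Once this is done the conclusion is immediate, and for completeness the same conclusion also falls out of combining Propositions \ref{2h6} and \ref{2h8}.

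First I would fix a Hermitian metric $h$ on $E$ with $\Theta_E\geqq0$. The induced metric on $\det E=\wedge^r E$ has curvature $\Tr\Theta_E\geqq0$, so $\det E$ is nef and the smooth $(1,1)$-form $\om:=(i/2\pi)\Tr\Theta_E$ represents $c_1(\det E)$. Since $\det E$ is positive (hence big, and in any case big is all one needs), and since a nef line bundle is big precisely when its top self-intersection is positive, we get $c_1(\det E)^d=\int_X\om^d>0$ where $d=\dim X$. Therefore $\om^d$ is not the zero form, so $\Tr\Theta_E$ has full rank --- i.e.\ is strictly positive --- at some point of $X$, and hence on a nonempty open set by continuity. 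Thus the single metric $h$ realizes $E_\met\geqq0$ together with $(\det E)_\met>0$ on an open set; that is, $E$ is strongly semi-positive. Theorem \ref{2h1} then gives that $\Sym^m E\to X$ is big for every $m\geqq r=\rank E$, which in particular yields bigness for arbitrarily large $m$.

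Alternatively, having obtained that $\Sym^r E$ is big, Proposition \ref{2h8} (with $k=r$) produces arbitrarily large $\ell$ for which $\Sym^{r\ell}E$ is generically globally generated; since $\det(\Sym^{r\ell}E)$ is a positive tensor power of $\det E$ it is big, so Proposition \ref{2h6} applies to the bundle $\Sym^{r\ell}E$ and gives that $\Sym^{k}(\Sym^{r\ell}E)$ is big for some $k>0$, whence its direct summand $\Sym^{kr\ell}E$ is big; letting $\ell$ range without bound recovers the assertion. The main (and essentially only) obstacle is the passage carried out in the second paragraph: turning the \emph{pointwise} condition $\Theta_E\geqq0$ together with the \emph{cohomological} condition that $\det E$ be big into the \emph{open} condition $\Tr\Theta_E>0$, i.e.\ the observation that a closed semi-positive $(1,1)$-form whose class has positive top power must be strictly positive somewhere. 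If one only knew $E$ to be nef rather than $E_\met\geqq0$, this step would instead require a Segre-class computation showing $s_d(\Sym^m E)[X]>0$, which is the genuinely harder variant.
\hfill\qed
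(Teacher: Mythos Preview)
Your proof is correct. Your direct route—checking that the single metric with $\Theta_E\geqq0$ already satisfies $\Tr\Theta_E>0$ on an open set (via $\int_X\om^d=c_1(\det E)^d>0$) and then invoking Theorem~\ref{2h1}—is actually cleaner than the paper's argument and immediately gives $S^mE$ big for every $m\geqq r$. The paper instead follows essentially your alternative path: it appeals to Theorem~\ref{2h1} (together, implicitly, with Proposition~\ref{2h8}) to obtain $S^mE$ generically globally generated, notes that $\det S^mE>0$, and then applies Proposition~\ref{2h6}. Your explicit reduction of the two hypotheses to strong semi-positivity in a \emph{single} metric is a point the paper leaves implicit; what your direct approach buys is that one never needs the generic-global-generation detour, while the paper's route has the virtue of illustrating how Propositions~\ref{2h6} and~\ref{2h8} feed into each other.
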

\begin{proof}
By Theorem \ref{2h1}, $S^m E$ is generically globally generated and $\det S^m E>0$ so that \pref{2h6} applies.\end{proof}

\begin{Rem}\lab{2h12}
For a line bundle $L\to X$ we consider the properties
{\setbox0\hbox{(iii)}\leftmargini=\wd0 \advance\leftmargini\labelsep
 \begin{enumerate}
\item $L$ is \emph{nef};
\item $L$ is \emph{big};
\item $L$ is \emph{free} (frequently expressed by saying that $L$ is \emph{semi-ample}).
\end{enumerate}} 

For vector bundles $E\to X$ one has the corresponding  properties using $\cO(1)$.  In (ii) and (iii) one generally uses $\Sym^{m\P E} E$ rather than just $E$ itself. 

Although all these properties are important, in some sense (iii) is the most stable (note that (iii)$\implies$(i)).  Repeating a discussion from the introduction, 
suppose that we have in $L\to X$ a Hermitian metric with Chern form $\om$ such that
\begin{equation}\lab{2h13}
\bmp{5}{$\om\geqq 0$ and $\om>0$ on $X\bsl Z$ where $Z\subset X$ is a normal crossing divisor.}\end{equation}
Then $\om$ defines a K\"ahler metric $\om^\ast$ on $X^\ast := X\bsl Z$, and then as in \pref{1a10n} one may ask the
\begin{equation}\lab{2h14}
\bmp{4.75}{{\bf Question:} \emph{Are there properties of the metric $\om^\ast$, especially those involving its curvature $R_{\om^\ast}$, that imply that $L\to X$ is free?}}\end{equation}
\end{Rem}
 
 \section{Norm positivity}
 \subsection{Definition and first properties} \hfill
 
 As   previously noted, for many purposes including those arising from Hodge theory  strict positivity of a holomorphic vector bundle in the sense of \pref{2b2} and ~\pref{2b3} is too strong, whereas semi-positivity  is too weak.  The main observation of this section is that for many bundles that arise naturally  in algebraic geometry the curvature has a special form, one that implies semi-positivity in both of the above senses and where in examples the special form  has  Hodge-theoretic and algebro-geometric interpretations.
 
 \begin{Defin} \lab{3a1}
 Let $E\to X$ be a Hermitian vector bundle with curvature $\Theta_E$.  Then $\Theta_E$ has the \emph{norm positivity} property if there is a Hermitian vector bundle $G\to X$ and a holomorphic bundle mapping
 \begin{equation}\lab{3a2}
 A:TX\otimes E\to G\end{equation}
 such that for $x\in X$ and $e\in E_x$, $\xi\in T_xX$
 \begin{equation}\lab{3a3}
 \Theta_E(e,\xi)=\|A(\xi\otimes e)\|^2_G.\end{equation}
 
 Here we are identifying $E_x$ with $E^\ast_x$ using the metric, and  $\|\enspace\|^2_G$ denotes the square norm in $G$.
 In matrix terms, relative to unitary frames in $E$
 and $G$ there will be a matrix $A$ of $(1,0)$ forms such that the curvature matrix is given by 
 \begin{equation}\lab{3a4}
 \Theta_E=-{}^t\ol{A}\wedge A.\end{equation}
 We note that \pref{3a3} will hold for any tensors in $TX\otimes E$, not just decomposable ones.  As a consequence $E\to X$ is semi-positive in both senses \pref{2b2} and \pref{2b3}.\end{Defin}
 
The main implications of norm positivity will use the following observation:
\begin{equation}\lab{3a5}
\bmp{5}{\em If the curvatures of Hermitian bundles $E,E'\to X$ have the norm positivity property, then the same is true for $E\oplus E'\to X$ and $E\otimes E'\to X$, as well as Hermitian direct summands of these bundles.}\end{equation}

 \begin{proof}
 If we have $A:TX\otimes E\to G$ and $A':TX\otimes E'\to G'$, then $\Theta_{E\otimes E'}=( \Theta_E\otimes \Id_{E'})\oplus (\Id_E\otimes \Theta_{E'})$, and
 \[
 (A\otimes \Id_{E'})\oplus (\Id_E\otimes A'):TX\otimes E\otimes E'\to (G\otimes E')\oplus (E\otimes G')\]
leads to norm positivity for $\Theta_{E\otimes E'}$.  The argument for $\oplus$ is evident.\end{proof}

 \subsection{A result using norm positivity}\hfill
 
 The idea is this: For this discussion we abbreviate
 \[
 T=T_x X, \quad E=E_x,\quad G=G_x \]
 and identify $E\cong E^\ast$ using the metric.  
We have a linear mapping
 \begin{equation}\lab{3b1}
 A:T\otimes E\to G,\end{equation}
 and using \pref{3a3} non-degeneracy properties of this mapping will imply positivity properties of $\Theta_E$.  Moreover, in examples the mapping $A$ will have algebro-geometric meaning so that  algebro-geometric assumptions will lead to positivity properties of the curvature.
 
 The simplest non-degeneracy property of \pref{3b1} is that $A$ is injective; this seems to not so frequently happen in practice.  The next simplest is that $A$ has injectivity properties in each factor separately.  Specifically  we consider the two conditions
 \begin{equation}\lab{3b2}
  A:T\to \Hom(E,G)\hensp{is injective;}\end{equation}
  \begin{equation}
\lab{3b3}
 \bmp{5}{for general $e\in E$, the mapping $A(e):T\to G$ given by 
\[
A(e)(\xi)=A(\xi\otimes e),\quad \xi\in T\]
is injective.}\end{equation}
The geometric meanings of these are:
\begin{equation}\lab{3b4}
\bmp{5}{\pref{3b2} \emph{is equivalent to having
\[
\Theta_{\det E}=\Tr\,\Theta_E>0\]
at $x$}; and}\end{equation}
\begin{equation}\lab{3b5}
\bmp{5}{\pref{3b3} \emph{is equivalent to having
\[
\om>0\]
at $(x,[e])\in (\P E)_x$.}}\end{equation}
This gives the 
\begin{prop}\lab{3b6} If $E\to X$ has a metric whose curvature has the norm  positivity property, then
\begin{enumerate}[{\rm (i)}]
\item \pref{2b2}$\implies \det E$  {is big};
\item \pref{3b3} $\implies E$ {is big}.\end{enumerate}
\end{prop}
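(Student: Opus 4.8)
The plan is to reduce both assertions to the Grauert--Riemenschneider/Siu--Demailly criterion \pref{2f2}, using the curvature dictionary already established in \pref{3b4}, \pref{3b5} and \pref{2c3}. The common starting point is that, by Definition \ref{3a1} and \pref{3a3}, norm positivity forces $\Theta_E(e,\xi)=\|A(\xi\otimes e)\|^2_G\geqq 0$ for all $e\in E_x$ and $\xi\in T_xX$, so in particular $E_\met\geqq 0$; the whole game is then to upgrade this semi-positivity to strict positivity on an open set and invoke \pref{2f2}.

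For (i) I would take traces in a unitary frame to get $\Theta_{\det E}=\Tr\,\Theta_E\geqq 0$ on all of $X$, so $\det E$ is nef. By \pref{3b4} the hypothesis of (i) is exactly the assertion that $\Theta_{\det E}(x)>0$ at the relevant point; since $\Theta_{\det E}$ is a continuously (indeed real-analytically) varying Hermitian $(1,1)$ form, its positivity locus is open, hence a non-empty open subset of $X$, i.e.\ $(\det E)^0_\met>0$ in the sense of Section II.B. Applying \pref{2f2} to the line bundle $\det E\to X$ then gives $\kappa(\det E)=\dim X$, which is precisely the statement that $\det E$ is big.

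For (ii) I would pass to $\P E\xri{\pi}X$ with its tautological line bundle $\cope(1)$, metrized by the metric induced from $E$ and carrying curvature form $\om_E$. By \pref{2c3} (in its ``$\geqq 0$'' version), $\om_E\geqq 0$ on all of $\P E$, so $\cope(1)$ is nef. By \pref{3b5} the hypothesis \pref{3b3} of (ii) is equivalent to $\om_E>0$ at the point $(x,[e])\in\P E$; since this holds at a general point and the positivity locus is open, we get $\om_E\geqq 0$ on $\P E$ with $\om_E>0$ on a non-empty open set, i.e.\ $\cope(1)^0_\met>0$ on the projective manifold $\P E$. A final application of \pref{2f2}, now to $\cope(1)\to\P E$, yields $\kappa(\cope(1))=\dim\P E$, which by definition (cf.\ \pref{2f6} and the proof of \pref{2h8}) says exactly that $E\to X$ is big.

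I do not anticipate a real obstacle here: the substantive inputs---the theorem \pref{2f2} and the dictionary \pref{2c3}, \pref{3b4}, \pref{3b5}---are already in hand, and the only identity to verify, $\Theta_{\det E}=\Tr\,\Theta_E$, is elementary. The one step deserving a word of care is the passage from a hypothesis stated pointwise (or ``at a general point'') to the ``$>0$ on an open set'' conclusion that \pref{2f2} requires; this is routine, since positivity of a continuously varying Hermitian form is an open condition, so the relevant locus---non-empty by hypothesis---is automatically open.
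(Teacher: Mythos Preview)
Your proposal is correct and is exactly the argument the paper has in mind: the proposition is stated immediately after \pref{3b4} and \pref{3b5} with the words ``This gives the\dots'', so the intended proof is precisely the reduction to \pref{2f2} via the dictionary \pref{3b4}, \pref{3b5} (and \pref{2c3}) that you spell out. The only cosmetic point is that the hypothesis labeled \pref{2b2} in the statement is a typo for \pref{3b2}; you have implicitly read it that way, which is correct.
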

A bit more subtle is the following result, which although it is a consequence of Theorem \ref{2h1} and \pref{3b3}, for later use we shall give another  proof.
\begin{Thm}  \lab{3b7}If the rank $r$ bundle  $E\to X$ has a metric whose curvature has the norm  positivity property, then
\[
\hbox{\pref{2b2} $\implies  S^r E$ {is big}.}\]
\end{Thm}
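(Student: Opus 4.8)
The plan is to deduce the statement from Proposition \ref{3b6}(ii) applied to $S^rE$ in place of $E$. The first step is to record that the norm positivity property is inherited by $S^rE$: since $S^rE$ is an orthogonal holomorphic direct summand of $E^{\otimes r}$ (the symmetrization projector is parallel for the induced metric and connection, so the corresponding splitting has vanishing second fundamental form), \pref{3a5} provides a Hermitian bundle $G_r\to X$ and a holomorphic map $A_r\colon TX\otimes S^rE\to G_r$, built from $A$ by the Leibniz rule, with
\[
\Theta_{S^rE}(s,\xi)=\|A_r(\xi\otimes s)\|^2_{G_r}
\]
for $s\in S^rE_x$ and $\xi\in T_xX$. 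In particular $\Theta_{S^rE}\geqq 0$ everywhere.

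By Proposition \ref{3b6}(ii) it then suffices to verify condition \pref{3b3} for $S^rE$, namely that at a general point $(x,[s])\in\P S^rE$ the homomorphism $A_r(s)\colon T_xX\to (G_r)_x$, $\xi\mapsto A_r(\xi\otimes s)$, is injective. I would test this on a decomposable tensor. Choose $x\in X$ at which the hypothesis \pref{3b2} holds, so that $A\colon T_xX\to\Hom(E_x,G_x)$ is injective; let $e_1,\dots,e_r$ be a unitary basis of $E_x$ and put $s=e_1\cdots e_r$. Exactly as in the computation in the proof of Theorem \ref{2h1}, after diagonalizing the relevant Hermitian forms,
\[
\|A_r(\xi\otimes s)\|^2_{G_r}=\Theta_{S^rE}(s,\xi)=\sum_{i=1}^r\Theta_E(e_i,\xi)=\sum_{i=1}^r\|A(\xi\otimes e_i)\|^2_G .
\]
If $\xi\neq 0$ then $A(\xi\otimes\,\cdot\,)\colon E_x\to G_x$ is nonzero by \pref{3b2}, hence $A(\xi\otimes e_i)\neq 0$ for at least one basis vector $e_i$, and the right-hand side is strictly positive. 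Thus $A_r(s)$ is injective, which is condition \pref{3b3} for $S^rE$, and Proposition \ref{3b6}(ii) yields that $S^rE$ is big.

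Alternatively one can finish without quoting Proposition \ref{3b6}: the displayed identity shows that the $(1,1)$-form $\Theta_{S^rE}(s,\,\cdot\,)$ is positive at $x$, so Lemma \ref{n2h3} (with $F=S^rE$ and $f=s$) gives that the curvature form $\om_r$ of $\cO_{\P S^rE}(1)$ is strictly positive at $(x,[s])$, while $\om_r\geqq 0$ everywhere because $\Theta_{S^rE}\geqq 0$; hence $\cO_{\P S^rE}(1)$ satisfies $L^0_\met>0$ and is big by \pref{2f2}, so $S^rE$ is big by definition.

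The one point that needs care is the first step — that, as a summand of $E^{\otimes r}$, the bundle $S^rE$ still carries a metric whose curvature has the pure form $-{}^t\ol{A_r}\wedge A_r$; this is the content of \pref{3a5} and rests on the symmetrization projector being parallel, so that the second fundamental form of the inclusion $S^rE\hookrightarrow E^{\otimes r}$ vanishes. Everything after that is bookkeeping, the one substantive observation being that evaluating $\Theta_{S^rE}$ on the very special decomposable vector $e_1\cdots e_r$ reproduces $\Tr\Theta_E$ in the horizontal directions, so that the trace hypothesis \pref{3b2} is precisely what makes the horizontal curvature strictly positive at $(x,[s])$; Lemma \ref{n2h3} then promotes this to strict positivity of $\om_r$ on an open subset of $\P S^rE$.
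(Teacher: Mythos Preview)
Your proof is correct and follows essentially the same route as the paper's: use \pref{3a5} to pass norm positivity to $S^rE$, evaluate at the decomposable vector $e_1\cdots e_r$ with $\{e_i\}$ a unitary basis, and observe that $\Theta_{S^rE}(e_1\cdots e_r,\xi)$ equals (up to a harmless positive constant) $\sum_i\|A(\xi\otimes e_i)\|_G^2$, which is nonzero for $\xi\ne0$ precisely by the hypothesis \pref{3b2}. Your packaging via Proposition \ref{3b6}(ii) (or alternatively Lemma \ref{n2h3} plus \pref{2f2}) is a bit more explicit than the paper's terse one-line conclusion, and your remark that the symmetrization projector is parallel --- hence the second fundamental form of $S^rE\subset E^{\otimes r}$ vanishes --- usefully fills in the ``direct summand'' clause of \pref{3a5}, which the paper states but does not prove.
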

 
  \begin{Cor} \lab{3b8}
 With the assumptions in \pref{3b7} the map
 \[
 H^0(X,S^m(S^r E)) \to S^m(S^rE)\]
 is generically surjective for $m\gg 0$.\footnote{Then this also implies that the map
 \[
 H^0(X,S^m(S^rE))\to S^{mr}\]
 is generically surjective.}\end{Cor}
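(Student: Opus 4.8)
The plan is to obtain the Corollary as an immediate consequence of the bigness statement in Theorem \ref{3b7}, combined with the fact---established in the course of the proof of Proposition \ref{2h8}---that a big vector bundle has generically globally generated symmetric powers in all sufficiently large degrees.

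First I would record that, under the hypotheses of Theorem \ref{3b7}, the bundle $F:=S^rE\to X$ is big and, in addition, nef: bigness is exactly the conclusion of \ref{3b7}, while nef-ness holds because a metric with the norm positivity property is in particular semi-positive, so that $E$ is nef and hence so is $S^rE$. Thus $\cO_{\P F}(1)\to\P F$ is a nef and big line bundle. Applying to $F$ the principle recalled in \ref{2h8} then yields that $S^mF$ is generically globally generated for $m\gg0$. Unwinding this on $\P F$: by Kodaira's lemma write $\cO_{\P F}(N)\equiv H+\mathcal E$ with $H$ ample and $\mathcal E$ effective; multiplying sections of $\cO_{\P F}(mH)$ by the $m$-th power of the section cutting out $\mathcal E$ produces, for $m\gg0$, global sections of $\cO_{\P F}(mN)$ that generate this line bundle away from $\supp\mathcal E$; pushing forward through $\pi_\ast\cO_{\P F}(m)=S^mF$, and using that $\cO_{\P F}(1)$ restricts to $\cO(1)$ on the fibres of $\pi\colon\P F\to X$, one gets that the evaluation map $H^0(X,S^mF)\to S^mF$ is surjective at a general point of $X$ once $m\gg0$. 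With $F=S^rE$ this is precisely the assertion of the Corollary. For the footnote, post-composing this generically surjective evaluation map with the natural surjection $S^m(S^rE)\twoheadrightarrow S^{mr}E$ coming from multiplication in the symmetric algebra shows that $H^0(X,S^m(S^rE))\to S^{mr}E$ is generically surjective as well.

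The one step that deserves care---and which I would import from the proof of Proposition \ref{2h8} rather than reprove---is the passage from ``$\cO_{\P F}(m)$ is generated by its global sections away from a proper closed subset of $\P F$'' to ``$S^mF$ is globally generated at a general point of $X$''. What is needed is that the stable base locus of $\cO_{\P F}(1)$ does not dominate $X$, so that a general fibre $\P F_x$ is disjoint from it; the relative ampleness of $\cO_{\P F}(1)$ already shows that no entire fibre can lie in the base locus, and ruling out that the base locus contains a multisection of $\P F\to X$ is where the nef-ness of $S^rE$---and hence the norm positivity hypothesis on $E$---is genuinely used. This is the only real obstacle; the remaining ingredients (Theorem \ref{3b7}, Kodaira's lemma, and the pushforward formula $\pi_\ast\cO_{\P F}(m)=S^mF$) are formal.
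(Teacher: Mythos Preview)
Your approach matches the paper's: no separate proof of the corollary is given there, and it is treated as immediate from Theorem \ref{3b7} via the argument of Proposition \ref{2h8} that a big vector bundle has generically globally generated high symmetric powers. Your extra care about whether the base locus of $\cO_{\P F}(m)$ could dominate $X$ goes beyond what the paper spells out---the paper simply asserts the relevant equivalence in the proof of \ref{2h8} without isolating this point.
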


 \begin{proof}[Proof of Theorem \ref{3b7}] Keeping the above notations and working at a general point  in $\P E$ over $x\in X$, given $\xi\in T_xX$ and a basis $e_1,\dots, e_r$ of $E_x$ from \pref{3b3} we have
 \begin{equation}\lab{3b10}
 \sum^r_{i=1}\| A(\xi\otimes e_i)\|^2_G\ne 0.\end{equation}
 Then using \pref{3a5} for the induced map
 \[
 A:T_xX\otimes S^r E_x\to S^{r-1} E_x\otimes G_x\]
 from \pref{3b10}  for $\om_r$ the canonical (1,1) form on $\P S^r E$  at the point $(x,[e_1\cdots e_r])$  \[
 \lra{\om_r ,\xi\wedge \bar\xi}>0.\qedhere\]
 \end{proof}
 
 \begin{rem} Viehweg (\cite{Vie83a}) introduced the notion of \emph{weak positivity} for a coherent sheaf.  For \vb s this means that for  any ample \lb\ $L\to X$ there is a $k>0$ such that the evaluation mapping 
 \[
 H^0(X,S^\ell(S^k E\otimes L))\to S^\ell (S^k E\otimes L)\] 
 is generically surjective for $\ell\gg 0$.  He then shows   that for the particular bundles that arise in the proof of the Iitaka conjecture if one has $\det E>0$ on an open set, and then an intricate cohomological argument gives that $E$ is weakly positive.  As will be explained in Section V.A of these notes,  \pref{3b8} may be used to circumvent the need for weak positivity in this case.
  \end{rem}
  
  We note that the ample line bundle $L\to X$ is not needed in \pref{3b8}.  We also point out that
  \[
  E_\met \geqq 0 \implies E\hbox{ is weakly positive (cf.\ \cite{Pau16}).}\]
  This is plausible since $S^k E \geqq 0$ and $L>0\implies S^kE\otimes L>0$.  In loc.\ cit.\  this result  is extended to important situations where the metrics have certain types of singularities.

  We conclude this section with a discussion of the Chern forms of bundles having the norm positivity property, including the \hvb.
  \begin{Prop}\lab{3b10}
  The linear mapping $A$ induces 
  \[
  \wedge^q A:\wedge^q T\to \wedge^q G\otimes S^q E,\]
  and up to a universal constant
  \[
  c_q(\Theta)=\|\wedge^q A\|^2.\]
  \end{Prop}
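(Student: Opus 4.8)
The plan is to read the formula off by substituting the norm--positivity expression $\Theta_E=-{}^{t}\ol A\wedge A$ into the Chern--Weil polynomial that computes $c_q$ and then recognizing the outcome as the squared norm of a $q$-th exterior power of $A$.

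First I would construct $\wedge^q A$ by multilinear algebra alone. Viewing $A$ as a holomorphic section of $\Hom(TX,\Hom(E,G))=\Hom(TX,E^{\ast}\otimes G)$, one takes the $q$-th exterior power in the $TX$-slot to obtain a holomorphic section of $\Hom(\wedge^q TX,(E^{\ast}\otimes G)^{\otimes q})$, and then composes with the $\GL(E)\times\GL(G)$--equivariant projection of $(E^{\ast}\otimes G)^{\otimes q}$ onto the Cauchy summand that is dictated by the parities of the form degrees (for odd--degree entries the roles of the symmetric and exterior powers get interchanged relative to ordinary matrices), namely $S^q E^{\ast}\otimes\wedge^q G$; identifying $S^q E^{\ast}$ with $S^q E$ via the metric yields the asserted map $\wedge^q A\colon\wedge^q TX\to\wedge^q G\otimes S^q E$. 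Concretely, in a local holomorphic frame in which $A$ is the matrix of $(1,0)$-forms $(A^{\mu}_{\alpha})$, the components of $\wedge^q A$ are the $q\times q$ minors of $(A^{\mu}_{\alpha})$ formed with $\wedge$ in place of multiplication. Finally $\|\wedge^q A\|^2:=\langle\wedge^q A,\wedge^q A\rangle$ is the $(q,q)$-form obtained by contracting with the induced metric on $\wedge^q G\otimes S^q E$ and pairing the $(q,0)$-form parts by $\overline{(\,\cdot\,)}\wedge(\,\cdot\,)$; one checks in passing that it is $\geqq 0$.

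Next I would expand the $q$-th Chern form by Chern--Weil, $c_q(\Theta)=\bigl(\tfrac{i}{2\pi}\bigr)^{q}\sum_{|I|=q}\det\bigl(\Theta^{\alpha_i}_{\alpha_j}\bigr)_{\alpha_i,\alpha_j\in I}$, the inner determinant again computed with $\wedge$. Substituting $\Theta^{\alpha}_{\beta}=-\sum_{\mu}\ol A^{\mu}_{\alpha}\wedge A^{\mu}_{\beta}$, expanding each determinant over permutations, bringing all the $(0,1)$-factors $\ol A$ to the left of all the $(1,0)$-factors $A$ (recording the universal reordering signs and an overall $(-1)^q$), and regrouping the internal sums over the repeated $G$-indices $\mu$, one finds that the result equals, up to a universal constant, $\sum_{I,J}\overline{\det(A^{J}_{I})}\wedge\det(A^{J}_{I})=\|\wedge^q A\|^2$. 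This is the form--valued incarnation of the classical identity $\wedge^q(B^{\ast}B)=(\wedge^q B)^{\ast}(\wedge^q B)$, i.e.\ of the multiplicativity of exterior powers under composition (Cauchy--Binet). The universal constant absorbs the power of $\tfrac{i}{2\pi}$, the reordering signs, and the combinatorial multiplicities from summing over ordered versus unordered index sets; the normalization is pinned down by $q=1$, where the identity reads $c_1(\Theta)=\tfrac{i}{2\pi}\Tr\Theta=\mathrm{const}\cdot\|A\|^2$ and is immediate.

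The one genuinely delicate point, which I would single out as the main obstacle, is the sign and multiplicity bookkeeping in this last step. Since $A$ and $\ol A$ are odd--degree forms, every transposition used in bringing a determinant into ``Gram form'' contributes $-1$, and the diagonal contributions $\mu_a=\mu_b$ that vanish in the ordinary Cauchy--Binet identity now persist; tracking these is precisely what interchanges the symmetric/exterior structure between the two factors of $\Hom(E,G)$ --- so that the minors land in $\wedge^q G\otimes S^q E$ as asserted --- and what guarantees that the right-hand side is a bona fide nonnegative $(q,q)$-form with the correct universal constant rather than merely a real closed form of the right class. Carrying out the case $q=2$ in coordinates already exhibits the entire pattern, and the general $q$ then follows by the identical reorganization.
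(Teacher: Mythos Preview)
Your proposal is correct and follows essentially the same route as the paper. The paper's argument is extremely terse: it simply writes $\wedge^q\Theta=\wedge^qA\otimes(\wedge^qA)^\ast$ (multiplicativity of $\wedge^q$ under composition, i.e.\ exactly the Cauchy--Binet identity you invoke) and then takes the trace, $c_q(\Theta)=\Tr\wedge^q\Theta=(\wedge^qA,\wedge^qA)$. Your explicit expansion of the Chern--Weil determinant and regrouping into minors is just an unpacking of that one line; your observation about the odd--degree parities forcing the target to be $\wedge^qG\otimes S^qE$ rather than $\wedge^qG\otimes\wedge^qE$ is the content of the paper's parenthetical remark that in the minors ``the terms of $E$ are multiplied as polynomials.''
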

  
  \begin{proof}
  The notation means
  \[
  \| \wedge^q A\|^2 = (\wedge^q A,\wedge^q A)\]
  where in the inner product we use the Hermitian metrics in $G$ and $E$, and we identify
  \[
  \wedge^q T^\ast\otimes \ol{\wedge^q T^\ast}\cong (q,q)\hbox{-part of } \wedge^{2q}(T^\ast\otimes \ol T^\ast).\]
  Then letting $A^\ast$ denote the adjoint of $A$ we have
  \[
  \wedge^q \Theta= \wedge^q A\otimes \wedge^q A^\ast= \wedge^q A\otimes (\wedge^q A)^\ast\]
  and
  \[
  c_q(\Theta)=\Tr \wedge^q (\Theta)=(\wedge^q A,\wedge^q A).\qedhere\]
  \end{proof}
  
  In matrix terms, if 
  \[
  A= \dim G\times \dim T \hbox{ matrix with entries in }E\]
  then
  \[
  \wedge^q A=\lrc{ \begin{matrix} \hbox{matrix whose entries are the $q\times q$ minors of $A$,}\\
  \hbox{where the terms of $E$ are multiplied as polynomials.}\end{matrix}}\]
  It follows that up to a universal constant for the \hvb\ $F$
  \[
  c_q(\Theta_F)=\sum_a \Psi_\alpha\wedge\ol\Psi_\alpha\]
  where the $\Psi_\alpha$ are $(q,0)$ forms.  In particular, any monomial $c_I(\Theta_F)\geqq 0$.
  
  The vanishing of the matrix $\wedge^q \Phi_{\ast,n}$ is {\em not} the same as $\rank\Phi_{\ast,n}<q$.  In fact, 
  \begin{equation}\lab{3b11}
  \rank \Phi_{\ast,n}<q \iff c_1(\Theta_F)^q=0.\end{equation} 
  
  In general we have the 
  
  \begin{Prop}\lab{3b12}
  If $E\to X$ has the norm positivity property, then $ P(\Theta_E)\geqq 0$ for any $P\in \cC$.\end{Prop}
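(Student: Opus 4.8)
The plan is to reduce to the Schur polynomials and then to realize each Schur form of a norm-positive bundle as a squared norm, in the spirit of Proposition~\ref{3b10}. First, $\cC$ is generated over $\Q^{>0}$ by the Schur polynomials $s_\la$ of \pref{2d1}; since a non-negative rational combination of non-negative forms is again non-negative, it suffices to prove $s_\la(\Theta_E)\geqq 0$ for each such $\la$. (If one only wants the numerical statement $\int_Y P(\Theta_E)\geqq 0$, it is immediate: norm positivity implies Griffiths semi-positivity (Definition~\ref{3a1}), hence $E$ is nef, and then $P(c(E))\geqq 0$ numerically for $P\in\cC$ by \cite{Laz04}. The point here is the pointwise inequality of forms, matching the flavor of Proposition~\ref{3b10}.)

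For the form-level statement I would follow the route suggested by Proposition~\ref{3b10}, where $c_q(\Theta_E)=\|\wedge^q A\|^2$ up to a positive constant. For a general partition $\la=(\la_1\geqq\cdots\geqq\la_\ell)$ one feeds $A$ into the Schur functor $\mathbb{S}_\la$. Concretely, pass to the complete flag bundle $\pi\colon\mathrm{Fl}(E)\to X$, on which $\pi^*E$ carries its tautological filtration with line-bundle graded pieces $L_1,\dots,L_r$, each with the induced metric; by the curvature-of-a-quotient formula \pref{2b7} each $\Theta_{L_m}$ is a sum of a term pulled back from $A$ and a second-fundamental-form term, both non-negative, and $\pi^*E$ again has the norm positivity property (precompose the pullback of $A$ with $T\mathrm{Fl}(E)\to\pi^*TX$). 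Expanding $\pi^*s_\la(\Theta_E)$ by the splitting principle and collecting terms according to this structure, one rewrites it as a finite sum $\sum_\alpha\Psi_\alpha\wedge\ol\Psi_\alpha$ with the $\Psi_\alpha$ of type $(d,0)$; pushing forward along the proper submersion $\pi$ (integration over the fibre) preserves non-negativity, giving $s_\la(\Theta_E)\geqq 0$ on $X$. Alternatively, and more briefly: $\Theta_E+\eps\,\om\otimes\Id_E$ is Griffiths positive for a K\"ahler form $\om$ and $\eps>0$, so Griffiths' theorem on the positivity of the Schur forms of a Griffiths-positive bundle (see \cite{Laz04} and \cite{Dem12a} for accounts and references) gives $s_\la(\Theta_E+\eps\,\om\otimes\Id_E)>0$, and letting $\eps\to 0$ yields $s_\la(\Theta_E)\geqq 0$.

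The main obstacle is the sign bookkeeping in the first route: one must verify that the signs produced by the Giambelli/Jacobi--Trudi expansion of $s_\la$---equivalently, by the antisymmetrizations in the flag-bundle computation---recombine so that the universal constant in front of $\|\mathbb{S}_\la A\|^2$ is positive. This is exactly Griffiths' computation, carried out here in the special form $\Theta_E=-{}^t\ol A\wedge A$. The perturbation route sidesteps this by quoting the theorem and is the more economical one for these notes, since only the non-strict inequality is claimed; but the first route has the merit of exhibiting $s_\la(\Theta_E)$ explicitly as the squared norm of a Schur-functor image of the bundle map $A$, which is the viewpoint of Section~III.
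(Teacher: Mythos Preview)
The paper does not give its own argument; it simply cites \cite{Gri69}. So the comparison is not with a proof in the paper but with the implicit claim that Griffiths' 1969 computation, carried out directly for curvature of the special shape $\Theta_E=-{}^t\ol A\wedge A$, establishes the result.

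Your second route (perturb to Griffiths-positive and quote a theorem) has a genuine gap. The statement you invoke---that for a Griffiths-positive bundle every Schur form $s_\la(\Theta_E)$ is a positive $(d,d)$-form---is not a theorem in \cite{Laz04} or \cite{Dem12a}; it is Griffiths' \emph{conjecture}. What \cite{Laz04} contains is the numerical statement (Fulton--Lazarsfeld), and the form-level positivity is known only for special $\la$. The entire point of Proposition~\ref{3b12} is that the stronger hypothesis of norm positivity, i.e.\ the explicit factorization of $\Theta_E$ through a holomorphic $A$, is what makes the pointwise argument go through, exactly as it did for $c_q$ in Proposition~\ref{3b10}. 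Perturbing destroys that factorization and leaves you appealing to an open problem.

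Your first route is closer in spirit to \cite{Gri69}, but the flag-bundle step as written is not correct. Each $L_m=S_m/S_{m-1}$ is a quotient of a \emph{sub}-bundle of $\pi^\ast E$, so $\Theta_{L_m}$ receives a negative second-fundamental-form contribution (from $S_m\subset\pi^\ast E$) as well as the positive one (from $S_{m-1}\subset S_m$); your assertion that ``both [are] non-negative'' is false, and this is precisely where the sign issue you flag becomes an actual obstruction, not just bookkeeping. The computation in \cite{Gri69} does not pass to a flag bundle; it works directly with the matrix $A$ of $(1,0)$-forms and exhibits $s_\la(\Theta_E)$ as a sum $\sum_\alpha\Psi_\alpha\wedge\ol\Psi_\alpha$, extending the $\wedge^qA$ identity of Proposition~\ref{3b10} to the relevant Schur-type minors of $A$. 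If you want to complete a self-contained argument, that is the computation to carry out.
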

  
  A proof of this appears in \cite{Gri69}.
  
  In the geometric case when we have a VHS arising from a family of smooth varieties  we have the period mapping $\Phi$ with the end piece of differential  being
  \[
  \Phi_{\ast,n}:T_b B\to \Hom\lrp{H^0(\Om^n_{X_b}),H^1(\Om^{n-1}_{X_b})}\]
 and   the algebro-geometric interpretation of \pref{3b11} is standard; e.g., $\Phi_{\ast,n}$ injective is equivalent to local Torelli holding for the $H^{n,0}$-part of the Hodge structure.

  We conclude this subsection with a result that pertains to a question that was raised above.
  
  \begin{Prop}\lab{3b13} If $\Phi:B\to\Ga\bsl D$ has no trivial factors, and if $h^{n,0}\leqq \dim B$ and $H^0(\ol B,F_e)\ne 0$, then
  \[
  c_{h^{n,0}}(F)\ne 0.\]
  \end{Prop}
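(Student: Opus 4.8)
The plan is to deduce the statement from the norm-positivity description of the Chern forms of the Hodge bundle \pref{3b10} together with the semisimplicity of polarized variations of Hodge structure.

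\emph{Step 1 (reduction to non-vanishing of the top Chern form).} Put $r=h^{n,0}=\rank F$. Applying \pref{3b10} to $E=F$ with $A=\Phi_{\ast,n}$, the top Chern form of the canonically extended Hodge bundle is, up to a universal positive constant, $c_{r}(\Theta_{F_e})=\|\wedge^{r}\Phi_{\ast,n}\|^2=\sum_\alpha\Psi_\alpha\wedge\ol\Psi_\alpha$, a non-negative $(r,r)$-current on $\ol B$ representing $c_{h^{n,0}}(F)$; by the mild-singularities results of Section IV.B this current may be multiplied by smooth forms and its restriction to $B$ is the smooth Chern form. Suppose $c_{h^{n,0}}(F)=0$. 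Since $r\leqq\dim B$, for a K\"ahler form $\kappa$ on $\ol B$ we get $0=\int_{\ol B}c_{r}(\Theta_{F_e})\wedge\kappa^{\dim B-r}$, the integral of a non-negative top-degree current; hence the current vanishes, so each $\Psi_\alpha\equiv 0$ and $\wedge^{r}\Phi_{\ast,n}\equiv 0$ on $B$. (This is where the hypothesis $h^{n,0}\leqq\dim B$ enters.)

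\emph{Step 2 (meaning of the vanishing).} Because $r=\rank F$, the identity $\wedge^{r}\Phi_{\ast,n}\equiv 0$ forces, for every $b\in B$ and every $\xi\in T_bB$, the Kodaira--Spencer type map $\Phi_{\ast,n}(\xi\otimes\,\cdot\,):F^n_b\to (F^{n-1}/F^n)_b$ to be non-injective; equivalently the Gauss--Manin derivative never moves $F^n$ out of itself with full rank.

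\emph{Step 3 (production of a trivial factor).} Let $0\ne s\in H^0(\ol B,F_e)$ and restrict it to a holomorphic section of $F^n\subset\V$ over $B$. Using the rank degeneracy of $\Phi_{\ast,n}$ from Step~2, together with the relative weight filtration property and the estimates of \cite{CKS86} to control $s$ near $Z=\ol B\bsl B$, one shows that the smallest $\R$-sub-local system $\W\subseteq\V$ with $s\in\W_\C$ satisfies $\Phi_\ast|_\W=0$, i.e.\ $\W$ has constant Hodge filtration. By Deligne's semisimplicity $\W$ underlies a polarized sub-variation, which is non-zero (it contains $s$) and has constant period map, hence is a trivial factor of $\Phi$ --- contradicting the hypothesis. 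Therefore $c_{h^{n,0}}(F)\ne 0$.

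The main obstacle is Step~3: Steps~1 and 2 are formal once \pref{3b10} and Section IV.B are available, but deducing the existence of a constant sub-variation from ``the top exterior power of $\Phi_{\ast,n}$ vanishes and $F^n$ has a global section over $\ol B$'' requires a careful propagation of $s$ under the Gauss--Manin connection, controlled simultaneously on $B$ and across the boundary $Z$, and is the genuinely Hodge-theoretic input.
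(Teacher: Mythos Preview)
Your Steps~1 and~2 set up a plausible framework, but there is a genuine gap in Step~3, and the route you take diverges from the paper in a way that makes that gap hard to close.

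First, the interpretation in Step~2 is not quite right. The vanishing of $\wedge^{r}\Phi_{\ast,n}$ in the sense of Proposition~\ref{3b10} means that certain $r\times r$ minors (with entries multiplied in $S^{r}F$) vanish; it does \emph{not} directly say that for each fixed $\xi$ the map $\Phi_{\ast,n}(\xi):F\to G$ drops rank. Evaluating $\wedge^{r}A$ on a decomposable $\xi\wedge\cdots\wedge\xi$ is automatically zero, so your pointwise-degeneracy reading needs independent justification.

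More seriously, Step~3 never explains why the rank degeneracy of $\Phi_{\ast,n}$ has anything to do with the \emph{particular} section $s$. Even if every $\Phi_{\ast,n}(\xi)$ had nontrivial kernel, those kernels could vary with $\xi$ and need not contain $s(x)$. So there is no mechanism in your outline forcing $\nabla s$ to stay in the sub-local-system $\W$ generated by $s$, or forcing $\Phi_{\ast}|_{\W}=0$. Invoking \cite{CKS86} and Deligne semisimplicity doesn't supply this missing link; semisimplicity only helps once you already know $\W$ is a sub-VHS with constant period map.

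The paper's argument avoids this difficulty by working with $s$ directly and never passing through $\wedge^{r}\Phi_{\ast,n}$. From $c_{h^{n,0}}(F)=0$ one argues that $s$ is nowhere zero, then goes to a minimum of $\|s\|^{2}$ and combines the second-derivative test there with the integration-by-parts argument of Proposition~\ref{2g12} to obtain $Ds=0$ globally. That immediately gives $\nabla s=0$ for the Gauss--Manin connection, and then the arguments of \cite{Gri72} produce the trivial factor. The point is that the minimum principle applied to $\|s\|^{2}$ ties the curvature information to $s$ itself, which is exactly what your Step~3 is missing.
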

  \begin{proof} We will first prove the result when $B=\ol B$.  We let $s\in H^0(F,B)$ and assume that $c_{h^{n,0}}(F)=0$.  Then $s$ is everywhere non-zero and we may go to a minimum of $\| s\|^2$.  From Proposition \ref{2g12} we have $D\sigma=0$, which implies that the norm $\|s\|$ is constant and
  \[
  \nabla s=0\]
  where $\nabla$ is the Gauss-Manin connection.  Using the arguments \cite{Gri72} we may conclude that the variation of Hodge structure has a trivial factor.
  
  If $B\ne \ol B$, the arguments given in Section IV below may be adapted to show that the proof still goes through.  The point is the equality of the distributional and formal derivatives that arise in integrating by parts.\end{proof}
 \section{Singularities}
 In recent years the use of singular metrics and their curvatures  in algebraic geometry has become widespread and important.  Here we shall discuss some aspects of this development; the main objective is  to define mild singularities and show that the singularities that  arise in Hodge theory have this property.
 
 One may roughly divide   singularities into three classes:
   \begin{enumerate}[{\rm (i)}]
 \item metrics with \emph{analytic singularities} as defined in \cite{Dem12a} and \cite{Pau16}; these arise in various extensions of the Kodaira vanishing theorem. We shall only briefly discuss these in part to draw a contrast with the next type of singularities that will play a central role in these notes;
 \item metrics with \emph{logarithmic singularities}; these arise in Hodge theory especially in \cite{CKS86}, and also in  \cite{Kol87}, \cite{Zuo}, \cite{Bru16} and \cite{GGLR17}, and for those that do arise in Hodge theory we shall show that they are \emph{mild} as defined below;
 
 \item metrics with \emph{PDE singularities}; these arise in several places including the applications to moduli where important classes of varieties have canonical special metrics. We shall not discuss these here but refer to the survey papers \cite{Don1} and \cite{Don2} and the Bourbaki talk \cite{Dem16} for summaries of results and guides to the literature.\end{enumerate}  
 
 \subsection{Analytic singularities}
 For $E\to X$ a holomorphic \vb\ over a compact complex manifold, these singularities arise from metrics of the form
 \begin{equation}\lab{4a1}
  h=e^{-\vp}h_0\end{equation}
  where $h_0$ is an ordinary smooth metric in the bundle and $\vp$ is a \emph{weight function}, which in practice and   will almost always locally  be of the form
  \begin{equation}\lab{4a2}
  \vp=\vp' + \vp'' \end{equation}
  where $\vp'$ is  plurisubharmonic (psh) and $\vp''$ is smooth.
  
  \begin{defin}
  The metric \pref{4a1} has \emph{analytic singularities} if it locally has the form \pref{4a2} with 
  \[
  \vp' = \alpha\lrp{\log \sum^m_{j=1}|f_j(z)|^{\alpha_j}},\qquad\alpha,\alpha_j>0\]
  and where the $f_j(z)$ are holomorphic functions.\end{defin}
  
  \begin{defin}  Given a weight function $\vp$ of the form \pref{4a2}, we define the subsheaf $I(\vp) \subset \cO_X$ by
  \[
  I(\vp) = \lrc{ f\in \cO_X:\int e^{-\vp} |f|^2<\infty}.\]
  Here the integral is taken over a relatively compact open set in the domain of definition of $f$ using any smooth volume form on $X$.
  \end{defin}
  
  \begin{Prop}[Nadel; cf.\ \cite{Dem12a}]  \lab{4a3}$I(\vp)$ is a coherent sheaf of ideals.\end{Prop}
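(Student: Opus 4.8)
The plan is to run Nadel's argument (cf.\ \cite{Dem12a}): reduce the coherence of $I(\vp)$ to the strong Noetherian property of coherent sheaves, H\"ormander's $L^2$ estimate for $\ol\part$ with a weight carrying a logarithmic pole, and Krull's intersection theorem. First I would dispose of the easy points: the defining condition is local, so $I(\vp)$ is honestly a subsheaf of $\cO_X$, and it is an ideal sheaf because $\int e^{-\vp}|gf|^2\leqq (\sup|g|^2)\int e^{-\vp}|f|^2$ for holomorphic $g$ and $|f_1+f_2|^2\leqq 2|f_1|^2+2|f_2|^2$. Coherence is local, so I fix a point $x_0$; writing $\vp=\vp'+\vp''$ as in \pref{4a2}, the smooth summand $\vp''$ is locally bounded, hence affects neither the convergence of any integral below nor the plurisubharmonicity needed for H\"ormander, so I may assume $\vp$ itself is plurisubharmonic on a small coordinate ball $\Om$ centred at $x_0$.

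Next I would produce a coherent candidate. Let $\cH$ be the space of holomorphic functions $f$ on $\Om$ with $\int_\Om e^{-\vp}|f|^2\,dV<\infty$; since $e^{-\vp}$ is locally bounded below (as $\vp$ is locally bounded above), an $L^2$-limit of such functions converges locally uniformly and is again holomorphic, so $\cH$ is a separable Hilbert space. Let $\cJ\subset\cO_\Om$ be the subsheaf generated, as an $\cO_\Om$-module, by all elements of $\cH$. The increasing sequence of subsheaves generated by the first $N$ vectors of a Hilbert basis of $\cH$ is locally stationary by the strong Noetherian property of coherent sheaves, so $\cJ$ is coherent, and near $x_0$ it is generated by finitely many members of $\cH$; trivially $\cJ\subset I(\vp)$. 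Everything then comes down to the reverse inclusion $I(\vp)_{x_0}\subset\cJ_{x_0}$.

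To prove it, take $f\in I(\vp)_{x_0}$, represented on a ball $B=B(x_0,\rho)\Subset\Om$ with $\int_B e^{-\vp}|f|^2<\infty$, and fix an integer $k\geqq 1$. Choose a cutoff $\chi$ equal to $1$ near $x_0$ and supported in $B$, and set $v:=\ol\part(\chi f)=(\ol\part\chi)\,f$, a smooth $\ol\part$-closed $(0,1)$-form supported on an annulus away from $x_0$. Apply H\"ormander's theorem on the pseudoconvex set $\Om$ with the weight
\[
\psi \;=\; \vp \;+\; (n+k)\log|z-x_0|^2 \;+\; |z-x_0|^2, \qquad n=\dim X,
\]
which is plurisubharmonic with $i\,\part\ol\part\psi\geqq i\,\part\ol\part|z-x_0|^2>0$; the pole factor is bounded on the annulus supporting $v$, so $\int_\Om e^{-\psi}|v|^2<\infty$ and there is $u$ on $\Om$ with $\ol\part u=v$ and $\int_\Om e^{-\psi}|u|^2<\infty$. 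Near $x_0$ the function $u$ is holomorphic (there $v=0$), and finiteness of $\int e^{-\vp}|u|^2|z-x_0|^{-2(n+k)}$ forces $u$ to vanish at $x_0$ to order $>k$; moreover $e^{-\psi}\geqq c\,e^{-\vp}$ on $\Om$ once $\Om$ is shrunk so that $|z-x_0|<1$, hence $\int_\Om e^{-\vp}|u|^2<\infty$. Therefore $F:=\chi f-u$ is holomorphic on $\Om$, lies in $\cH$, and $f-F=u\in\mm_{x_0}^{k+1}$; thus $f\in\cJ_{x_0}+\mm_{x_0}^{k+1}$. Since $k$ is arbitrary, Krull's intersection theorem applied to the finitely generated $\cO_{x_0}$-module $\cO_{x_0}/\cJ_{x_0}$ gives $f\in\cJ_{x_0}$. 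Hence $I(\vp)_{x_0}=\cJ_{x_0}$, and running the same argument re-centred at points near $x_0$ shows $I(\vp)=\cJ$ on a neighbourhood of $x_0$, which is coherent; as $x_0$ was arbitrary, $I(\vp)$ is coherent.

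The genuinely analytic step — the only one I would expect to require care rather than bookkeeping — is this combination of the H\"ormander $L^2$ estimate with the verification that a logarithmic pole of weight $(n+k)$ forces the solution $u$ to vanish to order $>k$ at $x_0$ (and simultaneously stays $L^2$ with respect to $e^{-\vp}$). The sheaf-of-ideals property, the Noetherian reduction to a coherent $\cJ$, and the Krull step are all formal.
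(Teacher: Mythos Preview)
Your argument is the standard Nadel proof exactly as it appears in \cite{Dem12a}, and it is correct. The paper itself gives no proof at all --- it simply states the proposition with attribution and the citation ``cf.\ \cite{Dem12a}'' and moves on to an example --- so there is nothing to compare beyond noting that you have supplied precisely the proof the cited reference contains.
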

  \begin{Exam}[\cite{Dem12a}] \lab{4a4}
  For
  \[
  \vp= \log\lrp{|z_1|^{\alpha_1}+\cdots + |z_n|^{\alpha_n}} ,\qquad\alpha_j >0\]
  the corresponding ideal is
  \[
  I(\vp)=\lrc{ z^{\beta_1}_1 \cdots z^{\beta_n}_n:\sum_j (\beta_j+1)/\alpha_j>1}\]
  where the RHS is the ideal generated by the monomials appearing there.    \end{Exam}
  
  Thus for $\beta_j=1$ and $\sum_j \alpha_j=2+\eps$
  \[
  I(\vp)=\mm_0\]
  is the maximal ideal at the origin in $\C^n$.  The blow up using this ideal $I(\vp)$ is the usual blow up $\wt \C^n_0 \xri{\pi}\C^n$ of $\C^n$ at the origin.

   Variants of this construction give powers $\mm^k_0$ of the maximal ideal, and using $z_1,\dots,z_m$ for $m\leqq n$ for suitable choices of the $\alpha_j$ and $\beta_j$ leads to weighted blowups of $\C^n$ along the coordinate subspaces $\C^{n-m}\subset \C^n$.  The use of weight functions with analytic singularities provides a very flexible analytic alternative to the traditional technique of blowing up along subvarieties.
   
   \begin{Exam} \lab{4a5}  Quite different behavior occurs for the weight function 
   \[
   \vp=\log\bp{(-\log |z_1|) \cdots (-\log|z_k|)}.\]
   In this case $I(\vp)=\cO_X$.  We will encounter weight functions of this type in Hodge theory.
   \end{Exam}
   \begin{Exam}\lab{4a6}
   If $L\to X$ is a \lb\ and $s\in H^0(X,L)$ has divisor $(s)=D$, we may define a metric $\|\enspace\|$ in the \lb\ by writing any local section $s'\in\cO_X(L) $ as 
   \[
   s'=fs\]
   where $f$ is a meromorphic function and  then setting
   \[
   \|s'\|=|f|=e^{\log |f|}.\]
   By the Poincar\'e-Lelong formula (\cite{Dem12a}), formally $\part\ol\part \log \|s'\|=0$ but as currents  the Chern form is the (1,1) current
   \[
   \Om_L=[D]\]
   given by integration over the effective divisor $D$.\end{Exam}
   
   We will be considering the case when $E=L$ is a \lb\ with singular metrics of the form \pref{4a1}, \pref{4a2}.  In this case the \emph{curvature form} is given by 
   \begin{equation}\lab{4a7}
   \om_h =(i/2)\ol\part\part\log h=(i/2)\part\ol\part \vp-(i/2)\part\ol\part \log h_0 .\end{equation}
   For $\vp$ given by \pref{4a2} where $\vp'$ is psh,  the singular part of the curvature form is the (1,1) current
   \[
   (i/2)\part\ol\part\vp' \geqq 0\]
   where the inequality is taken in the sense of currents as explained in \cite{Dem12a} and \cite{Pau16}.
   
   \begin{Thm}[Nadel vanishing theorem]
   \lab{4a8}  If $\om_h>0$ in the sense of currents, then
   \[
   H^q(X,(K_X+L)\otimes I(\vp))=0,\qquad q>0.\]
   \end{Thm}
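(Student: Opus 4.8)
The plan is to realize $(K_X+L)\otimes I(\vp)$ as the kernel of a fine $L^2$-Dolbeault complex and then to kill the higher cohomology of that complex using H\"ormander-type estimates made available by the hypothesis $\om_h>0$; this is the argument of Nadel and Demailly, which I outline below.

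\emph{Step 1 (an $L^2$ resolution).} Write $n=\dim X$ and $h=e^{-\vp}h_0$. For open $U\subset X$ let $\cA^q(U)$ be the $(n,q)$-forms with values in $L$ whose coefficients are locally square-integrable on $U$ for $h$ and whose $\ol\part$, taken in the sense of distributions, again lies in $\cA^{q+1}(U)$; these define fine sheaves $\cA^q$. First I would check that
\[
0\to \cO(K_X+L)\otimes I(\vp)\to \cA^0\xr{\ol\part}\cA^1\xr{\ol\part}\cA^2\to\cdots
\]
is exact: the identification of $\Ker(\ol\part\colon\cA^0\to\cA^1)$ with $\cO(K_X+L)\otimes I(\vp)$ is the definition of the multiplier ideal, and exactness in positive degrees is the local solvability of $\ol\part$ with $L^2$-estimates on small polydiscs, which is H\"ormander's theorem applied with the plurisubharmonic weight $\vp$ plus a strictly psh term such as $|z|^2$. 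Since the $\cA^q$ are fine, $H^q\bp{X,(K_X+L)\otimes I(\vp)}$ is computed by the complex of global sections $\Ga(X,\cA^\bullet)$; that this computes the cohomology of the coherent sheaf $\cO(K_X+L)\otimes I(\vp)$ uses coherence of $I(\vp)$ (Proposition \ref{4a3}).

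\emph{Step 2 (the Bochner--Kodaira estimate).} Fix a K\"ahler form $\om$ on the compact manifold $X$. The Bochner--Kodaira--Nakano identity gives, for smooth compactly supported $(n,q)$-forms $u$ with values in $L$,
\[
\|\ol\part u\|^2_h+\|\ol\part^\ast u\|^2_h\geqq \int_X \lra{[\,i\Theta_h(L),\Lambda\,]\,u,u}_h ,
\]
and in top holomorphic degree the curvature term is $\geqq 0$ whenever $i\Theta_h(L)\geqq 0$ and $q\geqq 1$ --- here no positivity of $\om$ itself is needed, only positivity of $L$. Because $i\Theta_h(L)$ is a positive multiple of $\om_h$, which is $>0$ as a current, I would pass to a regularization of $\vp$ (or work over $X\bsl Z$, with $Z$ the polar locus, equipped with a complete K\"ahler metric and using that $Z$ is a null set) to obtain a strict bound $\int_X\lra{[\,i\Theta_h(L),\Lambda\,]u,u}_h\geqq C\|u\|^2_h$ with $C>0$ for all $(n,q)$-forms, $q\geqq 1$. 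The usual functional-analytic argument (Riesz representation applied to the densely defined closed operator $\ol\part$) then yields: every $\ol\part$-closed $w\in L^2_{(n,q)}(X,L;h)$ with $q\geqq 1$ equals $\ol\part v$ for some $v\in L^2_{(n,q-1)}(X,L;h)$.

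\emph{Step 3 (conclusion).} Given $w\in\Ga(X,\cA^q)$ with $\ol\part w=0$ and $q\geqq 1$, compactness of $X$ makes $w$ globally $h$-square-integrable, so Step 2 produces $v\in L^2_{(n,q-1)}(X,L;h)$ with $\ol\part v=w$; then $\ol\part v$ is $L^2$, so $v\in\Ga(X,\cA^{q-1})$. Hence $\Ga(X,\cA^\bullet)$ is exact in positive degrees, which by Step 1 is exactly $H^q\bp{X,(K_X+L)\otimes I(\vp)}=0$ for $q>0$. The hard part will be Step 2: justifying the strict Bochner--Kodaira inequality for the singular weight $\vp$ and the positive current $\om_h$ --- that is, approximating $\vp$ by smooth weights whose curvatures stay uniformly bounded below, or running the complete-metric argument on $X\bsl Z$ together with the extension of $L^2$ solutions of $\ol\part$ across the null set $Z$; the local solvability in Step 1 and the coherence of $I(\vp)$ handle the remaining sheaf-theoretic bookkeeping.
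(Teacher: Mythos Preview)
The paper does not give its own proof of this theorem: it states the Nadel vanishing theorem and attributes it to the literature (the general reference is \cite{Dem12a}), then immediately passes to an application. Your outline is the standard Nadel--Demailly argument --- $L^2$ Dolbeault resolution of $(K_X+L)\otimes I(\vp)$ by fine sheaves, Bochner--Kodaira--Nakano on $(n,q)$-forms, and H\"ormander solvability --- and is correct as a sketch; it is precisely the proof the paper is citing rather than reproducing.

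One small point worth tightening: in Step~2 you write that $i\Theta_h(L)\geqq 0$ already forces the curvature term $\geqq 0$ in bidegree $(n,q)$, and then separately invoke strict positivity to get $C>0$. The cleaner statement is that on $(n,q)$-forms the commutator $[i\Theta_h(L),\Lambda]$ is bounded below by $q$ times the smallest eigenvalue of $i\Theta_h(L)$ relative to $\om$; the hypothesis $\om_h>0$ as a current gives exactly this uniform lower bound (after regularization or the complete-metric argument on the complement of the singular locus, as you note). Also, in Step~1 the coherence of $I(\vp)$ is not strictly needed to identify the hypercohomology of the fine resolution with sheaf cohomology --- that holds for any sheaf admitting a fine resolution --- but it is what guarantees the object you are computing is the cohomology of a coherent sheaf.
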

  
  \ssn{Application:}  Assuming that $\om_L>0$ and using a cutoff function to globalize the function $\vp$ in \pref{4a4}, we may replace $L$ by $mL$ to make $(i/2)\part\ol\part\vp +\om_{K_X}+m\om_L>0$.  Then the Nadel vanishing theorem gives $H^1(X,mL\otimes I_x)=0$ from which  we infer that 
  \[
  H^0(X,mL)\to mL_x\to 0.\]
  Thus $mL$ is globally generated, and similar arguments show that for $m\gg 0$ the map
  \[
  \vp_{mL}:X\to \P^{N_m}\]
  is an embedding. 
  
  This proof of the Kodaira embedding theorem illustrates the advantage   of the flexibility provided by the choice of the weight function $\vp$.  Instead of blowing up as in the original Kodaira proof, the use of weight functions achieves the same effect with greater flexibility. 
  
  Another use of singular metrics is given by the
  \begin{Thm}[Kawamata-Viehweg vanishing theorem]
  \lab{4a9}
  If $L\to X$ is big,
  then
  \[
  H^q(X,K_X+L)=0,\qquad q>0.\]\end{Thm}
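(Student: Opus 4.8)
The plan is to derive this from the Nadel vanishing theorem \ref{4a8}. What I would do is equip $L$ with a singular Hermitian metric $h=e^{-\vp}h_0$ having two properties at once: its curvature is strictly positive as a current, $\om_h>0$, and its multiplier ideal is trivial, $I(\vp)=\cO_X$. Granting such an $h$, Theorem \ref{4a8} applied to $L$ gives immediately
\[
H^q\bp{X,(K_X+L)\otimes I(\vp)}=H^q(X,K_X+L)=0,\qquad q>0,
\]
which is the assertion. (As is usual for this statement one uses that $L$ is nef in addition to big; the nef hypothesis is precisely what makes the metric below exist.)

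To build $h$, I would interpolate a ``big'' metric against a family of ``nef'' metrics. Bigness and Kodaira's lemma give an integer $m>0$, an ample line bundle $A$, and a section $s\in H^0(X,mL-A)$ with divisor $D=(s)$; then $mL=A+D$, and using a positively curved smooth metric on $A$ one obtains a singular metric on $L$ whose curvature current is
\[
T_0:=\tfrac{1}{m}\bp{\om_A+[D]}\geqq \eps_0\,\om_0
\]
for a fixed K\"ahler form $\om_0$ and some $\eps_0>0$ --- a K\"ahler current --- whose Lelong numbers are $\tfrac{1}{m}$ times the multiplicities of $D$, hence bounded by some $N$. Nefness gives, for every $\eta>0$, a smooth form $\alpha_\eta$ representing $c_1(L)$ with $\alpha_\eta\geqq-\eta\,\om_0$. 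The convex combination $T_t:=(1-t)T_0+t\,\alpha_\eta$ again represents $c_1(L)$, its Lelong numbers are $(1-t)$ times those of $T_0$ and so are $\leqq(1-t)N$, and its curvature is $\geqq\bp{(1-t)\eps_0-t\eta}\om_0$. Choosing first $t$ close enough to $1$ that $(1-t)N<1$, and then $\eta<(1-t)\eps_0/t$, one gets that $T_t$ is strictly positive and has all Lelong numbers $<1$; hence by Skoda's lemma the weight $\vp$ of the corresponding metric $h$ satisfies $I(\vp)=\cO_X$ (cf.\ the integrability computation in Example \ref{4a4}). This is the metric required in the first paragraph.

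The one genuinely delicate point is having $\om_h>0$ and $I(\vp)=\cO_X$ hold simultaneously. For a line bundle that is merely big, every positively curved singular metric must carry nonzero Lelong numbers along the non-ample (augmented base) locus, so bigness by itself cannot give both; the role of nefness is that the ``cost'' of mixing in smooth forms, namely the term $-t\eta\,\om_0$, can be made arbitrarily small --- small enough that the surviving fraction $(1-t)\eps_0\,\om_0$ of the K\"ahler current's positivity still dominates, even though the same mixing has shrunk the Lelong numbers below $1$. Getting the order of the choices right --- $t$ first, then $\eta$ --- is the crux; the rest is formal. An alternative, following Kawamata's original approach, would be to absorb $D$ into a cyclic branched cover $\pi\colon Y\to X$, apply Kodaira vanishing to an honest ample line bundle on $Y$, and descend by means of the eigensheaf decomposition of $\pi_\ast\cO_Y$; but the Nadel route above is the one consonant with the development in this section.
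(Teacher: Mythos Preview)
Your argument is correct and follows the same overall strategy as the paper's proof: both use Kodaira's lemma to write $mL$ as (ample) $+$ (effective), put the current $[E]$ on the effective part and a smooth positive metric on the ample part, and then invoke Nadel vanishing (Theorem~\ref{4a8}). The paper's proof simply stops at $\om_L=\tfrac{1}{m}([E]+\om_H)\geqq\tfrac{1}{m}\om_H>0$ and declares that Nadel gives the result, without commenting on the multiplier ideal.

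Your version is more careful on exactly this point: you interpolate the K\"ahler current against smooth representatives coming from nefness so as to push all Lelong numbers below~$1$, and then appeal to Skoda's integrability to conclude $I(\vp)=\cO_X$. You are also right that the nef hypothesis is what makes this work; the paper's statement omits ``nef'' (a common ellipsis), but the full theorem requires it, and your discussion of why bigness alone cannot give both strict positivity and trivial multiplier ideal is on target. So: same route as the paper, but you have filled in the step the paper leaves implicit.
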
 
   \begin{proof}[Proof {\rm (cf.\ \cite{Dem12a})}]
Let $H\to X$  be a very ample line bundle and $D\in |H|$ a smooth divisor.  From the cohomology sequence of
\[
0\to mL-H\to mL\to mL\big|_D\to 0\]
and
\[
h^0(X,mL)\sim m^d,\; h^0(D,mL\big|_D)\sim m^{d-1}\]
where $d=\dim X$ we have $h^0(mL-H)\ne 0$ for $m\gg 0$.  If $E\in |mL-H|$ from  Example \ref{4a6}   there exists a singular metric in $mL-H$ with Chern form
\[
\Om_{mL-H}=[E]\geqq 0\]
where the RHS is the (1,1) current defined by $E$.  If $\om_H>0$ is the curvature form for a positively curved metric in $H\to X$, then
\[
\om_L = \frac{1}{m}\lrp{[E]+\om_H} \geqq \lrp{\frac{1}{m}} \om_H>0,\]
and Nadel vanishing gives the result.
 \end{proof}
 
 \begin{rem}
 A standard algebro-geometric proof of Kawamata-Viehweg vanishing uses  the branched covering method to reduce it to Kodaira vanishing.    The above argument again illustrates the flexibility gained by the use of singular weight functions (we note that $D$ plays a role similar to that of the branch divisor in the branched covering method).
   \end{rem}
   \subsection{Logarithmic and mild singularities}\label{4b}\hfill
   
As our main applications will be to Hodge theory, 
 in this section we will use the notations from Section I.C.
We recall  from that section that 
{\setbox0\hbox{()}\leftmargini=\wd0 \advance\leftmargini\labelsep
    \begin{itemize}
   \item $B$ is a smooth quasi-projective variety;
   \item $\ol B$ is a smooth projective completion of $B$;
   \item $Z=\ol B\bsl B$ is a divisor with normal crossings 
   \[
   Z=\cup Z_i\]
   where $Z_I:= \bigcap_{i\in I}Z_i$ is  a stratum of $Z$ and   $Z^\ast_I=Z_{I,\text{reg}}$  are  the smooth points of $Z_I$;
   \item $E\to \ol B$ is a holomorphic vector bundle.\end{itemize}}  \noindent 
   A neighborhood $\cU$ in $\ol B$ of a point $p\in Z$ will be
   \[
   \cU\cong \Delta^{\ast k}\times \Delta^\ell\]
   with coordinates $(t,w)=(t_1,\dots,t_k;w_1,\dots,w_\ell)$.
   
   We   now introduce the co-frame in terms of which we shall express the curvature  forms in $\cU$.  The Poincar\'e metric in $\Delta^\ast = \{0<|t|<1\}$ is given by the (1,1) form
   \[
   \om_{\rm PM} = (i/2) \frac{dt\wedge d\bar t}{|t|^2(-\log|t|)^2}.\]
   We are writing $-\log|t|$ instead of just $\log|t|$ because we will want to have positive quantities in the computations below.  As a check on signs and constants we note the formula
   \begin{equation}\lab{4b1}
   (i/2)\part\ol \part \bp{-\log(-\log|t|)} = (1/4)\om_{\rm PM}.\end{equation}
   The inner minus sign is to have $-\log |t|>0$ so that $\log(-\log|t|)$ is defined.  The outer one is to have the expression in parentheses  equal to $-\infty$ at $t=0$ so that we have a psh function.  
For
   \[
   \vp =  \log(-\log|t|)\]
   the curvature form in the trivial bundle over $\Delta$ with the singular metric given by $e^{-\vp}$ has   curvature form
   \begin{equation}\lab{4b2}
   (i/2) \ol\part\part \log(e^{-\vp})=(1/4)\om_{\rm PM}.\end{equation}
   
   \begin{rem}
   The functions that appear as coefficients in formally computing \pref{4b1} using the rules of calculus are all in $L^1_{\rm loc}$ and therefore define distributions.  We may then compute $\part$ and $\ol\part$ either in the sense of currents or formally using the rules of calculus. An important observation is 
   \begin{equation} \lab{4b3}
   \bmp{4}{\em these two methods of computing $\ol\part\part\vp$ give the same result.}\end{equation}
   This is in contrast with the situation when we take
   \[
   \vp=\log|t|\]
   in which case we have in the sense of currents the  Poincar\'e-Lelong formula  
   \begin{equation}\lab{4b4}
   (i/\pi)\part\ol\part\log |t|=\delta_0 \end{equation}
   where $\delta_0$ 
   is the Dirac $\delta$-function at the origin.  Anticipating the discussion below, a charateristic feature of the metrics that arise in Hodge theory will be that the principle \pref{4b3} will   hold.
   \end{rem}
   \begin{defin} The \emph{Poincar\'e coframe} has as basis the (1,0) forms
   \[
   \frac{dt_i}{t_i(-\log |t_i|)},\; dw_\alpha\]
   and their conjugates.\end{defin}
   
   \begin{defin} A metric in the holomorphic \vb\ $E\to B$ is said to have \emph{logarithmic singularities} along the divisor $Z=\ol B\bsl B$   if  locally in an open set $\cU$ as above and  in terms of a holomorphic frame  for the bundles and the Poincar\'e coframe  the metric~$h$, the connection matrix $\theta=h^{-1}\part h$, and the curvature matrix $\Theta_E=\ol\part(h^{-1}\part h)$ have entries that are Laurent polynomials in the $\log|t_i|$ with coefficients that are real analytic functions in $\cU$.\end{defin}
   
   \begin{prop}\lab{4b5} The Hodge metrics in the Hodge bundles $F^p\to B$ have logarithmic singularities relative to the canonically extended Hodge bundles $F^p_e \to \ol B$.\end{prop}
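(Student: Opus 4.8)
The plan is to work locally near a point $p\in Z$, on a polydisc $\cU\cong\Delta^{\ast k}\times\Delta^\ell$ with coordinates $(t,w)$ as above, and to produce a good holomorphic frame for $F^p_e$ from the nilpotent orbit. First I would recall Deligne's canonical extension: untwisting a multivalued flat frame $e_1,\dots,e_m$ of $\V$ by $g(t):=\exp\bigl(-\tfrac{1}{2\pi i}\sum_i(\log t_i)N_i\bigr)$ produces a single-valued holomorphic frame $\tilde e_j=g(t)e_j$ of the extended bundle $\V_e$, in which (the $e$-frame being the flat one) the polarization $Q$ is constant. By Schmid's nilpotent orbit theorem in the several-variable form (\cite{CKS86}), the Hodge filtration, written in the $\tilde e$-frame, extends holomorphically across $t=0$; hence one may choose holomorphic sections $\sigma_\alpha$ spanning $F^p_e$ over $\cU$ whose coordinate vectors in the $\tilde e$-frame are holomorphic on all of $\Delta^{k+\ell}$, i.e.\ $\sigma_\alpha=g(t)\hat\sigma_\alpha(t,w)$ with $\hat\sigma_\alpha$ holomorphic. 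The entries of the Hodge metric are then $h_{\alpha\bar\beta}=\langle\sigma_\alpha,\sigma_\beta\rangle=Q(C\sigma_\alpha,\overline{\sigma_\beta})$, $C$ the Weil operator of the moving Hodge structure, and the task is to analyze these together with $\theta=h^{-1}\part h$ and $\Theta_E=\ol\part\theta$ in the Poincar\'e coframe.

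The key step is to compare $h$ with the metric $h^{\mathrm{nilp}}$ of the approximating nilpotent orbit. Worked out via $g(t)$, $\overline{g(t)}$ and the constant form $Q$, the entries of $h^{\mathrm{nilp}}$ are polynomials in the $\log|t_i|$ with coefficients holomorphic in $(t,w)$: each $N_i$ is nilpotent so $g(t)^{\pm1}$ is polynomial in the $\log t_i$, and the $\arg t_i$-contributions from $g(t)$ and $\overline{g(t)}$ cancel because the $N_i$ commute and the frame is the canonical-extension one, leaving only the $\log|t_i|$. It then remains to control the correction matrix $K:=h\cdot(h^{\mathrm{nilp}})^{-1}$ together with all of its $\part/\part t_i$ and $\part/\part w_\alpha$ derivatives: here one invokes the several-variable $\rsl_2$-orbit theorem of \cite{CKS86} (with the refinements needed here, cf.\ \cite{Kol87} and \cite{GGLR17}), which gives that $K$ and its derivatives are again finite sums of a real analytic function on $\cU$ times a monomial in the $\log|t_i|$. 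Thus $h$, and (using that, by the same norm estimates, $\det h$ is a unit times a monomial in the $\log|t_i|$) its inverse $h^{-1}$, lie in the class $\mathcal{A}$ of finite sums $\sum a(t,w)\prod_i(\log|t_i|)^{n_i}$ with $n_i\in\Z$ and $a$ real analytic on $\cU$.

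Finally, $\mathcal{A}$ is closed under sums and products, and it is closed under $\part$ and $\ol\part$ once these are written in the Poincar\'e coframe: a real analytic factor is sent by $\part$ to real analytic functions times $dt_i$ or $dw_\alpha$, each a coframe element with coefficient in $\mathcal{A}$, while $\part\log|t_i|=\tfrac12\,\frac{dt_i}{t_i}=\bigl(-\tfrac12\log|t_i|\bigr)\,\frac{dt_i}{t_i(-\log|t_i|)}$ is the coframe element $\frac{dt_i}{t_i(-\log|t_i|)}$ with coefficient $-\tfrac12\log|t_i|\in\mathcal{A}$, and similarly for $\ol\part$ and the conjugate coframe. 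Consequently $\theta=h^{-1}\part h$, and then $\Theta_E=\ol\part\theta$, have all entries, expressed in the Poincar\'e coframe, in $\mathcal{A}$; that is, they are Laurent polynomials in the $\log|t_i|$ with real analytic coefficients. Running this for each $p$ gives the proposition.

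The one genuinely substantial point is the control of $K=h\cdot(h^{\mathrm{nilp}})^{-1}$ and its Poincar\'e-frame derivatives used above. Schmid's estimates by themselves only say that $K$ differs from the identity by a term that is $O(|t|^{\eps})$; what is needed is the stronger assertion that $K$, together with all of its derivatives in the Poincar\'e coframe, stays inside $\mathcal{A}$. This is precisely the statement that the Hodge-metric singularities are \emph{mild}, and establishing it requires the several-variable $\rsl_2$-orbit theorem together with the bookkeeping of the nested relative weight filtrations $W\bigl(N_{i_1}+\cdots+N_{i_j}\bigr)$ and their $\rsl_2$-splittings --- the refinement of the work of \cite{CKS86} and \cite{Kol87} that is carried out in \cite{GGLR17} and recalled in Section IV.B. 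Everything else in the argument is formal manipulation within $\mathcal{A}$.
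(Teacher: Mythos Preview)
Your argument overcomplicates matters and, in doing so, introduces a genuine gap. The paper does not give a detailed proof of this proposition: it simply says that in the general case it ``is a consequence of the several variable nilpotent orbit theorem (\cite{CKS86}),'' and then \emph{explicitly contrasts} this with the harder Theorem~\ref{4b8} on mild singularities, whose proof requires the full $\rsl_2$-orbit machinery of \cite{CKS86}, \cite{Kol87}, \cite{GGLR17}. Your last paragraph inverts this logical structure: you claim that controlling $K=h\cdot(h^{\mathrm{nilp}})^{-1}$ ``is precisely the statement that the Hodge-metric singularities are \emph{mild},'' and that one must invoke the $\rsl_2$-orbit theorem and its refinements to establish the present proposition. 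That is the content the paper reserves for Theorem~\ref{4b8}, not for Proposition~\ref{4b5}. The point of the paper's one-line proof is that your first paragraph already does the work: once the untwisted filtration extends holomorphically, the Hodge metric in the canonical-extension frame is computed from $Q$, from holomorphic and antiholomorphic data, and from the conjugation matrix $\overline{g(t)}g(t)^{-1}$, which is polynomial in the $\log|t_i|$.

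The concrete gap is your assertion that ``$\det h$ is a unit times a monomial in the $\log|t_i|$,'' which you use to conclude $h^{-1}\in\mathcal{A}$. This is false in several variables. The paper's own computation in Section~\ref{5c} shows that for a nilpotent orbit the Hodge line bundle metric is, up to a constant, the homogeneous polynomial $P(x)=\prod_{i=0}^n\det\bigl((\sum_j x_jN_j|_{H^{n-i,0}})^i\bigr)$ in $x_j=-\log|t_j|$, and Example~\ref{5c15} gives the explicit non-monomial case $h(t)=L(t_1)L(t_2)+L(t_1t_2)L(t_3)$. Thus $h^{-1}$ is not a Laurent polynomial in the $\log|t_i|$, your class $\mathcal{A}$ is not closed under inverting $h$, and your deduction that $\theta$ and $\Theta_E$ lie in $\mathcal{A}$ does not go through as written. (Indeed, the paper itself notes just after \pref{4b7} that in the several-variable case the Poincar\'e-frame coefficients of the Chern forms are \emph{not} bounded, which is exactly the failure of this monomial claim.) The proposition should be read as asserting logarithmic \emph{growth} of $h$, $\theta$, $\Theta_E$; that much follows from the nilpotent orbit theorem without any appeal to $K$ or to the $\rsl_2$-orbit theorem.
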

   In the geometric case this result may be inferred from the theorem on regular singular points of the Gauss-Manin connection (\cite{Del}) and \pref{6.1} above.  In the general case it is a consequence of the several variable nilpotent orbit theorem (\cite{CKS86}).  More subtle is the behavior of the coefficients of the various quantities, especially the Chern polynomials $P(\Theta_{F^p})$, when they are expressed in terms of the Poincar\'e frame, a topic analyzed in \cite{CKS86} and  where the analysis is refined in  \cite{Kol87}, and  to which we now turn.
   
   We recall that a distribution $\Psi$ on a manifold $M$ has a \emph{singular support} $\Psi_{\sing}\subset M$  defined by the property that on any open set $W\subset M\bsl \Psi_\sing$ in the complement the restriction $\Psi\big|_W$ is given by a smooth volume form.  A  finer invariant of the singularities of $\Psi$ is given by its \emph{wave front set}\footnote{A good discussion of wave front sets and references to the literature is given in Wikipedia.  We will not use them in a technical sense but rather as a suggestion of an important aspect  to be analyzed for the Chern polynomials of the Hodge bundles.}
   \[
   WF(\Psi)\subset T^\ast M.\]
   Among other things the wave front set  was introduced to help deal with two classical problems concerning distributions:
\begin{equation}\lab{4b6} \bmp{5}{\begin{itemize}
\item[(a)] distributions cannot in general be multiplied;
\item[(b)] in general distributions cannot be restricted to submanifolds $N\subset M$.\end{itemize}}
 \end{equation}
    For (a) the wave front sets should be transverse, and for (b) to define $\Psi\big|_N$ it suffices to have $TN\subset WF(\Psi)^\bot$.
    
   In the case of currents represented as differential forms with distribution coefficients, multiplication should be expressed in terms of the usual wedge product of forms.  For restriction, if $N$ is locally given by $f_1=\cdots = f_m=0$, then for a current $\Psi$ we first set $df_i = 0$; i.e., we cross out  any terms with a $df_i$.  Then the issue is to restrict the distribution coefficients of the remaining terms to $N$.  Thus the notion of  the wave front set for a current $\Psi$   involves both the differential form  terms appearing in $\Psi$ as well as the distribution coefficients of those terms.
   
   \begin{defin} The holomorphic bundle $E\to B$ has \emph{mild logarithmic singularities} in case it has logarithmic singularities and the following conditions are satisfied:
     \begin{enumerate}[{\rm (i)}] \item
   the Chern polynomials $P(\Theta_E)$ are closed currents given by differential forms with $L^1_{\rm loc}$ coefficients and which represent $P(c_1(E),\dots,c_r(E))$ in $H^\ast_{\rm DR}(\ol B)$;
   \item the products $P(\Theta_E)\cdot P'(\Theta_E)$ may be defined by formally multiplying them as $L^1_{\rm loc}$-valued differential forms, and when this is done we obtain a representative in cohomology of the products of the   polynomials in the Chern classes;
   \item the restrictions $P(\Theta_E)\big|_{Z^\ast_I}$ are defined and represent $P\lrp{c_1\lrp{E\big|_{Z^\ast_I}},\dots,c_r\lrp{E_{Z^\ast_I}}}$.\end{enumerate} 
   \end{defin}
   
   We note the opposite aspects of analytic singularities and mild logarithmic singularities: In the former one wants the singularities to create behavior different from that of smooth metrics, either with regard to the functions that are in $L^2$ with respect to the singular metric, or to create non-zero Lelong numbers in the currents that arise from their curvatures.  In the case of mild logarithmic singularities, basically one may work with them as if there were no singularities at all.  An important  additional point  to be explained in more detail below is that the presence of singularities \emph{increases} the positivity of the Chern forms of the Hodge bundles, so that in this sense one  uses singularities to positive effect. 
   
    The main result, stated below and which will be discussed in the next section, is that the Hodge bundles have mild logarithmic singularities.  This would follow if one could show that
   \begin{equation}\lab{4b7} \bmp{5}{\em When expressed in terms of the Poincar\'e frame the polynomials $P(\Theta_E)$ have bounded coefficients.}\end{equation}
   This is true when $Z$ is a smooth divisor, but when  $Z$ is not a smooth divisor  this is not the case and the issue is more subtle. 
     
     \subsection*{Main result}
 
     \begin{Thm}[\cite{CKS86}, with amplifications in  \cite{Kol87}, \cite{GGLR17}]\lab{4b8}
     The Hodge bundles have mild singularities.\end{Thm}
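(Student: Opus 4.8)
\medskip
\noindent\emph{Proof idea.} The statement is local on $\ol B$, so the plan is to work near a point $p\in Z$ in a coordinate polydisc $\cU\cong\Delta^{\ast k}\times\Delta^{\ell}$ as in Section~\ref{4b}, with $Z\cap\cU=\{t_1\cdots t_k=0\}$ and commuting unipotent monodromies $T_j=\exp N_j$. First one invokes the several-variable nilpotent orbit theorem of \cite{CKS86}: lifting $\Phi$ to the universal cover of $\cU$ and trivializing the local system by a flat multivalued frame, $\Phi$ is approximated by a nilpotent orbit, i.e.\ a map of the shape
\[
\exp\!\bigl(\tfrac1{2\pi i}\sum_j(\log t_j)N_j\bigr)\,\exp\!\bigl(\Gamma(t,w)\bigr)\cdot F_0,\qquad \Gamma(0,w)=0,
\]
in the strong sense that the Hodge metrics induced on the canonically extended bundles $F^p_e\to\cU$ by $\Phi$ and by this orbit agree up to a factor $1+o(1)$, with a matching estimate on all derivatives computed in the Poincar\'e coframe. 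Since the three clauses in the definition of mild singularities are insensitive to such a correction, it suffices to treat the nilpotent-orbit metric, and one may work throughout in a single monodromy sector $y_1\geqq y_2\geqq\cdots\geqq y_k\geqq 1$, where $y_j:=-\log|t_j|$, the general case following by permuting the $t$-coordinates.

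Next one feeds the orbit into the several-variable $\SL_2$-orbit theorem of \cite{CKS86}. Out of the relative weight filtrations $W(N_1)\subset W(N_1+N_2)\subset\cdots\subset W(N_1+\cdots+N_k)$ this produces a tuple of commuting $\rsl_2$-triples and a real-analytic ``error'' $g(t,w)$ having a limit at $t=0$, so that on the sector the Hodge metric is comparable to $g$ times the natural metric on the associated graded, reweighted by the appropriate powers of the ratios $y_j/y_{j+1}$ (with $y_{k+1}:=1$). The resulting norm estimate --- a flat multivalued section $v$ satisfies $\|v\|^2_h\sim\prod_{j=1}^k(y_j/y_{j+1})^{\ell_j(v)}$, the exponents being the weights of $v$ under the successive filtrations --- already yields Proposition~\ref{4b5}: in the Poincar\'e coframe $dt_i/(t_i y_i),\ dw_\alpha$ the entries of $\theta=h^{-1}\part h$ and of $\Theta_{F^p_e}=\ol\part(h^{-1}\part h)$ are Laurent polynomials in the $y_i$ with real-analytic coefficients across $Z$. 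Proposition~\ref{4b5} may be assumed, but one still wants this precise shape of the coefficients, since the rest of the argument rests on it.

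With these asymptotics in hand one verifies the three clauses. For (i), one rewrites a Chern polynomial $P(\Theta_{F^p_e})$ in the Poincar\'e coframe and collects, for each monomial, the total power of the $y_i$ multiplying $\prod dt_i\wedge d\bar t_i/|t_i|^2$; the grading built into the $\SL_2$-orbit --- each curvature entry has a definite weight, and the weights of the entries of a $q\times q$ minor add --- bounds that power enough to make the coefficient $L^1_{\loc}$ near every stratum, and then $P(\Theta_{F^p_e})$ is a closed current because, as recorded in \pref{4b3}, for these pole-free integrands the formal and the distributional $\part,\ol\part$ coincide and no Poincar\'e-Lelong residue appears. That the class is $P(c_1(F^p_e),\dots,c_r(F^p_e))$ follows by comparison with the Chern-Weil form of a smooth metric on the honest holomorphic bundle $F^p_e\to\ol B$: the Bott-Chern transgression between the two metrics is again $L^1_{\loc}$, its singular part being built from iterated $\log y_i$, which is integrable against a smooth volume form, so the two closed currents are cohomologous. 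Clauses (ii) and (iii) are the same bookkeeping applied to a product of minors, respectively to the minors that survive after one sets $dt_i=0$ for $i\in I$ and restricts the remaining coefficients to $Z^\ast_I$; for (iii) one also uses that the $\SL_2$-orbit data is compatible with passage to the associated graded, so that $\Theta_{F^p_e}\big|_{Z^\ast_I}$ is the curvature of the \hvb\ of the induced variation $\Phi_I$ and clauses (i)--(ii) on $Z^\ast_I$ complete the identification.

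The genuinely delicate point --- the main obstacle --- is the weight bound used throughout the previous paragraph. When $Z$ is a smooth divisor ($k=1$) the estimate \pref{4b7} holds, the Poincar\'e-coframe coefficients are literally bounded, and everything above is routine. For $k\geqq 2$ this fails, since ratios like $y_1/y_2$ range over all of $[1,\infty)$ on the sector, so individual coefficients need not be bounded. What one must show instead --- the content of \cite{CKS86}, sharpened in \cite{Kol87} and further in \cite{GGLR17} --- is that the unbounded coefficients are nonetheless $L^1_{\loc}$ against the Poincar\'e volume form and, more importantly, that the wave front set of $P(\Theta_{F^p_e})$ lies in the conormal directions to the strata, i.e.\ is spanned by the $dt_i/t_i$. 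By the two principles recalled in \pref{4b6} this is exactly what legitimizes the products in (ii) and the restrictions in (iii): the singular codirections of two such currents never meet transversally in a forbidden way, and they are annihilated upon restriction to a coordinate subvariety. Pinning this down requires the full inductive architecture of the several-variable $\SL_2$-orbit theorem --- the nested filtrations $W(N_1+\cdots+N_j)$ and the short exact sequences of limiting mixed Hodge structures linking consecutive levels --- and it is precisely this refinement of the wave front estimates that \cite{GGLR17} contributes beyond \cite{CKS86} and \cite{Kol87}.
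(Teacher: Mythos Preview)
Your outline is broadly on target---reduction to a nilpotent orbit via \cite{CKS86}, then controlling the Poincar\'e-coframe coefficients stratum by stratum---and you correctly identify the $k\geqq2$ case as the crux. But the route you describe and the one the paper actually takes diverge after the reduction step, and your sketch defers to citations precisely where the paper supplies its own content.

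The paper does \emph{not} invoke the full several-variable $\SL_2$-orbit theorem or frame the argument in terms of norm estimates and wave-front bounds on curvature entries. Instead, restricting to the Hodge \emph{line} bundle and to a nilpotent orbit (the general case being deferred to \cite{GGLR17}), it computes the Hodge metric on $\det F_e$ \emph{explicitly} as a homogeneous polynomial
\[
P(x)=\prod_{i=0}^{n}\det\!\Bigl(\bigl(\textstyle\sum_j x_jN_j\big|_{H^{n-i,0}}\bigr)^{i}\Bigr),\qquad x_j=-\log|t_j|,
\]
and then shows a factorization $P=P_I\,P_{I^c}+R$ where, by the relative weight filtration property alone, $P_{I^c}$ is exactly the Hodge metric of the induced variation on $Z^\ast_I$ (Lemma~\ref{5c7}). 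The restriction property (iii) thus reduces to showing that the remainder terms $R/(P_IP_{I^c})$ tend to $0$ as the $x_i\to\infty$ for $i\in I$. This is handled by a purely combinatorial lemma (Lemma~\ref{5c10}): the monomials of $P$ lie in the convex hull of certain extremal monomials $M_\pi$ indexed by permutations, each monomial in $R$ has strictly smaller $I$-degree than $P_I$, and a sector-by-sector argument (Lemma~\ref{5c13}) then gives the limit. The RWFP is isolated as \emph{the} structural input; the $\SL_2$-orbit theorem is used only to produce the $\R$-split model and the commuting $\rsl_2$'s that drive the weight count in \pref{5b10}.

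So your approach is not wrong, but it is more abstract and leans on the very references the theorem cites, whereas the paper's contribution here is a concrete polynomial computation that makes visible \emph{why} the limits \pref{4b9} exist. If you want your write-up to stand on its own, you should replace the appeal to ``weight bounds'' and ``wave-front estimates'' with the explicit $P=P_IP_{I^c}+R$ decomposition and the convex-hull lemma; that is where the actual work lives.
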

  The general issues  \pref{4b6}(a), \pref{4b6}(b)  
     concerning distributions were raised above.  Since currents are differential forms with distribution coefficients, these issues are also present for currents, where as noted above the restriction issue \pref{4b6}(b) involves both the differential form aspect and the distribution aspect of currents.  This is part (iii) of the definition  and is the   property of the Chern polynomials that appears in \cite{GGLR17}.
     
     The proof of  (i) and part of (ii) in Theorem \ref{4b8} is based on the fundamental results in \cite{CKS86}, with  refinements in \cite{Kol87} concerning a particular multiplicative property  \pref{4b6}(a)  in the definition  of mild logarithmic
      singularities,\footnote{The specific result in \cite{Kol87} is that the integral
     \[
     \int_{\ol B} c_1 \lrp{\Theta_{\det E}}^d<\infty,\quad \dim \ol B=d\]
     of the top power of the Chern form of the \hlb\ is  finite.  This result also follows from the analysis in Section V of \cite{GGLR17}.} and  in \cite{GGLR17} the general multiplicative property and  the restriction property  
     of the Chern polynomials is addressed.   Both of these involve estimates  in  the $\Delta^\ast$-factors in neighborhoods $\cU\cong \Delta^{\ast k}\times \Delta^\ell$    in $\ol B$.  In effect these estimates may be intrinsically thought of as occurring in sectors in the co-normal bundle of the singular support of the Chern forms, and in this sense may be thought of as dealing with the wave front sets of the these forms.
     
     A complete proof of Theorem \ref{4b8} is given in Section 5 of \cite{GGLR17}. In the next section we shall give the argument for the Chern form $\Om = c_1\lrp{\Theta_{\det F}}$ of the \hlb\ and in the special case when  the localized VHS is a nilpotent orbit.  The computation will be  explicit; the intent is to provide a perspective on some of the background  subtleties in the general  argument, one of which we now explain.  
     
     We  restrict to the case when $\cU\cong \Delta^{\ast k}=\{(t_1,\dots,t_k):0<|t_j|<1\}$, and setting
$
    \ell(t_j)= \log t_j/2\pi i$ and $
     x_j=-\log |t_j| $ 
     consider a nilpotent orbit
     \[
     \Phi(t)=\exp \lrp{\sum^k_{j=1}\ell(t_j)N_j} \cdot F_0.\]
     Following explicit computations of the Chern form $\Om$ and of the Chern form $\Om_I$ for the restriction of the \hlb\ to $Z^\ast_I$, the desired result comes down to showing that a limit
     \begin{equation}
     \lab{4b9}
     \lim_{x_j\to\infty} \frac{Q(x)}{P(x)}\end{equation}
     exists where $Q(x), P(x)$ are particular homogeneous polynomials of the same degree with $P(x)>0$ if $x_j>0$.  Limits such as \pref{4b9} certainly do not exist in general, and the issue to be understood is how  in the case at hand  the very special properties of several parameter \lmhs s  imply the existence of the limit.

     As an application of Theorems \ref{3b7} and \ref{4b8}, using the notations from Section I.C we consider a VHS given by a period mapping
     \[
     \Phi:B\to\Ga\bsl D.\]
     Denoting by
     \[
     F_e\to \ol B\]
     the canonically extended \hvb\ we have
     
     \begin{Thm}\label{4b10}
     \leavevmode
     \begin{enumerate}[{\rm (i)}]
     \item The Kodaira-Iitaka dimension
     \[
     \kappa(F_e)\leqq 2h^{n,0}-1.\]
     \item Assuming the injectivity of the end piece $\Phi_{\ast,n}$ of the differential of $\Phi$,
     \[
     \kappa(S^{h^{n,0}}F_e)=\dim \P S^{h^{n,0}} F_e;\]
     i.e., $S^{h^{n,0}}F_e\to \ol B$ is big.\end{enumerate}
     \end{Thm}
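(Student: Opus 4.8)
\emph{Proof proposal.} The plan is to derive both parts from the norm positivity of the Hodge vector bundle together with the mildness of its singularities on $\ol B$, part (ii) being essentially Theorem \ref{3b7} read in this setting. First I would recall Griffiths' curvature computation: for the metric induced by the polarizing form, $F=F^n$ has curvature $\Theta_F=-{}^t\ol A\wedge A$, where $A=\Phi_{\ast,n}:TB\otimes F\to G$ is the end piece of the differential of the period mapping and $G=F^{n-1}/F^n$; thus $F$ has the norm positivity property of Definition \ref{3a1}, with $\Theta_F(e,\xi)=\|\Phi_{\ast,n}(\xi)(e)\|_G^2$, and by Proposition \ref{3b10} its Chern forms are $c_q(\Theta_F)=\|\wedge^q\Phi_{\ast,n}\|^2$ up to a universal constant. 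By Proposition \ref{4b5} and Theorem \ref{4b8} all of this carries over, with only mild logarithmic singularities, to the canonically extended bundles $F_e,G_e\to\ol B$: the operative consequence is that the Chern forms built from $\Theta_{F_e}$ — and the Chern forms of the tautological line bundles on $\P F_e$ and on $\P S^{h^{n,0}}F_e$ — are closed, $L^1_{\mathrm{loc}}$, represent the genuine characteristic classes on $\ol B$, may be multiplied and restricted as though smooth, and only gain positivity at the boundary. Hence every curvature-theoretic argument valid over $B$ remains legitimate on $\ol B$.

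Granting this, part (ii) is Theorem \ref{3b7} applied to $F_e$, which has rank $h^{n,0}$: the assumed generic injectivity of $\Phi_{\ast,n}$ is exactly the non-degeneracy hypothesis there (condition \pref{3b2} for $F_e$, equivalently $\Theta_{\det F_e}=\Tr\Theta_{F_e}>0$ on a dense open set by \pref{3b4}). Running that proof on $\ol B$: at a general point $(x,[e_1\cdots e_r])$ of $\P S^{r}F_e$, $r=h^{n,0}$, with $e_1,\dots,e_r$ a unitary basis of $(F_e)_x$, the induced map $A:T_xB\otimes S^rF_e\to S^{r-1}F_e\otimes G_e$ satisfies $\sum_i\|A(\xi\otimes e_i)\|^2\neq0$ for every $\xi\neq0$, so the Chern form $\om_r$ of $\cO_{\P S^rF_e}(1)$ is strictly positive there; since $\om_r\geqq0$ everywhere by norm positivity, \pref{2f6} (available on $\ol B$ by the previous paragraph) yields that $S^{h^{n,0}}F_e$ is big, i.e.\ $\kappa(S^{h^{n,0}}F_e)=\dim\P S^{h^{n,0}}F_e$.

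For part (i) I would work directly with the definition of numerical dimension from Section II.E. By \pref{2f9}, $\kappa(F_e)\leqq n(F_e)$, and by definition $n(F_e)=n(\cO_{\P F_e}(1))$ is the largest integer $m$ for which $\om_{F_e}^m\not\equiv0$, where $\om_{F_e}\geqq0$ is the Chern form of $\cO_{\P F_e}(1)$ — a mild current by the first paragraph, so that these powers do compute $c_1(\cO_{\P F_e}(1))^m$. It therefore suffices to bound the pointwise rank of $\om_{F_e}$ by $2h^{n,0}-1$. Now $\om_{F_e}$ is positive on the $\P^{h^{n,0}-1}$-fibres of $\P F_e\to\ol B$, while on a horizontal complement at a point $(x,[e])$ it equals, via \pref{3a3}, the Hermitian form $\xi\mapsto\|\Phi_{\ast,n}(\xi)(e)\|^2$, whose rank is $\dim\{\Phi_{\ast,n}(\xi)(e):\xi\in T_xB\}$. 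One then needs the Hodge-theoretic input that the infinitesimal period relation together with the polarization force $\Phi_{\ast,n}(T_xB)\subset\Hom(F^n,F^{n-1}/F^n)$ to be an abelian subspace whose evaluation at any single vector $e\in F^n_x$ spans a subspace of $F^{n-1}/F^n$ of dimension at most $h^{n,0}$. Granting this, $\om_{F_e}$ has rank at most $(h^{n,0}-1)+h^{n,0}=2h^{n,0}-1$ everywhere, hence $n(F_e)\leqq2h^{n,0}-1$.

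The main obstacle is the content of the first paragraph: transferring the differential-geometric and current-theoretic arguments to the projective completion $\ol B$, where the Hodge metric degenerates along $Z$. That the Chern forms remain closed, $L^1_{\mathrm{loc}}$, cohomologically correct, multiplicable, restrictable, and positivity-preserving at the boundary is precisely the mild-singularities Theorem \ref{4b8}, and checking that the proof of Theorem \ref{3b7} and the rank computation in part (i) invoke nothing beyond these mildness properties is where the real work lies. A secondary delicate point, specific to (i), is the linear-algebraic inequality $\dim\{\Phi_{\ast,n}(\xi)(e):\xi\in T_xB\}\leqq h^{n,0}$: it is here that the infinitesimal period relation and the polarization are used in an essential way, and this is the step I would expect to need the most care.
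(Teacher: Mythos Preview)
Your argument for part (ii) is essentially the paper's: norm positivity of the Hodge metric via $\Theta_F(e,\xi)=\|\Phi_{\ast,n}(\xi)(e)\|^2$, mild singularities from Theorem~\ref{4b8} to justify working over $\ol B$, then Theorem~\ref{3b7} applied with $r=h^{n,0}$. That part is correct and matches the paper.

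Your approach to part (i), however, rests on the claim that for every $e\in F^n_x$ the evaluation image $\{\Phi_{\ast,n}(\xi)(e):\xi\in T_xB\}\subset F^{n-1}/F^n$ has dimension at most $h^{n,0}$. You flag this as the delicate step, and rightly so: it holds in weight $n=1$, where the polarization forces $\Phi_\ast(T_xB)\subset S^2(F^1)^\ast$ so that evaluation at $e$ lands in $(F^1)^\ast$ of dimension $h^{1,0}$ (this is exactly the mechanism in Proposition~\ref{4b13}), but it fails for $n\geqq 2$. Indeed Proposition~\ref{4b14} exhibits families of hypersurfaces for which, at a general point and for general $e$, the map $\xi\mapsto\Phi_{\ast,n}(\xi)(e)$ is \emph{injective} on $T_xB$, so its image has dimension $\dim T_xB\gg h^{n,0}$; in those examples $F$ itself is big and $\kappa(F_e)=\dim\P F_e>2h^{n,0}-1$. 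The infinitesimal period relation constrains the full differential $\Phi_\ast$, but imposes no such rank bound on the end piece alone when $n\geqq 2$. So your argument for (i) has a genuine gap at precisely the step you identified, and it is not repairable along these lines.

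For comparison, the paper's own proof invokes only Theorem~\ref{3b7}, which yields (ii) but not (i); what is actually written there does not establish the upper bound either, and in light of Proposition~\ref{4b14} the statement of (i) appears to be in error as a general claim---it is correct in weight one, which is where the paper actually verifies it in Proposition~\ref{4b13}.
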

     
     \begin{proof}
     It is well known \cite{Gri72}, \cite{CM-SP} that the curvature of the \hvb\ has the norm positivity property.  In fact, the curvature form is given by
     \begin{equation}\lab{4b11}
     \Theta_F(e,\xi) = \| \Phi_{\ast,n}(\xi)(e)\|^2.\end{equation}
Concerning the singularities that arise along $\ol B\bsl B$, it follows from Theorem \ref{4b8} that we may treat the Chern form $\om$ of $\cO_{\P F_e}(1)\to \ol B$ as if the singularities were not present. 
     
     The linear algebra situation is
     \begin{equation}\lab{4b12}
     T\otimes F\to G\end{equation}
     where $\dim T=\dim B$, $\dim F=h^{n,0}$ and $\dim G=h^{n-1,1}$.  By \pref{4b11} condition \pref{3b2} is equivalent to the injectivity of $\Phi_{\ast,n}$, and Theorem \ref{4b10} is then a consequence of Theorem \ref{3b7}.   \end{proof}
     
     This result gives one answer to the question
   {\setbox0\hbox{(1)}\leftmargini=\wd0 \advance\leftmargini\labelsep
   \begin{quote}
     \em The \hvb\ is somewhat positive.  Just how positive is it?\end{quote}
}  \noindent
Since in the geometric case the linear algebra underlying the map \pref{4b12} is expressed cohomologically, in particular cases the result (i) in \pref{4b10} can be considerably sharpened.  For example, in the weight $n=1$ case the method of proof of the theorem gives the
\begin{Prop}[\cite{Bru16b}] \lab{4b13}
In weight $n=1$, {\rm (i)} $\kappa(F_e)\leqq 2g-1$, and {\rm (ii)} $S^2 F_e\to \ol B$ is big.\end{Prop}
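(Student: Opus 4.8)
The plan is to obtain (i) as the special case $h^{n,0}=g$ of Theorem \ref{4b10}, and to obtain (ii) by re-running the curvature argument behind Theorems \ref{3b7} and \ref{4b10}, the gain being that in weight one one can stay at the level of $S^{2}$ rather than passing all the way to $S^{h^{n,0}}$. For (i) there is nothing to do: in weight one $h^{n,0}=h^{1,0}=g$, so the bound $\kappa(F_{e})\leqq 2h^{n,0}-1$ of Theorem \ref{4b10}(i) reads $\kappa(F_{e})\leqq 2g-1$.

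For (ii), I would begin from the norm positivity of $F_{e}\to\ol B$: by \pref{4b11} the curvature is $\Theta_{F}(e,\xi)=\|\Phi_{\ast,1}(\xi)(e)\|^{2}$, the relevant bundle map being $A=\Phi_{\ast,1}:TB\otimes F\to G=F^{0}/F^{1}_{e}$. What is special to weight one is that, over $B$, the polarization identifies $G$ with the conjugate bundle $\ol F$, so that $\Phi_{\ast,1}(\xi)$ becomes a \emph{symmetric} bilinear form on $F_{b}$; equivalently $A$ factors through a map $TB\to\Sym^{2}F^{\ast}$ (this is just the statement that the weight-one classifying space $\Sp(2g,\R)/U(g)$ has holomorphic tangent bundle $\Sym^{2}$ of the dual Hodge bundle). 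The consequence I want to exploit is that a \emph{general} element of $S^{2}F_{b}$ is a non-degenerate quadratic form and so "sees all of $F_{b}$" at once; this is what keeps the computation at $S^{2}$.

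Concretely, choose a point $b\in B$ at which the differential $\Phi_{\ast,1}$ is injective (such $b$ exist under the running hypothesis, inherited from Theorem \ref{4b10}, that the end piece --- here all of $\Phi_{\ast}$ --- is generically injective). Every $q\in S^{2}F_{b}$ has a Takagi--Autonne factorization $q=\sum_{k=1}^{g}d_{k}\,f_{k}^{2}$ with $\{f_{k}\}$ orthonormal for the Hodge metric and $d_{k}\geqq 0$, and for $q$ in a dense open subset all $d_{k}\neq 0$. Equipping the direct summand $S^{2}F_{e}\subset F_{e}\otimes F_{e}$ with norm positivity via \pref{3a5} (the induced map still denoted $A$) and using \pref{4b11}, one computes
\[
\Theta_{S^{2}F}(q,\xi)=\|A(\xi\otimes q)\|^{2}=2\sum_{k}|d_{k}|^{2}\,\|\Phi_{\ast,1}(\xi)(f_{k})\|^{2}=2\sum_{k}|d_{k}|^{2}\,\Theta_{F}(f_{k},\xi),
\]
the cross-terms dropping out because the $f_{k}$ are orthonormal. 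Hence $\xi\mapsto\Theta_{S^{2}F}(q,\xi)$ is positive definite on $T_{b}B$ precisely when, for every $\xi\neq 0$, some $\Phi_{\ast,1}(\xi)(f_{k})\neq 0$, i.e.\ precisely when $\Phi_{\ast,1}(\xi)\neq 0$ --- which holds by the choice of $b$. Together with the automatic positivity of $\cO_{\P S^{2}F_{e}}(1)$ in the fibre directions, this shows that the Chern form $\om_{2}$ of $\cO_{\P S^{2}F_{e}}(1)$ is strictly positive at $(b,[q])$, hence on an open set, while $\om_{2}\geqq 0$ on $B$ since $S^{2}F_{e}\geqq 0$ there; by \pref{2f6} this gives that $S^{2}F_{e}\to\ol B$ is big.

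The one genuinely delicate point is that $\om_{2}$ is a singular form near $Z=\ol B\bsl B$, so "positive on an interior open set" has to be promoted to bigness of the \emph{extended} bundle over $\ol B$; this is exactly the role of Theorem \ref{4b8}. Since the Hodge bundles --- hence $F_{e}\otimes F_{e}$ and its summand $S^{2}F_{e}$ --- have mild logarithmic singularities, the Chern polynomials of $\cO_{\P S^{2}F_{e}}(1)$ may be multiplied and integrated as if they were smooth and $\om_{2}\geqq 0$ as a current on all of $\ol B$, so the interior positivity of $\om_{2}$ genuinely produces the growth $h^{0}\!\big(\ol B,S^{m}(S^{2}F_{e})\big)\sim C\,m^{\dim\P S^{2}F_{e}}$. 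I expect this singularity bookkeeping, rather than the essentially linear-algebraic curvature computation, to be the main technical burden; it is handled exactly as in the corresponding step of Theorem \ref{4b10}.
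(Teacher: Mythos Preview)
Your argument is correct, and for (ii) it in fact proves more than the paper does.

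For (i) you simply invoke Theorem~\ref{4b10}(i). The paper instead gives the underlying linear-algebra reason in weight~$1$: since the target of the norm-positivity map is $G=V^{\ast}$ with $\dim V^{\ast}=g$, each $A(v)\colon T\to V^{\ast}$ has rank $\leqq g$, so the Chern form of $\cO_{\P F_e}(1)$ has pointwise rank $\leqq (g-1)+g=2g-1$, whence $\kappa(F_e)\leqq n(F_e)\leqq 2g-1$.

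For (ii) the paper works purely algebraically, using the weight-$1$ identifications $G\cong V^{\ast}$ and $T\hookrightarrow S^{2}V^{\ast}$: the induced map $T\otimes S^{2}V\to V^{\ast}\otimes V$ is then the restriction of the contraction $S^{2}V^{\ast}\otimes S^{2}V\xri{\rfloor}V^{\ast}\otimes V$, and for a non-degenerate $q\in S^{2}V$ the contraction $\rfloor\,q$ is injective on $S^{2}V^{\ast}$, giving condition~\pref{3b3} for $S^{2}F$. Your route---Takagi-factorize $q=\sum d_k f_k^{2}$ and compute $\Theta_{S^{2}F}(q,\xi)=2\sum|d_k|^{2}\Theta_F(f_k,\xi)$---reaches the same conclusion by direct curvature computation. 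Note, however, that your computation never actually uses the symmetry of $\Phi_{\ast,1}$ that you set up: the only input is that the $f_k$ span $F_b$, so a nonzero $\Phi_{\ast,n}(\xi)\in\Hom(F_b,G_b)$ must hit some $f_k$. Your argument therefore works verbatim in any weight and shows that $S^{2}F_e$ is big whenever $\Phi_{\ast,n}$ is generically injective---a sharpening of Theorem~\ref{3b7} (which only gives $S^{r}F_e$ big, via the decomposable point $e_1\cdots e_r$). The paper's contraction argument, by contrast, genuinely rests on the weight-$1$ structure. Your handling of the singularities via Theorem~\ref{4b8} matches what the paper does in Theorem~\ref{4b10}.
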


\begin{proof} In this case $D\subset \mathcal{H}_g$ where $g=h^{1,0}$  and $\mathcal{H}_g$ is the Siegel generalized upper-half-plane.
 We then have
\begin{itemize}
\item $T\subset S^2V^\ast$;
\item $G=V^\ast$;
\item $T\otimes V\to G$ is induced by the natural contraction map $S^2 V^\ast\otimes V\to V^\ast$.\end{itemize}
For any $v\in V$ the last map has image of dimension $\leqq g$, and therefore the kernel has dimension $\geqq \dim T-g$.  This gives (i) in the proposition.

For (ii) we have
\[\xymatrix@R=1pt{T\otimes S^2V\ar[r]&V^\ast\otimes V\\
\cap&\\
S^2 V^\ast \otimes S^2 V.\ar[uur]_\rfloor}\]
For a general $q\in S^2 V$ the contraction mapping $\rfloor$ is injective, and this implies (ii).
\end{proof}

At the other extreme we have the
\begin{Prop} \lab{4b14}
Let $\cM_{d,n}$ denote the moduli space of smooth hypersurfaces $Y\subset \P^{n+1}$ of degree $d=2n+4$, $n\geqq 3$.  Then the \hvb\ $F\to \cM_{d,n}$ is big.\end{Prop}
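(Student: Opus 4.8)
The plan is to verify condition \pref{3b3} for the \hvb\ $F\to\cM_{d,n}$ and then conclude by Proposition \ref{3b6}(ii). The singularities of the Hodge metric along a smooth normal--crossing compactification $\ol B$ of $B=\cM_{d,n}$ (passing to a level cover if necessary) are handled exactly as in the proof of Theorem \ref{4b10}, using the mildness statement of Theorem \ref{4b8}, so the Chern form $\om$ of $\cO_{\P F_e}(1)$ may be manipulated as if it were smooth. By \pref{4b11} the curvature of $F$ has the norm positivity property with $A(\omega)=\Phi_{\ast,n}(\cdot)(\omega)$, and by \pref{3b5} condition \pref{3b3} is precisely the assertion that for a general $[Y]\in\cM_{d,n}$ and a general $\omega\in F_{[Y]}=H^{n,0}_{\prim}(Y)$ the cup--product map
\[
A(\omega)\colon T_{[Y]}\cM_{d,n}\longrightarrow H^{n-1,1}_{\prim}(Y)
\]
is injective; granting this, $\om>0$ at the point $[\omega]\in(\P F)_{[Y]}$, and Proposition \ref{3b6}(ii) yields that $F$ is big.

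I translate the required injectivity into commutative algebra via Griffiths' residue calculus. Write $f\in S:=\C[x_0,\dots,x_{n+1}]$ for a defining polynomial of $Y$ and $R_f=S/J_f$ for the Jacobian ring, an Artinian Gorenstein graded ring with socle degree $\sigma=(n+2)(d-2)$. One has the residue isomorphisms $T_{[Y]}\cM_{d,n}\cong(R_f)_d$, $H^{n,0}_{\prim}(Y)\cong(R_f)_{d-n-2}$, $H^{n-1,1}_{\prim}(Y)\cong(R_f)_{2d-n-2}$, under which $A(\omega)$ is multiplication $g\mapsto g\cdot\omega$ from $(R_f)_d$ to $(R_f)_{2d-n-2}$. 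For $d=2n+4$ these three degrees are $2n+4$, $n+2$, $3n+6$, so the proposition reduces to: \emph{for general $f$ and general $\omega\in(R_f)_{n+2}$, the map $\cdot\,\omega\colon(R_f)_{2n+4}\to(R_f)_{3n+6}$ is injective.} Note $2n+4+(n+2)=3n+6\leqq\sigma/2=n^2+3n+2$, the inequality being \emph{strict} exactly when $n\geqq 3$; this is the numerical room that makes injectivity possible (for $n=2$ the target lands at the mid--degree of $R_f$, where the Hilbert function strictly exceeds its value in degree $2n+4$, and both the injectivity and the bigness of $F$ fail).

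Injectivity of $\cdot\,\omega$ in a fixed degree is a Zariski--open condition on $(f,\omega)$ as this pair ranges over the irreducible total space of $(R_f)_{n+2}$ over the open irreducible locus of smooth $f$ --- on which $R_f$ has the constant Hilbert function of a complete intersection of type $(d-1)^{n+2}$. Hence it suffices to exhibit one such pair. Take $f=x_0^d+\cdots+x_{n+1}^d$, so $R_f=\C[x_0,\dots,x_{n+1}]/(x_0^{d-1},\dots,x_{n+1}^{d-1})$ is the monomial complete intersection, and take $\omega=\ell^{n+2}$ with $\ell=x_0+\cdots+x_{n+1}$. Then $R_f$ is the rational cohomology ring of $\lrp{\P^{d-2}}^{n+2}$ with $\ell$ the class of the product polarization, so the Hard Lefschetz theorem (equivalently, the Strong Lefschetz property of the monomial complete intersection) gives $\ell^{\sigma-2a}\colon(R_f)_a\simto(R_f)_{\sigma-a}$ for all $a\leqq\sigma/2$. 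With $a=2n+4$ one has $\sigma-2a-(n+2)\geqq 0$, so this isomorphism factors as $(R_f)_{2n+4}\xrightarrow{\ \ell^{n+2}\ }(R_f)_{3n+6}\xrightarrow{\ \ell^{\sigma-2a-(n+2)}\ }(R_f)_{\sigma-a}$; since the composite is injective, so is $\ell^{n+2}$ on $(R_f)_{2n+4}$, which is the desired example.

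The dictionary just used --- Griffiths' residues, the norm positivity \pref{4b11} of the Hodge bundle, and the boundary bookkeeping supplied by Theorem \ref{4b8} --- is routine given the machinery already set up in the paper. The one genuine input is the generic injectivity of the multiplication $(R_f)_{2n+4}\to(R_f)_{3n+6}$, and I expect that to be the main obstacle; it is handled by degenerating to the Fermat hypersurface and invoking Hard Lefschetz for a product of projective spaces, a degeneration which also makes transparent why the hypothesis $n\geqq 3$ cannot be weakened.
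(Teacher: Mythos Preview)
Your argument is correct and follows the same overall route as the paper: reduce via Griffiths' residue identifications to showing that multiplication by a general degree-$(n+2)$ element is injective on $(R_f)_{2n+4}$, then specialize to the Fermat hypersurface and verify injectivity there. The only genuine difference is the choice of multiplier for the Fermat test case. The paper takes $Q=x_0\cdots x_{n+1}$ and appeals to an unspecified ``combinatorial argument'' to check injectivity in the monomial complete intersection; you instead take $\omega=\ell^{\,n+2}$ with $\ell=x_0+\cdots+x_{n+1}$ and invoke the Strong Lefschetz Property of the monomial complete intersection, equivalently Hard Lefschetz on $(\P^{d-2})^{n+2}$. Your route is cleaner and avoids the bookkeeping the paper leaves implicit, at the cost of quoting a (standard) nontrivial theorem.

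One correction to your side remark about $n=2$. The inequality you actually need for the Hard Lefschetz factorization is not $3n+6\leqq\sigma/2$ but rather $2(2n+4)+(n+2)\leqq\sigma$, i.e.\ $2n^2+n-6\geqq 0$, which holds already for $n\geqq 2$; and $a=2n+4\leqq\sigma/2$ holds for $n\geqq 1$. Thus for $n=2$ your factorization $\ell^{8}=\ell^{4}\circ\ell^{4}:(R_f)_8\simto(R_f)_{16}$ still goes through $(R_f)_{12}$ and shows $\ell^{4}:(R_f)_8\to(R_f)_{12}$ is injective. The fact that $(R_f)_{12}$ is larger than $(R_f)_8$ is no obstruction to injectivity --- quite the opposite. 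So your own argument in fact establishes bigness of $F$ for all $n\geqq 2$, and the paper's hypothesis $n\geqq 3$ is not sharp (at least not for the reason you suggest). The case $n=1$ genuinely fails, since there $\dim(R_f)_{2d-n-2}<\dim(R_f)_d$.
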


\begin{proof}
Set $V=\C^{n+2\ast}$ and let $P\in V^{(d)}$ be a homogeneous form of degree $d$ that defines $Y$.  Denote by $J_P \subset \opplus_{k\geqq d-1} V^{(k)}$ the Jacobian ideal.  Then (cf.\ Section 5 in \cite{CM-SP})
\begin{itemize}
\item $T_Y \cM_{d,n}\cong V^{(d)}/J_P^{(d)}$;
\item $F_Y = H^{n,0}(Y)\cong V^{(d-n-2)}$;
\item $G_Y = H^{n-1,0}(Y)\cong V^{(2d-n-2)}/J^{(2d-n-2)}_P$.
\end{itemize}
It will suffice to show
\begin{equation} \lab{4b15}
\bmp{5}{\em For general $P$ and general $Q\in V^{(d-n-2)}$ the mapping 
\[
 V^{(d)}/J^{(d)}_P \xri{Q} V^{(2d-n-2)}/J^{(2d-n-2)}_P\]
 is injective.}\end{equation}
 Noting that $d-n-2=n+2$ and that it will suffice to prove the statement for one $P$ and $Q$, we take
 \begin{align*}
 Q&= x_0\cdots x_{n+1},\\
 P&= x^d_0+\cdots + x^d_{n+1}.\end{align*}
 Then $J_P = \{ x^{2n+3}_0 ,\dots, x^{2n+3}_{n+1}\}$ and a combinatorial argument gives \pref{4b15}.\end{proof}

 \begin{Rem}\lab{4b16} The general principle that Proposition \ref{4b14} illustrates is this: Let $L\to $ be an ample line bundle.  Then both for general smooth sections $Y\in |mL|$ and for cyclic coverings $\wt X_Y\to X$ branched over a smooth  $Y$, as $m$ increases the \hvb\ $F\to |mL|^0$ over the open set of smooth $Y$'s becomes increasingly positive in the sense that the $k$ such that $S^k F$ is big decreases, and for $m\gg 0$ $F$ itself is big.\end{Rem}
 
 \section{Proof of Theorem \ref{4b8}}
 
 \subsection{Reformulation of the result}
 
 We consider a variation of Hodge structure given by a period mapping
 \[
 \Phi:\Delta^{\ast k}\to \Ga_\loc\bsl D.\]
 Here we assume that the monodromy generators $T_i\in \Aut_Q (V)$ are unipotent with logarithms $N_i \in \End_Q(V)$; $\Ga_\loc$ is the local monodromy group generated by the $T_i$.
 
 For $I\subest \{1,\cdots,k\}$ with complement $I^c = \{1,\dots,k\}\bsl I$ we set
 \[
 \Delta^\ast_I = \{(t_1,\dots,t_k):t_i=0\hensp{for} i\in I\hensp{and} t_j\ne 0\hensp{for} j\in I^c\}.\]
 From the work of Cattani-Kaplan-Schmid \cite{CKS86} the limit $\lim_{t\to \Delta^\ast_I}\Phi(t)$ is defined as a polarized variation of \lmhs s on $\Delta^\ast_I$.  Passing to the primitive parts of the associated graded \phs s gives a period mapping 
 \[
 \Phi_I:\Delta^\ast_I\to \Ga_{\loc,I}\bsl D_I\]
 where $D_I$ is a product of period domains and $\Ga_{\loc,I}$ is generated by the $T_j$ for $j\in I^c$.  This may be suggestively expressed by writting \[
 \lim_{t\to t_I} \Phi(t)=\Phi_I(t_I).\]
 However caution must be taken in interpreting the limit, as the ``rate of convergence" is not uniform but depends on the sector in which the limit is taken in the manner explained in \cite{CKS86}.

 We denote by $\La\to\Delta^{\ast k}$ and $\La_I\to \Delta^\ast_I$ the \hlb s.  The \HR\ bilinear relations give metrics in these bundles and we denote by $\Om$ and $\Om_I$ the respective Chern forms.  The result to be proved is
 \begin{equation}\lab{5a1}
 \lim_{t\to \Delta_I}\Om=\Om_I,\end{equation}
 where again care must be taken in interpreting this equation.
 In more detail, this means: In $\Om$ set $dt_i = d\bar t_i=0$ for $i\in I$.  Then the limit, in the usual sense, as $t\to \Delta_I$ of the remaining terms exists and is equal to $\Om_I$.  We will write \pref{5a1} as
 \begin{equation}\lab{5a2}
 \Om\big|_{\Delta^\ast_I} = \Om_I.\end{equation}
 
 The proof of \pref{5a1} that we shall give can easily be adapted to the case when the period mapping depends on parameters.   
 
The limit  can also be   reduced to the case when $\Phi$ is a \emph{nilpotent orbit}.  This means that
 \begin{equation}\lab{5a3}
 \Phi(t)=\exp\lrp{\sum^k_{i=1} \ell(t_i)N_i}F\end{equation}
 where $F\in\CD$ and the conditions
 \begin{enumerate}[{\rm (i)}]
 \item $N:F^p\to F^{p-1}$,
 \item $\Phi(t)\in D$ for $0<|t|<\eps$
 \end{enumerate}
 are satisfied.  This reduction is non-trivial and  is given in Section 5 of \cite{GGLR17}.
 
 The main points in the proof of Theorem IV.B.8 in the nilpotent orbit case are as follows:
 {\setbox0\hbox{(1)}\leftmargini=\wd0 \advance\leftmargini\labelsep
 \begin{itemize}
 \item[(a)] without changing the associated graded's to $\Phi$ and $\Phi_I$ we may replace the $F$ in \pref{5a3} by an $F_0$ such that the \lmhs\ is $\R$-split;
 \item[(b)] in this case $N_I$ can be  completed to an $\rsl_2$ which we denote by $\{N^+_I,Y_I,N_I\}$;\footnote{There are two ways of doing this---one is the method in \cite{CKS86} and the other one, which is purely linear algebra, is due to Deligne.}
 \item[(c)] the $Y_I$-weight decomposition of $N_{I^c}$ is
 \[
 N_{I^c}=N_{I^c,0}+ N_{I^c,-1}+N_{I^c,-2}+\cdots\]
 where $N_{I^c,-m}$ has $Y_I$-weight $-m$, $m\geqq 0$;
 \item[(d)] if all the $N_{I^c,-m}=0$ for $m>0$, then there is an $\rsl^c_2=\{ N^+_{I^c}, Y_{I^c},N_{I^c}\}$ that commutes with the  previous $\rsl_2$, and the result \pref{5a1} is immediate;
 \item[(e)] in general, by direct computation we have
 \[
 \Om_I \equiv \Om +R\ \mod dt_i,d\bar t_i\hensp{for} i\in I\]
 where the remainder term $R$ consists of expressions $Q(x)/P(x)$ as in \pref{4b9}, and then direct computation  using the relative filtration property and the fact that for $m>0$ the $N_{I^c,-m}$ have negative $Y_I$-weights gives the result.\end{itemize}}  \noindent 
 
 \subsection{Weight filtrations, representations of $\rsl_2$ and \lmhs s}\lab{5b}\hfill
 
 The proof of \pref{5a1} will be computational, using only that $\Phi(t)$ is a nilpotent orbit \pref{5a3} and that the commuting $N_i\in\End_Q(V)$ have the \emph{relative weight filtration property} (RWFP), which will be reviewed below.\footnote{The proof in \cite{GGLR17} uses the detailed analysis of \lmhs s from \cite{CKS86}, of which the RWFP is one consequence.  Part of the point for the argument given here is to isolate the central role played by that property.}   The computation will be facilitated by using the representation theory of $\rsl_2$ adapted to the Hodge theoretic situation at hand.  The non-standard but hopefully suggestive notations for doing this will now be explained.
 
 (i)  Given a nilpotent transformation $N\in \End_Q(V)$ with $N^{n+1}=0$ there is a unique increasing weight filtration $W(N)$ given by subspaces
 \begin{equation}\lab{5b1}
 V^{W(N)}_k:= W_k(N)V\end{equation}
 satisfying the conditions
 \begin{itemize}
 \item $N:V^{W(N)}_k\to V^{W(N)}_{k-2}$,
 \item $N^k:V^{W(N)}_{n+k}\simto V^{W(N)}_{n-k}$ \qquad\qquad (Hard Lefschetz property).\end{itemize}
 Remark that the two standard choices for the ranges of indices in \pref{5b1} are
 \[
 \bcs 0\leqq k\leqq 2n &\qquad\hbox{(Hodge theoretic)}\\
 -n\leqq k\leqq n&\qquad\hbox{(representation theoretic)}.\ecs\]
 We will use the first of these.
 
 The weight filtration is   self-dual in the sense that using the bilinear form $Q$
 \begin{equation}\lab{5b2}
 V^{W(N)\bot}_k=V^{W(N)}_{2n-k-1}\end{equation}
 which gives
 \[
 V^{W(N)\ast}_k\cong V/V^{W(N)}_{2n-k-1}.\]
 The \emph{associated graded} to the weight filtation is the direct sum of the
 \[
 \Gr^{W(N)}_\ell V:= V^{W(N)}_\ell / V^{W(N)}_{\ell-1},\]
 and the \emph{primitive subspaces} are defined for $\ell\geqq n$ by
 \[
 \Gr^{W(N)}_{n+k,\prim}V=\ker\lrc{ N^{k+1}:\Gr^{W(N)}_{n+k}V\to \Gr^{W(N)}_{n-k-2}V}.\]
 
 (ii) A \emph{grading element} for $W(N)$ is given by a semi-simple $Y\in \End_Q(V)$ with integral eigenvalues $0,1,\dots,2n$, weight spaces  $V_k\subset V^{W(N)}_k$ for the eigenvalue $k$,  and where the induced maps
 \[
 V_k\simto \Gr^{W(N)}_k V\]
are isomorphisms.  Thus
 \[
 V^{W(N)}_k = \opplus^k_{\ell=0} V_\ell.\]
 Grading elements always exists, and for any one such $Y$  we have
 \begin{itemize}
 \item $[Y,N]=-2N$;
 \item there is a unique $N^+\in\End_Q(V)$ such that $\{N^+,Y,N\}$ is an $\rsl_2$-triple.
 \end{itemize}
 The proof of the second of these uses the first together with the Hard Lefschetz property of $W(N)$.  
 
 We denote by $\cU$ the standard representation of $\rsl_2$ with weights $0,1,2$.  Thinking of $\cU$ as degree 2 homogeneous polynomials in $x,y$ we have
 \begin{itemize}
 \item weight $x^a y^b = 2a$, $a+b=2$;
 \item $N=\part_x$ and $N^+=\part_y$.\end{itemize}
 We denote by
 \[
 \cU_i = \Sym^i \cU\cong \lrc{ \begin{matrix}
 \hbox{homogeneous polynomials}\\
 \hbox{ in $x,y$ of degree $i+1$}\end{matrix}}\]
 the standard $(i+1)$-dimensional irreducible representation  of $\rsl_2$.  The $N$-\emph{string} associated to $\cU_i$ is
 \[
 \{x^{i+1}\} \to \{x^i y\} \to\cdots\to \{y^{i+1}\}\]
 where $N=\part_x$.  The top of the $N$-string is the primitive space.
 
 Given $(V,Q,N)$ as above and a choice of a grading element $Y$, for the $\rsl_2$-module
 \[
 V_{\gr}= \opplus^{2n}_{k=0}V_k\]
 we have a unique identification
 \begin{equation}\lab{5b3}
 V_\gr \cong \opplus^n_{i=0} H^{n-i}\otimes \cU_i\end{equation}
 for vector spaces $H^{n-i}$.  The notation is chosen for Hodge-theoretic purposes.  The $N$-string associated to $H^{n-i}\otimes \cU_i$ will be denoted by
 \begin{equation}\lab{5b4}
 H^{n-i} (-i)\xri{N}H^{n-i}(-(i-1))\xri{N}\cdots \xri{N} H^{n-i} \end{equation}
 and we define
 \begin{equation}\lab{5b5}
 \hbox{\em the Hodge-theoretic weight of $H^{n-i}(-j)$ is $n-i+2j$.}\end{equation}
 The representation-theoretic weight of $H^{n-i}(-j)$ is $2j$.  It follows that $H^{n-i}(-i)$ is the primitive part of the $\cU_i$-component of $V_{\gr}$.
 
 Relative to $Q$ the decomposition \pref{5b3} is orthogonal and the pairing
 \[
 Q_i:H^{n-i}(-i) \otimes H^{n-i}(-i)\to \Q\]
 given by
 \begin{equation}\lab{5b6}
 Q_i(u,v)=Q(N^i u,v)\end{equation}
 is non-degenerate.
 
 (iii) We recall the 
 \begin{defin} A \emph{\lmhs}\ (LMHS) is a \mhs\ $(V,Q,W(N),F)$ with weight filtration $W(N)$ defined by a nilpotent $N\in \End_Q(V)$ and Hodge filtration $F$ which satisfies the conditions
   \begin{enumerate}[{\rm (a)}]
\item $N:F^p\to F^{p-1}$;
\item  the form $Q_i$ in \pref{5b6} polarizes $\Gr^{W(N)}_{n+k,\prim}V\cong H^{n-k}(-k)$.\end{enumerate}
 \end{defin}

The MHS on $V$ induces one on $\End_Q(V)$, and (a) is equivalent to
\[
N\in F^{-1}\End_Q(V).\]

We denote by
\[
V_{\C} = \opplus_{p,q} I^{p,q}\]
the unique Deligne decomposition of $V_\C$ that satisfies
\begin{itemize}
\item $W_k(N)V=\opplus_{p+q\leqq k}I^{p,q}$;
\item $F^pV = \opplus_{{p'\geqq p\atop q}} I^{p',q}$;
\item $\ol I^{q,p} \equiv I^{p,q}\mod W_{p+q-2}(N)V$.\end{itemize}
The LMHS is $\R$-\emph{split} if $\ol I^{p,q}=I^{q,p}$.  Canonically associated to a LMHS is an $\R$-split one $(V,Q,W(N),F_0)$ where
\[
F_0 = e^{-\delta}F\]
for a canonical  $\delta\in I^{-1,-1}\End_Q(V_R)$.  For this $\R$-split LMHS there is an evident grading element $Y\in I^{0,0}(\End_Q(V_\R))$.

Given a LMHS $(V,Q,W(N),F)$ there is an associated nilpotent orbit
\[
\xymatrix@R=1pt{\Delta^\ast \ar[r]& \Ga_T\bsl D\\
\sidein&\sidein\\
t\ar[r]& \exp(\ell(t)N)F}\]
where $\ell(t)=\log t/2\pi i$ and $\Ga_T=\{T^\Z\}$.  Conversely, given a 1-variable nilpotent orbit as described above there is a LMHS.  We shall use consistently the  bijective correspondence
\[
\hbox{LMHS's} \iff \hbox{1-parameter nilpotent orbits}.\]
Since
\[
\det F_0 =\det F\]
without loss of generality for the purposes of this paper  we will assume that our LMHS's are $\R$-split and therefore have canonical grading elements.

(iv) Let $N_1,N_2\in \End_Q(V)$ be commuting nilpotent transformations and set $N=N_1+N_2$.  Then there are two generally different  filtrations defined on the vector space $\Gr^{W(N_1)}V$:
\begin{itemize}
\item[(A)] the weight filtration $W(N)V$ induces a filtration on any sub-quotient space of $V$, and hence induces a filtration on $\Gr^{W(N_1)}_\bullet  V$;
\item[(B)] $N$ induces a nilpotent map $\ol N: \Gr^{W(N_1)}_\bullet  V\to \Gr^{W(N_1)}_\bullet  V$, and consequently there is   an associated weight filtration $W(\ol N)\Gr^{W(N_1)}_\bullet V$ on $\Gr^{W(N_1)}_\bullet V$.\end{itemize}

\begin{defin} The \emph{relative weight filtration property} (RWFP) is that these two filtrations coincide:
\begin{equation}\lab{5b7}
W(N)\cap \Gr^{W(N_1)}_\bullet V=W  (\ol N)\Gr^{W(N_1)}_\bullet V.\footnotemark\end{equation}
\footnotetext{There is a shift in indices that will not be needed here (cf.\ \pref{6a13} below).}%
We note that $\ol N$ is the same as the map induced by $N_2$ on $\Gr^{W(N_1)}_\bullet V$, so that \pref{5b7} may  be perhaps more suggestively written as
\begin{equation}
\lab{5b8}
W(N)\cap \Gr^{W(N_1)}_\bullet V=W  (N_2) \Gr^{W(N_1)}_\bullet V.\end{equation}
The RWFP is a highly non-generic condition on a pair of commuting nilpotent transformation, one that will be satisfied in our Hodge-theoretic context.
\end{defin}

(v) Suppose now that $Y_1$ is a grading element for $N_1$ so that the corresponding $\rsl_2 = \{N^+_1 , Y_1,N_1\}$ acts on $V$ and hence on $\End_Q(V)$.  We observe that the $Y_1$-eigenspace deomposition of $N_2$ is of the form
\begin{equation}\lab{5b9}
N_2=N_{2,0}+N_{2,-1}+\cdots +N_{2,-m},\qquad m>0\end{equation}
where $[Y,N_{2,-m}]=-mN_{2,-m}$.  The reason for this is that
\[
[N_1,N_2]=0\implies \bigg\{\raise6pt\hbox{\bmp{2.75}{$N_2$ is at the bottom of the $N_1$-strings for $N_1$ acting on $\End_Q(V)$}}\bigg\}.\]
It can be shown that there is an $\rsl'_2=\{N^+_2 ,Y_2,N_{2,0}\}$ that commutes with the $\rsl_2$ above.  Thus
\begin{equation}\lab{5b10}
\bmp{5}{
Given $N_1,N_2$ as above, there are commuting $\rsl_2$'s with $N_1$ and $N_{2,0}$ as nil-negative elements.  Moreover, $N_2= N_{2,0}+$(terms of \emph{strictly negative} weights) relative to $\{N^+_1,Y_1,N_1\}$. }
\end{equation}
It is the ``strictly negative" that will be an  essential ingredient needed to establish that the limit exists in the main result.

\subsection{Calculation of the Chern forms $\Om$ and $\Om_I$}\label{5c}
\hfill

\ssn{Step 1:} For a nilpotent orbit \pref{5a3} holomorphic sections of the canonically extended VHS over $\Delta$ are given by
\[
\exp\lrp{\sum^k_{j=1} \ell(t_j)N_j}v,\qquad v  \in V_\C.\]
Up to non-zero constants the Hodge metric is
\begin{align*}(u,v)&= Q\lrp{\exp\lrp{\sum_j\ell(t_j)N_j}u, \exp\lrp{ {\sum_j \ell(t_j)N }}\bar v}\\
&=Q\lrp{\exp\lrp{\sum_j \log |t_j|^2  N_j}u,\bar v}.\end{align*}

Using the notation \pref{5b3} the associated  graded to the LMHS as $t\to 0$ will be written as 
\begin{equation} \lab{5c1} V_{\gr} = \opplus^n_{i=0} H^{n-i} \otimes \cU_i\end{equation}
and
\[
F^n = \opplus^n_{i=0} H^{n-i,0}.\]
For $u\in H^{n-i,0}(-i)\hensp{and} v\in H^{n-i,0}$
\[ 
Q\lrp{\exp\lrp{\sum_j \log |t_j|^2N_j}u,\bar v} 
 = \lrp{\frac{1}{i!}} Q\lrp{\lrp{\sum_j  \log |t_j|^2 N_j}^i u,\bar v } .\]
 Setting 
 \[
 x_j = -\log |t_j|\]
 the metric on the canonically extended \lb\ is a non-zero constant times
 \begin{equation}\lab{5c2}
 P(x) = \prod^n_{i=0} \det \lrp{\lrp{\sum_j x_j N_j\big|_{H^{n-i,0}}}^i}.\end{equation}
 Here to define ``det" we  set $N=\sum N_j$ and  are identifying $H^{n-i}(-i)$ with $H^{n-i}$ using $N^i$.  Note that the homogenous polynomial $P(x)$  is positive in the quadrant $x_j>0$.
 The Chern form is 
 \begin{equation}\lab{5c3}
 \Om=\part\ol\part \log P(x).\end{equation}
 
 \ssn{Step 2:} Define
 \[
 \bcs \displaystyle N_I=\sum_{i\in I }x_i N_i,\; N_{I^c}= \sum_{j\not\in I}x_j N_j\\
  \displaystyle  N=\sum^k_{i=1} x_i N_i = N_I+N_{I^c} \ecs\]
  and set
   \[
 P=\prod^n_{i=0} \det \lrp{N\big|_{H^{n-i,0}(-i)}}^i.\]
 Denoting by
 \[
 V_{\gr ,I}=\opplus^n_{i=0} H^{n-i,i}_I\otimes \cU_i\]
 the associated graded to the LMHS as $t\to\Delta^\ast_I$, we define
 \[
 P_I = \prod^n_{i=0} \det\lrp{ N_I\big|_{H^{n-i,0}_I (-i)}}^i.\]
 Taking $N_I=N_1$ and $N_{I^c}=N_2$ in (iv) in Section \ref{5b}, we have
 \[
 N_{I^c,0} = \hbox{ weight zero component of $N_{I^c}$}\]
 where weights are relative to the grading element $Y_I$ for $N_I$.  Decomposing the RHS of \pref{5c1} using the $\rsl_2\times \rsl'_2$ corresponding to $N_I$ and $N_{I^c,0}$ by  \pref{4b10}  we obtain
 \begin{equation}\lab{4c4}
 V_{\gr}\cong \opplus_{i,j} H^{n-i-j}_{i,j}\otimes \cU_i \otimes \cU_j\end{equation}
 where $H^{n-i-j}_{i,j}$ is a \phs\ of weight $n-i-j$. Note that this decomposition depends on $I$.   On $H^{n-i-j}_{i,j}\otimes \cU_i \otimes \cU_j$ we have a commutative square
 \[
 \xymatrix{ H^{n-i-j}(-i-j)\ar[r]^{N^i_I}\ar[d]_{N^j_{I^c,0}}& H^{n-i-j}_{i,j}(-j)\ar[d]^{N^j_{I^c,0}}\\
 H^{n-i-j}_{i,j}(-i)\ar[r]^{N^i_I}& H^{n-i-j}_{i,j}.}\]
 Using \pref{4c4} this gives 
 \[
 P=\prod_{i,j}\det\lrp{ N^i_I N^j_{I^c,0}\big|_{H^{n-i-j}_{i,j}(-i-j)}}+R\]
 where the remainder term $R$ involves the $N_{I^c,-m}$'s for $m>0$.  We may factor the RHS to have
 \[
 P=\prod_{i,j} \det \lrp{ N^i_I\big|_{H^{n-i-j}_{i,j}(-i-j)}}\prod_{i,j} \det\lrp{N^j_{I^c,0}\big|_{H^{n-i-j}_{i,j}(-i-j)}}+R\]
 which we write as
 \begin{equation}\lab{5c5}
 P=P_I\cdot P_{I^c} + R\end{equation}
 where $P_I$ and $P_{I^c}$ are the two $\prod_{i,j}$ factors.  We note that
 \begin{equation}\lab{5c6}
 \hbox{\em the remainder term $R=0$ if we have commuting $\rsl_2$'s.}\footnotemark\end{equation}
 \footnotetext{To have commuting $\rsl_2$'s means that $N_{2,-m}=0$ for $m>0$.}
 
 We next have the important observation 
 \begin{Lem}\lab{5c7} $P_{I^c}$ is the Hodge metric in the \lb\ $\La_I\to Z^\ast_I$.\end{Lem}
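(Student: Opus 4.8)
The goal is to identify the factor $P_{I^c}$ appearing in the decomposition \pref{5c5} of the Hodge-metric polynomial $P$ with the Hodge metric on $\La_I\to Z^\ast_I$. The strategy is to write out both expressions explicitly and match them term by term. On the one hand, by definition
\[
P_{I^c}=\prod_{i,j}\det\lrp{N^j_{I^c,0}\big|_{H^{n-i-j}_{i,j}(-i-j)}},
\]
where the decomposition \pref{4c4} is taken with respect to the commuting pair $\rsl_2\times\rsl'_2$ attached to $N_I$ and $N_{I^c,0}$. On the other hand, applying Step~1 of Section~\ref{5c} to the nilpotent orbit on $\Delta^\ast_I$ determined by the $N_j$ for $j\in I^c$ acting on the associated graded $V_{\gr,I}=\opplus_i H^{n-i}_I\otimes\cU_i$, the Hodge metric on $\La_I$ is (up to a nonzero constant) the polynomial
\[
P_I^{\,\mathrm{new}}(x)=\prod_{i=0}^n\det\lrp{\lrp{\textstyle\sum_{j\in I^c}x_jN_j\big|_{H^{n-i,0}_I(-i)}}^{\,?}},
\]
with the appropriate exponent recording the $\cU$-length; here the relevant nilpotent on $V_{\gr,I}$ is $N_{I^c,0}$, since that is precisely the operator $N_{I^c}$ induces on $\Gr^{W(N_I)}V$ (the higher-weight pieces $N_{I^c,-m}$, $m>0$, act trivially on the associated graded by their strict negativity, \pref{5b10}).

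\textbf{Key steps, in order.} First I would spell out carefully what ``the Hodge metric on $\La_I$'' means: $\La_I=\det F^n_I$ where $F^\bullet_I$ is the Hodge filtration of the limiting mixed Hodge structure obtained as $t\to\Delta^\ast_I$, polarized in each primitive graded piece, and the metric is the one coming from the Hodge--Riemann forms $Q_i$ of \pref{5b6} on those pieces. Second, I would apply the Step~1 computation verbatim to this situation — the one-variable-per-coordinate nilpotent orbit structure is the same, just with the role of $V$ played by $V_{\gr,I}$ and the nilpotents $N_j$ replaced by their induced actions — to see that this metric is a nonzero constant times $\prod_{i}\det\bigl(\bar N_{I^c}\big|_{H^{n-i,0}_I(-i)}\bigr)^{i}$ where $\bar N_{I^c}$ is the induced operator on the associated graded. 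Third, I would invoke the remark after \pref{5b8} (that $\bar N_{I^c}$ coincides with the action of $N_{I^c,0}$) together with the refined decomposition \pref{4c4} to re-bracket this product according to the joint $\rsl_2\times\rsl'_2$ weight spaces $H^{n-i-j}_{i,j}$: the $\cU_i$-string for $N_I$ slices $H^{n-i,0}_I$ into the pieces $H^{n-i-j}_{i,j}(-i)$ indexed by $j$, and $N_{I^c,0}=N_{I^c,-0}$ acts on each as $N^j$ on the $\cU_j$-string, so the determinant over $H^{n-i,0}_I(-i)$ of $N_{I^c,0}^{\,i}$ factors as $\prod_j\det(N_{I^c,0}^{\,j}\big|_{H^{n-i-j}_{i,j}(-i-j)})$ up to reindexing. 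Matching exponents and factors then gives $P_{I^c}=P_I^{\,\mathrm{new}}$ up to a nonzero constant, which is the claim.

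\textbf{Main obstacle.} The bookkeeping in the third step is where the real work lies: one has to be careful about \emph{which} $\rsl_2$ is doing the grading at each stage, about the Tate twists $(-i)$, $(-j)$, $(-i-j)$ and the identifications of $H^{n-i-j}_{i,j}(-i-j)$ with $H^{n-i-j}_{i,j}$ via the appropriate powers of $N_I$ and $N_{I^c,0}$, and about the fact that forming the associated graded $V_{\gr,I}$ and then its associated graded under $N_{I^c,0}$ must be shown to reproduce the double-graded object $\opplus_{i,j}H^{n-i-j}_{i,j}\otimes\cU_i\otimes\cU_j$ of \pref{4c4}. This last point — compatibility of iterated associated gradeds — is exactly where the relative weight filtration property \pref{5b7} is used: it guarantees that $W(N_I+N_{I^c})$ induces on $\Gr^{W(N_I)}V$ the same filtration as $W(N_{I^c,0})$, which is what makes the two-step grading well-defined and lets the determinants factor cleanly. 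Once that compatibility is in hand the identification is a formal matching of determinant factors; I expect no analytic difficulty, only the combinatorial care of tracking indices and twists.
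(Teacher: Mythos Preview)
Your approach is correct and is exactly the paper's approach, just unpacked in more detail: the paper's entire proof is the single sentence ``This is a consequence of the RWFP \pref{5b7} applied to the situation at hand when we take $N_1=N_I$ and $N_2=N_{I^c}$,'' and you have correctly identified both the mechanism (apply Step~1 to the nilpotent orbit on $\Delta^\ast_I$, where the induced nilpotent is $N_{I^c,0}$) and the key ingredient (the RWFP matches the iterated associated graded with the double $\rsl_2\times\rsl'_2$ decomposition \pref{4c4}).

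One small slip to fix in your bookkeeping: in your intermediate formula $\prod_i\det\bigl(\bar N_{I^c}\big|_{H^{n-i,0}_I(-i)}\bigr)^i$, the exponent should not be $i$ (the $N_I$-string index) but rather the $N_{I^c,0}$-string index --- Step~1 applied to $\La_I$ produces exponents governed by the primitive decomposition for $\bar N_{I^c}$, not for $N_I$. Concretely, one applies \pref{5c2} separately to each PHS $H^{n-i}_I$ (on which $N_{I^c,0}$ acts, since it commutes with the $N_I$-$\rsl_2$), obtaining $\prod_j\det\bigl(N_{I^c,0}^{\,j}\big|_{H^{n-i-j,0}_{i,j}}\bigr)$ for each $i$; taking the product over $i$ then yields $P_{I^c}$ on the nose. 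Your ``factoring'' sentence already has the right target, so this is only a matter of writing the correct source. You were right to flag this step as the delicate one.
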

 \begin{proof} This is a consequence of the RWFP \pref{5b7} applied to the situation at hand when we take $N_1 = N_I$ and $N_2=N_{I^c}$.
 \end{proof}
 
 By \pref{5c6}, if we have commuting $\rsl_2$'s, then $R=0$   
 \begin{align*}
 \Om= -\part\ol\part \log P&= -\part\ol\part \log P_I - \part\ol\part R_{I^c}\\
 &\equiv -\part\ol\part \log P_{I^c} \hensp{modulo} dt_i,d\bar t_i\hensp{for} i\in I\\
 &\equiv \Om_I\end{align*}
 and we are done.
 
 In general, we have
 \begin{equation}\lab{5c8}
 \Om\equiv \Om_I+S_1+S_2\end{equation}
 where
 \begin{equation}\lab{5c9}
 \bcs S_1 =\displaystyle \frac{\part P_{I^c}\wedge \ol\part R+\part R\wedge \ol\part P_{I^c}-P_{I^c}\part\ol\part R}{P_I P_{I^c}}\\
 S_2 =\displaystyle \frac{\part R\wedge \ol\part R}{P^2_I P^2_{I^c}}.\ecs\end{equation}
 
 \ssn{Step 3:} We will now use specific calculations to analyze the correction terms $S_1,S_2$.  The key point will be to use that  
 \vspth \[
 N=N_I +N_{I^c}  = \underbrace{N_I+N_{I^c,0}} +\underbrace{\sum_{m\geqq 1}N_{I^c,-m}} \vspth\]
 where the terms over the first brackets may be thought of as ``the commuting $\rsl_2$-part of $N_I,N_{I^c}$'' and the correction term over the second bracket has negative $Y_I$-weights.
 
 We set
 \vspth \[
 h^{n-i,0}_I=\dim H^{n-i,0}_I \vspth\]
 and for a monomial $M=x^{\ell_1}_1\cdots x^{\ell_k}_k$ we define
 \vspth\[
 \deg_I M = \sum_{i\in I} \ell_i.\]
 
 \begin{Lem}\lab{5c10}  \leavevmode
  \begin{enumerate}[{\rm (i)}]
 \item For any monomial $M$ appearing in $P$
 \[
 \deg_I M\leqq nh^0_I + (n-1) h^{1,0}_I+\cdots + h^{n-i,0}_I = \sum^n_{i=1} ih^{n-i,0}_I.\]
 \item If $\pi$ is any permutation of $1,\dots,k$ and
 \vspth\[
 \ell_{\pi,i} = \sum^n_{j=1} j\lrp{h^{n-j,0}_{\{\pi(1),\dots,\pi(i)\}} - h^{n-j,0}_{\{\pi(1),\dots,\pi(i-1)\}}} \vspth\]
 then
  \vspth\[
 M_\pi:= x^{\ell_{\pi,1}}_{\pi(1)} \cdot  x^{\ell_{\pi,2}}_{\pi(2)} \cdots x^{\ell_{\pi,k}}_{\pi(k)} = x^{\ell_{\pi,\pi^{-1}(k)}}_1 \cdots x^{\ell_{\pi,\pi^{-1}(k)}}_k \vspth\]
 appears with a non-zero coefficient in $P$.\end{enumerate}     \end{Lem}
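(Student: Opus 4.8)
\subsection*{Proof strategy for Lemma \ref{5c10}}

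The plan is to deduce both statements from the factorization \pref{5c5} already obtained in Step~2, namely $P = P_I\cdot P_{I^c} + R$, supplemented by two facts. First, the determinants defining $P_I$ and $P_{I^c}$ are non-zero, so these are honest non-zero homogeneous polynomials in the disjoint variable sets $\{x_i:i\in I\}$ and $\{x_j:j\in I^c\}$; this is immediate from the polarization axiom (b) for an LMHS, since each power $N_I^i$ (resp.\ $N_{I^c,0}^j$) is a morphism of the polarized Hodge structures occurring in the associated graded and hence restricts to an isomorphism of the relevant primitive Hodge piece onto its Tate twist. Second, every monomial occurring in $R$ has $I$-degree strictly smaller than $\deg_I(P_I\cdot P_{I^c}) = \deg P_I$. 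Granting these, there is no cancellation between the top-$I$-degree part of $P_I P_{I^c}$ and $R$, so $\deg_I P = \deg_I(P_IP_{I^c}) = \deg P_I$; and a direct count from \pref{5c2} gives $\deg P_I = \sum_i i\,h^{n-i,0}_I = \sum_i i\,\dim H^{n-i,0}_I$. This is statement (i) (in fact with equality, which is more than asserted).

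For statement (ii) I would induct on $k$. If $k=1$ then $P$ is a non-zero homogeneous polynomial in the single variable $x_1$, hence $P = c\,x_1^{\deg P}$ with $c\ne 0$, and $\deg P = \sum_j j\,h^{n-j,0} = \sum_j j\,h^{n-j,0}_{\{1\}} = \ell_{\pi,1}$ (using $h^{n-j,0}_\emptyset = 0$ for $j\ge 1$), so the assertion holds. For the inductive step, given a permutation $\pi$ of $\{1,\dots,k\}$ apply \pref{5c5} with $I=\{\pi(1),\dots,\pi(k-1)\}$ and $I^c=\{\pi(k)\}$. Then $P_{I^c}$ is a non-zero homogeneous polynomial in the single variable $x_{\pi(k)}$, so $P_{I^c}=c_k\,x_{\pi(k)}^{\deg P_{I^c}}$ with $c_k\ne 0$; comparing total degrees (via $\deg P = \sum_j j\,h^{n-j,0}_{\{\pi(1),\dots,\pi(k)\}}$ and $\deg P_I = \sum_j j\,h^{n-j,0}_I$ from the count above) gives $\deg P_{I^c} = \sum_j j\bigl(h^{n-j,0}_{\{\pi(1),\dots,\pi(k)\}}-h^{n-j,0}_{\{\pi(1),\dots,\pi(k-1)\}}\bigr)=\ell_{\pi,k}$. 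On the other hand $P_I$ is again a polynomial of the shape \pref{5c2}, built from the commuting nilpotents $\{N_i:i\in I\}$ with the same $F$, and its Hodge numbers for the limit along $\{\pi(1),\dots,\pi(m)\}$ agree with the original ones whenever $m\le k-1$ (because these subsets lie in $I$, and the CKS limits only depend on the nilpotents invoked and on $F$). Hence by the inductive hypothesis the monomial $M_{\pi'}=\prod_{m\le k-1}x_{\pi(m)}^{\ell_{\pi,m}}$ occurs in $P_I$ with non-zero coefficient, so $M_\pi = M_{\pi'}\cdot x_{\pi(k)}^{\ell_{\pi,k}}$ occurs in $P_I\cdot P_{I^c}$ with non-zero coefficient; and since $\deg_I M_\pi = \sum_{m\le k-1}\ell_{\pi,m}=\deg P_I$ strictly exceeds the $I$-degree of every monomial of $R$, the remainder contributes nothing to the coefficient of $M_\pi$ in $P$. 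Thus $M_\pi$ appears in $P$ with non-zero coefficient, completing the induction.

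The main obstacle, and the only place Hodge theory really enters, is the strict inequality $\deg_I R < \deg P_I$; everything else is linear algebra and bookkeeping once Step~2 and \pref{5b10} are available. I would isolate it as a lemma. Write $N = N_I + N_{I^c,0} + \sum_{m\ge 1}N_{I^c,-m}$, where by \pref{5b10} the correction terms $N_{I^c,-m}$ have $Y_I$-weight $-m<0$, and expand each determinant $\det\bigl(N^i|_{H^{n-i,0}(-i)}\bigr)$ appearing in $P$ over the $Y_I$-weight decomposition of $H^{n-i,0}(-i)$. A term that uses $c=\sum_{m\ge1}c_m\ge 1$ negative-weight factors to descend a prescribed total $Y_I$-weight $w$ can employ at most $\bigl(w-\sum_m m\,c_m\bigr)/2$ applications of $N_I$, hence fewer than the $w/2$ applications available to the ``main'' term that uses only $N_I$ and $N_{I^c,0}$; summing over the matchings in the determinant and over $i$ yields $\deg_I R < \deg(P_I P_{I^c}) = \deg P_I$. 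A secondary, purely organizational, point to state carefully is that $P_I$ really is the polynomial \pref{5c2} for the subsystem $\{N_i:i\in I\}$, with matching Hodge numbers, so that the induction closes; this is the compatibility of the CKS limits under enlarging the set of vanishing coordinates, and should be recorded explicitly rather than left implicit.
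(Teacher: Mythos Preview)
Your proposal is correct and follows essentially the same approach as the paper: both arguments hinge on the weight-counting observation that any monomial involving a factor $N_{I^c,-m}$ with $m\geq 1$ has $I$-degree strictly below $\sum_i i\,h^{n-i,0}_I$, and both establish (ii) by an induction along the flag $\{\pi(1)\}\subset\{\pi(1),\pi(2)\}\subset\cdots$. Your packaging through the factorization \pref{5c5} (peeling off $\pi(k)$ rather than building up from $\pi(1)$) is a cosmetic reorganization of the paper's direct use of the $\rsl_2\times\rsl'_2$ decomposition, and your flagged ``secondary point'' about the compatibility of the sub-nilpotent-orbit Hodge numbers is exactly what the paper uses implicitly when it writes $P_I=\prod_i\det\bigl(N_I|_{H^{n-i,0}_I(-i)}\bigr)^i$.
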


 \begin{cor} The monomials appearing in $P$ are in the convex hull of the monomials $M_\pi$.\end{cor}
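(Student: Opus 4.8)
The plan is to deduce the corollary directly from Lemma \ref{5c10} by unwinding what its two parts say about the Newton polytope of the homogeneous polynomial $P(x)$. Recall that for a polynomial $P(x)=\sum_M c_M M$ in $x_1,\dots,x_k$ the \emph{Newton polytope} is the convex hull in $\R^k$ of the exponent vectors of those monomials $M$ with $c_M\neq 0$; a monomial ``appears in $P$'' precisely when its exponent vector lies among these. The assertion to be proved is that every exponent vector occurring in $P$ lies in the convex hull of the $k!$ exponent vectors coming from the monomials $M_\pi$, $\pi\in S_k$.

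First I would fix notation: write $\exp(M)=(\ell_1,\dots,\ell_k)\in\Z_{\geq 0}^k$ for the exponent vector of a monomial $M$, and let $v_\pi=\exp(M_\pi)$ for each permutation $\pi$. By part (ii) of Lemma \ref{5c10} each $v_\pi$ is the exponent vector of a monomial genuinely appearing in $P$, so all $v_\pi$ lie in the Newton polytope; hence their convex hull $\cH:=\mathrm{conv}\{v_\pi:\pi\in S_k\}$ is contained in the Newton polytope. The content of the corollary is the reverse inclusion: the Newton polytope is contained in $\cH$. The key observation is that part (i) of the lemma, applied not to the original ordering but to every reordering of the variables, gives exactly the supporting half-space inequalities that cut out $\cH$.

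The main step is therefore the following linear-algebra packaging of parts (i) and (ii). Since $P$ is homogeneous of a fixed degree $d=\sum_{i=1}^n i\,h^{n-i,0}$ (sum over all $n$ primitive pieces), every exponent vector occurring in $P$ satisfies $\sum_i \ell_i = d$; so the Newton polytope and $\cH$ both lie in the affine hyperplane $\{\sum_i x_i=d\}$. Now apply part (i) with the variables reindexed by an arbitrary permutation $\sigma$: for any subset $J=\{\sigma(1),\dots,\sigma(i)\}$ it gives, for every monomial $M$ in $P$, the bound $\sum_{a\in J}\ell_a \leq \sum_{j=1}^n j\,h^{n-j,0}_J$. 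Running over all $\sigma\in S_k$ and all initial segments produces one inequality $\sum_{a\in J}\ell_a\leq b_J$ for each nonempty proper $J\subsetneq\{1,\dots,k\}$, where $b_J=\sum_{j=1}^n j\,h^{n-j,0}_J$; together with the equality $\sum_a\ell_a=d$ these are precisely the inequalities defining a (generalized permutohedron-type) polytope, and part (ii) shows the vertex $v_\pi$ saturates exactly the chain of inequalities indexed by the initial segments of $\pi$. Standard facts about such polytopes (the submodularity of $J\mapsto b_J$, which here follows from the chain identity $h^{n-j,0}_{J\cup J'}+h^{n-j,0}_{J\cap J'}\geq h^{n-j,0}_J+h^{n-j,0}_{J'}$ coming from the filtration-by-$W(N)$ structure, or alternatively just the direct check that the $v_\pi$ are the only vertices) then identify this polytope with $\cH$, giving the desired inclusion of the Newton polytope in $\cH$.

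I expect the main obstacle to be verifying the submodularity of $J\mapsto b_J=\sum_j j\,h^{n-j,0}_J$, i.e.\ that the dimensions $h^{n-j,0}_J$ of the Hodge pieces of the limiting mixed Hodge structure associated to $N_J=\sum_{i\in J}x_iN_i$ behave submodularly in $J$; this is exactly the monotonicity/compatibility encoded by the relative weight filtration property and the $\rsl_2\times\rsl_2'$ decomposition already used in Step 2 (cf.\ \pref{4c4}), so in practice it should be extractable from the computations preceding the lemma, but it is the one point that genuinely uses Hodge theory rather than pure convex geometry. If one prefers to avoid invoking the permutohedron machinery, the alternative is a direct argument: given any monomial $M$ in $P$ with exponent vector $\ell$, use part (i) iteratively to show $\ell$ can be written as a convex combination of the $v_\pi$ by a greedy/peeling procedure — pick the coordinate where $\ell$ is ``extremal,'' match it against the appropriate $M_\pi$, subtract, and induct on $k$; this is more hands-on but sidesteps the need to state submodularity explicitly. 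Either way, once parts (i) and (ii) are in hand the corollary is essentially a formal consequence.
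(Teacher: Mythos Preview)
Your route is genuinely different from the paper's. In the paper the Corollary is not deduced from the Lemma as a separate step: the single proof following both statements establishes them simultaneously. The inductive $\rsl_2\times\rsl_2'$ decomposition along a flag $\{\pi(1)\}\subset\{\pi(1),\pi(2)\}\subset\cdots$ directly produces an expression $P=\sum_\pi C_\pi M_\pi + (\text{correction})$, and ``tracking the correction terms'' through that iteration shows each correction monomial is strictly dominated along some initial segment, hence lies strictly inside the convex hull. So the paper gets the convex-hull statement as a byproduct of the computation that proves the Lemma, rather than by post-processing the Lemma through polytope theory.

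Your polymatroid/generalized-permutohedron reformulation is a clean way to organize the same information, and the identification of the $v_\pi$ with the greedy vertices is exactly right. Two points, though. First, a sign: for the base polytope $\{x:\sum_{a\in J}x_a\le b_J,\ \sum_a x_a=d\}$ to coincide with $\mathrm{conv}\{v_\pi\}$ you need $b$ \emph{submodular}, i.e.\ $b_{J\cup J'}+b_{J\cap J'}\le b_J+b_{J'}$; you wrote the reverse inequality. Second, and more substantively, this submodularity is the entire content of the Corollary once part~(i) is in hand for all $I$, and you have not proved it --- you correctly flag it as the obstacle, but the sketch ``it follows from RWFP and the $\rsl_2\times\rsl_2'$ decomposition'' is exactly what the paper's direct computation is doing. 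Your alternative greedy/peeling suggestion likewise hides the same submodularity (the induction step needs that after subtracting a vertex contribution the residual still satisfies the system, which is again submodularity). So your approach buys a nice conceptual picture, but the Hodge-theoretic work is the same and is not bypassed.
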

 
 \begin{proof} For $V_{\gr,I}=\Gr^{W(N_I)}V$ we have as $\{N^+_I,Y_I,N_I\}$-modules
 \[
 V_{\gr,I}\cong \opplus^n_{i=0} H^{n-i}_I\otimes \cU_i.\]
 Decomposing the RHS as $\rsl'_2$-modules we have
 \[
 V_{\gr,I}\cong \opplus^n_{i=0} H^{n-i}_{a,i-a}\otimes \cU_a \otimes \cU'_{i-a}\]
 where the $H^{n-i}_{a,i-a}$ depend on $I$.  The map
 \[
 N^i :H^{n-i,0}(-i) \to H^{n-i}\]
 gives  
 \[
 \opplus^i_{a=0} H^{n-i,0}_{a,i-a}(-i)\to \opplus^i_{a=0} H^{n-i,0}_{a,i-a}.\]
 The $Y_I$-weights of vectors in $H^{n-i,0}_{a,i-a}$ are equal to $a$, and thus $\wedge^{h^{n-i,0}} \lrp{\opplus^i_{a=0} H^{n-i,0}_{a,i-a}(-i)}$ has weight $\sum a h^{n-i,0}_{a,i-a}$ and $\wedge^{h^{n-i,0}} \lrp{\opplus^i_{a=0} H^{n-i,0}_{a,i-a}}$ has weight $-\sum a h^{n-i,0}_{a,i-a}$. As a consequence
 \begin{quote} \em any monomial in $\det \lrp{N^i\big|_{H^{n-i,0}(-i)}}$ drops weights by $2\sum h^{n-i,0}_{a,i-a}$.\end{quote}
 We have
 \begin{equation}\lab{5c11}
 \det\lrp{ \lrp{N\big|_{H^{n-i,0}(-i)}}^i}= \det\lrp{\lrp{\lrp{N_I+N_{I^c,0}}\big|_{H^{n-i,0}(-i)}}^i}+T\end{equation}
 where $T=$ terms involving $N_{I^c,\mathrm{neg}}$.  For any monomial $M$ in a minor involving the $N_{I^c,\mathrm{neg}}$ of total weight $-d$,  
 \[
 2\deg_I M+d=2\sum^i_{a=0} ih^{n-i,0}_{a,i-a},\qquad d>0\]
 and so
 \[
 \deg_I M <\sum^i_{a=0}ah^{n-i,0}_{a,i-a}.\]
 Putting everything together, we have \pref{5c10} where
 \begin{equation}\lab{5c12}
 T=\lrc{ \begin{matrix} \hbox{linear combinations of monomials $M$}\\
 \hbox{satisfying } \deg_IM<\sum^n_{i=0} \sum^i_{a=0} ah^{n-i,0}_{a,i-a}\end{matrix}}.\end{equation}
 Using the bookkeeping formula $h^{n-i,0}_I=\sum^n_{i=a} h^{n-i,0}_{a,i-a}$ we obtain
 \[
 \sum^n_{i=0} \sum^i_{a=0} ah^{n-i,0}_{a,i-a}=\sum^i_{a=0} \sum^n_{i=a} h^{n-i,0}_{a,i-a}=\sum^n_{a=0} ah^{n-a,0}_I \]
 which gives
 \[
 P=P_I P_{I^c}+\lrp{\hbox{correction term with }\deg_I<\sum^n_{a=0} ah^{n-a,0}_I}\]
 where $\deg_I P_I=\sum^n_{a=0} ah^{n-a,0}_I$ and $\deg_I P_{I^c}=0$, giving (i) in \pref{5c10}.
 
 A parallel argument shows that for $I\cap J=\emptyset$  
 \begin{align*}
 D_{I\cup J} &:= \prod^n_{i=0}\det\lrp{\lrp{N_I+N_{I,0}\big|_{H^{n-i,0}_{I\cup J}(-i)}}}^i
 \\ &\quad{}+ \hbox{ a correction with }\deg_I <\sum^m_{a=0} a h^{n-a,0}_I. \end{align*}
 By the definition of $H^{n-i}_{I\cup J}$,
 \[
 \det \lrp{\lrp{N_I+N_{J,0}\big|_{H^{n-i,0}_{I\cup J}(-i)}}^i}  \ne 0\]
 and
 \[ \deg_I \lrp{\det \lrp{\lrp{N_I+N_{J,0}\big|_{H^{n-i,0}_{I\cup J}(-i)}}^i}}= \sum^n_{a=0} a h^{n-a,0}_I  \]
 while automatically 
 \[
 \deg_{I\cup J} (\hbox{all terms of }D_{I\cup J}) = \sum^n_{a=0} a h^{n-a,o}_{I\cup J}.\]
 Thus 
 \begin{align*}
  \deg_{J} \det \lrp{\lrp{ N_I+N_{J,0} \big|_{H^{n-i,0}_{I\cup J}(-i)}}^i}
 & =\deg_{I\cup J}\det \lrp{\lrp{ N_I+N_{J,0} \big|_{H^{n-i,0}_{I\cup J}(-i)}}^i} 
\\&\quad{} - \deg_I \det\lrp{\lrp{N_I+N_{J,0}\big|_{H^{n-i,0}_{I\cup J}(-i)}}^i}\\
 &= \sum^n_{a=0} a \lrp{h^{n-a,0}_{I\cup J}-h^{n-a,0}_I}.\end{align*}
 Proceeding inductively on $\{\pi(1)\}\subset \{\pi(1),\pi(2)\}\subset\cdots \subset \{\pi(1),\dots,\pi(k)\}$
 we obtain, if
 $
 N_{\{\pi(1),\dots,\pi(\ell)\},0}=\hbox{weight 0 piece of } N_{\{\pi(1),\dots,\pi(\ell)\}}$  with respect to $\Gr^{W(N)_{\{\pi(1),\dots,\pi(k)\}}}$ then
 \[
 \prod^n_{i=0} \det\lrp{\lrp{ N_{\{\pi(1)\}} + N_{\{ \pi(1),\pi(2)\},0} + \cdots + N_{\{\pi(1),\dots,\pi(k)\},0} \big|_{H^{n-i,0}}}^i}\]
 is a \emph{non-zero} multiple of $
 x^{\ell_1}_{\pi(1)} x^{\ell_2}_{\pi(2)}\cdot\;\cdot\; x^{\ell_k}_{\pi(k)}$.  This is our $M_\pi$.  Tracking the correction  terms we have 
 \[
 P= \sum_\pi C_\pi M_\pi + \hbox{ terms strictly in the convex hull of the $M_\pi$ }\]
 where $C_\pi\ne 0$ for all $\pi$.
 This proves (ii) in \ref{5c10}.\end{proof}
 
 \ssn{Step 4:} Referring to \pref{5c8} and \pref{5c9}, from Lemma \ref{5c10} we have:
 \begin{itemize}
 \item[(a)] $R_1$ has $\deg_IR_1<\deg_I P_I$, and all monomials satisfy (i) in \ref{5c10}.
 \item[(b)] $R_2$ is a sum of products of monomials $M_1M_2$ where each $\deg_IM < \deg_I P_I$ and $M_i$ satisfies (i) in \ref{5c10}.\end{itemize}
 To complete the proof we have
 \begin{Lem}\lab{5c13}
 Given a monomial $M$ in the $I$-variables satisfying $\deg_I M<\deg_I P$ and {\rm (ii)} in Lemma \ref{5c10},
 \[
 \lim_{t\to\Delta^\ast_I} M/P_I = 0.\]\end{Lem}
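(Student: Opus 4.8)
\smallskip
\noindent\emph{Plan of proof.}
The plan is to treat $M$ and $P_I$ as functions of the variables $x_i$, $i\in I$, alone: $M$ is a monomial of degree $e:=\deg_I M$, while $P_I=\prod_i\det\bp{N_I|_{H^{n-i,0}_I(-i)}}^i$ is homogeneous of degree $d:=\deg_I P$ with $e<d$, and is strictly positive on the quadrant $\{x_i>0\}$ by the same polarization argument that makes $P$ positive there (the remark following \pref{5c2}). Since $t\to\Delta^\ast_I$ forces $x_i=-\log|t_i|\to+\infty$ for every $i\in I$ while the remaining $x_j$, $j\notin I$, stay bounded and enter neither $M$ nor $P_I$, it suffices to show that $M(x)/P_I(x)\to 0$ as $x$ runs to infinity in every $I$-coordinate. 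Finally, the region $\{x_i\geqq 1\}$ is covered by the finitely many closed sectors $\Sigma_\sigma=\{x_{\sigma(1)}\leqq\cdots\leqq x_{\sigma(r)}\}$, $r=|I|$, indexed by the orderings $\sigma$ of $I$, so it is enough to prove the limit is $0$ along each $\Sigma_\sigma$.

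The core step is a lower bound for $P_I$ on $\Sigma_\sigma$. First I would observe that positivity of $P_I$ on the open orthant forces every \emph{vertex} of its Newton polytope to carry a positive coefficient: evaluate $P_I$ along the one-parameter path $(e^{w_i s})_{i\in I}$, where $w$ is a linear functional maximized uniquely at the vertex, and let $s\to\infty$. Next, by the Corollary to Lemma \ref{5c10} every monomial of $P_I$ lies in the convex hull of the monomials $M_\pi$ attached to the orderings $\pi$ of $I$, and for $x\in\Sigma_\sigma$ the monomial $M_\sigma$, whose exponent vector $\ell_\sigma$ is given by \pref{5c10}(ii), dominates every other monomial of $P_I$. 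Combining these facts with a compactness/induction argument — normalizing modulo scaling and, on the locus where some of the ratios $x_i/x_{i'}$ remain bounded, inducting on the number of genuinely unbounded variables while using that the face polynomials of a polynomial positive on the orthant are again positive there — should produce a constant $c>0$, uniform in $\sigma$, with $P_I(x)\geqq c\,M_\sigma(x)$ on $\Sigma_\sigma\cap\{x_i\geqq 1\}$.

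Granting this, I would finish on $\Sigma_\sigma$ by passing to the coordinates $u_1=x_{\sigma(1)}$ and $u_j=x_{\sigma(j)}/x_{\sigma(j-1)}\geqq 1$ for $2\leqq j\leqq r$, under which a monomial $x^a$ becomes $\prod_{j=1}^r u_j^{A_j(a)}$ with $A_j(a)=\sum_{\ell\geqq j}a_{\sigma(\ell)}$. Then
\[
\frac{M(x)}{P_I(x)}\leqq c^{-1}\prod_{j=1}^r u_j^{\,A_j(a)-A_j(\ell_\sigma)},
\]
where $a$ is the exponent of $M$. By Lemma \ref{5c10}, $\ell_\sigma$ realizes the componentwise maximal vector of reverse partial sums occurring in $P_I$, so the hypothesis that $M$ satisfies \pref{5c10}(ii) (the degree bounds of Lemma \ref{5c10} for $I$ and for each of its sub-index-sets) gives $A_j(a)\leqq A_j(\ell_\sigma)$ for $j\geqq 2$, while $A_1(a)-A_1(\ell_\sigma)=e-d<0$. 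Since every $u_j\geqq 1$, the factors with $j\geqq 2$ are $\leqq 1$ and the $j=1$ factor equals $x_{\sigma(1)}^{\,e-d}\to 0$; hence $M/P_I\to 0$ on each sector and therefore along $t\to\Delta^\ast_I$.

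The step I expect to be the genuine obstacle is the uniform lower bound $P_I\geqq c\,M_\sigma$: away from the deep interior of a sector two or more vertices of the Newton polytope tie, and where several of the $x_i$ are comparable one is effectively restricting $P_I$ to a proper face and must know the corresponding face polynomial is still positive on the orthant — so the exact combinatorics of Lemma \ref{5c10} (which monomials occur, with which signs, and the convex-hull statement) must be fed in with some care. Once that bound is available, matching the hypothesis on $M$ to the inequalities $A_j(a)\leqq A_j(\ell_\sigma)$ is routine.
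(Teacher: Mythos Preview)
Your approach is essentially the paper's, reorganized. Both arguments rest on the same three ingredients: positivity of $P_I$ on the orthant, the convex-hull description of the Newton polytope of $P_I$ in terms of the extremal monomials $M_\pi$ (the analogue for $P_I$ of the Corollary to Lemma \ref{5c10}), and a sectoral analysis indexed by orderings of the $I$-variables. The paper packages these differently: it first multiplies $M$ by a positive-degree monomial $M'$ so that $\deg_I(M'M)=\deg_I P_I$ with $M'M$ in the convex hull of the $M_\pi$, and then shows $M'M/P_I$ is bounded by a contradiction-by-sequences argument. Along a bad sequence it passes to subsequences on which every ratio $x_i/x_j$ has definite behavior, groups variables with comparable growth into blocks, and reduces to the case of strictly separated growth rates; there $P_I$ is $cM_\sigma$ plus slower-growing terms with $c>0$, and the convex-hull hypothesis bounds $M'M/M_\sigma$.

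What you do differently is to aim for the uniform inequality $P_I\geqq c\,M_\sigma$ on each closed sector and then read off $M/P_I\leqq c^{-1}M/M_\sigma\to 0$ via the ratio coordinates $u_j$. This is cleaner if the bound holds, and your change of variables makes transparent exactly which partial-degree inequalities on $M$ are being used. The difficulty you flag --- controlling $P_I/M_\sigma$ near the faces of the sector where several $u_j$ are bounded --- is precisely what the paper's grouping-of-comparable-variables step handles: collapsing the tied variables into a single new variable \emph{is} the induction you sketch, and positivity of the resulting face polynomial follows because it is again a Hodge-norm determinant of the form \pref{5c2} for the restricted cone. So your ``genuine obstacle'' is real but is resolved by exactly the mechanism the paper uses; once you see that, the two proofs are the same argument told in two orders. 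One minor point: the Corollary you invoke is stated for $P$, not $P_I$, so you should note (as the paper does implicitly when it speaks of ``the $M_\pi$'s for $P_I$'') that the same proof gives the analogous convex-hull statement for $P_I$ with $\pi$ ranging over orderings of $I$.
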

 
 \begin{proof}
 Implicit in the lemma is that the limit exists.  We note that $t\to \Delta^\ast_I$ is the same as $x_i\to \infty$ for $i\in I$.  We also observe that the assumptions in the lemma imply that there is a  positive degree monomial $M'$ with $\deg_I(M'M)= \deg_IP_I$ and where $M'M$ lies in the convex hull of the $M_\pi$'s for $P_I$.  Using this convex hull property we will show that
 \begin{equation}\lab{5c14}
 M'M/P_I\hensp{is bounded as $x_i\to \infty$ for} i\in I.\end{equation}
 Since $\lim_{x_i\to\infty} M'(x)=\infty$, this will establish the lemma.
 
 We now turn to the proof of \pref{5c14}.  Because the numerator and denominator are homogeneous of the same degree, the ratio is the same for $(x_1,\dots,x_k)$ and $(\la x_1,\dots,\la x_k)$, $\la>0$.
 
 For simplicity, reindex so that $I=\{1,\dots,d\}$.  Suppose that  $x_\nu = (x_{\nu1},\dots,x_{\nu d})$ is a sequence of points in $( x_i>0 , i\in I)$  such that
 \[
 \lim_{\nu\to\infty} \frac{M'M(x_\nu)}{D_I(x_\nu)} = \infty.\]
 Consider a successive set of subsequences such that for all $i,j$, we have one of three possibilities:
 \begin{enumerate}[{\rm (i)}]
 \item $\lim_{\nu\to\infty} x_{\nu i}/x_{\nu j} = \infty$;
 \item $x_{\nu i}/x_{\nu j}$ is bounded above and below, which we write as $x_{\nu i}\equiv x_{\nu j}$;
 \item $\lim_{\nu\to\infty} x_{\nu i}/x_{\nu j}=0$.\footnote{In effect we are doing a sectoral analysis in the co-normal bundle to the stratum $\Delta^\ast-I$, which explains the wave front set analogy mentioned above.}
 \end{enumerate}
 Now replace our sequence by this subsequence.
 Let $I_{i},\dots,I_r$ be the partition of $I$ such that
 \[
 i \equiv j \iff   \hensp{(ii) holds for} i,j\]
 and order them so that
 (i) holds for $i,j\iff i\in I_{m_1},j\in I_{m_2}$ and $m_1<m_2$.  We may thus find a $C>0$ such that 
$
 \frac{1}{C}\leqq x_{\nu i}/x_{\nu j}\leqq C$ if $i,j$ in same $I_m$, and for any $B>0$ 
 \[
 x_{\nu i}/x_{\nu j}>B^{m_2-m_1}\hensp{if} i\in I_{m_1},j\in I_{m_2}, \nu \hbox{ sufficiently large.}\]
 By compactness, we may pick a subsequence so that $\lim_{\nu\to\infty}(x_{\nu i}/x_{\nu j})=C_{ij}$ if $i,j\in$ same $I_m$.  
 
 Now introduce variables $y_1,\dots, y_d$ and let
 \[
 x_i = a_i y_m\hensp{if} i\in I_m,\quad a_i/a_j=C_{ij}, a_i>0.\]
 We may restrict our cone by taking
 \[
 \wt N_m = \sum_{i\in I_m} a_i N_i.\]
 This reduces us to the case $|I_m|=1$ for all $m$, i.e., 
 \[
 \lim_{\nu \to \infty} x_{\nu i}/x_{\nu j}=\infty \hbox{ if $i<j$.}\footnotemark\]
 \footnotetext{In effect we are making a generalized base change $\Delta^{\ast d}\to \Delta^{\ast k}$ such that for the pullback to $\Delta^{\ast d}$ the   coordinates $y_m$ go to infinity at different rates.}
  Thus for any $B$,
 \[
 x_{\nu i}/x_{\nu j}>B^{j-i} \hensp{for} v\gg 0.\]
 Now
 \[
\frac{x^{m_1}_{\nu_1} x^{m_2}_{\nu_2}\cdot\;\cdot\; x^{m_d}_{\nu_d}}{x^{\ell_1}_{\nu_1} x^{\ell_2}_{\nu_2}\cdot\;\cdot\; x^{\ell_d}_{\nu_d}} \to 0
\]
if $m_2+m_2+ \cdot\;\cdot\; + m_d = \ell_2+\ell_2 +\cdot\;\cdot\;+ \ell_d$ and $m_1<\ell_1$, or $m_1=\ell_1$ and $m_2<\ell_2,\cdot\;\cdot$.
 Thus
 \[
 P_I = cM_{\{1,2,\dots,d\}} + \hbox{terms of slower growth as $\nu\to \infty, c>0$,} \]
 i.e., $$(M_{\{1,2,\dots,d\}}/\hbox{other terms)}(x_\nu)>B.$$ Since $M'M$ belongs to the convex hull of the $M_\pi$, $(M'M/M_{\{1,2,\dots,d\}})(x_\nu)$ is bounded as $\nu\to\infty$.  This proves the claim.
 \end{proof}
 
 \begin{Exam}  \lab{5c15} An example that illustrates most of the essential points in the argument is provided by a neighborhood $\Delta^3$ of the dollar bill curve
 \[ \hspace*{-1.5in}\begin{picture}(140,45)
 \put(14,15){$\$\; \longleftrightarrow$}
 \put(60,0){ \includegraphics{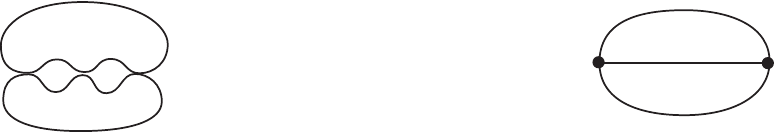}}
 \put(120,17){with the dual graph}\end{picture}\] 
 in $\ol\cM_2$.  The family may be pictured as follows: \pagebreak
 \begin{figure}[h]
\[ \begin{picture}(350,180)
 \put(0,0){\includegraphics{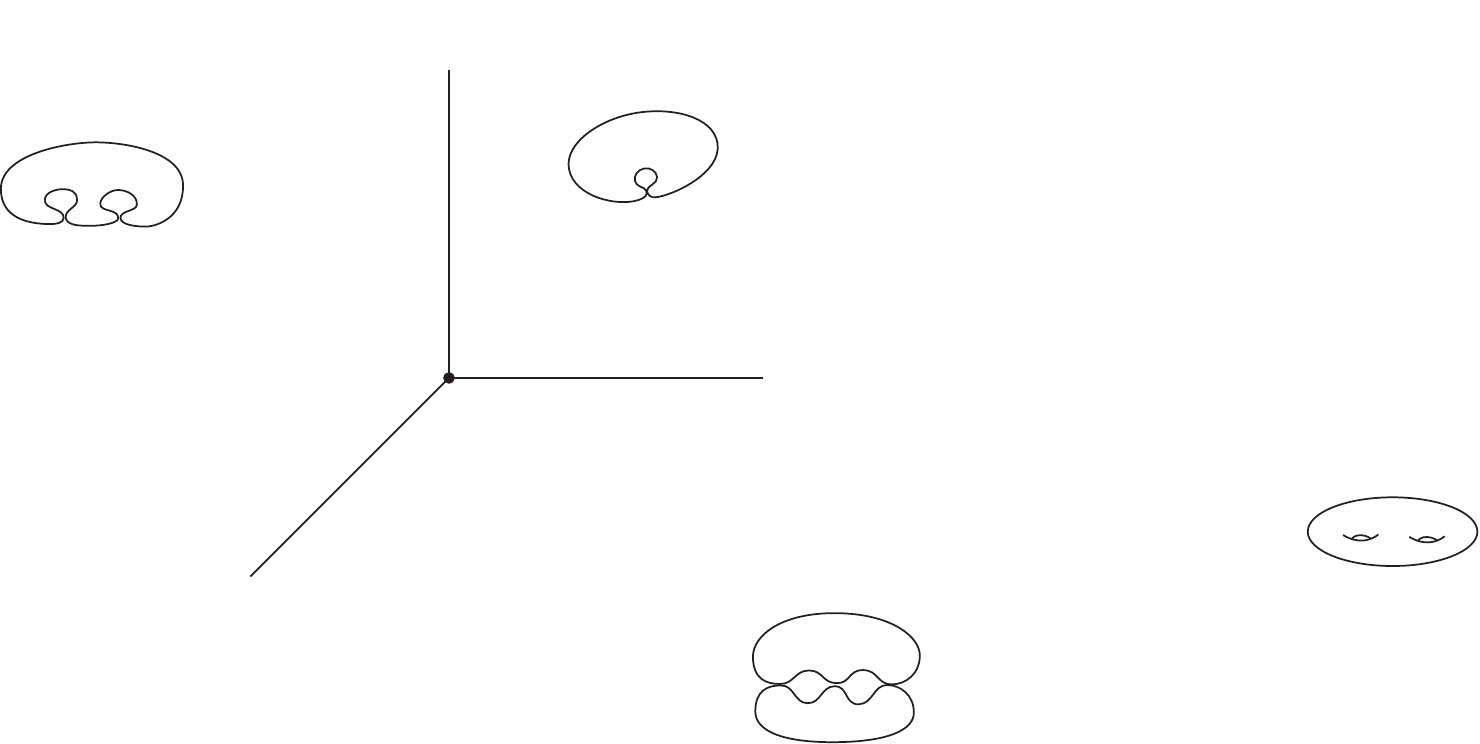}}
 \put(220,180){\bmp{1.25}{\small one nodal curve\hfill\break on the coordinate\hfill\break plane}}
 \put(-20,130){\bmp{1.25}{\small two nodal curve on the coordinate axis}}
 \put(130,20){\bmp{1.25}{\small dollar bill curve\hfill\break at the origin}}
 \put(75,130){\vector(1,0){40}}
 \put(164,38){\vector(-1,2){29}}
 \put(295,60){\small smooth curve}
 \put(295,45){\small in $\Delta^{\ast 3}$}
 \end{picture}\]\vspace*{12pt}\caption{}\label{fig1}\end{figure}

 With the picture 
 \[ \begin{picture}(150,80)
\put(0,0){\includegraphics{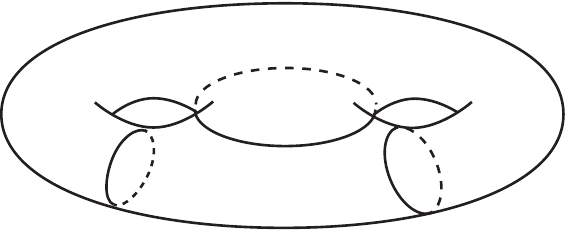}}
\put(33,-10){$\delta_1$}
\put(90,49){$\delta_3$}
\put(120,-10){$\delta_2$}
 \end{picture}\vspace*{22pt}\]
    each of the coordinate planes outside the axes is a family of nodal curves where one of the vanishing cycles $\delta_i$ has shrunk to a point.  Along each of the coordinate axes two of the three cycles have shrunk to a second node, and at the origin we have the dollar bill curve.
 
 We complete the $\delta_i$ to a symplectic basis by adding cycles $\ga_i$.\footnote{Here $\ga_3 $ is not drawn~in.}
   \[ \begin{picture}(150,86)
\put(0,10){\includegraphics{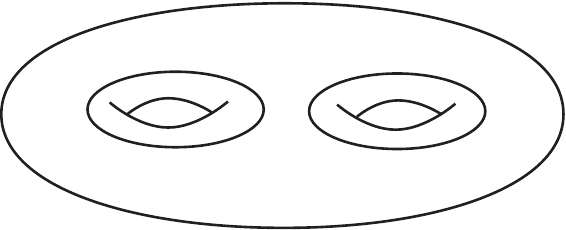}}
\put(17,35){$\ga_1$}
\put(144,35){$\ga_2$} \end{picture}
 \]
 The corresponding monodromies around the coordinate axes are Picard-Lefschetz transformations with logarithms 
 \[
 N_i(\mu)=(\mu,\delta_i)\delta_i\]
and with matrices
 \[
 N_1 = \left(\begin{array}{c|c}
 0 & \begin{smallmatrix} 1&0\\ 0&1\end{smallmatrix}\\
 \hline
 0&0\end{array}\right) , \quad N_2 = \left(\begin{array}{c|c}
 0 & \begin{smallmatrix} 0&0\\ 0&1\end{smallmatrix}\\
 \hline
 0&0\end{array}\right) , \quad  N_3 = \left(\begin{array}{c|c}
 0 & \begin{smallmatrix} 1&1\\ 1&1\end{smallmatrix}\\
 \hline
 0&0\end{array}\right).  \]
 Setting as usual $\ell (t)=\log t/2\pi i$, the normalized period matrix is
 \[
 \Om(t) = \begin{pmatrix} \ell(t_1)+\ell(t_3)&\ell(t_3)\\
 \ell(t_3)&\ell(t_2)+\ell(t_3)\end{pmatrix} + \begin{pmatrix} \hbox{holomorphic}\\ \hbox{term}\end{pmatrix}.\]
 The corresponding nilpotent orbit is obtained by taking the value at the $t_i=0$ of the holomorphic term, and by rescaling this term may be eliminated.
 
 Setting $L(t)=(-\log |t|)/4\pi^2$ and  $PM(t)=(i/2)\ol\part\part\log L(t)$, the metric in the canonically framed Hodge \vb\ is the $2\times 2$ Hermitian matrix
 \[
 H(t)=\begin{pmatrix}
 L(t_1)+L(t_3)&L(t_3)\\
 L(t_3)&L(t_2)+L(t_3)\end{pmatrix};\]
 in the Hodge line bundle the metric is 
 \begin{align*}
 h(t)&= L(t_1)L(t_2)+(L(t_1)+L(t_2)) L(t_3)\\
 &= L(t_1)L(t_2)+L(t_1t_2)L(t_3).\end{align*}
 Setting  
 \begin{align*}
     \om&= \part\ol\part \log h(t)=\part\ol\part \bp{\log (L(t_1)L(t_2)+L(t_1t_2)L(t_3))}\\
 \om_3&= \part\ol\part \log L(t_1t_2)\end{align*}
 we will show   that
 \begin{equation}\lab{5c16}
 \om\big|_{t_3=0} \hensp{\em is defined and is equal to}\om_3.\end{equation}
 \end{Exam}
 
 \begin{proof}
 Setting $\psi=\part h/h$ and $\eta=\part\ol\part h/h$ we have
 \[
 \om=-\psi\wedge\ol\psi + \eta.\]
 Now
 \[
 \psi=\frac{\part\lrp{L(t_1) L(t_2)+L(t_2t_2)L(t_3)}}{L(t_1)L(t_2)+L(t_1t_2)L(t_3)}.\]
 Setting $dt_3=0$ the dominant term of what is left is the left-hand term in
 \[
 \frac{\part L(t_1t_2)}{\frac{L(t_1)L(t_2)}{L(t_3)} +L(t_1t_2)} \longrightarrow \frac{\part L(t_1t_1)}{L(t_1t_2)},\]
 and the arrow means that the limit as $t_3\to 0$ exists and is equal to the term on the right. 
 
 For $\eta$, letting $\equiv$ denote modulo $dt_3$ and $\ol{dt_3}$ and taking the limit as above
 \[
 \eta\equiv \frac{\part\ol\part L(t_1t_2)}{\frac{L(t_1)L(t_2)}{L(t_3)} + L(t_1t_2)} \longrightarrow
 \frac{\part\ol\part L(t_1t_2)}{L(t_1t_2)},\]
 which gives the result.
 \end{proof}
 
 We next observe that
 \begin{equation}\lab{5c17}
 \om_3\big|_{t_2=0} \hensp{\em is defined and is equal to zero.}\end{equation}
 \begin{proof}
 The computation is similar to but simpler than that in the proof of \pref{5c16}.\end{proof}
 
 \ssni{Interpretations}  The curves pictured in Figure 1 map to  an open set $\Delta^3 \subset \ol\cM_2$.  The PHS's of the smooth curves in $\Delta^{\ast 3}$ vary with three parameters.  Those on the codimension 1 strata such as $\Delta^{\ast 2}\times \Delta$ vary in moduli with two parameters.  Their normalizations are
\[ \begin{picture}(300,80)
\put(0,10){\includegraphics{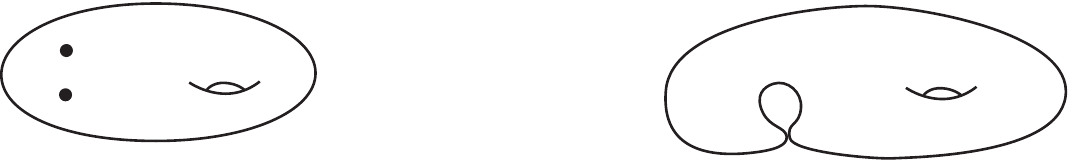}}
\put(50,0){$E$}
\put(30,27){$q$}
\put(30,45){$p$}
\put(110,35){\vector(1,0){60}}
\end{picture}\]
and their LMHS's vary with two parameters with
\[
\bcs
\Gr_1(\hbox{LMHS})\cong H^1(E)\\
\Gr_0(\hbox{LMHS})\cong \Q\ecs\]
and where the extension data in the LMHS is locally  given by $\AJ_E(p-q)$.  Thus $\Gr$(LMHS) varies with one parameter and for the approximating nilpotent orbit is constant along the curves $t_1t_2=c$.  This local fibre of the map $\ol\cM_2\to \ol M$ is part of the closed fibre parametrized by $E$.

Along the codimension 2 strata such as $\Delta^\ast\times \Delta^2$ the curves vary in moduli with 1-parameter.  Their normalizations are
\[ \begin{picture}(300,80)
\put(0,10){\includegraphics{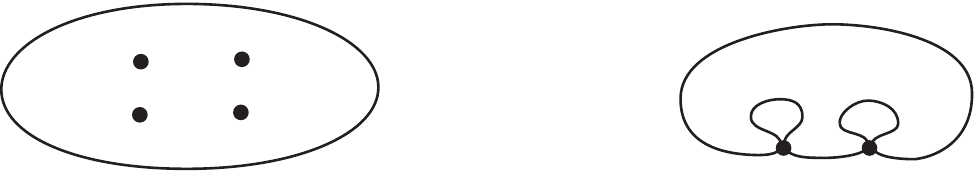}}
 \put(28,25){$q$}
\put(28,40){$p$}
\put(75,40){$p'$}
\put(75,25){$q'$}
\put(125,35){\vector(1,0){55}}
\end{picture}\]
 and the moduli parameter is locally the cross-ratio of $\{p,q;p',q'\}$.  The LMHS's are purely Hodge-Tate and thus $\Gr$(LMHS) has no continuous parameters.  In summary
 \begin{itemize}
 \item $\Phi_e$ is locally 1-1 on $\Delta^{\ast 3}$;
 \item $\Phi_{e,\ast}$ has rank 1 on $\Delta^{\ast 2}\times \Delta$;
 \item $\Phi_e$ is locally constant on $\Delta^\ast \times \Delta^2$.\end{itemize}
 As $c\to 0$ the fibres of $\Phi_e$ on the $\Delta^{\ast 2}\times \Delta$ tend to the coordinate axis $\Delta^\ast \times \Delta^2$ along which $\Phi_e$ is locally constant.
 
 \subsection*{Proof of Theorem I.A.14 in the $\dim B=2$ case}
 We are given $\Phi:B\to \Ga\bsl D$ where $B=\ol B\bsl Z$ with $Z=\sum Z_i$ a normal crossing divisor whose components are irreducible, smooth curves around which the monodromy $T_i$ is unipotent and non-trivial.  We may also assume that $\Phi_\ast$ is generically 1-1.  The mapping $\Phi$ is then proper and the images $M=\Phi(B)\subset\Ga\bsl D$ is a closed complex surface.  We want to show that $M$ has a canonical completion to a complex analytic surface $\ol M$ over which the \hlb\ over $M$ extends to an ample line bundle $\La_e\to\ol M$.  We choose our labelling so that the LMHS is locally constant along the $Z_i$ for $i=1,\dots,k$ and is not locally constant along the remaining $Z_j$.  Then the curves $Z_i$ for $i=1,\dots,m$ are the ones to be contracted to give $\ol M$.  It is classical that the criterion to be able to contract the $Z_i$ to normal singularities is that the intersection matrix
 \begin{equation}\lab{5c18}
 \| Z_i\cdot Z_j\|_{1\leq i,j\leqq k}<0\end{equation}
 be negative definite.
 
 Since the monodromy of LMHS along $Z_i$ is finite, the canonically extended \hlb
 \[
 \La_{e,i} \to Z_i\]
 is of finite order, and moreover the canonically extended Chern $\om_e$ may be restricted to $Z_i$ and
 \[
 \om_i = \om_{e,i}\big|_{Z_i}=0\]
 where $\om_i$ is the Chern form of the \hlb\ $\La_i = \La_e \big|_{Z_i}$.   Since $\om^2_e >0$ our desired result \pref{5c18} follows from the Hodge index theorem.  Since $\La_e \big|_{Z_i}$ is of finite order a power of it is trivial and therefore for some $m$ the line bundle $\La^m_e\to\ol B$ descends to a line bundle $\La^m_{e,\ol M}\to \ol M$.  Although $\ol M$ is singular and $\om_e$ only descends to a singular differential form on $\ol M$, the classical Kodaira theorem may be extended to this case to give that $\La^M_{e,\ol M}\to \ol M$ is ample (cf.\ Section VI in \cite{GGLR17} for the details).

 \section{Applications, further results  and some open questions}
\subsection{The  Satake-Baily-Borel completion of period mappings} \hfill 

We will discuss the proof of Theorem \ref{1a7} as stated in the introduction.  There are three steps in the argument:
\begin{itemize}
\item[(a)] construction of the completion $\ol M$ of $M$;
\item[(b)] analysis of the extended Chern form $\om_e$ on $\ol B$; and
\item[(c)] extension of the classical Kodaira theorem to the case of the singular variety $\ol M$ where the Chern form  up on $\ol B$ has mild singularities.
\end{itemize}
Step (b) was discussed in Section V above and using it step (c) can be done by extending  essentially standard arguments.  For this we refer to Section 6 in \cite{GGLR17} and  here shall  take up step (a). What follows is not a formal proof; the intent is to illustrate some of the key ideas behind the argument given in  \cite{GGLR17}.  There are four parts to the argument:
\begin{itemize}
\item[(a1)] localize the period mapping to
\[
\Phi:\Delta^{\ast k}\times \Delta^\ell \to\Ga_{\loc}\bsl D\]
where $\Ga_\loc= \{T_1,\dots,T_k\}$ is the local monodromy group and determine the structure of local image
\[
\Phi(\Delta^k\times \Delta^\ell)\subset \ol M;\]
\item[(a2)] using the first step and the proper mapping theorem, show that the fibres $\cF$ of the set-theoretic map
\[
\Phi_e :\ol B\to\ol M\]
are compact analytic subvarieties of $\ol B$, and from this infer that $\ol M$ has the structure of a compact Hausdorff topological space and $\Phi_e$ is a proper mapping;

\item[(a3)] define the sheaf $\cO_{\ol M}$ whose sections over an open set $\cU\subset \ol M$ are the continuous functions $f$ such that $f\circ \Phi_e$ is holomorphic in $\Phi^{-1}_e(\cU)$, and show that $\cO_{\ol M}$ endows $\ol M$ with the structure of an analytic variety; and
\item[(a4)] show that certain identifications of connected components of the fibres that result from the global action of monodromy give a finite equivalence relation (this uses \cite{CDK95}).

\end{itemize}
In this paper we shall mainly discuss the first step isolating the essential point of how the relative weight filtration property enters in the analysis of how the local period mapping in (a1) extends across the boundary strata of $\Delta^{\ast k}\times \Delta^\ell$ in $\Delta^k \times \Delta^\ell$.

Step (a2) is based on the following (cf. \cite{Som78}).  Let
\[
\Phi:B\to\Ga\bsl D\]
be a period mapping where $B=\ol B\bsl Z$ with $Z=\sum_i Z_i$ a normal crossing divisor with unipotent monodromy $T_i$ around $Z_i$.  The $\Phi$ may be extended across all the $Z^\ast_i:= Z_i\bsl \lrp{\cap_{j\ne i} Z_i\cap Z_j}$ around which $T_i = \Id$ and the resulting mapping
\[
\Phi_e:B_e\to \Ga\bsl D\]
is proper.\footnote{This result uses that an integral element of the IPR the holomorphic sectional curvatures are $\leqq -c$ for some $c>0$ and the Ahlfors-Schwarz lemma.  For the former we refer to Theorem \ref{6b1} below.  The latter is a by now standard deep fact in complex function theory.} Thus $\Phi_e(B_e)\subset \Ga\bsl D$ is a closed analytic variety.  This result will be used for $B$, and also for the period mappings
\[
\Phi_I:Z^\ast_I\to\Ga_I\bsl D_I\]
given by taking the associated gradeds to the \lmhs s along the strata $Z^\ast_I:=Z_I\bsl \lrp{\cap_{J\supset I}Z_J\cap Z_I}$. 

Step (a3), which is work still in progress, is based on analysis of the global structure of the fibres $\cF$, specifically positivity properties of the co-normal sheaf $I_\cF/ I^2_\cF$ (which is the co-normal bundle $N^\ast_{\cF/\ol B}$ in case $\cF$ is smooth).

Turning to the discussion of step (a1) we shall only consider the key special case when $\ell=0$ (there are no parameters) and $\Phi$ is a nilpotent orbit \pref{5b3}.  Then $\Phi$ is given by taking the orbit of the image of the homomorphism of  complex Lie groups
\[
\rho:\C^{\ast k}\to G_\C\]
whose differential is
\[
\rho_\ast(t_i\part/\part t_i)=N_i.\]

Referring to the discussion following the statement of Theorem \ref{1a7} in the introduction and recalled below we will prove the following
\begin{Prop}\lab{6a1} There exists a mapping
\[
\mu:\Delta^{\ast k}\to\C^N\]
whose   fibres are exactly the fibres of the set-theoretic mapping $\Phi_e$ localized to $\Delta^{\ast k}$ and with monodromy group $\Ga_{\loc}$ (cf.\ {\rm VI.A.3} below).\end{Prop}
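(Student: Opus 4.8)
The plan is to construct $\mu$ explicitly from the nilpotent orbit data and then show, using the relative weight filtration property, that its fibres coincide with those of the set-theoretic $\Phi_e$. First I would recall that for a nilpotent orbit $\Phi(t)=\exp\bigl(\sum_{i=1}^k \ell(t_i)N_i\bigr)F_0$ with $\R$-split LMHS at the origin, the associated graded to the LMHS along each stratum $\Delta^\ast_I$ is governed by the weight filtration $W(N_I)$ where $N_I=\sum_{i\in I}N_i$. By the RWFP \pref{5b7} applied iteratively along a flag of index sets, the graded polarized Hodge structure $\mathrm{Gr}\bigl(\mathrm{LMHS}_I\bigr)$ is determined by the collection of subspaces $W_\bullet(N_I)\subset V$ together with the induced Hodge data; crucially, since $\Phi$ is a nilpotent orbit, $\mathrm{Gr}(\mathrm{LMHS}_I)$ is \emph{constant} along $\Delta^\ast_I$. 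Thus the set-theoretic fibres of $\Phi_e$ over the boundary are exactly the strata $\Delta^\ast_I$ themselves (each stratum maps to a single point recording $\mathrm{Gr}(\mathrm{LMHS}_I)$), while over the open part $\Delta^{\ast k}$ the fibres are the fibres of the honest period map $\Phi$ on $\Delta^{\ast k}$, which by properness (following \cite{Som78}, cf.\ step (a2)) are closed analytic.

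Next I would build $\mu$ as follows. On $\Delta^{\ast k}$ the period map $\Phi$ factors, after descending by $\Ga_{\loc}$, through the values of a finite set of $\Ga_{\loc}$-invariant holomorphic functions; concretely one takes the coordinates of $\Phi$ in a suitable matrix realization and forms the invariants under $t_i\mapsto e^{2\pi i}t_i$, which are the $t_i$-monomials and the entries $\ell(t_i)-\ell(t_j)$ appearing with integral coefficients — equivalently the data of $P_I$ and the cross-ratio-type combinations seen in Example \ref{5c15}. I would then complement these with the values $t_i$ themselves (so as to separate points not related by monodromy on $\Delta^{\ast k}$) and, to handle the boundary, with the functions that on $\Delta^\ast_I$ reduce to the invariants defining $\mathrm{Gr}(\mathrm{LMHS}_I)$. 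Packaging these finitely many holomorphic functions as a single map $\mu:\Delta^{\ast k}\to\C^N$ gives the candidate. The key computation — the analogue of \pref{5a1} and of \pref{5c16} — is that each boundary-adapted component of $\mu$ extends continuously across $\Delta^\ast_I$ and there agrees with the corresponding invariant of $\mathrm{Gr}(\mathrm{LMHS}_I)$; this is precisely where the ``strictly negative $Y_I$-weight'' statement \pref{5b10} and the limit arguments of Lemmas \ref{5c10}, \ref{5c13} are invoked to guarantee the limits exist.

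Finally I would verify the fibre statement. On $\Delta^{\ast k}$, $\mu(t)=\mu(t')$ iff $\Phi(t)$ and $\Phi(t')$ lie in the same $\Ga_{\loc}$-orbit (injectivity of the $t_i$-components forces $t=t'$ up to monodromy), so the fibres of $\mu$ restricted to $\Delta^{\ast k}$ are the fibres of $\Phi_e$ there. Along a boundary stratum $\Delta^\ast_I$, the $t_i$ for $i\notin I$ still separate points up to monodromy, while the remaining components of $\mu$ take the constant value $\mu\bigl(\mathrm{Gr}(\mathrm{LMHS}_I)\bigr)$; hence the fibre of $\mu$ through a boundary point is exactly the set-theoretic fibre of $\Phi_e$, which is the union of $\Delta^\ast_I$ with any limiting interior fibres abutting it (as in the last paragraph of Example \ref{5c15}, where the fibres over $\Delta^{\ast2}\times\Delta$ limit onto the axis). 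I expect the main obstacle to be exactly the boundary-extension step: showing that the chosen $\Ga_{\loc}$-invariant functions on $\Delta^{\ast k}$ extend continuously across \emph{all} strata $\Delta^\ast_I$ simultaneously and there recover $\mathrm{Gr}(\mathrm{LMHS}_I)$ without collapsing distinct fibres — this requires the full sectoral (wave-front-set) analysis of Section \ref{5c} and careful bookkeeping of which monomials survive in the limit, rather than any single clean estimate.
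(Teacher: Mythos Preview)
Your proposal has a genuine gap: the central claim that ``since $\Phi$ is a nilpotent orbit, $\Gr(\mathrm{LMHS}_I)$ is constant along $\Delta^\ast_I$'' is false, and this error propagates through the whole construction. The restricted period map $\Phi_I$ on a boundary stratum is itself the nilpotent orbit $\exp\bigl(\sum_{j\in I^c}\ell(t_j)N_j\bigr)\cdot F_I$, which genuinely varies with the $t_j$ for $j\in I^c$ unless the induced operators $\ol N_j$ on $\Gr^{W(N_I)}V$ happen to satisfy relations. (Example \ref{5c15} already shows this: on the codimension-one face the associated graded varies with one parameter, detected by the monomial $t_2t_3$.) Consequently your description of the set-theoretic fibres of $\Phi_e$ over the boundary as ``exactly the strata $\Delta^\ast_I$ themselves'' is wrong, and any map built to have those fibres cannot match $\Phi_e$.

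There is a second, related problem on the open part. You propose to include ``the values $t_i$ themselves'' among the components of $\mu$ and then argue that ``injectivity of the $t_i$-components forces $t=t'$ up to monodromy.'' But the fibres of $\Phi$ on $\Delta^{\ast k}$ are exactly the orbits of the vector fields $\sum_j a_j\,t_j\,\partial/\partial t_j$ with $\sum_j a_j N_j=0$; when the $N_i$ satisfy nontrivial linear relations these fibres are positive-dimensional and the individual $t_i$ are \emph{not} constant on them. Including the $t_i$ would make $\mu$ strictly finer than $\Phi_e$. The paper's construction is the opposite of ad hoc: $\mu$ is a \emph{monomial} map $t\mapsto (t^{B_1},\dots,t^{B_N})$ where the exponent vectors $B$ span $R^\perp$ for $R=\{A:\sum a_iN_i=0\}$, with non-negative entries furnished by Farkas' lemma. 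The RWFP then enters not to show constancy on strata but to show compatibility: the monomials cutting out fibres of $\Phi_I$ (those $t^B$ with $\sum_{j\in I^c}b_jN_j\in W_{-1}(N_I)\cG$) restrict correctly to the monomials for $\Phi_J$ when $I\subsetneq J$. Your invocation of Lemmas \ref{5c10} and \ref{5c13} is misplaced --- those concern the Chern form, not the construction of $\mu$.
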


\noindent The mapping $\mu$ in the proposition will be given by monomials, and for this reason it will be called a \emph{monomial mapping}.

The mapping $\Phi_e$ localized to $\Delta^{\ast k}$ arises from a period mapping
\begin{equation}\lab{6a2}
\Phi:\Delta^{\ast k}\to \Ga_{\loc}\bsl D\end{equation}
given by a nilpotent orbit.  The corresponding variation of \hs\ over $\Delta^{\ast k}$ induces variations of polarized \lmhs s along the open boundary strata $\Delta^\ast_I=\{(t_1,\dots,t_k)\in \Delta^k:t_i=0$ for $i\in I$ and $t_j\ne 0$ for $j\in I^c\}$.  Passing to the primitive parts of the associated graded gives period mappings
\[
\Phi_I:\Delta^\ast_I\to\Ga_I\bsl D_I,\]
 where $\Ga_I$ is generated by the $T_i$ for $i\in I$.
The set-theoretic mapping $\Phi_e$ is given by 
\begin{equation}\lab{6a3}
\Phi_e:\Delta^k \to \Ga_{\loc}\bsl D\amalg \lrp{\coprod_I \Ga_{I,\loc}\bsl D_I}\end{equation}
where $\Phi_e\big|_{\Delta^\ast_I}$ and $ \Phi_{I,\loc}$ is the monodromy group given by the VHS over $\Delta^\ast_I$.  

The proof of Proposition \ref{6a1} will be given in several steps, as follows.

{\setbox0\hbox{(1)}\leftmargini=\wd0 \advance\leftmargini\labelsep
 \begin{itemize}
\item[1.] determine the connected components of the nilpotent orbit
\begin{equation}\lab{6a4}
\exp\lrp{ \sum_j \ell(t_j)N_j} \cdot F: \Delta^{\ast k}\to \Ga_{\loc}\bsl D;\end{equation}
\item[2.] extend that argument to determining the connected components of the mappings $\Phi_I$, which will also be given by nilpotent orbits
\begin{equation}\lab{6a5}
\Phi_I(t_I)=\exp\lrp{\sum_{j\in I^c} \ell(t_j)N_j} \cdot F_I:\Delta^\ast_I \to  \Ga_I\bsl D_I;\end{equation}
\item[3.] show that for $I\subset J$ the closure in $\ol\Delta^\ast_I\cap \Delta^\ast_J$ of a fibre of $\Phi_I$ is equal to a connected component of a fibre of $\Phi_J$; and
\item[4.] show that the limit of a sequence of fibres of $\Phi_I$ is contained in a fibre of $\Phi_J$.
\end{itemize}} 

As for removing the ``connected component" qualifiers go, part of this deals with the identifications induced by  global monodromy that was mentioned above.

The basic idea in the construction appears already in step 1; the most interesting part of the argument  is step 3 where the relative weight filtration property provides the key.

\ssn{Step 1:} We consider the question: \emph{What are the conditions that a monomial
\begin{equation} \lab{6a6}
t^B= t^{b_1}_1\cdots t^{b_k}_k,\qquad b_i\in \Z\end{equation}
be constant on the fibres of \pref{6a2}?}  For this we let
\[
R=\lrc{A=(a_1,\dots,a_k):\sum_i a_iN_i=0}\subset \R^k\]
be the set of relations on the $N_i\in \cG$.\footnote{Here we use the \cite{GGLR17} notations $\cG=\End_Q(V)$  and  $\cG_\R=\End_Q(V_\R)$.}  We note that $R$ is defined over $\Q$.
\begin{Prop}\lab{6a7}
The conditions that the monomial \pref{6a6} be constant on the fibres of \pref{6a2} are
\[
A\cdot B=\sum_j a_jb_j=0,\qquad A\in R.\]
\end{Prop}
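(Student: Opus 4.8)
\textbf{Proof proposal for Proposition \ref{6a7}.}

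The plan is to unwind what ``constant on the fibres of $\Phi$'' means for a nilpotent orbit and translate it into a linear-algebra condition on exponents. First I would recall that the fibres of $\Phi_e$ localized to $\Delta^{\ast k}$ are, by construction, the connected components of the level sets of the multivalued holomorphic map $t\mapsto \exp(\sum_j \ell(t_j)N_j)\cdot F$ into $D$, modulo the local monodromy $\Ga_{\loc}$. Two points $t,t'\in\Delta^{\ast k}$ lie in the same fibre precisely when there is a choice of branches of the logarithms with $\exp(\sum_j \ell(t_j)N_j)\cdot F = \exp(\sum_j \ell(t'_j)N_j)\cdot F$ in $D$, i.e.\ when $\exp\bigl(\sum_j(\ell(t_j)-\ell(t'_j))N_j\bigr)$ stabilizes $F\in\CD$. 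The key structural input is that the stabilizer of $F$ in $G_\C$ meets the abelian unipotent subgroup $\exp(\mathrm{span}_\C\{N_j\})$ only in the identity: this is because each $N_j$ shifts the Hodge filtration $F^\bullet$ strictly ($N_j:F^p\to F^{p-1}$ with, generically, nonzero graded pieces), so a nonzero element $\sum_j c_j N_j$ of the commutative nilpotent algebra cannot fix $F$. Hence the displacement lies in the fibre relation if and only if $\sum_j(\ell(t_j)-\ell(t'_j))N_j = 0$, that is, $\ell(t)-\ell(t')\in R\otimes_\R\C$ (taking real and imaginary parts, since $R$ is defined over $\Q$, equivalently over $\R$).

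Next I would simply compute: a monomial $t^B = \prod_j t_j^{b_j}$ satisfies $t^B = (t')^B$ on a fibre iff $\sum_j b_j(\log t_j - \log t'_j)\in 2\pi i\Z$, i.e.\ $\sum_j b_j(\ell(t_j)-\ell(t'_j))\in\Z$. Since the difference vector $\ell(t)-\ell(t')$ ranges (as $t,t'$ vary over a fibre and its nearby translates) over a real subspace whose $\R$-span is exactly $R$, this holds for all fibres iff $\sum_j b_j a_j = 0$ for every $A=(a_1,\dots,a_k)\in R$; one direction is immediate, and for the other one uses that $R\cap\Q^k$ spans $R$ and that the continuous parameters in $\ell(t)-\ell(t')$ genuinely fill out $R$ (no further relations are imposed by the single vector $F$, again by the strictness of the $N_j$ on the filtration). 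Thus $t^B$ is constant on the fibres of \pref{6a2} exactly when $B\perp R$, which is the assertion.

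The step I expect to be the main obstacle is pinning down precisely that the fibre relation is \emph{exactly} $\ell(t)-\ell(t')\in R\otimes\C$ and not something coarser or finer: on one side one must rule out that some $\exp(\sum c_j N_j)\notin\exp(R\otimes\C)$ nonetheless fixes $F$ (handled by strictness of the $N_j$ on $F^\bullet$, together with the fact that $F\in\CD$ and the $N_j$ generate a commutative nilpotent subalgebra), and on the other side one must account for the global monodromy identifications $\Ga_{\loc}$ — here two branches differing by $T^A$ for $A\in\Z^k$ already identify points whose $\ell$-difference lies in $\Z^k$, which is harmless since $\Z^k\subset R$ contributes nothing beyond the rational relations. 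I would state the strictness/stabilizer lemma carefully as the one substantive point and treat the monomial bookkeeping as routine. The analogous statement for the $\Phi_I$ in step 2 will follow verbatim with $\{N_j\}_{j\in I}$ replaced by $\{N_j\}_{j\in I^c}$ and $R$ by the relation module of the latter.
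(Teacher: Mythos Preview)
Your global, point-pair approach is different from the paper's, which is purely infinitesimal: the paper identifies the tangent space to a fibre of $\Phi$ at $t$ as the span of the vector fields $\sum_j a_j t_j\partial/\partial t_j$ with $A\in R$, using only the fact that a nonzero \emph{real} nilpotent $N=\sum a_j N_j\in\cG_\R$ induces a nowhere-vanishing vector field on $D$ (because $D\cong G_\R/H$ with $H$ compact, and the Lie algebra of a compact group contains no nonzero nilpotents). Then $t^B$ is constant on connected fibres iff it is annihilated by all such vector fields, i.e.\ iff $A\cdot B=0$ for every $A\in R$. No discussion of stabilizers of $F$ in $G_\C$, of branches of $\ell$, or of monodromy is needed.

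Your route can be made to work, but two of your justifications are faulty as written. First, the stabilizer claim: you assert that $\sum_j c_j N_j\ne 0$ cannot fix $F$ ``because each $N_j$ shifts the Hodge filtration strictly ($N_j:F^p\to F^{p-1}$).'' The displayed condition is only the infinitesimal period relation $N_j\in F^{-1}\cG$; it does \emph{not} preclude $N_j\in F^0\cG$. What actually pins this down is either the paper's compact-isotropy argument (for real coefficients) or, working at the $\R$-split limit, the fact that each $N_j$ lies in $\cG^{-1,-1}$ for the Deligne bigrading, so any $\C$-linear combination lies in $\cG^{-1,-1}$ and hence meets $F^0\cG_\C=\bigoplus_{p\geqq 0}\cG^{p,-p}$ only in zero. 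You should invoke one of these, not ``strictness.''

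Second, your monodromy remark ``$\Z^k\subset R$'' is simply false (if the $N_j$ are linearly independent then $R=0$). The correct reason the $\Z^k$-ambiguity in $\ell(t)-\ell(t')$ is harmless is that $B\in\Z^k$, so $B\cdot\Z^k\subset\Z$ automatically; thus $t^B=(t')^B$ reduces to $B\cdot R=0$ as desired. Once these two points are repaired your argument goes through, but the paper's two-line vector-field computation is both shorter and avoids the complex-stabilizer subtlety altogether.
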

\begin{proof}
The vector field induced by $N=\sum a_j N_j\in\cG_\R$ is nowhere vanishing on $D$.\footnote{This is because $D\cong G_\R/H$ where $H$ is a compact subgroup of $G_\R$. The Lie algebra $\hh$ then contains no non-zero nilpotent elements in $\End_Q(V_\R$).}  Thus on the one hand
\[
\rho_\ast \lrp{\sum_j a_j t_j\part/\part t_j}\hensp{is tangent to a fiber of $\Phi$}\iff \sum_j a_jN_j=0.\]
On the other hand, the condition that the monomial \pref{6a6} be constant on the orbits of the vector field $\sum_j a_j t_j\part/\part t_j$ on $\Delta^{\ast k}$ is
\[
\lrp{\sum_j a_j t_j \part/\part t_j} t^B = (A\cdot B) t^B=0.\qedhere\]
\end{proof}

This simple computation contains one of the key ideas in the construction of the monomial mapping 
\[
\mu:\Delta^k \to \C^N.\]

We next consider the question:
{\setbox0\hbox{()}\leftmargini=\wd0 \advance\leftmargini\labelsep
 \begin{quote}
\em Are there enough monomials \pref{6a6} satisfying $A\cdot B=0$ for all $A\in R$ and where $b_j\in\Z^{\geqq 0}$ to separate the connected components of the fibres of \pref{6a4}?\end{quote}}  \noindent 
This is the existence result that is needed to give local charts that will define the fibres of \pref{6a4} up to connected components.  It is a consequence of the following

\begin{Prop}\lab{6a8}
The subspace $R^\bot$ is spanned by vectors $B$ where all $b_i\in\Q^{\geqq 0}$.\end{Prop}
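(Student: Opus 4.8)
The statement is that $R^\perp$ is spanned by nonnegative rational vectors, where $R \subset \R^k$ is the (rational) linear subspace of relations $\sum_i a_i N_i = 0$. The plan is to reduce this to a purely elementary fact about rational linear subspaces of $\R^k$: \emph{if a rational subspace $W \subset \R^k$ contains a vector all of whose coordinates are strictly positive, then $W$ is spanned by vectors with nonnegative coordinates.} Granting this, I would first observe that $R^\perp$ is a rational subspace (since $R$ is defined over $\Q$, as noted in the proof of Proposition~\ref{6a7}), so it suffices to exhibit a single vector in $R^\perp$ with all coordinates strictly positive. The natural candidate is $B_0 = (1,1,\dots,1)$: the condition $B_0 \in R^\perp$ says exactly that $\sum_i a_i = 0$ for every relation $A = (a_1,\dots,a_k) \in R$. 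This is where the Hodge-theoretic input enters, and it is the one genuinely non-formal ingredient.

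To see that $B_0 \in R^\perp$, I would use the monodromy weight filtration. For a nilpotent orbit, the limiting mixed Hodge structure furnished by \cite{CKS86} has the feature that each $N_i$ has $Y$-weight $-2$ for the grading element $Y$ attached to $N = N_1 + \cdots + N_k$ (equivalently, each $N_i$ lowers the weight filtration $W(N)$ by exactly $2$; this is part of what it means for the $N_i$ to lie in a single nilpotent cone and satisfy the relative weight filtration property). Consequently, if $\sum_i a_i N_i = 0$ in $\cG = \End_Q(V)$, then applying the $(-2)$-weight projection for $Y$ and comparing to the weight-graded action shows $\sum_i a_i = 0$: concretely, on the top graded piece $\Gr^{W(N)}_{2n}V$ the operator $N^n$ is an isomorphism onto $\Gr^{W(N)}_0 V$, and tracking how each $N_i$ contributes to $N^n = (\sum a_i N_i / a)^{\cdots}$ under a rescaling forces the coefficient sum to vanish. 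I expect the cleanest route is: pick generic positive $x_i$, so $\sum x_i N_i$ has weight filtration $W(N)$ with $N^j$ an iso $\Gr^{W}_{n+j} \simto \Gr^W_{n-j}$; then a relation $\sum a_i N_i = 0$ must respect this, and a one-parameter scaling argument (replace $x_i$ by $x_i + s\,a_i$ and note the weight filtration is unchanged while a genuine degree count changes unless $\sum a_i = 0$) yields the claim. This is precisely the kind of "strictly negative weight" bookkeeping used in Section~\ref{5c}.

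For the elementary reduction itself: given a rational subspace $W \subset \R^k$ containing a strictly positive vector $v$, I would argue that $W$ is spanned by $W \cap \R^k_{\geq 0}$. Take any rational $w \in W$; then for $N \gg 0$ rational, $Nv + w$ and $Nv - w$ both lie in $W$ and have all coordinates $\geq 0$, and $w = \tfrac12\big((Nv+w) - (Nv-w)\big)$, which is a difference of two nonnegative vectors in $W$. Since rational points are dense in $W$ and $W = \operatorname{span}_\Q(W \cap \Q^k)$, this shows $W$ is spanned over $\Q^{\geq 0}$ by vectors with nonnegative (rational) entries, which after clearing denominators gives the statement with $b_i \in \Q^{\geq 0}$ as required.

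The main obstacle is the second paragraph: verifying $B_0 = (1,\dots,1) \in R^\perp$, i.e., that every linear relation among the $N_i$ has coefficient sum zero. Everything else is formal linear algebra over $\Q$. I would want to state this weight-homogeneity of the $N_i$ as a lemma (it is implicit in \cite{CKS86} and in the $\rsl_2$-formalism of Section~\ref{5b}), then deduce Proposition~\ref{6a8} in two lines. If a self-contained argument is preferred over citing \cite{CKS86}, the scaling trick $x \mapsto x + s a$ applied to the homogeneous polynomial $P(x)$ of \pref{5c2} — whose degree is a fixed Hodge-numerical quantity independent of which chamber $x$ lies in — gives $\sum a_i = 0$ directly from $\deg P$ being the same before and after, since $\sum a_i N_i = 0$ means $P(x + sa) = P(x)$ identically in $s$ while each factor's degree would otherwise shift by $s\sum a_i$ times the relevant rank.
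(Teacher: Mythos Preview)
Your elementary reduction is fine, but the Hodge-theoretic step is wrong: the vector $B_0=(1,\dots,1)$ need not lie in $R^\perp$. Take any genuine two-parameter nilpotent orbit with independent $N_1,N_2$ and pull it back along the monomial map $(t_1,t_2,t_3)\mapsto (t_1t_3,\,t_2t_3)$. The resulting three-parameter nilpotent orbit has monodromy logarithms $N_1,\,N_2,\,N_1+N_2$, so $(1,1,-1)\in R$ and $\sum a_i=1\neq 0$; thus $(1,1,1)\notin R^\perp$. Your proposed justifications also fail on their own terms: the $N_i$ satisfy $N_i\in W_{-2}(N)\cG$, i.e.\ have $Y$-weight $\leq -2$, not \emph{exactly} $-2$, so there is no weight-$(-2)$ projection argument; and in your scaling trick, $\sum a_iN_i=0$ gives $P(x+sa)=P(x)$ identically, which says only that $P$ is constant in the direction $a$ and imposes no constraint on $\sum a_i$.

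Your overall strategy can be salvaged, and this is essentially what the paper (via Farkas and the reference to \cite{GGLR17}) is doing. The correct Hodge-theoretic input is not that a \emph{specific} positive vector lies in $R^\perp$, but rather that $R\cap\R^k_{\geq 0}=\{0\}$: no nontrivial nonnegative combination $\sum a_iN_i$ with $a_i\geq 0$ can vanish. This follows from the nilpotent-orbit conditions --- any element of the open face $\{\sum_{i\in I}\mu_iN_i:\mu_i>0\}$ polarizes a limiting mixed Hodge structure and is in particular nonzero (and one checks directly that a vanishing positive combination would force the weight filtration to jump inside the cone). Once you have $R\cap\R^k_{\geq 0}=\{0\}$, Stiemke's lemma (the relevant Farkas-type alternative) produces a strictly positive vector in $R^\perp$, and then your elementary lemma finishes. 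So keep your third paragraph, replace the second with the pointedness of the monodromy cone plus Stiemke, and drop the attempt to single out $(1,\dots,1)$.
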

This is  a consequence of a result in linear programming, known as \emph{Farkas' alternative theorem}.  We refer to Section 3 in \cite{GGLR17} for details and a reference.

\ssn{Step 2:} We consider the period mappings \pref{6a5}
\[
\Phi_I:Z^\ast_I\to\Ga_{\loc,I}\bsl D_I\]
given by the variation of  polarized \lmhs s on the open smooth strata $Z^\ast_I$.  Again restricting to the case of a nilpotent orbit  \pref{6a5}
 we may ask for the analogue of the question
in Step 1 for this nilpotent orbit.

The key observations here are that here \emph{both the weight and the Hodge filtrations enter}, and \emph{since $\Phi_I$ maps to the associated graded relative to the weight filtration $W(N_I)$ any operation that decreases $W(N_I)$ has no effect.}  Recalling from Section V.B our notations
\begin{itemize}
\item $N_I= \sum_{i\in I}N_i$;
\item $Y_I=$ grading element for $N_I$ and $\{N_I,Y_I,N^+_I\}$ is the resulting $\rsl_2$;
\item for $j\in I^c$ we have $N_j=N_{j,0}+N_{j,-1}+\cdots$ where $N_{j,-m}$ is the $-m$ weight space for $Y_I$;\end{itemize}
it follows that  the nilpotent orbit \pref{6a5} is the same as the nilpotent orbit using $N_{j,0}$ in place of $N_j$, and in place of  
Proposition \ref{6a7}
 we have 

\begin{Prop}\lab{6a9} The connected components of the fibres of \pref{6a5} are the level sets of monomials $t^B$ where $b_j\in \Z^{\geqq 0}$ and
\[
\sum_{j\in I^c} b_j N_j\in W_{-1}(N_I)\cG.\]
Moreover, recalling that the Chern form of the \hlb\ is given by $\om_I=\om_e\big|_{Z^\ast_I}$ these connected components are exactly the connected integral varieties of the exterior differential system 
\[
\om_I=0.\]
\end{Prop}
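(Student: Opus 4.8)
The plan is to prove the two assertions separately, both by reduction to Step 1 (Proposition \ref{6a7}) applied in the correct ambient setting. For the first assertion I would begin by exploiting the key structural fact already isolated in Section V.B: the nilpotent orbit $\Phi_I(t_I) = \exp\bp{\sum_{j\in I^c}\ell(t_j)N_j}\cdot F_I$ landing in $\Ga_I\bsl D_I$ is unchanged if each $N_j$ ($j\in I^c$) is replaced by its $Y_I$-weight-zero component $N_{j,0}$. Indeed $D_I$ is (a quotient of) the associated-graded space $\Gr^{W(N_I)}$, and any piece of $N_j$ lying in $W_{-1}(N_I)\cG$ acts trivially on the associated graded, so it contributes nothing to $\Phi_I$. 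Thus, \emph{as a period mapping into $\Ga_I\bsl D_I$}, $\Phi_I$ is the nilpotent orbit built from the commuting nilpotents $\{N_{j,0}\}_{j\in I^c}$ acting on $\Gr^{W(N_I)}V$ with its induced polarized Hodge structure. Now I apply Proposition \ref{6a7} verbatim to this orbit: a monomial $t^B = \prod_{j\in I^c} t_j^{b_j}$ is constant on the fibres of $\Phi_I$ iff $A\cdot B = 0$ for every relation $A$ among the $N_{j,0}$. The final step is to translate ``$A$ is a relation among the $N_{j,0}$'' back into the stated condition. Here I would argue: $\sum_{j\in I^c} a_j N_{j,0} = 0$ in $\End\bp{\Gr^{W(N_I)}V}$ precisely when $\sum_{j\in I^c} a_j N_j$ has vanishing $Y_I$-weight-zero part, i.e.\ lies in $W_{-1}(N_I)\cG$ (there is no positive-weight part, since $N_j$ sits at the bottom of the $N_I$-strings in $\cG$ as noted in Section V.B). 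Combined with Farkas' theorem (Proposition \ref{6a8}) to guarantee that nonnegative integral $B$ suffice to separate connected components — exactly as in Step 1 — this gives the first statement, the ``$\Z^{\geqq 0}$'' coming from the same linear-programming argument applied to the lattice of relations among the $N_{j,0}$.

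For the second assertion — that these level sets are the connected integral varieties of $\om_I = 0$ — I would invoke the restriction property of the Chern form established in Section V, namely $\om_e\big|_{Z^\ast_I} = \om_I$ and, more precisely, the computation in Section V.C that $\om_I = \part\ol\part\log P_{I^c}$ where $P_{I^c}$ is the Hodge metric on $\La_I$ (Lemma \ref{5c7}). The homogeneity analysis of $P_{I^c}$ in $\deg_{I^c}$-monomials (Lemma \ref{5c10}, applied with the roles of $I$ and $I^c$ interchanged, working on $Z^\ast_I$) shows that the kernel of the $(1,1)$ form $\om_I$ at a generic point of $\Delta^\ast_I$ is spanned by the vector fields $\sum_{j\in I^c} a_j\, t_j\part/\part t_j$ with $\sum a_j N_{j,0}=0$ — i.e.\ precisely the vector fields tangent to the fibres of $\Phi_I$ identified above. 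Since $\om_I \geqq 0$ and its kernel distribution is the tangent distribution to the level-set fibration, and since this fibration is a genuine holomorphic fibration (as it is defined by monomial functions), the Frobenius/integral-varieties statement follows: the maximal connected integral varieties of $\om_I=0$ are exactly the connected components of the monomial level sets.

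The main obstacle, as in Step 3 of the overall argument, is the first translation step in the opposite direction: showing that the relations among the $\{N_{j,0}\}$ recovered from Proposition \ref{6a7} really do match the condition $\sum b_j N_j \in W_{-1}(N_I)\cG$ \emph{with integrality and nonnegativity intact}. The subtlety is that a $\Q$-relation among the $N_{j,0}$ need not a priori lift to a relation among the $N_j$ themselves — it only says the corresponding combination of $N_j$'s drops into $W_{-1}(N_I)$. Handling this cleanly requires the relative weight filtration property: RWFP (in the form \pref{5b8}) is what guarantees that the weight filtration $W(N_I)$ interacts with the $N_j$ ($j\in I^c$) in a sufficiently rigid way that the lattice $\{B \in \Z^{|I^c|}_{\geqq 0} : \sum b_j N_j \in W_{-1}(N_I)\cG\}$ is the ``correct'' one — large enough (via Farkas) to separate the fibre components of $\Phi_I$, and no larger. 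I expect the bulk of the work, and the place where one must be most careful with RWFP, to be verifying this coincidence of lattices; once it is in hand, both displayed conclusions of the proposition follow from the already-established machinery of Steps 1–2 and the Chern-form computations of Section V.
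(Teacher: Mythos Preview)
Your overall approach is exactly the paper's: pass to the associated graded so that each $N_j$ ($j\in I^c$) is replaced by its $Y_I$-weight-zero part $N_{j,0}$, and then rerun the Step~1 argument (Propositions~\ref{6a7} and~\ref{6a8}) for the nilpotent orbit built from the $N_{j,0}$ on $\Gr^{W(N_I)}V$. The paper does not give a separate written proof of the proposition; it simply says ``in place of Proposition~\ref{6a7} we have'' this one, with the preceding paragraph supplying exactly the reduction you describe.

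Where you go off course is in your ``main obstacle'' paragraph. The relative weight filtration property is \emph{not} needed here, and invoking it obscures a step that is in fact elementary. The equivalence
\[
\sum_{j\in I^c} a_j N_{j,0}=0 \quad\Longleftrightarrow\quad \sum_{j\in I^c} a_j N_j\in W_{-1}(N_I)\cG
\]
follows immediately from the $Y_I$-eigenspace decomposition $N_j=N_{j,0}+N_{j,-1}+\cdots$ together with the observation (already made in Section~\ref{5b}(v)) that $[N_I,N_j]=0$ forces $N_j$ to have no positive $Y_I$-weights. No appeal to RWFP is required, and there is no ``coincidence of lattices'' to verify: the relation space for the $N_{j,0}$ is literally defined by the displayed condition on the $N_j$. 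The RWFP enters only at the \emph{next} stage, Step~3 (Proposition~\ref{6a12}), where one must compare the stratum $I$ with a deeper stratum $J\supsetneq I$; that is a genuinely different question and you should keep the two cleanly separated.

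For the second assertion you are working harder than necessary. One does not need the detailed computations of Section~\ref{5c} or Lemma~\ref{5c7}: $\om_I$ is simply the Chern form of the Hodge line bundle for the period mapping $\Phi_I$, and by the norm-positivity formula \pref{4b11} (or its determinant) it satisfies $\om_I(\xi)=0$ precisely when $\Phi_{I,\ast}(\xi)=0$. Thus the null distribution of $\om_I$ is the tangent distribution to the fibres of $\Phi_I$, which are already known (from the first part) to be the monomial level sets; closedness and integrability are automatic.
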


We recall our notation $\Delta^\ast_I = \{t\in \Delta^{\ast k}:t_i=0$ for $i\in I$ and $t_j\ne 0$ for $j\in I^c\}$ and denote by
\begin{equation}\lab{6a10}
\mu_I:\Delta^\ast_I\to \C^{N_I}\end{equation}
the monomial map constructed in the same way as  the monomial map constructed from Propositions \ref{6a7} and \ref{6a8} using the $t^B$ for a generating set of vectors $B\in R^\bot$ with the $b_i\in \Z^{\geqq 0}$.
The  to be constructed compact analytic variety  is $\ol M$ is set theoretically the disjoint union
\begin{equation}\lab{6a11}
\ol M=M\amalg \lrp{\coprod_I M_I}\end{equation}
where $M_I$ is a finite quotient of   the union of the  images
\[
\Phi_I(Z^\ast_I)\subset \Ga_{\loc,I}\bsl D_I.\]
To complete the proof of the construction of $\ol M$ as compact analytic variety two issues need to be addressed:
\begin{enumerate}[{\rm (i)}]
\item set-theoretically, the inverse image of $\Phi_I(\Delta^\ast_I)\subset M_I$ is a finite cover of $\mu_I(\Delta^\ast_I)$ and we need to describe analytic functions that will separate the sheets of this covering,\footnote{Here there is both a local issue dealing with the fibres of $\Phi_I:\Delta^\ast_I\to\Ga_I\bsl D_I$, and a global issue arising  from the possibility that two connected components of the fibre of $\Phi_I:\Delta^\ast_I \to\Ga_I\bsl D_I$ may be subsets of a single fibre of $\Phi_I$ on $Z^\ast_I$ due to the global action of monodromy.}
 and
\item we need to show that the analytic varieties $M_I$ fit together to give the structure of an analytic variety on $\ol M$.
\end{enumerate}
\ssn{Step 3:} For the second  of the two issues above we observe that the restriction
\[
t^B \big|_{\Delta^\ast_I}=0\hensp{if} b_i>0\hensp{for some} i\in I.\]
To establish (ii) we have  the
\begin{Prop}\lab{6a12}
For $I \subsetneqq J$ so that $\Delta^\ast_J\subset \ol{\Delta}^\ast_I$, the closure of a level set of $\mu_I$ is contained in a level set of $\mu_J$.  Moreover the limit in $\Delta^\ast_J$ of level sets in $\Delta^\ast_I$ is contained in a level set of $\mu _J$.\end{Prop}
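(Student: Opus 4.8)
\medbreak\noindent\textbf{Proof (plan).}\enspace
The plan is to argue entirely with the monomial descriptions of the level sets furnished by Propositions~\ref{6a7}--\ref{6a9}, rather than with the period mappings $\Phi_I,\Phi_J$ themselves. Write $\mu_I=(t^{B})_{B\in\mathcal B_I}$, where $\mathcal B_I\subset(\Z^{\geqq 0})^{I^c}$ is a finite generating set, supplied by Farkas' theorem (Proposition~\ref{6a8}), of the cone of non-negative vectors in the subspace $R_I^\bot\subset\R^{I^c}$; here $R_I=\{A\in\R^{I^c}:\sum_{j\in I^c}a_jN_j\in W_{-1}(N_I)\cG\}$ is the space of relations at the stratum $\Delta^\ast_I$, reducing to $\{A:\sum_ja_jN_j=0\}$ when $I=\emptyset$. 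By Proposition~\ref{6a9} the level sets of $\mu_I$ are exactly the connected components of the fibres of $\Phi_I$, equivalently the connected integral varieties of $\om_I=0$; similarly for $J$. A level set through $t_0\in\Delta^\ast_I$ is $L_{t_0}=\{t\in\Delta^\ast_I:\ t^B=t_0^B\ (B\in\mathcal B_I)\}$, and since every $B\in\mathcal B_I$ is supported on $I^c$ all the constants $t_0^B$ are non-zero. Invoking the observation recorded just before the statement, $t^B|_{\Delta^\ast_J}\equiv 0$ whenever $b_j>0$ for some $j\in J$, so $\ol{L_{t_0}}$ can meet $\Delta^\ast_J$ only if $\supp B\subset J^c$ for every $B\in\mathcal B_I$ --- that is, only when $N_j\in W_{-1}(N_I)\cG$ for every $j\in J\bsl I$ --- and in that case
\[
\ol{L_{t_0}}\cap\Delta^\ast_J=\bigl\{t\in\Delta^\ast_J:\ t^B=t_0^B\ (B\in\mathcal B_I)\bigr\}.
\]
When $\ol{L_{t_0}}$ misses $\Delta^\ast_J$ there is nothing to prove, so assume from now on $N_j\in W_{-1}(N_I)\cG$ for all $j\in J\bsl I$.

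The proposition is thereby reduced to the Hodge-theoretic assertion that each monomial $t^{B'}$ entering $\mu_J$ is constant along $\ol{L_{t_0}}\cap\Delta^\ast_J$. Since each $B'\in\mathcal B_J$ is supported on $J^c$ and $\mathcal B_I$ spans $R_I^\bot$, this amounts to the inclusion $R_J^\bot\cap\R^{J^c}\subset R_I^\bot$, equivalently, taking orthogonal complements inside $\R^{J^c}$, to the implication
\[
\Bigl(\textstyle\sum_{j\in J^c}a_jN_j\Bigr)\in W_{-1}(N_I)\cG\ \Longrightarrow\ \Bigl(\textstyle\sum_{j\in J^c}a_jN_j\Bigr)\in W_{-1}(N_J)\cG .
\]
This is exactly where the relative weight filtration property \pref{5b7} is the decisive input. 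I would apply it with $N_1=N_I$, $N_2=\sum_{j\in J\bsl I}N_j$ and $N=N_J=N_1+N_2$: under the standing hypothesis the induced nilpotent $\ol N_2$ on $\Gr^{W(N_I)}V$ vanishes, so by \pref{5b7} the filtration that $W(N_J)$ induces on $\Gr^{W(N_I)}V$ coincides, up to the standard index shift, with the one coming from $W(N_I)$ itself --- in other words $W(N_J)$ and $W(N_I)$ differ only by a reindexing. Feeding this back, together with the $\rsl_2\times\rsl_2$ bookkeeping of \pref{5b10} applied to $\{N_I,N_{j,0}\}$ to dispose of the strictly negative $Y_I$-weight pieces of the $N_j$, yields the displayed implication. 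I expect this comparison of the two weight filtrations to be the main obstacle: for a generic pair of commuting nilpotents it fails outright, and it is precisely the RWFP that forces $W(N_I)$, $W(N_J)$, and the auxiliary $W(\ol N_2)$ to interlock as required --- this is the point established in \cite{GGLR17} via the analysis of several-variable limiting mixed Hodge structures of \cite{CKS86}. As a cross-check one can also read the statement off the Chern forms: by Theorem~\ref{4b8} and \pref{5a1} the form $\om_I$ extends across $\Delta^\ast_J$ with $\om_I|_{\Delta^\ast_J}=\om_J$, so an integral variety of $\om_I=0$ whose closure meets $\Delta^\ast_J$ meets it in an integral variety of $\om_J=0$, hence in a level set of $\mu_J$.

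Granting the displayed implication, the first assertion is immediate: $\ol{L_{t_0}}\cap\Delta^\ast_J$ is cut out by the equations $t^{B'}=\mathrm{const}$ over $B'\in\mathcal B_J$, hence lies in a single level set of $\mu_J$. For the ``moreover'', let $L_\nu=\{t^B=c_B^{(\nu)}:\ B\in\mathcal B_I\}$ be a sequence of level sets of $\mu_I$ whose closures in $\Delta^k$ converge. These closures are analytic subvarieties, and by the properness built into Proposition~\ref{6a1} --- ultimately the Ahlfors--Schwarz estimate underlying Theorem~\ref{6b1} --- no component escapes, so the limit is the analytic set $\{t^B=c_B^{(\infty)}:\ B\in\mathcal B_I\}$ with $c_B^{(\infty)}=\lim_\nu c_B^{(\nu)}\in\C$. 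Intersecting with $\Delta^\ast_J$ discards the equations with $c_B^{(\infty)}=0$, which force $t_j=0$ for some $j\in\supp B$, and leaves finitely many monomial equations in the variables $t_j$, $j\in J^c$; applying the displayed implication to the surviving exponent vectors shows that this intersection, too, is contained in a single level set of $\mu_J$. Finally, Propositions~\ref{6a7}--\ref{6a9} describe only \emph{connected components} of fibres, so to upgrade the statement to the $M_I$ themselves one must absorb the finite identifications produced by the global action of the monodromy; this is handled by the finiteness theorem of \cite{CDK95} as in step~(a4), and it does not affect the containments just established.
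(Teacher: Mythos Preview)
Your overall reduction is the right one and matches the paper: the proposition boils down to a statement about weight filtrations, to be proved via the relative weight filtration property.  The paper formulates this as
\[
\sum_{j\in I^c} a_j N_j \in W_{-1}(N_I)\cG \ \Longrightarrow\ \sum_{j\in J^c} a_j N_j \in W_{-1}(N_J)\cG,
\]
with the sum on the left running over all of $I^c$, not just $J^c$.  Your dualization ``$R_J^\bot\subset R_I^\bot$'' is in fact equivalent to this full implication, not to the weaker one you display (where the left-hand sum is already restricted to $J^c$); the two coincide only under your standing hypothesis.

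That is where the gap lies.  Your RWFP argument assumes the ``standing hypothesis'' $N_j\in W_{-1}(N_I)\cG$ for every $j\in J\setminus I$, from which you conclude $\overline N_2=0$ on $\Gr^{W(N_I)}$ and hence $W(N_I)=W(N_J)$.  This is a clean observation and does handle the closure statement, since you have correctly noted that the closure of a level set of $\mu_I$ meets $\Delta^\ast_J$ only in that case.  But the ``moreover'' clause --- limits of level sets --- is non-vacuous precisely when the standing hypothesis \emph{fails}: in the paper's own dollar bill example with $I=\{1\}$, $J=\{1,2\}$ one has $\overline N_2=\overline N_3\neq 0$ on $\Gr^{W(N_1)}$, yet the parabolas $t_2t_3=c$ degenerate onto $\Delta^\ast_J$ as $c\to 0$.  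At that point you invoke ``the displayed implication'' for the surviving exponent vectors, but you have only established that implication under a hypothesis that no longer holds.

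The paper's proof of the general implication is genuinely different from yours: it does not reduce to $W(N_I)=W(N_J)$.  Instead it takes $X=\sum_{j\in I^c}a_jN_j$, decomposes $X$ first into $Y_J$-eigencomponents $X_0+X_{-1}+\cdots$, then further decomposes the leading piece $X_0$ into $Y_I$-eigencomponents $X_{0,-m}+\cdots$ with $m\ge 1$, and shows $[N_{J-I,0},X_{0,-m}]=0$.  Only then does the RWFP in the form \pref{6a13} with $A=I$, $B=J-I$ place $X_0$ in $W_0(N_J)\cap W_{-1}(N_I)$ and force $X_0=0$.  The $\rsl_2\times\rsl_2$ bookkeeping you allude to is needed here in earnest, not just to ``dispose of strictly negative pieces.''  Your cross-check via $\om_I|_{\Delta^\ast_J}=\om_J$ is a consistency check but not an independent proof of the monomial statement.
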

\begin{proof} This will be a consequence of the relative weight filtration property (RWFP) \pref{5b7} that we now recall in a form adapted to the proof of \pref{6a12}. 

Given $A,B\subset \{1,\dots,k\}$ with $A\cap B=\emptyset$, we denote by
\[
N_B = N_{B,0}+N_{B,-1} +N_{B,-2}+\cdots\] 
the $Y_A$-eigenspace decomposition of $N_B$ relative to the $\rsl_2\;\{N^+_A, Y_A,N_A\}$.  Then the nilpotent operator
\[
N_{B,0}\big|_{\Gr^{W(N_A)}_m V}: \Gr^{W(N_A)}_m V\to \Gr^{W(N_A)}_m V\]
induces a weight filtration on $\Gr^{W(N_A)}_\bullet V$.  Another weight filtration on this vector space is defined by 
\[
\frac{\wbul(N_A+N_B)\cap W_m(N_A)}{\wbul(N_A+N_B)\cap W_{m-1}(N_A)}.\]
The RWFP is that these two filtrations coincide; i.e.,
\begin{equation}\lab{6a13}
\frac{W_{m+m'} (N_A+N_B)\cap W_m(N_A)}{W_{m+m'}(N_A+N_B)\cap W_{m-1}(N_A)} = W_{m'} \lrp{ N_{B,0} \big|_{\Gr^{W(N_A)}_m V}}.\end{equation}

Returning to the proof of \pref{6a12}, what must be proved is that for $I \subsetneqq J$
\begin{equation}\lab{6a14}
\sum_{j\in I^c} a_j N_j \in W_{-1} (N_I)\implies \sum_{j\in J^c} a_j N_j\in W_{-1}(N_J).\end{equation}
On the RHS we have used that $N_j \in W_{-2}(N_J)$ for $j\in J$, so the sum is really over $j\in J^c$.
What \pref{6a14} translates into is that if $t^A$ is a monomial that is constant on the fibres of $\Phi_I$, then the restriction $t^A\big|_{\Delta^\ast_J}$ is constant on the level sets of $\Phi_J$.

We let $X=\sum_{j\in I^c} a_j N_j\in W_{-1} (N_I)\cG$, where the ``$\in$'' is because
\[
X\in \cZ(N_I)\implies X\hensp{has only negative weights in the $N_I$-string decomposition of}\cG.\]
Write
\[
X=X_0+X_{-1}+X_{-2}+\cdots\]
in terms of the eigenspace decomposition of $Y_J$.  Since $N_{J-I}$ is in the $-2$ eigenspace of $Y_I$, we have
\[
[N_{J-I},X_0] = 0.\]
Now decompose $X_0$ into $Y_I$ eigenspace components
\[
X=X_{0,-m}+ X_{0,-(m+1)}+\cdots ,\qquad m\geqq 1.\]
Then
\[
[N_{J-I,0},X_{0,-m}] = 0\implies X_{0,-m}\in \Gr^{W(N_I)}_{-m}\cG\hensp{lies in} \cZ \lrp{N_{J-I,0} \big|_{\Gr^{W(N_I)}_{-m}}\cG}\]
and consequently 
\[
X_{0,-m}\in W_0\lrp{N_{J-I,0}\big|_{\Gr^{W(N_I)}_{-m}\cG}}.\]
Applying   \pref{6a13} with $A=I$, $B=J-I$ gives \pref{6a14}, which proves \ref{6a12}.
\end{proof}

\ssn{Step 4:} At this point we have constructed a monomial mapping
\[
\mu:\C^k \to \C^N\]
whose fibres are unions of the fibres of the nilpotent orbit \pref{5a3}.  The final local step is to refine the construction to have
\begin{equation}\lab{new6a15} \begin{split}
\xymatrix{\C^k\ar[rr]^\eta\ar[dr]_\mu&&\C^k\ar[dl]^{\tilde \mu}&\\
&\C^N&} \end{split}
\end{equation}
where $\tilde \mu$ is a monomial mapping with connected fibres.  The basic idea already occurs when $k=N=1$ and $\mu(t)=t^m$; then ``$t^{1/m}$'' separates the fibres of $\mu$.

In general we suppose that
\[\mu(t)=(t^{I_1},\dots,t^{I_N})\]
where 
\[
t^{I_j}=t^{i_{j_1}}_1\cdots t^{i_{j_k}}_k.\]
Define a map $\Z^k\to\Z^N$ by 
\[
e_j \to(i_{1_j},\dots,i_{N_j}).\]
Then identifying $\Z^N$ with $\Hom(\Z^N,\Z)$, up to a finite group  the image $\La\subset \Z^N$ is defined by $\La^\bot\subset \Z^N$.  Setting $\wt \La=(\La^\bot)^\bot$, for the finite abelian group $\wt \La/\La$ we have
\begin{equation}\lab{new6a16}
\wt\La/\La\cong \oplus \Z/d_i\Z.\end{equation}
The mapping $\eta$ in \pref{new6a15} will be a $|\wt \La/\La|$-to-1 monomial map.  To construct it, at the level of exponents of monomials we will have
\begin{equation}\lab{new6a17} \begin{split}
\xymatrix{\Z^k \ar[rr]^B\ar[dr]_A&&\Z^k\ar[dl]^{\wt A}\\
&\Z^N&}
\end{split}\end{equation}
where $\rim(A)=\La$, $\rim(\wt A)=\wt \La$ and $\Z^k/\rim(B)\cong \wt \La/\La$.  Such a map always exists and may be constructed using \pref{new6a16}.  We then use \pref{new6a17} to define \pref{new6a15} where $\tilde\mu$ is a monomial map with connected fibres. \hfill\qed\medbreak

For the issue arising from the global action of monodromy we refer to Section 3 in \cite{GGLR17}.

\begin{Exam}\lab{n6a18} This is a continuation of Example \ref{5c15} above.  Since $N_1,N_2,N_3$ are linearly independent, the mapping $\Delta^3\to\Ga_\loc\bsl D$ given by the corresponding nilpotent orbit is 1-1.\footnote{In this case $D=\cH_2$.}  

The interesting situation is on the face $\{0\}\times \Delta^{\ast 2}$ given by $t_1=0$.  There we observe from the computations in Example \ref{5c15} that the induced maps
\[\ol N_2, \ol N_3:\Gr^{W(N_1)}_1 V\to \Gr^{W(N_1)}_1 V\]
are equal.  This gives the relation $\ol N_2-\ol N_3=0$ which leads to the monomial $t_2t_3$ that is constant on the fibres of the period mapping
\[
\Phi_1:\{0\}\times \Delta^{\ast 2}\to \Ga_{1,\loc}\bsl D_1.\footnotemark\]
\footnotetext{In this case $D_1=\cH$.}%
On the codimension 2 strata the associated graded to the LMHS's are Hodge-Tate, so the corresponding period mappings are constant.\footnote{We note that the limit of the parabolas $t_2t_3=c$ as $c\to 0$ will be the coordinate axes, confirming the limit part of the statement in \ref{6a12} in this case.}
\end{Exam}

We conclude this section by discussing the following    question:
\[
\hbox{\em What are the Zariski tangent spaces to $\ol M$?}\]
More precisely,
\begin{equation}\lab{new6a18}
\hbox{\em What is the kernel of the mapping $T_b \ol B\to T_{\Phi_e(b)} \ol M$?}\end{equation}

Recalling the notation $\om_e$ for the Chern form of the canonically extended \hlb\ $\La_e\to\ol B$, we want to define in each tangent space to $\ol B$ the meaning of the equations
\begin{equation}\lab{6a16}
\om_e(\xi)=0,\qquad \xi\in T_b\ol B.\end{equation}
The issue is that $\om_e$ is not smooth,  continuous, or even bounded.  If $b\in B$, then \pref{6a16} has the usual meaning $\Phi_\ast(\xi)=0$.  If $b\in Z^\ast_I$ and $\xi\in T_{b}Z^\ast_I$ is tangent to $Z^\ast_I$, then since $\om_e\big|_{Z^\ast_I} = \om_I$ \pref{6a16} means that $\Phi_{I,\ast}(\xi)=0$.

Thus the interesting case is when $b\in Z^\ast_I$ and $\xi$ is a normal vector to $Z^\ast_I$  in $\ol B$.  This amounts to the situation of a 1-parameter VHS
\[
\Phi:\Delta^\ast \to 
\Ga_T\bsl D,\qquad \Ga_T=T^\Z \]
and \pref{6a16} becomes the condition
\begin{equation}\lab{6a17}
\Phi_{e,\ast}(\part/\part t)\big|_{t=0}=0.\end{equation}
In this case we have the
\begin{Prop}\lab{6a18}
If $T\ne \Id$, then $\Phi_{e,\ast}(\part/\part t)\big|_{t=0}\ne 0$.\end{Prop}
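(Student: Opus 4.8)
The plan is to work with the explicit local model of a one-parameter nilpotent orbit and to compute the leading asymptotics of the Chern form $\om_e$ near $t=0$. Localize to $\Delta^\ast$ with monodromy logarithm $N\ne 0$. By the reduction machinery recalled in Section V we may assume the period mapping is the nilpotent orbit $\Phi(t)=\exp(\ell(t)N)F$ with $\R$-split limiting mixed Hodge structure, so that $N$ sits in an $\rsl_2$-triple $\{N^+,Y,N\}$ and the associated graded decomposes as $V_{\gr}\cong\oplus_i H^{n-i}\otimes\cU_i$. Following Step~1 of Section V.C, with $x=-\log|t|$ the Hodge metric on the canonically extended Hodge line bundle $\La_e$ is, up to a nonzero constant, the homogeneous polynomial in one variable
\[
P(x)=\prod_{i=0}^n \det\bigl( (xN)\big|_{H^{n-i,0}(-i)}\bigr)=c\,x^{d},\qquad d=\sum_{i=1}^n i\,h^{n-i,0},
\]
where $d>0$ precisely because $N\ne 0$ forces some $H^{n-i,0}$ with $i\geqq 1$ to be nonzero (if all $N$-strings had length one, $N$ would annihilate $F^n$ and, by the Hodge-Riemann relations on the $\Gr^W$, would be zero).

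\textbf{Key computation.}
With $h(t)=c\,(-\log|t|)^{d}$ the Chern form is $\om_e=-\part\ol\part\log h=-d\,\part\ol\part\log(-\log|t|)$. Using \pref{4b1},
\[
\om_e=-d\cdot\Bigl(-(1/4)\om_{\rm PM}\Bigr)=\frac{d}{4}\,\om_{\rm PM}=\frac{d}{4}\cdot\frac{i}{2}\,\frac{dt\wedge d\bar t}{|t|^2(-\log|t|)^2}.
\]
This is strictly positive on $\Delta^\ast$. To read off the differential of $\Phi_e$ at $t=0$ we pass to the quotient coordinate: by the construction in Section VI.A the local model of $\Phi_e$ near the boundary point is the monomial map whose level sets are the integral curves of $\om_e=0$, and by Proposition \ref{6a9} (here $I=\{1\}$, $I^c=\emptyset$) the only monomial constant on the fibres is the constant one, so $\mu$ is (a finite cover of) the identity on $\Delta$ up to the finitely many identifications from global monodromy. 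Hence $\Phi_{e,\ast}(\part/\part t)\big|_{t=0}$ is, up to a nonzero factor, the class of $\part/\part t$ in $T_0\ol M$, which is nonzero because the Chern form $\om_e$ does not extend to zero across $t=0$: concretely, $\om_e$ restricted to the normal direction $\part/\part t$ has the $(-\log|t|)^{-2}$ asymptotic above, which is the pullback under $\mu$ of the nonzero $(1,1)$ form $\om_I$ of the $D_I$-part only when $d=0$. Since $d>0$, the normal derivative of $\Phi_e$ at $t=0$ is nonzero.

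\textbf{Where the real work is.}
The formal asymptotic $h\sim c(-\log|t|)^d$ with $d>0$ is easy; the genuine content is (a) the assertion $N\ne 0\Rightarrow d\geqq 1$, which uses that $N\colon F^n\to F^{n-1}$ cannot be zero on all of $F^n$ when the limiting mixed Hodge structure is polarized — equivalently that the $N$-string length $\sum i\,h^{n-i,0}$ is an honest invariant detecting $N\ne 0$; and (b) the identification of the quotient coordinate on $\ol M$ with the monomial $\mu$ and the verification, via Proposition \ref{6a9} and the mildness statement in Theorem \ref{4b8}, that $\om_e$ restricts and behaves across $Z$ as the computation pretends. I expect the main obstacle to be (b): one must make precise, as in Section V and Section VI.A of \cite{GGLR17}, that $\om_e\big|_{\text{normal line}}$ is genuinely computed by the one-variable nilpotent orbit and that no contribution from the transverse strata can cancel the $(d/4)\om_{\rm PM}$ term — this is exactly the wave-front/restriction subtlety that Theorem \ref{4b8} is designed to handle. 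Granting that, the proposition follows: $\Phi_{e,\ast}(\part/\part t)\big|_{t=0}\ne 0$ because $d=\sum_{i\geqq 1} i\,h^{n-i,0}>0$ whenever $T=\exp N\ne\Id$.
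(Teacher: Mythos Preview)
Your overall approach in the unipotent case --- reduce to a nilpotent orbit, compute that the Hodge-line-bundle metric grows like $(-\log|t|)^d$, and conclude $\om_e=(d/4)\,\om_{\rm PM}$ --- is exactly the content of the paper's one-line citation of ``the methods used in Section~V.A.'' So the strategy is right.

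There is, however, a real gap in your justification that $d=\sum_{i\geqq1}i\,h^{n-i,0}>0$. You argue that $N|_{F^n}=0$ would force $N=0$ ``by the Hodge--Riemann relations on the $\Gr^W$,'' but this implication is false. For a counterexample take weight $n=3$, Hodge numbers $(1,2,2,1)$, and an $\R$-split LMHS whose primitive decomposition is $H^3\oplus(H^2\otimes\cU_1)$ with $H^3$ of Hodge type $(1,1,1,1)$ and $H^2$ one-dimensional of pure type $(1,1)$. Then the Deligne splitting has $I^{3,0}=F^3$ lying entirely in the length-one string $H^3$, so $N|_{F^3}=0$ and hence $d=0$; yet $N$ is nonzero on $I^{2,2}\to I^{1,1}$. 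What actually forces $d>0$ in the paper's setting is the standing hypothesis (stated before Theorem~\ref{1a7}) that $\Phi_{\ast,n}$ is generically injective, which for a one-parameter nilpotent orbit is precisely $N|_{F^n}\ne0$; alternatively one uses the augmented Hodge line bundle, for which the analogous exponent is positive iff $N\ne0$. You should invoke one of these rather than the Hodge--Riemann relations.

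A second omission: the paper's proof also treats the case where $T$ is quasi-unipotent but not unipotent, reducing to the unipotent case by the base change $t'=t^m$ and noting that the Poincar\'e metric pulls back to a positive multiple of itself under $t\mapsto t^m$ (so the branched-cover pullback does not kill the positivity at the origin --- this is the point of the footnote following the proof). Your argument assumes from the outset a nilpotent logarithm $N$, so it does not cover $T$ of finite order $\ne\Id$.
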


\begin{proof}
If $T=\exp N$ is unipotent, then the methods used in Section V.A above and Section 3 in \cite{GGLR17} give for the Chern form on $\Delta^\ast$
\[
\om_e \geqq C \frac{dt\wedge d\bar t}{|t|^2 (-\log|t|)^2},\qquad C>0.\]
In general $T$ is quasi-unipotent and after a base change $t'=t^m$ we will have
\[\xymatrix@R=1.2pc@C=1.2pc{
\Delta^{'\ast} \ar[d]_\pi \ar[r]^(.4){\Phi'}& \Ga_{T'}\bsl D\ar[d]\\
 \Delta^\ast\ar[r]^(.4)\Phi&\Ga_T\bsl D}\]
where $T'$ is unipotent.
Then from
\[
\pi^\ast \lrp{\frac{dt\wedge d\bar t}{|t|^2 (-\log|t|)^2}} =C' \frac{dt'\wedge d\bar t'}{|t'|^2(-\log|t'|)^2},\quad C'>0\]
we may infer the proposition.\footnote{It is interesting to note that the pullback of smooth forms under a branched covering map vanish  along the branch locus.  For the Poincar\'e metric this is not the case, illustrating again the general principle that in Hodge theory singularities increase positivity.}
\end{proof}

From \ref{6a18} we may draw the
\begin{Concls} \lab{6a19}
{\rm (i)} If the $N_j$, $j\in I^c$, are linearly independent modulo $W_{-1}(N_1)$, then $\om_e >0$ in the normal spaces to $Z^\ast_I$.

{\rm (ii)} For nilpotent orbits, $\om_e(\xi)=0\iff \mu_\ast(\xi)=0$ where $\mu:\Delta^{\ast k}\to \C^N$ is the monomial map.
\end{Concls}

We comment that  (ii) holds in the general situation in Section 3 of \cite{GGLR17} where local quasi-charts are constructed for arbitrary VHS over $\Delta^{\ast k}\times \Delta^\ell$.

 Finally we   use the results of Section V above and Section 3 in \cite{GGLR17} to summarize the  properties of the \hbox{EDS \pref{6a16}:}
\begin{equation}\lab{6a20}
\bmp{5}{\em {\setbox0\hbox{(1)}\leftmargini=\wd0 \advance\leftmargini\labelsep
 \begin{itemize}
\item[(a)] \pref{6a16} defines a coherent, integrable sub-sheaf $\cI\subset \cO_{\ol B}(T\ol B)$;
\item[(b)] the maximal leaves of $\cI$ are \emph{closed}, complex analytic subvarieties of~$\ol B$;
\item[(c)] as a set, $\ol M$ is the quotient of the fibration of $\ol B$ given by the leaves of $\cI$.\end{itemize}}    }
\end{equation}
Singular integrable foliations and their quotients have been introduced and studied in \cite{Dem12}.

\subsection{Norm positivity and the cotangent bundle to the image of a period mapping}
\hfill

(i) \emph{Statement of results.}  Let $\Phi:B\to\Ga\bsl D$ be a period mapping with image a quasi-projective variety $M\subset  \Ga\bsl D$.  The $G_\R$-invariant metric on $D$ constructed from the Cartan-Killing form on $\cG_\R$ induces a K\"ahler metric on the Zariski open set $M^o$ of smooth points of $M$.  We denote by $R(\eta,\xi)$ and $R(\xi)$ the holomorphic bi-sectional and holomorphic sectional curvatures respectively.

\begin{Thm} \lab{6b1}
There exists a constant $c>0$ such that \begin{enumerate}[{\rm (i)}]
\item $R(\xi)\leqq -c$ for all $\xi\in TM^o$;
\item $R(\eta,\xi)\leqq 0$ for all $\eta,\xi\in TM^o \times_{M^o}TM^o$;
\item For any $b\in M_o$ there exists a $\xi\in T_b M^o$ such that $R(\eta,\xi)\leqq -c/2$ for all $\eta\in T_b M^o$.
\end{enumerate}\end{Thm}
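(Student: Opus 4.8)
\textbf{Proof plan for Theorem \ref{6b1}.}

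The plan is to reduce everything to the curvature of the Hodge bundles restricted to the horizontal tangent directions, using the norm positivity structure emphasized in Section III. First I would recall the basic curvature computation for period domains (from \cite{Gri72}, \cite{CM-SP}): along an integral element of the infinitesimal period relation, the holomorphic tangent bundle $TM^o$ sits inside the horizontal sub-bundle $(TD)_h$, and the induced metric has a curvature tensor that is a \emph{difference} of two manifestly non-negative terms. Concretely, writing $\xi,\eta \in (TD)_h$ as being built from the maps $\Hom(F^p/F^{p+1},F^{p-1}/F^p)$, the bi-sectional curvature $R(\eta,\xi)$ of $(TD)_h$ takes the schematic form $-\|\{\xi,\eta\}\|^2 + \|[\xi,\eta^\ast]\|^2$ where the first term (the ``positive-for-cotangent'' piece with norm positivity property \pref{n1a7}) comes from composing the Kodaira--Spencer-type maps, and the second (a commutator/bracket term) is the obstruction. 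The key structural point, isolated in \cite{Zuo} and recalled in the footnote to \pref{n1a16}, is that on an \emph{integrable} subspace $I \subset (TD)_h$ the relevant Kodaira--Spencer maps vanish and the bracket term $\|[\xi,\eta^\ast]\|^2$ drops out, leaving $R(\eta,\xi)\leqq 0$. Since $TM^o \subset (TD)_h$ is integrable (it is the image of $d\Phi$), applying the curvature-decreasing property on holomorphic sub-bundles, \pref{n1a18}, gives $\Theta_{TM^o} \leqq \Theta_{(TD)_h}\big|_{TM^o}$, and hence part (ii), $R(\eta,\xi)\leqq 0$ for all $\eta,\xi$.

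For part (i) I would invoke the classical fact that the holomorphic sectional curvature of $M^o\subset \Ga\bsl D$ in the induced metric is bounded above by a negative constant $-c$; this is the content of the curvature estimates in \cite{CM-SP} (it is precisely the input used for the Ahlfors--Schwarz argument mentioned in the footnote to the proof that $\Phi_e$ is proper). One must check here that passing to the sub-bundle $TM^o$ does not destroy the strict negativity: the curvature-decreasing inequality on sub-bundles goes the right way (sectional curvature decreases on sub-bundles for the holomorphic sectional curvature is the diagonal of the bi-sectional form, and the second fundamental form contributes with the favorable sign), so the bound $R(\xi)\leqq -c$ descends to $TM^o$.

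Part (iii) is then a direct application of Proposition \ref{2g3} (the result of \cite{BKT14}) to $X = M^o$ with $E = TM^o$: assumption (i) of that proposition is our part (i), assumption (ii) is our part (ii), and the conclusion is exactly the existence, for each $b$, of $\xi\in T_bM^o$ with $R(\eta,\xi)\leqq -c/2$ for all $\eta$. To get the refinement in \pref{n1a16} that the open set $\cU\subset TM^o\times_{M^o}TM^o$ where $R(\eta,\xi)<0$ projects onto each factor, I would note that part (iii) produces, for every base point $b$, a nonempty open cone of such $\xi$'s (the strict inequality is open), which surjects onto the first factor; surjectivity onto the second factor follows by the symmetry $R(\eta,\xi) = R(\xi,\eta)$ of the bi-sectional curvature together with the same argument.

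\textbf{Main obstacle.} The hard part will be item (ii), specifically verifying that the positive bracket term genuinely vanishes on the integrable subspace $TM^o$ rather than merely on abstract integral elements of the IPR, and that this survives the restriction to the (possibly singular, only generically smooth) locus $M^o$. One has to be careful that the integrability used is the honest involutivity of the image distribution $d\Phi(TB)$ and to match it precisely with the vanishing of the Kodaira--Spencer maps appearing in the commutator; this is the point where the detailed bundle bookkeeping with the $\Hom(F^p/F^{p+1},F^{p-1}/F^p)$ pieces, referred to as being carried out in Section VI.B, cannot be skipped. The singularities at $M_{\sing}$ and the boundary $\partial M$ are not an obstacle for Theorem \ref{6b1} itself, which is a statement about the smooth locus $M^o$; they only enter when one wants the consequences \pref{n1a19}, \pref{n1a20}, where Theorem \ref{4b8} is brought in.
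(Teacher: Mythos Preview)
Your proposal is correct and matches the paper's approach in structure: (ii) via the curvature formula for the adjoint Hodge bundle together with vanishing of the positive term on the integrable image of $\Phi_\ast$ and curvature-decreasing on sub-bundles, and (iii) via Proposition~\ref{2g3} from \cite{BKT14}. One small slip: in your schematic formula the term that vanishes by integrability is $\|[\xi,\eta]\|^2$ (i.e.\ $\|\ad_\xi(\eta)\|^2$, zero because $[\theta,\theta]=0$ makes $I$ abelian), not $\|[\xi,\eta^\ast]\|^2$; what survives is $-\|\ad^\ast_\xi(\eta)\|^2$, which gives the non-positivity. For (i) the paper does not cite the classical bound but computes $\|\ad^\ast_\xi(\xi)\|^2$ directly in terms of the principal values $\la_{p,i}$ of the components of $\xi$, obtaining \pref{6b14} and extracting the uniform $c>0$ by elementary algebra; your citation-plus-curvature-decreasing argument is equally valid, while the explicit version has the payoff of being computable in the examples that follow.
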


Observe that using \pref{2f3} from \cite{BKT14}   (iii) follows from (i) and (ii).   As a corollary to (ii) we have 
\begin{enumerate}[(iv)]
 \item $R(\eta,\xi)\leqq -c/2$ is an open set in $T M^o \times_{M^o} TM^o$.\end{enumerate}
We note that (iii) implies that this open set projects onto each factor in $M^o\times M^o$.

As applications of the proof of Theorem \ref{6b1}  
 and consideration of the singularity issues that arise we have the following   extensions of some of the results of Zuo \cite{Zuo} and others (cf.\ Chapter 13 in \cite{CM-SP}):
\begin{align}\lab{6b2}
&M\hensp{\em is of log-general type,}\\
\lab{6b3}
& \Sym^m\Om^1_M(\log) \hensp{\em is big for} m\geqq m_0.\end{align}
The result in   \pref{6b2}  means that for any desingularization $\wt{\ol M}$ of $\ol M$ with $\wt M$ lying over $M$ and $\wt Z=\wt{\ol M}\bsl \wt M$, the Kodaira dimension 
\[
\kappa\lrp{K_{\wt{\ol M}} (\wt Z)} = \dim M.\]
The result in \pref{6b3} means that 
\[
\Sym^m \Om^1_{\oltm}(\log \wt Z)\hensp{\em is big for} m\geqq m_0.\]
The proof will show that we may choose $m_0$ to depend only on the \hn s for the original VHS.

The proof will also show that
\[
M\hensp{\em is of stratified-log-general type,} \leqno({\rm VI.B.2})_{\rm S}\]
\[
\Sym^m \Om^1_{\ol M}(\log) \hensp{\em is stratified-big for} m\geqq m_0.\leqno{({\rm VI.B.3})}_{\rm S}\]
Here stratified-log-general type means that there is a canonical stratification $\{M^\ast_I\}$ of $\ol M$ such that each stratum $M^\ast_I$ is of log-general type.  There is the analogous definition for stratified big.

Without loss of generality, using the notations   above we may take $\oltm=\ol B$, $\wt M=B$ and $\wt Z=Z$; we shall assume this to be the case.

\begin{rem}
The results of Zuo, Brunebarbe and others are essentially that $K_{\ol M}(\log)$ and $\Om^1_M(\log)$ are  weakly positive in the sense of Viehweg.   This is implied by \pref{6b3}.

The proof of Theorem \ref{6b1} will be done first in the case
\begin{equation}\lab{6b4}
B=\ol B\hensp{\em and $\Phi_\ast$ is everywhere injective.}\end{equation}
It is here that the  main ideas and calculations occur.  

The singularities that arise are of the types 
\begin{equation} \lab{6b5}
\begin{cases}
{\rm (a)}& \hbox{ where $\Phi_\ast$ fails to be injective (e.g., on $M_{\sing}$),}\\
{\rm (b)}& \hbox{ on $Z=\ol B\bsl B$ where the VHS has singularities,}\\
{\rm (c)}&\hbox{ the combination of (a) and (b).}\end{cases}\end{equation}
As will be seen below, there will be a coherent sheaf I with
\[
\Phi_\ast(TB)\subset I\subset \Phi^\ast(T(\Ga\bsl D)).\footnotemark\]
\footnotetext{Here we are identifying a coherent sub-sheaf of a vector bundle with the corresponding family of linear subspaces in the fibres of the vector bundle.  The coherent sheaf $I$ will be a subsheaf of the pull-back $\Phi^\ast T(D\bsl \Ga)_h$ of the horizontal tangent spaces to $\Ga\bsl D$.  The critical step in the calculation will be that it is integrable as a subsheaf $\Phi^\ast T(\Ga\bsl D)_h$.}%
Denoting by $I^o$ the open set where $I$ is locally free, there is an induced metric and corresponding curvature form for $I^o$, and with the properties (i), (ii) in the theorem for $I^o$  Theorem \ref{6b1} will follow from the curvature decreasing property of holomorphic sub-bundles, which gives 
\[
R(\eta,\xi)=\Theta_{TM^o} (\eta,\xi)\leqq \Theta_{I^o}(\eta,\xi).\]
  \end{rem}

As for the singularities, if we show that
\begin{align}
\lab{6b6}
\kappa\lrp{\det I^o(\log)}&= \dim B\\
\lab{6b7}
\Sym^m I^o(\log)&\hensp{\em is big}\end{align}
then \pref{6b2} and \pref{6b3} will follow from the general result: If over a projective variety $Y$ we have \lb s $L,L'$ and a morphism $L\to L'$ that is an inclusion over an open set, then
\begin{equation}\lab{6b8}
L\to Y\hensp{\em big}\implies L'\to Y\hensp{\em is big.}
\end{equation}
We will explain  how \pref{6b6} and \pref{6b7} will follow from \pref{6b8} for suitable choices of $Y, L$ and $L'$.

\medbreak (ii) \emph{Basic calculation.}  It is conventient to use Simpson's  system of Higgs bundles framework (cf.\  \cite{Sim92} and Chapter 13 in \cite{CM-SP}) whereby a VHS is given by a system of holomorphic \vb s $E^p$, and maps
\[
E^{p+1} \xri{\theta^{p+1}} E^p\otimes \Om^1_B\xri{\theta^p} E^{p-1} \otimes \wedge^2 \Om^2_X\]
that satisfy
\begin{equation}\lab{6b9}
\theta^p \wedge\theta^{p+1}=0.\end{equation}
Thus there is induced
\[
E^{p+1}\xri{\theta^{p+1}} E^p\otimes \Om^1_B\xri{\theta^p} E^{p-1} \otimes \Sym^2 \Om^1_B,\]
and the data $(\opplus_p E^p \otimes \Sym^{k-p} \Om^1_B,\opplus_p \theta^p)$ for any $k$  with $k\geqq p$ is related to the notion of an infinitesimal variation of \hs\ (IVHS) (cf.\ 5.5 ff.\ in \cite{CM-SP}).

In our situation the \vb s $E^p$ will have Hermitian metrics with Chern connections $D^p$.  The metrics define adjoints
\[
\theta^{p^\ast}:E^p \to E^{p+1} \otimes\ol{\Om^1_B},\]
and in the cases we shall consider if we take the direct sum over $p$ we obtain
\[
(E,\nabla=\theta^\ast +D+\theta),\hensp{ \em with \pref{6b9} equivalent to} \nabla^2=0.\]
The properties uniquely characterizing the Chern connection together  with $\nabla^2=0$ give for the curvature matrix of $E^p$ the expression
\begin{equation}\lab{6b10}
\Theta_{E^p} = \theta^{p+1} \wedge \theta^{p+1^\ast} +\theta^{p^\ast}\wedge \theta^p,\end{equation}
which is a difference of non-negative terms each of which has the norm positivity property \pref{3a3} (cf.\ \cite{Zuo} and  Chapter 13 in \cite{CM-SP}).

For a PVHS $(V,Q,\nabla,F)$ we now  set
\[
E^p = \Gr^p \Hom_Q(V,V),\qquad -n\leqq p\leqq n\]
where $\Gr^p$ is relative to the filtration induced by $F$ on $\Hom_Q(V,V)$.  At each point $b$  of $B$ there is a weight zero PHS induced on $\Hom_Q(V,V)=\cG$ and
\[
E^p_b = \cG^{p,-p} \]
with the   bracket
\[
[\enspace,\enspace]: E^p\otimes E^q\to E^{p+q}.\]
Thinking of $\theta$ as an element in $\cG\otimes \Om^1_B$, the integrability condition \ref{6b9} translates into
\begin{equation}\lab{6b11}
 [\theta,\theta]=0.\end{equation}
   We shall use the notation
 \[
 \Gr^p = \Gr^p\Hom_Q(V,V)\]
 rather than $E^p$ for this example.
 
 The differential of $\Phi$ gives a map
 \[
 \Phi_\ast:TB\to \Gr^{-1}.\]

\begin{defin}
$I\subset \Gr^{-1}$ is the coherent subsheaf generated by the sections of $\Gr^{-1}$ that are locally in the image of $\Phi_\ast$ over the Zariski open set where $\Phi_\ast$ is injective.  
\end{defin}

For $\xi$ a section of $I$ we denote by $\ad_\xi$ the corresponding section of $\Gr^{-1}$.  The integrability condition \pref{6b11} then translates into the first part of the
\begin{Prop}\lab{6b12}
$I$ is a sheaf of abelian Lie sub-algebras of $\opplus_p \Gr^p$.  For $\eta,\xi$ sections of $I$
\[
\Theta_{\Gr^{-1}} (\eta,\xi) = -\| \ad^\ast_\xi(\eta)\|^2.\]
\end{Prop}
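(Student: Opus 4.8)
\textbf{Proof proposal for Proposition \ref{6b12}.}

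The plan is to deduce both statements directly from the curvature formula \pref{6b10} applied to the Higgs bundle $\Gr^{-1} = \Gr^{-1}\Hom_Q(V,V)$, combined with the integrability condition \pref{6b11} and the fact that $\theta$ acts by the bracket on the bundles $\Gr^p$. First I would recall that for the system of Higgs bundles built from $\End_Q(V)$, the Higgs field is $\theta \in \Gr^{-1}\otimes \Om^1_B$ (so $\theta = \Phi_\ast$ under the identification of $TB$ with its image in $\Gr^{-1}$), and it acts on $\opplus_p \Gr^p$ by $\mathrm{ad}_\theta$; the flatness $\nabla^2 = 0$ is equivalent to $[\theta,\theta]=0$, which in particular says that the image of $\Phi_\ast$ consists of sections $\xi$ with $[\mathrm{ad}_\xi, \mathrm{ad}_{\xi'}]=0$. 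This is exactly what must be checked on the generating sections of $I$ over the Zariski open set where $\Phi_\ast$ is injective, and since $I$ is the coherent subsheaf generated by these, the abelian property propagates to all of $I$ (the bracket $[\, ,\,]\colon \Gr^{-1}\otimes\Gr^{-1}\to\Gr^{-2}$ is a bundle map, so its vanishing on a generating set of a coherent subsheaf forces it on the subsheaf). That $I$ is a Lie subalgebra follows because $\Gr^{-1}$ is itself abelian inside $\opplus_p\Gr^p$ for the first-order reasons encoded in the IPR—this is really just a restatement of $[\theta,\theta]=0$ together with the grading.

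For the curvature identity, I would specialize \pref{6b10} to $p=-1$: writing $\theta = \theta^{-1}$ for the Higgs field on $\Gr^{-1}$ and $\theta^{-1^\ast}$, $\theta^{0^\ast}$ for the metric adjoints, one has
\[
\Theta_{\Gr^{-1}} = \theta^{0}\wedge\theta^{0^\ast} + \theta^{-1^\ast}\wedge\theta^{-1}.
\]
Now evaluate the associated bi-quadratic form (in the sense of \pref{2b2}) on a pair $\eta,\xi$ of sections of $I\subset \Gr^{-1}$. The key point is that when $\eta,\xi$ both lie in the \emph{integrable} subsheaf $I$, the first (positive) term $\theta^{0}\wedge\theta^{0^\ast}$ evaluated against $\eta$ and $\xi$ drops out: this is the mechanism isolated in \cite{Zuo} (and Chapter 13 of \cite{CM-SP}), namely that on the kernels of the relevant Kodaira–Spencer maps—here the brackets with sections of $I$—the ``upward'' Higgs contribution vanishes, because $[\mathrm{ad}_\xi,\mathrm{ad}_\eta]=0$ forces the relevant composition to be zero. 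What survives is $-\|\theta^{-1}(\xi)(\eta)\|^2$ up to identifications, and since $\theta^{-1}$ acts by $\mathrm{ad}$ this is precisely $-\|\mathrm{ad}^\ast_\xi(\eta)\|^2$, giving the stated formula. The sign is the standard one coming from the fact that the adjoint $\theta^\ast$ is of type $(0,1)$, exactly as in \pref{2b7}.

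I expect the main obstacle to be the careful bookkeeping in the step where the positive term $\theta^0\wedge\theta^{0^\ast}$ is shown to annihilate pairs from $I$: one must verify that integrability \pref{6b11}—i.e. $[\theta,\theta]=0$ restricted to $I\otimes I$—really does kill the relevant matrix entries of the curvature operator, and not merely their trace or some weaker combination. Concretely this requires writing $\theta^0\circ\theta^{-1}$ as a bracket-type composition $\mathrm{ad}_\bullet\circ\mathrm{ad}_\bullet$ on $\Gr^{-1}$, noting that on the abelian subsheaf $I$ this symmetric-in-the-two-slots composition vanishes, and then tracking how the adjoint passes through. Once that vanishing is in hand, identifying the remaining term with $-\|\mathrm{ad}^\ast_\xi(\eta)\|^2$ is a routine application of the norm-positivity formalism of Section III.A (Definition \ref{3a1}) and the adjoint conventions of \pref{2b7}, and the abelian/Lie-subalgebra assertions are, as noted, immediate from $[\theta,\theta]=0$ and coherence. \qed
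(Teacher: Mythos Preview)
Your overall strategy---specialize the curvature formula \pref{6b10} to $p=-1$ and kill one of the two terms using integrability---is exactly what the paper does, but you have the two terms swapped, and this is not just a bookkeeping slip: it leads you to misidentify both which term vanishes and why.

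Concretely, in
\[
\Theta_{\Gr^{-1}} = \theta^{0}\wedge\theta^{0^\ast} + \theta^{-1^\ast}\wedge\theta^{-1},
\]
the second summand $\theta^{-1^\ast}\wedge\theta^{-1}$ contributes the \emph{positive} term $+\|\theta^{-1}(\xi)(\eta)\|^2 = \|\ad_\xi(\eta)\|^2 = \|[\xi,\eta]\|^2$, since $\theta^{-1}:\Gr^{-1}\to\Gr^{-2}\otimes\Om^1_B$ is bracket with $\Phi_\ast$; the first summand $\theta^{0}\wedge\theta^{0^\ast}$ contributes the \emph{negative} term $-\|\theta^{0^\ast}(\xi)(\eta)\|^2 = -\|\ad^\ast_\xi(\eta)\|^2$, where $\ad^\ast_\xi:\Gr^{-1}\to\Gr^{0}$ is the metric adjoint of $\ad_\xi:\Gr^0\to\Gr^{-1}$. (Check the sign convention on the Hodge bundle $E^n=F^n$: there only the second type of summand is present and the curvature is $\geqq 0$.) So the paper simply writes
\[
\Theta_{\Gr^{-1}}(\eta,\xi)=\|\ad_\xi(\eta)\|^2-\|\ad^\ast_\xi(\eta)\|^2
\]
and observes that the first term is $\|[\xi,\eta]\|^2=0$ for $\eta,\xi\in I$. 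That is the entire argument.

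By contrast, you claim that $\theta^{0}\wedge\theta^{0^\ast}$ is the positive term and that it is the one killed by integrability; you then identify the surviving term as $-\|\theta^{-1}(\xi)(\eta)\|^2$ and equate this with $-\|\ad^\ast_\xi(\eta)\|^2$. Both steps are wrong: $\theta^{-1}(\xi)(\eta)=[\xi,\eta]\in\Gr^{-2}$, not $\ad^\ast_\xi(\eta)\in\Gr^{0}$, and there is no mechanism by which integrability kills $\theta^{0^\ast}$ acting on $\eta$---the condition $[\theta,\theta]=0$ says nothing about the metric adjoint. The ``main obstacle'' you anticipate (showing $\theta^0\wedge\theta^{0^\ast}$ annihilates pairs from $I$) is therefore not an obstacle but a false goal; once the terms are correctly matched, the vanishing of the genuinely positive term is immediate from $[\xi,\eta]=0$ and no delicate bookkeeping is needed.
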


\begin{proof}  For $\eta,\xi\in \Gr^{-1}$ the curvature formula \pref{6b10} is
\[
\Theta_{\Gr^{-1}}(\eta,\xi)=\|\ad_\xi(\eta)\|^2 -\| \ad^\ast_\xi(\eta)\|^2.\]
The result then follows from $\ad_\xi(\eta)=[\xi,\eta]=0$ for $\eta,\xi\in I$.\end{proof}

On the open set where $I^o$ is a \vb\ with metric induced from that on $\Gr^{-1}$ we have 
\[
\Theta_{I^o} (\eta,\xi)\leqq \Theta_{\Gr^{-1}}(\eta,\xi)\leqq 0.\]
The first term is the holomorphic bi-sectional curvature for the indued metric on $\Phi(B)$.

To complete the proof of Theorem \ref{6b10} we need to show the existence of $c>0$ such that for all $\xi$ of unit length
\begin{equation}
\lab{6b13}
\|\ad^\ast_\xi(\xi)\|\geqq c.\end{equation}
The linear algebra situation is this: At a point of $B$ we have
\[
V=\opplus_{p+q=n}V^{p,q}\]
and $\xi$ is given by maps
\[
A_p: V^{p,q}\to V^{p-1,q+1},\qquad \Big\lfloor\frac{n+1}{2}\Big\rfloor\leqq p\leqq n.\]

In general a linear map
\[
A:E\to F\]
between unitary vector spaces has  {\em principal values} $\la_i$ defined by 
\[
A e_i = \la_i f_i,\qquad \la_i\hbox{ real and non-zero}\]
where $e_i$ is a unitary basis for $(\ker A)^\bot$ and $f_i$ is a unitary basis for $\rim A$.
 The square norm is 
 \[
 \|A\|^2 = \Tr A^\ast A=\sum_i \la^2_i.\]
 
 We denote by $\la_{p,i}$ the principal values of $A_p$.  The $\la_{p,i}$ depend on $\xi$, and the square norm of $\xi$ as a vector in $T_p B\subset T_{\Phi(p)}(\Ga\bsl D)$ is
 \[
 \| \xi\|^2 = \sum_p \sum_i \la^2_{p,i}.\]

In the above  we now replace $V$ by $\Hom_Q(V,V)$ and use linear algebra to determine the principal values of $\ad^\ast_\xi$.  These will be quadratic in the $\la_{p,i}$'s, and then
 \[\| \ad_{\xi^\ast}(\xi)\|^2\]
 will be quartic in the $\la_{p,i}$.  A calculation   gives
 \begin{equation}\lab{6b14}
 \|\ad_{\xi^\ast}(\xi)\|^2 = \sum_p \lrp{
 \frac{\sum_i a_p \la^4_{p,i}}{\lrp{\sum_i \la^2_{p,i}}^2}}\end{equation}
 where the $a_p$ are non-negative integers that are positive if $A_p\ne 0$, and from this by an elementary algebra argument we may infer the existence of the $c>0$ in Theorem \ref{6b1}. \hfill\qed\medbreak

At this point we have proved the theorem.  The basic idea is very simple:

{\setbox0\hbox{(1)}\leftmargini=\wd0 \advance\leftmargini\labelsep
 \begin{quote}
\em For a VHS the curvature \pref{6b10} of the \hb s is a difference of non-negative terms, each of which is of norm positivity type where the ``$A$''  in Definition \ref{3a3} is a Kodaira-Spencer map or its adjoint.  For the $\Hom_Q(V,V)$ variation of \hs,   $A(\xi)(\eta)=[\xi,\eta]=0$ by integrability.  Consequently the curvature form has a sign, and a linear algebra calculation gives the strict negativity $\Theta_I(\xi,\xi)\leqq -c\|\xi\|^4$ for some $c>0$.\footnote{This first proof of the result that appeared in the literature was Lie-theoretic where the metric on $\cG$ was given by the Cartan-Killing form.  As will be illustrated below the above direct algebra argument is perhaps more amenable to the computation in examples.}\end{quote}
}

(iii) \emph{Singularities.}  The singularity issues were identified in \pref{6b5}, and  we shall state a result that addresses them.  The proof of this result follows from the results in \cite{CKS86} as extended in \cite{GGLR17}, \cite{Kol87} and the arguments in \cite{Zuo}.\footnote{These arguments have been amplified at a number of places in the literature; cf.\ \cite{VZ03} and \cite{Pau16}.}

Using the notations introduced in (ii) above, a key observation is that the differential
\[
\Phi_\ast:TB\to \Gr^{-1}\]
extends to 
\[
\Phi_\ast:T\ol B\lra{-Z}\to \Gr^{-1}_e\]
where $T\ol B\lra{-Z} = \Om^1_{\ol B}(\log Z)^\ast$ and $\Gr^{-1}_e$ is the canonical extension to $\ol B$ of $\Gr^{-1}\to B$.  This is just a reformulation of the general result (cf.\ \cite{CM-SP}) that for all $p$, $\theta^p :E^p\to E^p\otimes \Om^1_{\ol B}$ extends to
\begin{equation}\lab{6b15} \theta^p_e :E^p_e \to E^{p-1}_e\otimes \Om^1_{\ol B}(\log Z).\end{equation}
 As noted above, the image $\Phi_\ast TB\subset \Gr^{-1}$ generates a coherent subsheaf $I\subset \Gr^{-1}$ and from \pref{6b15} we may infer that $I$ extends to a coherent subsheaf $I_e\subset \Gr^{-1}_e$.  As  in \cite{Zuo} we now blow up $\ol B$ to obtain a vector sub-bundle of the pullback of $\Gr^{-1}$ and note that  $I_e \subset \Gr^{-1}_e$ \emph{will be  an integrable sub-bundle.}

The metric on $\Gr^{-1}$ induces a metric in $I$ and we use the notations
{\setbox0\hbox{()}\leftmargini=\wd0 \advance\leftmargini\labelsep
 \begin{itemize} 
\item $\vp=$ Chern form of $\det I^{o\ast}$;
\item $\om=$ Chern form of $\cO_{\P I^{o\ast}}(1)$.\end{itemize}}  \noindent 

\begin{Thm}\lab{6b16}
Both $\vp$ and $\om$ extend to closed, $(1,1)$ currents $\vp_e$ and $\om_e$ on $\ol B$ and $\P I^\ast_e$  that respectively represent $c_1(\det I^\ast_e)$ and $c_1(\cO_{\P I^\ast_e})(1)$.  They have mild singularities and satisfy
{\setbox0\hbox{()}\leftmargini=\wd0 \advance\leftmargini\labelsep
 \begin{itemize} \item $\vp_e\geqq 0$ and $\vp_e>0$ on an open set;
\item $\om_e\geqq 0$ and $\om_e>0$ on an open set.\end{itemize}}   \end{Thm}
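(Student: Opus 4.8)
The plan is to reduce Theorem \ref{6b16} to the combination of the curvature computation of part (ii) above and the singularity estimates of Theorem \ref{4b8}. First I would record what has already been established on $B$ itself: by Proposition \ref{6b12}, for sections $\eta,\xi$ of $I$ one has $\Theta_{\Gr^{-1}}(\eta,\xi)=-\|\ad^\ast_\xi(\eta)\|^2\leqq 0$, and by the curvature-decreasing property of holomorphic sub-bundles $\Theta_{I^o}(\eta,\xi)\leqq \Theta_{\Gr^{-1}}(\eta,\xi)\leqq 0$. Dualizing, the Chern form of $\det I^{o\ast}$ satisfies $\vp=-\Tr\Theta_{I^o}\geqq 0$, and the Chern form $\om$ of $\cO_{\P I^{o\ast}}(1)$ satisfies $\om\geqq 0$ by \pref{2c3} applied to $I^{o\ast}$. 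The strict positivity on an open set is exactly the content of \pref{6b13}: the inequality $\|\ad^\ast_\xi(\xi)\|^2\geqq c\|\xi\|^4$ forces $\Theta_{I^o}(\xi,\xi)\leqq -c\|\xi\|^4<0$ wherever $I^o$ has positive rank and $\Phi_\ast$ is injective, hence $\vp>0$ there, and the analogous pointwise statement on $\P I^{o\ast}$ gives $\om>0$ on the corresponding open set (using that $\Phi_{\ast,n}$, equivalently the relevant $A_p$, is generically injective, which is our standing assumption). So on the Zariski-open locus $B^o\subset B$ where $I$ is locally free and $\Phi_\ast$ injective, both $\vp$ and $\om$ are smooth, $\geqq 0$, and $>0$ on a nonempty open set.

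Next I would address the extension across the degeneracy loci described in \pref{6b5}: the locus $(a)$ inside $B$ where $\Phi_\ast$ fails to be injective, the boundary $(b)$ $Z=\ol B\bsl B$, and their combination $(c)$. The key structural input, recalled in the paragraph preceding the theorem, is that the Higgs field extends, $\theta^p_e:E^p_e\to E^{p-1}_e\otimes\Om^1_{\ol B}(\log Z)$, so that $\Phi_\ast$ extends to $\Phi_\ast:T\ol B\langle -Z\rangle\to \Gr^{-1}_e$ and its image generates a coherent subsheaf $I_e\subset \Gr^{-1}_e$; after a blow-up of $\ol B$ (which I would perform exactly as in \cite{Zuo}) we may assume $I_e$ is an honest sub-bundle, and by the integrability $[\theta_e,\theta_e]=0$ it is an integrable sub-bundle. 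Since the theorem is a statement about currents representing Chern classes, it is invariant under this blow-up and under restriction, so it suffices to prove everything for the transformed data. Now $\det I^{o\ast}$ is a sub-line-bundle of a Chern bundle built from the $\Hom_Q(V,V)$-variation, and $\cO_{\P I^{o\ast}}(1)$ lives over the corresponding projective bundle; the Hodge metric on $\Gr^{-1}$, restricted to $I$, has logarithmic singularities along $Z$ by Proposition \ref{4b5}, and by Theorem \ref{4b8} these singularities are mild. Mildness is precisely what I need: it guarantees that $\vp$ and $\om$ are closed $(1,1)$ currents with $L^1_{\rm loc}$ coefficients representing $c_1(\det I^\ast_e)$ and $c_1(\cO_{\P I^\ast_e}(1))$ respectively (part (i) of the definition of mild singularities), and that they may be restricted to the strata without ambiguity.

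Finally I would upgrade the pointwise sign statements on $B^o$ to statements about the extended currents $\vp_e,\om_e$ on all of $\ol B$ and $\P I^\ast_e$. Here the relevant slogan, emphasized repeatedly in Sections IV and V, is that the Hodge-theoretic singularities \emph{increase} positivity rather than destroy it: along $Z$ the Poincaré-type growth of the metric, as in \pref{4b1}, contributes a non-negative singular piece $(i/2)\part\ol\part\vp'\geqq 0$ to the curvature, so $\vp_e\geqq 0$ and $\om_e\geqq 0$ hold as currents on the whole space; and since $\vp>0$, $\om>0$ already on a non-empty open subset of $B^o$, the same open-set strict positivity persists for $\vp_e,\om_e$. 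The main obstacle, as always in this circle of ideas, is the mild-singularity assertion itself — i.e. checking that the naive formal computation of $\part\ol\part\log$ of the Hodge metric for $I$ agrees with the distributional one, that the resulting currents are closed and cohomologous to the Chern classes, and that no boundary contribution with the wrong sign appears. But this is exactly Theorem \ref{4b8} together with the curvature formula \pref{6b10}, which I am entitled to invoke; the novelty here is only that one applies them to the sub-bundle $I_e$ of the endomorphism-variation rather than to a Hodge bundle directly, and the integrability $\ad_\xi(\eta)=0$ for $\eta,\xi\in I$ is what makes the positive term in \pref{6b10} drop out, leaving the clean sign needed for the theorem.
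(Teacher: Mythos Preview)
Your overall plan matches the paper's: establish the sign on $B^o$ from Proposition~\ref{6b12} and \pref{6b13}, extend $I$ to $I_e\subset\Gr^{-1}_e$ via the logarithmic Higgs field, blow up as in \cite{Zuo} to make $I_e$ a sub-bundle, and then invoke the mild-singularity machinery of Section~IV to pass to currents on $\ol B$ and $\P I^\ast_e$. The positivity and open-set strict-positivity statements on $B^o$ are handled correctly.

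There is, however, one genuine gap in your last paragraph. You write that the mild-singularity assertion ``is exactly Theorem~\ref{4b8}'', but Theorem~\ref{4b8} is a statement about the \emph{Hodge bundles} $F^p_e$, and $I_e$ is not one of them: it is a holomorphic sub-bundle of $\Gr^{-1}_e$, and the curvature of $I$ differs from the restriction of $\Theta_{\Gr^{-1}}$ by the second fundamental form of $I\subset\Gr^{-1}$. Mildness for $\Gr^{-1}_e$ does not automatically propagate to an arbitrary sub-bundle; the paper's Remark immediately following the theorem says exactly this (``It is almost certainly not the case that any sub-bundle $G\subset\Gr^{-1}_e$ will have Chern forms with mild singularities''). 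What makes $I_e$ special is that it is the \emph{kernel} of the extended Higgs map $\theta^{-1}:\Gr^{-1}_e\to\Gr^{-2}_e\otimes\Om^1_{\ol B}(\log Z)$, and it is this structural fact---not Theorem~\ref{4b8} alone---that lets one control the singularities. The paper isolates this as the ``one extra step'' and points to \cite{Zuo} and (5.20) in \cite{Kol87} for it. Your final sentence conflates two distinct roles of integrability: it does make the positive term in \pref{6b10} drop out (giving the sign), but you also need it, in the form ``$I_e=\ker\theta^{-1}$'', to justify that the induced metric on $I_e$ has mild singularities. Without that second use, your appeal to Theorem~\ref{4b8} does not close the argument.
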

With one extra step this result follows from singularity considerations similar to those in Section IV above.  The extra step is that
{\setbox0\hbox{(1)}\leftmargini=\wd0 \advance\leftmargini\labelsep
 \begin{quote}
\em $I_e$ is not a \hb, but rather it is the kernel of the map $\theta^{-1}:\Gr^{-1}_e \to \Gr^{-2}_e\otimes \Om^1_e \otimes \Om^1_{\ol B}(\log Z)$.\end{quote}}  \noindent 
As was noted in \cite{Zuo}, either directly or using  (5.20) in \cite{Kol87} we may infer the stated properties of $\vp_e$ and $\om_e$.\hfill\qed

\begin{rem} It is almost certainly not the case that any sub-bundle $G\subset \Gr^{-1}_e$ will have Chern forms with mild singularities.  The bundle $I_e$ is special in that it is the kernel of the map $\Gr^{-1}_e\to \Gr^{-2}_e\otimes \Om^1_{\ol B}(\log Z)$.  Although we  have not computed the $2^{\rm nd}$ fundamental form of $I_e \subset \Gr^{-1}_e$, for reasons to be discussed below it is reasonable to expect it to also have good properties.\end{rem}

The issue of the curvature form of the induced metric on the image $M=\Phi(B)\subset \Ga\bsl D$ seems likely to be interesting.  Since the metric on the smooth points $M^o\subset M$ is the K\"ahler metric given by the Chern form of the augmented Hodge \lb, the curvature matrix of $TM^o$ is computed from a positive (1,1) form that is  itself the curvature of a singular metric.  In the 1-parameter case the dominant term in $\om$ is the Poincar\'e metric $PM=d t\otimes d\bar t / |t|^2 (-\log|t|)^2$, and the curvature of the $PM$ is a positive constant times $-PM$.  One may again suspect that the contributions of the lower order terms in $\om$ are less singular than $PM$.  This issue may well be relevant to Question \ref{1a10n}.

\medbreak (iv) \emph{Examples.}  On the smooth points  of $M^o$ of  the image of a period mapping the holomorphic bi-sectional curvature satisfies 
\begin{equation}\lab{6b17} R(\eta,\xi)\leqq 0,\end{equation}
and for $\eta,\xi$ in an open set  in $TM^o\times_{M^o} TM^o$ it is strictly negative.  This raises the interesting question of the degree of flatness of $T^\ast M^o$.  In the classical case when $D$ is a Hermitian symmetric domain and $B=\Ga\bsl D$ is compact this question has been studied by Mok  \cite{Mok87} and others.  In case $B$ is a Shimura variety the related question of the degree of flatness of the extended \hb\ $F_e$ over a toriodal compactification of $\Ga\bsl D$ is one of current interest (cf.\ \cite{Bru16b}, \cite{Bru16} and the references cited there).  This issue will be further discussed in Section VI.E.

Here we shall discuss the equation 
\[
\Theta_{I^0}(\eta,\xi) =0 \]
over the smooth locus $M^o$ of $M$. 
In view of \pref{6b10} this equation  is equivalent to
\[
\ad_{\xi^\ast}(\eta)=0,\qquad \eta\in I .\]
To compute  the dimension of the solution space  to this equation,
we use the duality
\[
\ker(\ad^\ast_\xi)=(\rim (\ad_\xi))^\bot\]
to have
\begin{equation}\lab{6b18}
\dim\ker(\ad^\ast_\xi)= \dim\lrp{\mathrm{coker} \lrp{\rim \{\ad_\xi:\Gr^0 \to \Gr^{-1}\}}}.\end{equation}

Since $I$ depends on the particular VHS, at least as a first step it is easier to study the equation
\begin{equation}\lab{6b19}
\Ad_{\xi^\ast}(\eta)= 0,\qquad \eta \in \Gr^{-1}.\end{equation}
Because the curvature form decreases on the sub-bundle $I\subset \Gr^{-1}$, over $M^o$  we have
\[
\hbox{\pref{6b18}$\implies$\pref{6b19}}\]
but   in general  not conversely.

\begin{example} \lab{exam1}
For weight $n=1$ with $h^{1,0}=g$, with a suitable choice of coordinates the tangent vector $\xi$ is given by  $g\times g$ symmetric matrix $A$, and on $\Gr^{-1}$ we have
\begin{equation}\lab{6b20}
\dim \ker(\ad^\ast_\xi) =  \bpm{g-\hbox{rank }A+1}\\{2 }\epm\end{equation}
\end{example}

\nproof At  a point   we may choose a basis for that $Q=\bspm 0& Ig\\ -Ig&0\espm $ and
\begin{align*}
F^1&\hbox{ is given by } \bpm \Om\\ I_g\epm,\quad \rim\Om>0\\
\xi \in \Gr^{-1}&\hbox{ is given by } \bpm 0&A\\ 0&0\epm,\qquad A= {}^t A\\
\eta\in\Gr^0 &\hbox{ is given by } \bpm C&0\\ 0&-{}^t C\epm.\end{align*}
Then
\[
[\xi,\eta]= \bpm 0&AC+ {}^tCA\\
0&0\epm.\]
Diagonalizing $A$ and using \pref{6b18} we obtain \pref{6b20}.

\begin{example}\lab{exam2}
For weight $n=2$, $\xi$ is given by
\[
A=h^{2,0} \times h^{1,1}\hbox{ matrix}.\]
We will show that on $\Gr^{-1}$
\begin{equation}\lab{6b21}
\dim\ker (\ad^\ast_\xi) = (h^{2,0}\hbox{-rank }A)(h^{1,1}\hbox{-rank } A).\end{equation}
\end{example}

\begin{proof}
We may choose bases so that $Q=\diag (I_{h^{2,0}},-I_{h^{1,1}}, I_{h^{2,0}})$ and
\begin{align*}
F^2&\hbox{ is given by } \bpm \Om \\ 0 \\ i\Om\epm, \quad \Om\hbox{ non-singular},
\\
\xi&\hbox{ is given by } \bpm 0&A&0\\
0&0&^t A\\
0&0&0\epm,\\
\eta&\hbox{ is given by } \bpm C&0&0\\
0&D&0\\
0&0&-{}^t C\epm.\end{align*}
Then
\[
[\xi,\eta] = \bpm 0&AC-DA&0\\
0&0&{}^t A D+ {}^t( AC)\\
0&0&0\epm.\]
Choosing bases so that $A=\bspm I&0\\ 0&0\espm$, $C=\bspm C_{11}&C_{12}\\ C_{21}&C_{22}\espm$ and $D=\bspm D_{11}&D_{12}\\ -{}^t D_{12}&D_{22}\espm$, we have
\[
AC-DA=\bpm C_{11}&-D_{11}&D_{12}\\
0&-{}^t D_{12}&0\epm.
\]
Setting $\rk(E)=\rank E$ for a matrix $E$, this gives 
\begin{align*} & \hspace*{.75in} \hbox{rk } A \quad h^{2,0}\hbox{-rk }A\\
& \begin{matrix}
\hfill \rk A\\
\hfill  h^{1,1}\hbox{-rk } A\end{matrix} \bpm 
 \enspace\ast &\hspace*{.45in}\ast\quad\;\\
 \enspace\ast &\hspace*{.45in}0\quad\;\epm   \end{align*}
where the $\ast$'s are arbitrary.\end{proof}

As in the $n=1$ case  we note that
\begin{equation}\lab{6b22}
A \hensp{of maximal rank} \iff \ker (\ad^\ast_\xi)=0 .\end{equation}

\begin{example}\lab{exam3}
Associated to a several parameter nilpotent orbit
\[
\exp \lrp{\sum_i \ell(t_i)N_i}\cdot F\]
is a nilpotent cone $\sig=\{N_\la=\sum \la_i N_i, \la_i>0\}$ and the weight filtration $W(N)$ is independent of $N\in \sig$.  As discussed in Section 2 of \cite{GGLR17}, without loss of generality in what follows here  we may assume that the LMHS associate to $N\in \sig$ is $\R$-split. Thus there is a single $Y\in \Gr^0 \Hom_Q(V,V)$ such that for any $N\in \sig$ 
\[
[Y,N] = -2N,\]
and using the Hard Leftschetz Property $N^k:\Gr^{W(N)}_{n+k}(V)\simto \Gr^{W(N)}_{n-k}(V)$ we may uniquely complete $Y,N$ to an $\rsl_2\, \{N,Y,N^+\}$.  Let $\cG_\sig \subset \End(\Gr^{W(N)}_\bullet V)$ be the Lie algebra generated by the $N_i$ and $Y$.  The properties of this important Lie algebra will be discussed elsewhere; here we only note that $\cG_\sig$ {is semi-simple} and that the nilpotent orbit gives a period mapping
\[
\Delta^{\ast k}\xri{\Phi_\sig} \Ga_{\loc}\bsl D_\sig\]
where $D_\sig = G_{\sig,\R}/H_\sig$ is a Mumford-Tate sub-domain of $D$.  Of interest are the holomorphic bi-sectional  curvatures of $\Phi_\sig(\Delta^{\ast k})$.  We shall not completely answer this, but shall give a proof of the
\begin{Prop}\lab{6b23} $\Theta_I(\eta,N)=0$ for all $N\in \sig$, if and only if, $\eta\in \cZ(\cG_\sig)$.\end{Prop}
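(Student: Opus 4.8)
\textbf{Proof proposal for Proposition \ref{6b23}.}

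The plan is to reduce the statement to the curvature formula of Proposition \ref{6b12}, which in the present setting (where we work inside the semisimple Lie algebra $\cG_\sig$ attached to the nilpotent cone $\sig$) reads $\Theta_I(\eta,N) = -\|\ad^\ast_N(\eta)\|^2$ for $\eta \in I$ and $N \in \sig$. Thus $\Theta_I(\eta,N) = 0$ for all $N \in \sig$ is equivalent to $\ad^\ast_N(\eta) = 0$ for all $N \in \sig$. Since $\ad_N(\eta) = [N,\eta] = 0$ whenever $\eta$ lies in the abelian subalgebra $I$ (this was the content of \ref{6b12}), both conditions are ``half'' of saying $\eta \in \cZ(\cG_\sig)$, and the task is to upgrade ``$[N,\eta]=0$ and $[N,\eta]^\ast$-part vanishes for every $N \in \sig$'' to ``$\eta$ commutes with the whole Lie algebra generated by the $N_i$ and $Y$.''

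First I would spell out the ingredients. Because the LMHS attached to each $N \in \sig$ is $\R$-split with the single grading element $Y$, the triple $\{N,Y,N^+\}$ is an $\rsl_2$-triple for every $N \in \sig$ (using Hard Lefschetz, as recalled just before the statement), and $\cG_\sig$ is generated by the $N_i$ together with $Y$; moreover $\cG_\sig$ is semisimple and carries the induced weight-zero polarized Hodge structure, so it has an admissible (Hodge) metric for which $\ad_X^\ast = \ad_{\bar X}$ up to the Weil operator — concretely, the adjoint of $\ad$ by a vector in $\Gr^{-1}$ is the bracket by the ``conjugate'' vector sitting in $\Gr^{1}$. The key algebraic input is: if $\eta \in \Gr^{-1}$ satisfies $[N,\eta] = 0$ and $\ad_N^\ast(\eta)=0$ for all $N \in \sig$, then, writing out $\ad_N^\ast(\eta)$ via the Hodge metric as a bracket $[\,\overline{N},\eta\,]$ with $\overline N \in \Gr^{1}\cG_\sig$, we get that $\eta$ commutes with $N$ and with its Hodge-conjugate for every $N$ in the (open) cone. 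Since the real span of $\{N : N \in \sig\}$ together with their conjugates generates $\cG_\sig$ as a Lie algebra — this is exactly why one passes to $\cG_\sig$ rather than working in the full $\End_Q(V)$ — it follows that $\ad_\eta$ kills a generating set, hence $\eta \in \cZ(\cG_\sig)$. The converse is immediate: $\eta \in \cZ(\cG_\sig)$ gives $[N,\eta]=0$ and $[\overline N,\eta]=0$, so both terms in the curvature formula \pref{6b10} vanish and $\Theta_I(\eta,N)=0$.

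The steps, in order: (1) invoke the $\R$-split reduction and record that $\{N,Y,N^+\}$ is an $\rsl_2$-triple for each $N\in\sig$, with $\cG_\sig$ semisimple and generated by $\{N_i\}\cup\{Y\}$; (2) identify, via the Hodge metric on $\cG_\sig$, the operator $\ad_N^\ast$ acting on $\Gr^{-1}$ with $\ad$ by the conjugate element in $\Gr^{1}$; (3) combine with Proposition \ref{6b12} to translate $\Theta_I(\eta,N)=0\ \forall N\in\sig$ into: $\eta$ commutes with every $N\in\sig$ and with every $\overline N$; (4) show the Lie algebra generated by $\sig$ and the conjugates $\overline{\sig}$ is all of $\cG_\sig$ (this is where one uses that $Y = \tfrac12[N^+,N]$ with $N^+$ Hodge-conjugate to $N$, so $Y$ already lies in that generated subalgebra, hence so do all the brackets producing $\cG_\sig$); (5) conclude $\eta\in\cZ(\cG_\sig)$, and note the converse is trivial. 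The main obstacle I anticipate is step (4), i.e.\ making precise that $\cG_\sig$ — as opposed to some larger or smaller algebra — is exactly the Lie algebra generated by $\sig$ and $\overline{\sig}$; this rests on the structure theory of $\cG_\sig$ (its semisimplicity and the fact that $N^+\in\cG_\sig$ for $N\in\sig$, so that the $\rsl_2$'s built from $N$ live inside $\cG_\sig$), which the authors defer to a later discussion, so one has to be careful to use only what is already available. A secondary, more routine point is making the adjoint computation in step (2) precise with the correct Weil-operator signs, but this is the standard Hodge-metric bookkeeping already implicit in \pref{6b10} and \pref{6b14}.
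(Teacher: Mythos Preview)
Your outline is sound and would lead to a correct proof, but it packages the key step differently from the paper. The paper argues directly via the decomposition of $\cG_\C$ into $N$-strings for the $\rsl_2$-triple $\{N^+,Y,N\}$: that decomposition is orthogonal for the Hodge metric, so the adjoint $\ad_{N^\ast}$ preserves each string, and since $N$ is an isomorphism between consecutive weight levels so is $N^\ast$. Hence $\Theta_I(\eta,N)=0$ forces $\eta$ to lie in an $N$-string of length one, which immediately gives $[\eta,Y]=[\eta,N^+]=0$; varying $N$ over $\sig$ then yields $\eta\in\cZ(\cG_\sig)$.

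Your route instead identifies $\ad_N^\ast$ with bracket by the Hodge-conjugate $\overline N\in\Gr^{1}$ and then argues that $\sig\cup\overline\sig$ generates $\cG_\sig$, invoking ``$N^+$ is the Hodge conjugate of $N$'' together with $Y=[N^+,N]$. That identification is not literally true---the adjoint $N^\ast$ agrees with $N^+$ only up to positive scalars that vary from string to string---and making it precise is exactly the content of the paper's orthogonality-of-$N$-strings observation. So your step (4), which you rightly flagged as the crux, is most cleanly discharged by the paper's representation-theoretic argument rather than by asserting $\overline N\propto N^+$ globally. Once that is supplied the two proofs coincide; the paper's phrasing has the modest advantage of being self-contained, since ``length-one $N$-string'' directly encodes $[\eta,Y]=[\eta,N^+]=0$ without needing a separate verification that the $N^+$ for varying $N\in\sig$ all lie in $\cG_\sig$.
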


\begin{proof}
We denote by $\cG_\C=\opplus_p \cG^{p,-p}$ the Hodge decomposition on the associated graded to the \lmhs\ defined by $\sig$.  The Hodge metric is given on $\cG_\C$ by the Cartan-Killing form, and its restriction to $\cG^{-1,-1}$ is non-degenerate.\footnote{The decomposition of $\cG_\C$ into the primitive sub-spaces and their images under powers of $N$ depends  on the particular $N$.  The Hodge metric on $\cG^{-1,-1}$ is only definite on the subspaces arising from the primitive decomposition for such an  $N$.} 
The decomposition of $\cG_\C$ into $N$-strings for the $\rsl_2$ given by $\{N,Y,N^+\}$ is orthogonal with respect to the Hodge metric, from which we may infer that the adjoint $\ad_{N^\ast}$ acts separately on each $N$-string.  The picture is something like 
\[
\xymatrix{ \eta \circ \ar[r]^\eta & \circ \ar @/^/[l]^{N^\ast} \ar[r]^\eta&\circ\ar[r]^\eta \ar @/^/[l]^{N^\ast}& 
\circ\ar @/^/[l]^{N^\ast}.}\]
Because $N$ is an isomorphism the same is true of $N^\ast$; consequently
\[
\Theta_I (\eta,N)=0\iff \eta\hensp{belongs to an $N$-string of length 1,}\]
and this implies that $[\eta,Y] = [\eta,N^+]=0$.  By varying $N$ over $\sig$ we may conclude the proposition.
\end{proof}

\end{example}

\begin{example}\lab{exam4}
One of the earliest examples of the positivity of the \hlb\  arose in the work of Arakelev (\cite{Ara}).  For 1-parameter families it gives an \emph{upper} bound on the degree of the \hlb\ in terms of the degree of the logarithmic canonical bundle of the parameter spaces.\footnote{Using the above notations, the logarithmic canonical bundle of the parameter space is $K_{\ol B}(Z)$.}  This result has been extended in a number of directions; we refer to \cite{CM-SP}, Section 13.4 for further general discussion and references to the literature.

One such extension is due to \cite{Zuo}, \cite{VZ03} and \cite{VZ}.  This proof of that result   centers around the above  observation  that the curvature of \hb s has a sign on the kernels of Kodaira-Spencer mappings.  There is a new ingredient in the argument that will be useful in other contexts and we shall now explain this.  As above there are singularity issues that arise where the differential of $\Phi$ fails to be injective.  These may be treated in a similar manner to what was done above, and for simplicity of exposition and to get at the essential  new point we shall assume that $\Phi_\ast$ is everywhere injective and that the relevant Kodaira-Spencer mappings have constant rank.

The basic Arakelev-type inequality then exists at the curvature level.  For a variation of Hodge structure $(V,Q,\nabla,F)$ over $B$ with a completion to $\ol B$ with $Z= \ol B\bsl B$ a reduced normal crossing divisor, the inequality  is
\begin{equation}\lab{6b24}
\lrp{\begin{matrix} \hbox{curvature of}\\
\det \Gr^p V\end{matrix}} \leqq C_p\lrp{\begin{matrix} \hbox{curvature of}\\
\det \Om^1_{\ol B}(\log Z)\end{matrix}} \end{equation}
where $C_p$ is a positive constant that depends on the ranks of the Kodaira-Spencer mappings.  Here we will continue using the notations
\begin{equation}\lab{6b25}
\begin{cases}
\Gr^pV=F^pV/F^{p+1} V,\\
 \Gr^p V\xri{\theta}\Gr^{p-1} V\otimes \Om^1_{\ol B} (\log Z).\end{cases}\end{equation}
The second of these was denoted by $\theta^p$ above; we shall drop the ``$p$" here but note that below we shall use $\theta^\ell$ to denote the $\ell^{\rm th}$ iterate of $\theta$.
\end{example}

\begin{proof}[Proof of \pref{6b24}]
Using the integrability condition \pref{6b9} the iterates of \pref{6b25} give
\[
\Gr^p V\xri{\theta^\ell} \Gr^{p-\ell} \otimes \Sym^\ell \Om^1_{\ol B}(\log Z)\]
We use the natural inclusion $\Sym^\ell \Om^1_{\ol B}(\log Z)\subset \ottimes^\ell \Om^1_{\ol B}(\log Z)$ and consider this map as giving
\begin{equation}\lab{6b26}
\Gr^p V\xri{\theta^\ell} \Gr^{p-\ell} V\otimes \lrp{\ottimes^\ell \Om^1_{\ol B}(\log Z)}.\end{equation}
There is a filtration
\[
\ker\theta\subset\ker(\theta^2) \subseteq \dots\subseteq \ker \theta^{p+1} = \Gr^p V\]
and   $\Gr^p V$ has graded quotients
\[
\ker\theta, \frac{\ker\theta^2}{\ker\theta},\dots, \frac{\Gr^p V}{\ker\theta^p}.\]
The crucial observation (and what motivates the above use of $\ottimes^\ell$ rather than $\Sym^\ell$) is
\begin{equation}
\lab{6b27}\begin{split}
&\frac{\ker\theta^\ell}{\ker\theta^{\ell+1}}\hookrightarrow \Gr^{p-\ell+1}V\otimes \lrp{\ottimes^\ell \Om^1_{\ol B}(\log Z)} \\
&\hbox{\em lies in }K^{p-\ell+1}\otimes \lrp{\ottimes^{\ell-1} \Om^1_{\ol B}(\log Z)} \hensp{\em where}\\
&K^{p-\ell+1}=\ker \lrc{ \Gr^{p-\ell+1} V\xri{\theta}\Gr^{p-\ell}\otimes \Om^1_{\ol B}(\log Z)}.\end{split}
\end{equation}
From this we infer that 
\begin{enumerate}[(i)]
\item $K^p,K^{p-1},\dots,K^o$ all have negative semi-definite curvature forms;
\item $\frac{\ker \theta^\ell}{\ker\theta^{\ell-1}}\hookrightarrow K^{p-\ell+1} \otimes \lrp{\ottimes^{\ell-1} \Om^1_{\ol B}(\log Z)}$\end{enumerate}
which gives
\begin{enumerate}[(i)]
\item[(iii)] $\det\lrp{\frac{\ker\theta^\ell}{\ker\theta^{\ell-1}}} \hookrightarrow \wedge^{d_{p,\ell}} \lrp{K^{p-\ell+1}\otimes \lrp{\ottimes^{\ell-1}\Om^1_{\ol B}(\log Z)}}$. \end{enumerate}
Using
\begin{enumerate}[(i)] \item[(iv)] $\det \Gr^p V\cong \ottimes^{p+1}_{\ell=1} \det\lrp{\frac{ \ker \theta^\ell}{\ker\theta^{\ell-1}}}$ 
\end{enumerate}
and combining (iv), (iii) and (ii) at the level of curvatures gives \pref{6b24}.
\end{proof}

\ssni{Note} In \cite{GGK08} there are results that in the 1-parameter case express the ``error term" in the Arakelov inequality by quantities involving the ranks of the Kodaira-Spencer maps and structure of the monodromy at the singular points.

\subsection{The Iitaka conjecture } \hfill

 One of the main steps in the general classification theory of algebraic varieties was provided by a proof of the Iitaka conjecture.  An important special case of this conjecture is the
 \begin{Thm}\lab{6.1}
 Let $f:X\to Y$ be a morphism between smooth projective varieties and assume that
 \begin{enumerate}[{\rm (i)}]
 \item $\Var f=\dim Y$ (i.e., the Kodaira-Spencer maps are generically  1-1);
 \item the general fibre $X_y = f^{-1}(y)$ is of general type.\end{enumerate}
 Then the Kodaira-Iitaka dimensions satisfy
 \begin{equation}\lab{6.2}
 \kappa(X)\geqq \kappa(X_y)+\kappa(Y).\end{equation}\end{Thm}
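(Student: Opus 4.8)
The plan is to reduce the statement to the positivity of the Hodge vector bundle via the cyclic-covering construction discussed in the introduction, and then to apply Theorem \ref{3b7} (equivalently Theorem \ref{2h1}) to produce enough pluricanonical sections. First I would replace $f:X\to Y$ by a birationally equivalent model so that the locus $Y_0\subset Y$ over which $X_y$ is singular is a reduced normal crossing divisor, and so that $f$ admits an Abramovich--Karu semistable model $\ol{\cX}\to\ol B$ in the sense of Section I.C; this is harmless for \pref{6.2} since all the quantities are birational invariants. Since $X_y$ is of general type, $\kappa(X_y)=\dim X_y$, and the essential analytic input is that for $m\gg 0$ the sheaf $f_*\om^m_{X/Y}$ is big on $Y$; granting this, standard arguments (Viehweg's covering trick together with the easy addition $\kappa(X)\ge \kappa(X_y)+\kappa(f_*\om^m_{X/Y}\otimes(\text{ample}))$ type inequality) yield \pref{1a15}.

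The key steps, in order: (1) Pass to the cyclic covering family. For a typical fibre $W=X_y$ with $L=K_W$ ample, for $m\gg 0$ and $s\in H^0(K_W^m)$ form the cyclic cover $\wt W_s\to W$; globalizing over $\P=\P H^0(K_{X/Y}^m)$ and desingularizing gives the commutative diagram with $\wtcx\to\P\to Y$ as in the introduction, and by \pref{1a19}--\pref{1a25} one has that $f_*\om^m_{X/Y}$ is a direct factor of $\tilde f_*\om_{\wtcx/\P}\otimes p_*\cO_\P(1)$. (2) Apply Hodge theory to $\tilde f_*\om_{\wtcx/\P}$: this is (a direct factor of) a canonically extended Hodge vector bundle $F_e$, whose curvature has the norm positivity property \pref{4b11}, and by \pref{1a17} local Torelli holds for the end piece $\Phi_{*,n}$ when $W$ varies, i.e.\ condition \pref{3b2} is satisfied on a dense open set. (3) Invoke Theorem \ref{3b7} (via Theorem \ref{4b10}): $S^{h^{n,0}}F_e$ is big, hence by Corollary \ref{3b8} the symmetric powers $S^k(S^r F_e)$ are generically globally generated for $k\gg 0$, producing many sections. (4) Use the maps $\Sym^k\tilde f_*\om^m_{\wtcx/\P}\to \tilde f_*\om^{km}_{\wtcx/\P}$ to convert this positivity, twisted by $\cO_\P(1)$ along the fibres of $\P\to Y$ but needing to beat the negativity of $\cO_\P(1)$ normal to $\P_y$, into bigness of $f_*\om^m_{X/Y}$ on $Y$ for suitable $m$. (5) Deduce \pref{6.2} by the standard Kodaira-dimension addition argument from bigness of $f_*\om^m_{X/Y}$ and $\kappa(X_y)=\dim X_y$.

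The main obstacle is step (4): as emphasized in the introductory discussion around \pref{1a24}--\pref{1a25}, one does \emph{not} simply have $f_*\om^m_{X/Y}$ as a direct factor of $\tilde f_*\om_{\wtcx/Y}$; the twisting that occurs as $\lambda$ varies in the scaling identification $\wt W_s\cong \wt W_{\lambda s}$ forces the appearance of $p_*\cO_\P(1)$, which is positive only along the fibres $\P_y$ and tends to be negative transverse to them. Reconciling the fibrewise positivity coming from Theorem \ref{3b7} with this transverse negativity is exactly where the maps $\Sym^k\tilde f_*\om^m_{\wtcx/\P}\to\tilde f_*\om^{km}_{\wtcx/\P}$ must be brought in, and controlling their cokernels (or using weak-positivity-type arguments in the sense of the remark following Theorem \ref{3b7}) is the delicate point. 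The singularities of the Hodge metric along the boundary are handled by Theorem \ref{4b8}, so that the Chern forms may be manipulated as if smooth; this removes what would otherwise be a second serious technical difficulty but is not itself the crux.
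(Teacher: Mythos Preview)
Your proposal is essentially the same approach as the paper's: cyclic covering construction to relate $f_*\om^m_{X/Y}$ to a Hodge bundle, norm positivity plus local Torelli to apply Theorem \ref{3b7}, and then the passage through symmetric powers and multiplication maps. You have correctly isolated the crux at your step (4), the transverse negativity of $\cO_\P(1)$.

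The one place where you are vaguer than the paper is precisely the resolution of that crux. The paper does not merely invoke the maps $\Sym^k\tilde f_*\om^m_{\wtcx/\P}\to\tilde f_*\om^{km}_{\wtcx/\P}$ in the abstract; it uses the specific observation (due to Viehweg) that for $a>0$ the sheaf $\om^{ma+a+1}_{\cX_1/\P}\otimes h^\ast\cO_\P(a)$ is a direct summand of $G_\ast\om^{a+1}_{\wtcx/\P}$, and the extra exponent $a+1$ on $\om_{\cX_1/\P}$ is exactly what offsets the negativity of $\cO_\P(a)$ normal to the fibres of $\P\to Y$. One then needs $S^kG_\ast\om^{a+1}_{\wtcx/\P}$ big, which follows from the generic surjectivity of $S^{a+1}G_\ast\om_{\wtcx/\P}\to G_\ast\om^{a+1}_{\wtcx/\P}$ for $m\gg 0$ combined with the bigness already established for $S^kG_\ast\om_{\wtcx/\P}$. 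Your parenthetical ``or using weak-positivity-type arguments'' is the thing the paper is explicitly avoiding; once you have the direct-summand observation above, no weak positivity is needed.
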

 
 As noted in the introduction, this result was proved with one assumption (later seen to not be necessary) by 
Viehweg (\cite{Vie83a}, \cite{Vie83b}), 
    and in general by Koll\'ar \cite{Kol87}.  The role of positivity of the \hvb\ had earlier been identified  in \cite{Fuj78},  \cite{Kaw81}, \cite{Kaw83}, \cite{Kaw85} and Ueno \cite{Uen75}, \cite{Uen77}.  Over the years there has been a number  of interesting results concerning the positivity of the \hvb\ and, in the geometric case, the positivity of the direct images of the higher pluricanonical series; cf.\ \cite{PauTak14} and also  \cite{Pau16} and \cite{Sch15} for  recent  results and a survey of some of what is known together with  further references.
    
    To establish \pref{6.2} one needs to find global  sections of $\om^{m}_X$.  From
    \[
    h^0(\om^{m}_X)=h^0(f_\ast \om^{m}_X)\]
    and
    \[
    f_\ast \om^{m}_X=f_\ast \om^{m}_{X/Y}\otimes \om^m_Y,\]
    the issue is to find sections of $f_\ast \om^{m}_{X/Y}$.  If for example $f_\ast \om^{m}_{X/Y}$ is generically globally generated, then we have at least approximately $h^0(\om^{m}_{X_y})\cdot h^0(\om^{m}_Y)$ sections of $f_\ast \om^{m}_X$ which leads to the result. 
    
    To find sections of $f_\ast \om^{m}_{X/Y}$, if  for example  $\Sym^m f_\ast \om_{X/Y}$ has  sections, then since the multiplication mapping
    \begin{equation}\lab{6.3}
    \Sym^m f_\ast \om_{X/Y}\to f_\ast \om^{m}_{X/Y}\end{equation}
    is injective on decomposable tensors, we get  sections of the image.
    In general the issue is to find sections of $\Sym^{m''} f_\ast \om^{m'}_{X/Y}$ and use an analogue of \pref{6.3}.
    
     The arguments in \cite{Vie83b}, \cite{Kol87} have two main aspects:
    \begin{enumerate}
    \item the use of Hodge theory;
    \item algebro-geometric arguments using Viehweg's notion of weak positivity.
    \end{enumerate}
    In the works cited above, Hodge theory is used to show that under the assumptions in \pref{6.1} for $m\gg 0$
  we have
    \[ 
    \kappa(\det f_\ast \om^{m}_{X/Y}) = \dim Y.\]
    From this one wants to infer that $\Sym^{m''} f_\ast \om^{m'}_{X/Y}$ has  sections for $m',m'' \gg 0$.  This is where (ii) comes in.
    
    The objectives of this section  are twofold.  One is to show that in case local Torelli holds for $f:X\to Y$ step (ii) may be directly circumvented by using the special form
  $
    \Theta_F = -{}^t \ol A\wedge A$
    of the curvature in the \hvb\ where $A$ has algebro-geometric meaning that leads to  positivity properties.\footnote{As noted above, $A$ is the end piece of the differential of the period mapping.}  The other   is to discuss Viehweg's branched covering construction, which provides a mechanism to apply  the positivity properties of the \hvb\  to the pluri-canonical series.   
     
   One issue has been that the assumptions (i), (ii) plus Viehweg's branched covering method give
   \begin{equation}\lab{2}
   \kappa\lrp{\det\lrp{f_\ast \om^{m}_{X/Y}}}= \dim Y.\end{equation}
   From this one wants to show that
   \begin{equation}\lab{3}
   \lrc{\begin{matrix} \lrp{\det f_\ast \om^{m}_{X/Y}}^{m'} \hbox{ has} \\ \hbox{lots of sections}\end{matrix}} \implies
   \lrc{\begin{matrix} \Sym^{m''} \lrp{f_\ast \om^{m}_{X/Y}} \hbox{ has}\\
   \hbox{lots of sections}\end{matrix}}.\end{equation}
   The Viehweg-Koll\'ar method proves \pref{2} using the positivity properties of the Hodge \emph{line} bundle, and from this goes on to infer \pref{3} by an algebro-geometric argument involving Vieweg's  concept of weak positivity for a coherent sheaf.\footnote{The definition for a vector bundle was recalled above.}  The positivity of the Hodge \emph{vector} bundle does not enter directly.

  We will    give a four-step sketch of the proof of  Theorem \ref{6.1}, one that avoids the use of weak positivity.   The first three steps   follow   from the discussions above.  The fourth step  uses a variant of Viehweg's argument to derive Hodge theoretic information from the 
   pluri-canonical series together with \pref{3b7} and \pref{3b8}.
      
   \ssni{Step one {\rm (already noted above)}} Suppose that the fibres of $X \xri{f} Y$ are smooth and that $\Phi_{\ast,n}$ is injective at a general point.  Then by Theorem \ref{3b1}   \[ 
   \kappa\lrp{\Sym^{h^{n,0}}(f_\ast \om_{X/Y})} = \dim \P_{f_\ast \om_{X/Y}}(1)> \dim Y.\]    
   
   \ssni{Step two} This is the same as step one but where we allow singular fibres.  What is needed to handle these  follows from the discussion in Section \ref{4b}.

   \ssni{Reformulation of step two} We set $S^k = \Sym^k$ and
   \begin{itemize}
   \item $F_e = f_\ast \om_{X/Y}$;
   \item $\om _k= $ curvature form of $\cO_{\P S^k F_e}(1)$ 
   \end{itemize}
   where $\om_k$ is the  (1,1) form computed using the induced Hodge metric in $S^k F$.  Then $\om_k>0$ in the vertical tangent space to a  general fibre of $\P S^k F_e\to Y$.  Using the injectivity of the Kodaira-Spencer map
   \[
 T_y Y \to \Hom(F^n_y, F^{n-1}_y/F^n_y)\]
   at a general point $y\in Y$, from Theorem \ref{3b1} it follows that for  all $k\geqq h^{n,0}$
   \begin{equation}\lab{6}
   \bmp{2.5}{\em $\om_k>0$ in the horizontal space at a general point in $(\P S^kF_e)_y$.}\end{equation}
   This gives
   \[
   \om > 0 \hensp{in an open set in} \P S^k F_e,\]
 which implies that $\cO_{\P S^k F_e}(1)$ is big. This in turn implies the same for $S^k F_e$.
   
   \ssni{Step three}   Since $S^k f_\ast \om_{X/Y}$ is big, $S^\ell (S^k f_\ast \om_{X/Y})$ is generically globally generated for $\ell\gg 0$.  It follows that the direct summand $S^{k \ell}f_\ast \om_{X/Y}$ is generically globally generated for $\ell \gg 0$.  Since
   \[
   S^{k\ell}f_\ast \om_{X/Y}\to f_\ast \om^{k \ell}_{X/Y}\]
   is injective on decomposable tensors, we obtain at least approximately
   \[
   h^0 \lrp{\om^{ k\ell}_{X_y}}   h^0 \lrp{\om^{ k\ell}_Y}\]
   sections of $\om^{k\ell}_X$.
   
   \begin{Rem}\lab{r6b7}  In the geometric case an alternative geometric argument that $S^k f_\ast \om_{X/Y}$ is  big may be given as follows:
   
   First, for a family $W\xri{g} Y$ with smooth general fibre $W_y = g^{-1}(y)$, the condition \pref{3b3} for bigness of $g_\ast \om_{W/Y}$ is:
 {\setbox0\hbox{(ii)}\leftmargini=\wd0 \advance\leftmargini\labelsep
   \begin{enumerate}
   \item for general $y\in Y$  the Kodaira-Spencer map $\rho_y :T_y Y\to H^1(T {W_y})$ should be injective;
   \item for general $\psi\in (g_\ast \om_{W/Y})_y = H^0 (\Om^n_{W_y})$ the map
   \[
   H^1(T {W_y}) \xri{\psi} H^1(\Om^{n-1}_{W_y})\]
   should be injective on the image $\rho_y (T_yY) \subset H^1(TW_{y})$.
   \end{enumerate}}  
   
  Next, for  a famliy $X\to Y$ with generically injective Kodaira-Spencer mappings, we set
 \[
 W= X\overbrace{ \times_Y \times\cdots \times_Y}^k X.\]
 Then $H^0(\Om^n_W)$ contains $\ottimes^k H^0(\Om^n_X)$ as a direct summand and the same argument as in the proof of Theorem \pref{3b7} gives that $\ottimes^k f_\ast \om_{X/Y}$ is big.
 
 Finally, we may apply a similar argument to a desingularization of the quotient of $W\to Y$ by the action of the symmetric group.  Then the general fibre is
 \[
 X^{(k)}_y  = \hbox{ desingularization of }\Sym^k X_y\]
 with
 \[
 S^k H^0 (\Om^n_{X_y}) = \hbox{ direct summand of } H^0(\Om^n_{X_y^{(k)}})\]
 and again the argument in the proof of Theorem \ref{3b7} will apply.\end{Rem}
   
   \ssni{Step four} The idea is to apply the reformulation of step two with $f_\ast \om^{m}_{X/Y}$ replacing $f_\ast \om_{X/Y}$.  This will be discussed  below in which  the pluricanonical series of a smooth variety will be seen to have Hodge theoretic interpretations.\footnote{An alternative approach to the Iitaka conjecture which replaces the use of Hodge theory by vanishing theorems and also uses the cyclic covering trick has been used by Koll\'ar (\emph{Ann. of Math.} {\bf 123} (1986), 11--42).  The relation between vanishing theorems and Hodge theory is classical dating to Kodaira-Spencer and Akizuki-Nakano in the 1950's.  As noted by a number of people, Hodge theory in some form and the curvature properties of the Hodge bundles seem   to generally be lurking behind the positivity of direct images of pluricanonical sheaves.}  
   
   An alternate more direct  approach would be to have a metric 
 in
   \[
   \cO_{\P S^k f_\ast \om^{m}_{X/Y}}(1)\]
   whose Chern form $\om$ is positive in the   space at a general point of $\P S^k f_\ast \om^{m}_{X/Y}$.  For this    approach  to work one would  need  to have a metric in
   \[
   f_\ast \om^{m}_{X/Y}\]
   that has   a property analogous to that   obtained using the Kodaira-Spencer map in the case $m=1$ considered above.   Here one possibility might be to use the  relative Bergman kernel metrics that have appeared from the recent work of a number of people; cf.\ \cite{PauTak14} and the references cited therein.  This possibility will be discussed further at the end of Section VI.C.

 Before giving the detailed discussion we will give some general comments.
 
 (a) A guiding heuristic principle is
 \begin{equation}\lab{6c1}
 \bmp{5}{\em For families $f:X\to Y$ of varieties of general type the 
 \[
 f_\ast \om^k_{X/Y}\]
 become more positive as $k$ increases.}\end{equation}
 The reasons are
 \begin{itemize}
 \item $f_\ast \om_{X/Y}\geqq 0$ by Hodge theory, and $S^k f_\ast \om_{X/Y}$ becomes more positive with $k$ by the argument in the proof of Theorem \ref{3b7};
 \item the map
 \[
 S^\ell f_\ast \om^k_{X/Y}\to f_\ast \om^{k\cdot\ell}_{X/Y}\]
 is non-trivial since it is injective on decomposable tensors (similar to the   proof of Clifford's theorem);
 \item passing to the quotient increases positivity (curvatures increase on quotient bundles).\end{itemize}
 
 (b) In what follows we will without comment make simplifying modifications by replacing $f:X\to Y$ by $f':X'\to Y'$ where all the varieties are smooth and where
 \begin{itemize}
 \item $Y'\to Y$ is an isomorphism outside a codimension 2 subvariety of $Y$;
 \item $X'$ is a desingularization of $X\times_{Y'}Y$ and $f'$ is flat;
 \item the fibres $X'_{y'}=f^{'-1}(y')$ are smooth outside a normal crossing divisor in $Y'$ around which the local monodromies are unipotent (\cite{Vie83b}, page 577).
 \end{itemize}
 
 (c) We now recall the  two constructions from the introduction that associate to $f_\ast \om^m_{X/Y}$ a variation of Hodge structure; these constructions will be done first for a fixed smooth $W$ and then for a family $f:X\to Y$ where $W$ is a typical general fibre $X_y$.  The objectives are to illustrate how pluricanonical series can give rise to \hs s.
 
 \subsection*{(i) The case of fixed $W$, \hs s associated to $H^0(K^m_W)$}\hfill
 
 We summarize and establish notation for the standard construction of a cyclic covering
 \[
 \wt W_\psi \to W\]
 associated to $\psi\in H^0(K^m_W)$ whose divisor $(\psi)\in |mK_X|$ is smooth.  The construction is
 \begin{equation}
 \lab{6c2}
 \xymatrix@C=.5pt@R=1.5pc{
 \wt W_\psi\ar[d]_\pi&\subset &\ar[d]K_W&=\hbox{ total space of the line bundle } \\
 W&=&W}\end{equation}
 where
 \[
 \wt W_\psi= \{(w,\eta):\eta\in K_{W,w},\eta^m = \psi(w)\}.\]
 Then the direct image 
 \begin{equation}\lab{6c3}
 \psi_\ast K_{\wt W_\psi}\cong \opplus^{m-1}_{i=0} K^{m-i}_W \end{equation}
 which gives
 \begin{equation}\lab{6c4}
 H^0(K_{\wt W_\psi}) \cong \opplus^{m-1}_{i=0} H^0 (K^{m-i}_W).\end{equation}
 In this way pluri-differentials on $W$ become ordinary differentials on $\wt W_\psi$; this is the initial step in the relation between the pluricanonical series and Hodge theory.
 
 We observe that  the cyclic group $\G_m$ acts on $\wt W_\psi\to W$ with the  action by $\zeta =e^{2\pi i/m}$ on $H^0(K_{\wt W_\psi})$ and by $\zeta^{i+1}$ on $H^0(K^{m-i}_X)$ in \pref{6c4}.
 
 We also observe the diagram
 \begin{equation} \lab{6c5}
  \begin{split}
 \xymatrix{\wt W_\psi\ar[d]\ar[r]^\sim&\wt W_{\la\psi}\ar[d]\\
 W\ar@{=}[r]&W} \end{split}
 \end{equation}
 induced by scaling the action of $\la\in \C^\ast$ on $K_W\to W$ in \pref{6c2}.  \emph{The isomorphism $\simto$ in \pref{6c5} depends on the choice of $\la^{1/m}$.}
 
 We denote by $H^0(K^m_W)^0\subset H^0(K^m_W)$ the $\psi$'s with smooth divisor $(\psi)$, and we set
 \[
 \P^0 = \P H^0(K^m_W)^0 \subset \P = H^0 (K^m_W).\]
 Then $H^0(\cO_\P(1))\cong H^0(K^m_W)^\ast$, and the identity  gives a canonical section
 \begin{equation}
 \lab{6c6}
 \Psi\in H^0(W\times \P,K^m_W\boxtimes \cO_\P(1)).\end{equation}
 \begin{defin}
 $(\Psi)$ is the \emph{universal divisor of the $(\psi)$'s for $\psi\in H^0(K^m_W)$}.\end{defin}
 
 To construct a universal family of cyclic coverings $\wt W_\psi\to W$ it is necessary to choose an auxiliary  cyclic covering 
 \[
 \wt \P\xri{q}\P,\]
 with $q^\ast \cO_\P(1)=L^m$ for an ample \lb\ $L\to \wt \P$.  From 
 \[
 W\times \wt \P\xri{\id \times g} W\times \P \]
we obtain
 \[
 H^0(W\times \wt \P,K^m_W\boxtimes L^m) \cong H^0(W\times\wt\P,(K_W\boxtimes L)^m)\]
 and using the pullback to $W\times \wt \P$ of $\Psi$ in \pref{6c6}  there is  a cyclic covering
 \[
 \wt W_\Psi:= \wt{W\times \P}\xri{h}W\times \P\]
 branched over the universal divisor $(\Psi)$ and  depending on the choice of $\wt \P\to \P$.
 In this way the choice of isomorphisms in \pref{6c5} necessitated by choosing an $m^{\rm th}$ root of $\la$ may be made uniform.  We observe that
  \begin{equation}\lab{6c7}
 \bcs
 \wt \Psi\in H^0(K_{\wt W_\Psi/\wt \P}) \\
 \wt \Psi^m = p^\ast \Psi.\ecs\end{equation}
 
 Setting $\wt  W^0_\Psi = h^{-1} (W\times \P^0)$ the total space and the fibres of 
 \[
 \wt W^0_\Psi\to \wt \P^0\]
 are irreducible and smooth.  This gives a period mapping, or equivalently a VHS,
 \begin{equation}\lab{6c8}
 \Phi:\wt \P^0\to\Ga\bsl D.\end{equation}
 \begin{Prop}\lab{6c9}
 The \hvb
 \[
F_\Psi\cong \opplus^{m-1}_{i=0} H^0(K^{m-i}_W)\otimes L^{i+1}.\]
\end{Prop}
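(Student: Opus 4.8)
The plan is to compute $F_\Psi=g_\ast\omega_{\wt W^0_\Psi/\wt\P^0}$ directly from the cyclic covering structure, where $g=pr_2\circ h$ is the composite $\wt W^0_\Psi\xr{h}W\times\wt\P^0\xr{pr_2}\wt\P^0$. First I would record the two standard facts about the degree $m$ cyclic covering $h$ of $W\times\wt\P$ branched over the smooth divisor $(\wt\Psi_0)$, where $\wt\Psi_0$ is the pullback of the universal section $\Psi$ of $K_W^m\boxtimes\cO_\P(1)$ from \pref{6c6} under $\id_W\times q$: writing $\mathcal{N}:=pr_1^\ast K_W\otimes pr_2^\ast L$, so that $\mathcal{N}^m$ is the line bundle of the branch divisor because $q^\ast\cO_\P(1)=L^m$, one has
\[
h_\ast\mathcal{O}_{\wt W_\Psi}=\bigoplus_{j=0}^{m-1}\mathcal{N}^{-j},\qquad \omega_{\wt W_\Psi/W\times\wt\P}=h^\ast\mathcal{N}^{m-1},
\]
the Galois group $\G_m$ acting on the $j$-th summand of $h_\ast\mathcal{O}_{\wt W_\Psi}$ through $\zeta\mapsto\zeta^{\,j}$ (with the convention fixed by the action on the tautological section $\wt\Psi$ of \pref{6c7}). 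Restricting over a point of $\wt\P^0$ above $[\psi]\in\P^0$ recovers \pref{6c3}, \pref{6c4}, so the statement holds fiberwise; the content of Proposition \ref{6c9} is the identification of the global holomorphic twisting.

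Next I would carry out the pushforward. From $\omega_{W\times\wt\P/\wt\P}=pr_1^\ast K_W$ we get $\omega_{\wt W_\Psi/\wt\P}=h^\ast(pr_1^\ast K_W\otimes\mathcal{N}^{m-1})$, and the projection formula for $h$ together with flat base change for $pr_2$ gives
\[
F_\Psi=pr_{2,\ast}\Bigl((pr_1^\ast K_W\otimes\mathcal{N}^{m-1})\otimes\bigoplus_{j}\mathcal{N}^{-j}\Bigr)=\bigoplus_{j=0}^{m-1}H^0\bigl(W,K_W^{m-j}\bigr)\otimes L^{\,m-1-j}.
\]
After relabeling the summands by their $\G_m$-weight this takes the asserted form $\bigoplus_{i=0}^{m-1}H^0(K_W^{m-i})\otimes L^{i+1}$, once the normalization of $L$ relative to $q:\wt\P\to\P$ and the orientation of the covering are fixed so that the $\G_m$-weight $\zeta^{i+1}$ of the summand $H^0(K_W^{m-i})$ (recorded after \pref{6c4}) is matched to the power $L^{i+1}$. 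Alternatively, and perhaps more transparently, one may argue by transition data: $F_\Psi$ and the asserted bundle are locally free of the same rank with canonically identified fibers, and the gluing of the local identifications $F_\Psi|_{[\psi]}\cong\bigoplus_i H^0(K_W^{m-i})$ is governed precisely by the scaling isomorphisms $\wt W_\psi\xr{\sim}\wt W_{\lambda\psi}$ of \pref{6c5}, which depend on the choice of $\lambda^{1/m}$ and therefore twist the $i$-th summand (the $\zeta^{i+1}$-eigenspace) by the $(i+1)$-st power of the tautological $m$-th root, i.e.\ by $L^{i+1}$.

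The step I expect to be the main obstacle is exactly this last piece of bookkeeping: tracking the $\G_m$-linearizations on $\omega_{\wt W_\Psi/\wt\P}$ — in particular on the twist $\mathcal{O}((m-1)R)$ coming from the ramification divisor $R$ of $h$ — and reconciling them with both the normalization $q^\ast\cO_\P(1)=L^m$ and the eigenvalue convention behind \pref{6c3}, so that each summand acquires the correct power of $L$ rather than a uniformly shifted one. Everything else — the cyclic-cover pushforward formulas, the projection formula, flat base change, and the fiberwise identity — is routine. Accordingly I would structure the write-up so that the cyclic-cover formulas and the $\G_m$-eigendecomposition come first, the pushforward is done summand by summand, and a final short paragraph pins down the $L$-exponents using the scaling isomorphisms \pref{6c5} and the tautological section \pref{6c7}.
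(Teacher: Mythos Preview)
Your approach is correct and is the standard one: push $\omega_{\wt W_\Psi/\wt\P}$ down through the cyclic cover $h$ using the projection formula and flat base change for $pr_2$, then identify the $\G_m$-eigenspaces with powers of $L$. The paper itself does not give a proof of this proposition --- it is simply stated as a consequence of the construction together with the fiberwise identities \pref{6c3}--\pref{6c4}, and the text moves on immediately to local Torelli. Your write-up therefore supplies exactly the details the paper leaves implicit.

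One caution on the bookkeeping step you already flag as the main obstacle: your direct computation produces summands $H^0(K_W^{m-j})\otimes L^{m-1-j}$, with exponent set $\{0,\dots,m-1\}$, whereas the stated form has $L^{i+1}$ with exponent set $\{1,\dots,m\}$. This is not a pure relabeling; it genuinely depends on the orientation of the $\G_m$-action, on which $m$-th root of $\cO_\P(1)$ is declared to be $L$, and on whether one records the ramification twist as $h^\ast\mathcal N^{m-1}$ or its dual. You are right that this is where the care is needed --- just make your normalization explicit so the exponents land where the statement puts them, and if under your conventions they do not, say so.
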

 
 Arguments as in sections (1.4)--(1.7) of \cite{Vie83b} show that for $m\gg 0$ local Torelli holds for the period mapping \pref{6c8}.  In fact, we have the 
 \begin{Prop}\lab{6c10} For $m\gg 0$ the part 
 \[
 \Phi_{\ast,n}:T\wt \P^0 \to \Hom \lrp{H^0(K^m_W)\otimes L, F^{n-1}_\Psi/F^n_\Psi}\]
 of the  end piece of $\Phi_{\ast,n}$ is injective.\end{Prop}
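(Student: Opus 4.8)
\textbf{Proof proposal for Proposition \ref{6c10}.}

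The plan is to reduce the statement about the end piece $\Phi_{\ast,n}$ to a concrete surjectivity/injectivity statement in the Jacobian-ring style of Griffiths, exactly as in the hypersurface computation carried out in Proposition \ref{4b14}, but now for the cyclic cover $\wt W_\Psi$ rather than for a hypersurface. First I would use Proposition \ref{6c9}, together with the Hodge-theoretic splitting \pref{6c3}--\pref{6c4} and the $\G_m$-eigenspace decomposition, to identify explicitly the relevant graded pieces $F^n_\Psi$ and $F^{n-1}_\Psi/F^n_\Psi$ of the Hodge filtration on $H^n(\wt W^0_{\Psi,\tilde p})$ for a generic $\tilde p\in\wt\P^0$: on the $\zeta$-eigenspace the fiber of $F^n_\Psi$ is $H^0(K^m_W)\otimes L$, and the graded quotient $F^{n-1}_\Psi/F^n_\Psi$ is expressible via the cohomology of $W$ with coefficients in the appropriate powers $K_W^{j}$ twisted by the line bundle associated to the branch divisor. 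Then I would write the differential $\Phi_{\ast,n}$ as the cup-product/contraction map with the Kodaira--Spencer class of the family $\wt W^0_\Psi\to\wt\P^0$; because the family is built by extracting an $m$-th root of the \emph{universal} section $\Psi$, this Kodaira--Spencer class is, up to the $L$-twist, simply the tautological section, i.e. multiplication by a general $Q\in H^0(K^m_W)$.

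Next I would carry out the degeneration argument: it suffices to prove injectivity for one convenient $(W,\psi_0)$ and one $Q_0$, since injectivity is an open condition on $\wt\P^0$ and the locus where $W$ may vary (by \pref{1a17}) only helps. The natural choice is $W$ a smooth hypersurface of large degree in projective space (or a product of curves), where the Jacobian ring description makes all the graded pieces and the multiplication maps completely combinatorial; here one takes $\psi_0$ and $Q_0$ to be monomials adapted to the Fermat-type equation, exactly as in the proof of Proposition \ref{4b14}. For $m\gg 0$ the degrees involved in the relevant graded pieces of the Jacobian ring are all large, and the multiplication-by-$Q_0$ map between two such pieces is injective by the same monomial bookkeeping; the novelty over \pref{4b14} is only that one is multiplying inside the Jacobian ring of $W$ (not of $\wt W$) but in a range of degrees governed by $m$. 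The cyclic-cover eigenspace decomposition guarantees that the $\zeta$-eigenpiece of $\Phi_{\ast,n}$ is precisely this multiplication map.

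Finally I would assemble: the Hodge-theoretic norm positivity \pref{4b11} is not needed for the \emph{statement}, but the injectivity just established is exactly condition \pref{3b2}/\pref{3b3} that Theorem \ref{3b7} requires, so the payoff is immediate downstream. The main obstacle I expect is the bookkeeping of the Hodge filtration on the $\zeta$-eigenspace of $H^n(\wt W_\psi)$ in terms of data on $W$: one must pin down exactly which twist of which power $K_W^{m-i}$ contributes to $F^{n-1}_\Psi/F^n_\Psi$, and verify that the contraction with the Kodaira--Spencer class lands there as honest multiplication by $Q$ (as opposed to multiplication followed by a projection that could kill things). This is the step where the references to sections (1.4)--(1.7) of \cite{Vie83b} do the real work, and where ``for $m\gg 0$'' is genuinely used — both to make the Jacobian-ring degrees large enough for the combinatorial injectivity and to ensure $(\psi)$ and the universal divisor $(\Psi)$ are smooth with unipotent monodromy, so that the period map \pref{6c8} and its differential are defined in the first place.
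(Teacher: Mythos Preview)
Your overall framework matches the paper's sketch: both identify the $\G_m$-eigenspace decomposition of the Hodge filtration on $H^n(\wt W_\psi)$ as the organizing principle, both single out the $\zeta$-eigenspace as the one carrying $H^0(K^m_W)\otimes L$, and both describe $\Phi_{\ast,n}$ on that piece as a multiplication-by-$Q$ map with $Q\in H^0(K^m_W)$ playing the role of the Kodaira--Spencer direction. The paper's own proof is a one-sentence pointer to exactly this eigenspace picture and to Viehweg's argument.

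The gap is in your injectivity step. You propose to prove injectivity by \emph{choosing} a convenient $W$ (a Fermat-type hypersurface) and running the monomial combinatorics of Proposition~\ref{4b14}. But in Proposition~\ref{6c10} the variety $W$ is fixed and arbitrary; only $\psi$ varies over $\wt\P^0$. Openness in $\psi$ is fine, but there is no deformation parameter in $W$ to exploit, and a general $W$ need not specialize to anything with a Jacobian-ring description. The appeal to \pref{1a17} does not help here: that statement is about enlarging the parameter space, which can only make the differential \emph{larger}, whereas you need injectivity already on the smaller space $T\wt\P^0$ for the given $W$.

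The argument Viehweg actually runs (and to which the paper defers) stays on the fixed $W$ and uses the asymptotics in $m$: after the eigenspace bookkeeping, the map in question is a cup-product/multiplication map among cohomology groups of the form $H^i(W,K_W^{a})$, and for $m\gg 0$ the ampleness of $K_W$ plus Serre-type vanishing forces the needed injectivity directly. So ``$m\gg 0$'' is doing work intrinsic to $W$, not providing room to degenerate $W$. Once you replace the Fermat degeneration by this asymptotic vanishing step, the rest of your outline---the identification of $F^n_\Psi$ and $F^{n-1}_\Psi/F^n_\Psi$ on the $\zeta$-eigenspace via \pref{6c3}--\pref{6c4} and Proposition~\ref{6c9}, and the description of $\Phi_{\ast,n}$ as multiplication---is exactly what is required.
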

 
 This is proved by showing that $\Phi_\ast$ is injective on certain of the eigenspaces for the $\G$-equivariant action in the picture, and that the $\zeta$-eigenspace is among those that are included.
 
 \ssni{Discussion of singularities}   
For $\psi\in H^0 (K^m_W)$ the Finsler-type norm
 \begin{align*}
 \|\psi\|&= \int_W (\psi\wedge\ol\psi)^{1/m}\\
 &=\int_{\wt W_\psi}\wt \psi\wedge\ol{\wt \psi}\end{align*}
 is equal to the square of the  Hodge length of $\wt \psi=\psi^{1/m} \in H^0(K_{\wt W_\psi})$.  Even when the divisor $(\psi)$ acquires singularities so that $\wt W_\psi$ becomes singular, the Hodge length of the canonical section $\wt \psi$ will remain finite.  However, although $\|\psi\|$ is continuous in $\psi$, it is not smooth as its derivatives detect singularities of the degenerating Hodge structures.\footnote{Norms of this type appear in \cite{NarSim68} and have been used extensively in the recent literature (cf.\ \cite{Pau16} for a summary and survey).}  In terms of \lmhs s, $\wt \Psi$ lies in the lowest possible weight   part.
  
 The second method of associating Hodge theoretic data to the pluricanonical series $H^0(K^m_W)$ is the following: Let $\wt W_\Psi$ be a desingularization of a completion of $\wt W^0_\Psi$ and
 \[
 \wt W_\Psi\xri{\wt \pi} \P\]
 the resulting fibration.  If $\dim \wt W_\Psi=\tilde n$, then $H^{\tilde n}(\wt W_\Psi)$ has a \phs.  The general Hodge theory of  maps such as $\wt \pi$   is contained in the \emph{decomposition theorem} (\cite{deC-Mig}).  In the case at hand a special feature arises in that from Proposition \ref{6c9} we may infer that
 \begin{equation}\lab{6c11} \bmp{5}{\em $H^{\tilde n,0}(\wt W_\Psi)$ contains as a direct summand
 \[
 \opplus^{m-1}_{i=0} H^0(K^{m-i}_W) \otimes H^0(\cO_\P(n_i)).\]}
  \end{equation}
 Here the $n_i>0$ are determined by the $\nu_i>0$
 in
 \[
 q_\ast L^{i+1}\cong \oplus \cO_\P(\nu_i).\]
 
\subsection*{(ii)  The case of a family $f:X\to Y$; Hodge structures associated to $f_\ast \om^m_{X/Y}$}\hfill

 The following is a sketch of the proof of Theorem \ref{6.1}.  It is intended  to point out some of the geometric aspects of the arguments in \cite{Vie83b}, in particular the way in which the Hodge-theoretic interpretations enter into those arguments, referring to that paper for the details.
 
   Since the publication of \cite{Vie83b} several important general results concerning the pluricanonical series have been established (cf.\ \cite{Dem12a}), and   we shall assume the following:
 \begin{itemize}
 \item for general $y\in Y$, $X_y$ is smooth and there is an $\ell$ such that for all $m=k\ell$, $k\geqq 1$, the linear system $|mK_{X_y}|$ is ample; whenever an $m$ appears below it will be of this form; and
 \item the assumptions to have local Torelli in the form used in \cite{Vie83b} are satisfied for $f_\ast \om^m_{X/Y}$.
 \end{itemize}
 
 The \emph{basic diagram} is
 \begin{equation}\lab{6c12}\begin{split}
 \xymatrix{\wtcx\ar[d]_{\tilde f} \ar[r]& \cX_2\ar[d]^{\tilde h}\ar[r]& \cX_1\ar[r]\ar[d]^h& X\ar[d]^f\\
 \wt \P\ar[r]^g& \wt \P\ar[r]^q& \P\ar[r]^p&Y}\end{split}\end{equation}
 where
 \begin{itemize}
 \item $\P=\P f_\ast \om^m_{X/Y}$, so that $p_\ast \cO_\P(1) = f_\ast \om^{m\enspace^\ast }_{X/Y}$ is the \emph{dual} of $f_\ast \om^m_{X/Y}$;
 \item $\cX_1= X\times_Y \P$;
 \item $\wt \P\xri{q} \P$ is a cyclic branched covering where there is an ample line bundle $L\to \wt \P$ with
 \[
 q^\ast \cO_\P(1)=L^m;\]
 \item $\cX_2 = \cX_1 \times_\P \wt \P$; and
 \item $\wt \cX\to \cX_2$ is the cyclic covering obtained by globalizing the construction of $\wt W_\Psi\to W\times\wt \P$ given by the completion of $\wt W^0_\Psi$ in \pref{6c7} above.\end{itemize}
 
 The players in the basic diagram are
 \begin{itemize}
 \item $f^{-1}(y)=X_y$;
 \item $p^{-1}(y) = \P H^0(\om^m_{X_y})$, whose points are $[\psi]$ where $\psi\in H^0(\om^m_{X_y})$;
 \item $q^{-1}([\psi])=\{[\psi_i]\}$ where $[\psi_i]\xri{q}[\psi]$ under the cyclic covering;
 \item $\Psi$ is the tautological section of $\om^m_{\cX_1/\P}\otimes h^\ast \cO_\P(1)$;
 \item $\wt \Psi$ is the tautological section of
 \[
 \om^m_{\cX_2/\wt \P}\otimes \tilde h^\ast L^m = \lrp{\om_{\cX_2/\wt \P} \otimes \tilde h^{\ast}L}^m;\]
 \item $\wt \cX\to \cX_2$ is the $m$-sheeted cyclic covering obtained by extracting an $m^{\rm th}$ root of $\wt \Psi$.
 \end{itemize}
 We will denote the fibre over $y$ of $\wt \cX\to Y$ by
 \begin{equation}\lab{6c13}
 \wt \cX_y = \bigcup_{[\psi]\in\P H^0\lrp{\om^m_{X_y}}} \wt X_{y,\psi}\end{equation}
 where $X_y$ corresponds to $W$ and $\wt X_{y,\psi}$ to $\wt W_\psi$ above.
 
 There are two families of varieties constructed from the basic diagram.  Denoting the composition $q\circ g\circ \tilde f$ by $G$ for the first we have
 \begin{equation}\lab{6c14}
 G:\wt \cX\to \P\end{equation}
 whose fibre over $(y,[\psi])\in \P$ is $\wt X_{y,\psi}$.  The second is
 \begin{equation}\lab{6c15}
 F:\wt \cX\to Y\end{equation}
 whose fibre over $y\in Y$ is the variety \pref{6c13}.
 
 There are two important observations concerning these families:
 \begin{align}\lab{6c16}&\bmp{5}{{\em Generic local Torelli holds for both families;} and}\\
 \lab{6c17}&\bmp{5}{\em The basic diagram \pref{6c12} is commutative.}\end{align}
 The fibres of $G_\ast\om_{\wt \cX/\P}$ are given by
 \[
 \bp{G_\ast\om_{\wt \cX/\P}}_{(y,[\psi])}=H^0\bp{\om_{\wt X_{y,\psi}}}.\]
 From \pref{6c16} we have
 \begin{equation}\lab{6c19}
G_\ast \om_{\wtcx/\P}\geqq 0\hensp{\em and} \det G_\ast \om_{\wt \cX/\P}>0 \hensp{\em on an open set.}\end{equation}
 From 
 \pref{6c19}  we have
 \begin{equation}\lab{6c20}\bmp{5}{\em For $k\gg 0$, both $S^k G_\ast \om_{\wtcx/\P} $ and $S^k F_\ast\om_{\wtcx/Y}$ are big.}\end{equation}
 
 To complete the proof of Theorem \ref{6.1} the argument one first might try to make is this:
 \begin{quote} \em 
 $H^0(\om^m_{X_y})$ is a direct factor of $H^0(\om_{\wt X_{y,\psi}})$, and from the commutativity of the basic diagram \pref{6c12} it follows that $f_\ast \om^m_{X/Y}$ is a direct factor of the \hvb\ associated to the family $\wtcx \to Y$.  By the local Torelli property \pref{6c16} with the implication \pref{6c20}, it follows that $S^k f_\ast \om^m_{X/Y}$ is big.\end{quote}
 However, this argument is not   correct; the issue is more subtle.  The problem is that under the mapping
\vspace*{-3pt} \[
 \wtcx \xri{H} \cX_1\vspace*{-3pt}\]
 in \pref{6c12} we do \emph{not} have
\vspace*{-3pt} \[
 H_\ast \om_{\wtcx/\P}\cong \opplus^{m-1}_{i=0} \om^{m-i}_{\cX_1/\P},\vspace*{-3pt}\]
 but rather 
\vspace*{-3pt} \[
 H_\ast \om_{\wtcx/\P}\cong \opplus^{m-1}_{i=0} \om^{m-i}_{\cX_1/\P} \otimes h^\ast \cO_\P (i+1)\]
 where the $\cO_\P(i+1)$'s reflect the global twisting of the identification \pref{6c3}.  
 This would not be an issue  if $\cO_\P(1)$ were  positive.  This positivity    trivially holds along the fibres of $\P\to Y$, but since $p_\ast \cO_\P(1)=f_\ast \om^{m\enspace^\ast }_{X/Y}$  any positivity of $f_\ast \om^m_{X/Y}$ becomes negativity of $\cO_\P(1)$ in directions normal to fibres of $\P\to Y$.  An additional step is required.

 The key observation is that (cf.\ \cite{Vie83b}, page 587)
 \begin{quote}
 For $a>0$, $\om^{ma+a+1}_{\cX_1/\P}\otimes h^\ast \cO_\P(a)$ is a direct summand of $G_\ast \om^{a+1}_{\wtcx/\P}$.\end{quote}
 It is the additional factor of $a+1$ in $\om^{ma+a+1}_{\cX_1/\P}$ that offsets the negativity of $\cO_{\P}(a)$ in the normal direction of the fibres of $\P\to Y$.
 
 Thus what needs to be shown is
\vspace*{-3pt} \begin{equation}
 \lab{6c21}
 S^k G_\ast \om^{a+1}_{\wtcx/\P}\hensp{\em is big for} k\gg 0.\vspace*{-3pt}\end{equation}
 This follows from \pref{6c21} if we  have
\vspace*{-3pt} \[
 S^{a+1} G_\ast \om_{\wtcx/\P}\to G_\ast \om^{a+1}_{\wtcx/\P}\to 0,\vspace*{-3pt}\]
 and this may be accomplished generically in $Y$ by choosing $m\gg 0$.

 \begin{Rem}\lab{6c22}
 We conclude with a comment and a question.  The comment is that perhaps the most direct way to prove the Iitaka Conjecture \ref{6.2} would be to  use the curvature properties of the Finsler-type metric in $f_\ast \om^m_{X/Y}$.  Specifically, referring to \cite{Ber09}, \cite{BerPau12} and \cite{PauTak14} for details we set 
 \[
 \P^\ast  = \P(f_\ast \om^m_{X/Y})^\ast.\]
 Then $\cO_{\P^\ast}(1)\xri{\hat \pi} Y$ has fibres
 \begin{equation}\lab{6c23}
 \cO_{\P^\ast}(1)_{(y,[\eta])} = H^0(\om^m_{X_y})/\eta^\bot,\qquad \eta \in H^0(\om^m_{X_y})^\ast\end{equation}
 and
 \[
 \hat\pi_\ast \cO_{\P^\ast}(1) = f_\ast\om^m_{X/Y}.\]
 Following \cite{Kaw82} and \cite{PauTak14}, we define a metric in $\cO_{\P^\ast}(1)$ by taking the infinimum of the $\|\psi\|$'s where $\psi\in H^0(\om^m_{X_y})$ projects to a fixed vector in the quotient \pref{6c23}.  Leaving aside  the question of singularities, for this metric the form $\om_m$ satisfies
 \[
 \om_m\geqq 0.\]
  We do not expect to have $\om_m>0$ in an open set in $\P^\ast$: this is already not the case when $m=1$.  Instead we carry out a similar construction replacing $f_\ast \om^m_{X/Y}$ by $S^{m'}f_\ast \om^m_{X/Y}$ and denote by $\om_{m',m}$ the Chern form of the corresponding $\cO(1)$-bundle.
 
 \ssn{Question:} \emph{Assuming $\Var f=\dim Y$, for $m'\gg 0$  do we have $\om_{m',m}>0$ in an open set?}
  \end{Rem}
 
 This is true when $m=1$, and if it   holds for general $m$ and the issue of singularities can be handled one would have a direct ``curvature" proof of \ref{6.2}.  In fact, heuristic reasoning suggests the following
 \begin{Conj}\lab{6c24} Under the assumptions in Theorem \ref{6.1}, let $m\geqq 1$ be such that for general $y\in Y$ the bundle $K^m_{X_y}$ is globally generated.  Then for $P_m(X_y)=h^0(K^m_{X_y})$ 
 \[
 \Sym^{m'} f_\ast \om^m_{X/Y}  \hensp{is big for} m'\geqq P_m(X_y).\]\end{Conj}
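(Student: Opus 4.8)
\ssni{Strategy for the proof}
The plan is to realize Conjecture \ref{6c24} as an instance of Theorem \ref{2h1}. Set $E:=f_\ast\om^m_{X/Y}$. Over the Zariski open set of $y\in Y$ for which $X_y$ is smooth and $|mK_{X_y}|$ is globally generated, base change gives $E_y\cong H^0(X_y,K^m_{X_y})$, so the generic rank of $E$ is exactly $\rank E=h^0(K^m_{X_y})=P_m(X_y)$; thus the bound $m'\geqq P_m(X_y)$ in the conjecture is precisely the bound $m'\geqq\rank E$ of Theorem \ref{2h1}. It therefore suffices to equip $E$ with a Hermitian metric, having mild singularities in the sense of Section \ref{4b}, that is \emph{strongly semi-positive}: $\Theta_E\geqq 0$ on all of $Y$ and $\Theta_{\det E}>0$ on a non-empty open set. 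A standard blow-up argument (as in the proof of Proposition \ref{2h6}), together with \pref{6b8}, then propagates the resulting bigness of $\Sym^{m'}E$ back to $Y$.

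First I would put on $E$ the relative $m$-Bergman kernel (Narasimhan--Simha type) metric associated to the norm \pref{1a22}: for $m=1$ this is the Hodge metric, for which $\Theta_E\geqq 0$ is classical (\cite{Gri72}, \cite{CM-SP}), and for $m>1$ the semi-positivity $\Theta_E\geqq 0$ in the sense of currents is supplied by the work of Berndtsson, P\u{a}un and Takayama (\cite{Ber09}, \cite{BerPau12}, \cite{PauTak14}). For the mildness of its singularities along the discriminant divisor I would use the Hodge-theoretic model of the pluricanonical series from Section VI.C: after passing to the cyclic coverings of the basic diagram \pref{6c12}, an ample twist of $E$ is a direct summand of a genuine Hodge vector bundle (Propositions \ref{6c9} and \ref{6c10}), whose metric is, up to that twist, the Hodge metric; Theorem \ref{4b8} then applies, the twist being smooth, so that the Chern forms of $\cO_{\P\Sym^{m'}E}(1)$ may be restricted and multiplied as if the metric were smooth.

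Second --- the substantive step --- I would establish strong semi-positivity. Since $\Theta_{\det E}=\Tr\Theta_E\geqq 0$ everywhere, it is enough to know that $\det E$ is big: for then, the metric having mild singularities, Chern--Weil gives $\int_Y\Theta_{\det E}^{\dim Y}=c_1(\det E)^{\dim Y}>0$, whence $\Theta_{\det E}\not\equiv 0$ and, being $\geqq 0$, is $>0$ on an open set. Bigness of $\det E$ is exactly the known assertion $\kappa(\det f_\ast\om^m_{X/Y})=\dim Y$, which under the hypotheses $\Var f=\dim Y$ and ``$X_y$ of general type'' follows from the Hodge-theoretic input of \cite{Vie83b}, \cite{Kol87} (generic local Torelli \pref{6c16} for the cyclic-cover family $\wtcx\to Y$, whose Kodaira--Spencer map is generically injective precisely because $\Var f=\dim Y$). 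Granting this, $E$ is strongly semi-positive, and Theorem \ref{2h1} yields that $\Sym^{m'}E\to Y$ is big for every $m'\geqq\rank E=P_m(X_y)$, completing the proof modulo the descent already noted.

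The main obstacle will be making the first step rigorous, namely proving that the relative $m$-Bergman kernel metric on $f_\ast\om^m_{X/Y}$ itself --- not merely some auxiliary Hodge realization of it --- has mild singularities, and, relatedly, reconciling the positivity across the basic diagram \pref{6c12} through the twisting by the $\cO_\P(i)$ flagged in Remark \ref{6c22}; it is there that the argument genuinely departs from the smooth case and where the statement $\Theta_{\det E}>0$ on an open set must be extracted cleanly. An appealing alternative route, avoiding the reduction to $\kappa(\det f_\ast\om^m_{X/Y})=\dim Y$ altogether, would be to prove directly that for $m'\gg 0$ the Chern form $\om_{m',m}$ of $\cO(1)$ on $\P(\Sym^{m'}f_\ast\om^m_{X/Y})^\ast$ is $>0$ on an open set transverse to the fibres of $\P\to Y$; that is the open Question of Remark \ref{6c22}, and a positive answer would simultaneously give the conjecture and a curvature-theoretic proof of Theorem \ref{6.1}.
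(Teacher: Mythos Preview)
The statement is labelled a \emph{Conjecture} in the paper, and the paper does not prove it; it is introduced with the words ``heuristic reasoning suggests the following.'' So there is no proof in the paper to compare against. Your outline is in fact precisely the heuristic the authors have in mind: they pose, just before the conjecture, exactly the Question you cite from Remark~\ref{6c22}, and the bound $m'\geqq P_m(X_y)=\rank f_\ast\om^m_{X/Y}$ is visibly chosen so that Theorem~\ref{2h1} would apply once strong semi-positivity is available.

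That said, your proposal is a strategy, not a proof, and you correctly locate the gap. Theorem~\ref{2h1} is proved for a \emph{smooth} Hermitian metric: one works at a point, diagonalizes the Hermitian form $\langle(\Theta_E e_i,e_j),\xi\wedge\bar\xi\rangle$, and then evaluates $\Theta_{S^rE}$ on the decomposable tensor $e_1\cdots e_r$. For $m>1$ the Narasimhan--Simha/relative Bergman metric on $f_\ast\om^m_{X/Y}$ is only continuous (the paper itself notes this in the example following \pref{2b7}), and the Berndtsson--P\u{a}un--Takayama positivity is in the sense of currents for $\cO_{\P E}(1)$, not a pointwise inequality for a smooth $\Theta_E$. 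The ``mild singularities'' framework of Section~\ref{4b} is developed specifically for Hodge metrics with logarithmic growth along a normal crossing divisor; it is not known to apply to the Bergman-type metric on $f_\ast\om^m_{X/Y}$ over the smooth locus, where the issue is regularity rather than boundary behaviour. Your attempt to import mildness via the cyclic-cover diagram runs into the twisting by $\cO_\P(i)$ that the paper flags (and that you also flag): the Hodge metric you get this way lives on $G_\ast\om_{\wtcx/\P}$, not on $f_\ast\om^m_{X/Y}$ itself, and untwisting introduces exactly the negativity in the normal directions that obstructs a direct transfer.

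In short: your plan matches the authors' own heuristic, and the obstacle you name is the reason the statement remains a conjecture.
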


 \subsection{The Hodge vector bundle  may detect   extension data}\label{S:extdata}

\begin{Prop}
\lab{b.19}
On a subvariety $Y\subset Z^\ast_I$ along which the period mapping $\Phi_I$ is locally constant, the extended Hodge bundle $F_{e,I}\to Y$ is flat.  It may however have non-trivial monodromy.\end{Prop}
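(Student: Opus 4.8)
The plan is to decompose the claim into two parts: the flatness of $F_{e,I}$ on $Y$, and the possibility of non-trivial monodromy. For the first part, the key point is the Hodge-theoretic meaning of ``$\Phi_I$ locally constant'': it says that the graded polarized limiting mixed Hodge structure attached to the generic point of $Z^\ast_I$, after passing to the associated graded, does not vary as one moves along $Y$. By the characterization of $F_e$ recalled in Section I.C (where $F^p_e\to\ol B$ is the canonical extension whose holomorphic sections are those with logarithmic Hodge-norm growth), and by the norm-positivity formula \pref{4b11}, namely $\Theta_F(e,\xi)=\|\Phi_{\ast,n}(\xi)(e)\|^2$, the curvature of $F_{e,I}$ restricted to $Y$ is controlled by the end piece of the differential of $\Phi_I$. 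First I would observe that $\Phi_I$ locally constant on $Y$ forces $\Phi_{I,\ast}|_{TY}=0$, hence in particular the Kodaira--Spencer / Higgs field $\theta^p$ vanishes along $TY$ for every $p$; by \pref{6b10}, which writes $\Theta_{E^p}=\theta^{p+1}\wedge\theta^{p+1\ast}+\theta^{p\ast}\wedge\theta^p$ as a difference of norm-positive terms, the restriction $\Theta_{F_{e,I}}|_Y$ is a sum of non-negative and non-positive terms that \emph{both} vanish once the relevant $\theta$'s vanish in the $Y$-directions. This gives $\Theta_{F_{e,I}}|_Y=0$.

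Next I would upgrade ``curvature zero'' to ``flat'', i.e. produce an actual flat structure, not merely a metric-flat one. The natural way is to invoke the argument of \cite{Gri72} (as used already in Proposition \ref{3b13} and Proposition \ref{2g12}): a Hermitian holomorphic bundle with a metric connection of vanishing curvature whose metric has at worst mild/logarithmic singularities of the type analyzed in Section IV admits a flat $C^\infty$ (indeed holomorphic away from the degeneration locus) trivialization, because the parallel transport is well-defined and the monodromy representation is the only obstruction to global triviality. Concretely, one restricts the variation of Hodge structure to $Y$, passes to the associated graded as in \pref{1a9}, and observes that the constancy of $\Phi_I$ means the period map of the restricted graded VHS is constant; hence the underlying local system, together with its Hodge filtration, is locally constant, and $F_{e,I}|_Y$ is the canonical extension of a flat sub-bundle. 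Here one must check that the canonical extension of a flat bundle across $Z\cap Y$ is again flat with at worst unipotent local monodromy --- this is standard (Deligne's canonical extension), but it is the place where the mild-singularity results of Theorem \ref{4b8} are genuinely used, to justify that the Chern forms vanish not just formally but as currents on $Y\subset\ol B$.

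For the second assertion, that the monodromy may be non-trivial, I would simply exhibit the mechanism rather than a full example: even when $\Phi_I$ is constant along $Y$ as a map to $\Ga_I\bsl D_I$, the lift to $D_I$ need not be constant, because a non-trivial element of $\Ga_I$ (global monodromy, coming from loops in $Y$) can act. Equivalently, the flat bundle $F_{e,I}|_Y$ is trivial as a holomorphic bundle with connection on the universal cover of $Y$ but its monodromy representation $\pi_1(Y)\to \mathrm{GL}(F_{e,I})$ --- factoring through the image of $\pi_1(Y)$ in $\Ga_I$ --- can be non-trivial; this is exactly the phenomenon of ``finite but non-trivial monodromy of the LMHS along a stratum'' that appears in Example \ref{5c15} and in the discussion of \pref{1a13}. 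I expect the main obstacle to be the second step: rigorously passing from $\Theta_{F_{e,I}}|_Y=0$ to an honest flat structure in the presence of the boundary $Z$, i.e. controlling the behavior of the canonical extension of $F_{e,I}$ along $Y\cap Z$ and verifying that the mild-singularity formalism of Section IV indeed licenses restricting the (vanishing) Chern forms to $Y$; the interior statement on $Y\cap B$ is essentially \cite{Gri72}, but the extension across the degeneration locus requires the $\Delta^{\ast k}$ estimates of \cite{CKS86} and \cite{GGLR17}.
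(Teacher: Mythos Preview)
Your argument has a genuine gap, and it lies earlier than where you expect the difficulty.  The curvature formula \pref{4b11} and its Higgs-bundle version \pref{6b10} compute the curvature of the Hodge bundle of a \emph{polarized pure} VHS with its Hodge metric.  On $Z^\ast_I$, however, the limiting data is a variation of \emph{mixed} Hodge structure, and $F_{e,I}$ is its $F^n$-piece: a bundle filtered by $W_\bullet(N_I)\cap F_{e,I}$ whose \emph{associated graded} is the Hodge bundle of the pure VHS classified by $\Phi_I$.  When you write ``the curvature of $F_{e,I}$ restricted to $Y$ is controlled by the end piece of the differential of $\Phi_I$'' and then ``pass to the associated graded as in \pref{1a9}'', you are silently replacing $F_{e,I}$ by $\Gr^{W(N_I)}F_{e,I}$.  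Your vanishing of the Higgs field along $TY$ therefore yields flatness of the \emph{graded} pieces, not of $F_{e,I}$ itself; the extension among those pieces is exactly the data that $\Phi_I$ does not see, and it is the whole subject of this section.

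The paper's proof sidesteps metrics and curvature altogether.  It works directly with the Gauss--Manin connection $\nabla$ on the local system $\V_I$ underlying the variation of LMHS on $Z^\ast_I$: $\nabla$ preserves the weight filtration $W_\bullet(N_I)$, and the hypothesis that $\Phi_I$ is locally constant on $Y$ says precisely that on each $\Gr^{W(N_I)}_k\V_I$ the induced connection preserves the Hodge filtration.  A filtration argument then gives that $\nabla$ preserves $F_{e,I}\subset\cO_Y(\V_I)$, so $F_{e,I}$ is a flat sub-bundle.  This is two lines, uses no Hermitian input, and never leaves the open stratum---your worry about extending across $Z$ and invoking Theorem \ref{4b8} is misplaced, since $Y\subset Z^\ast_I$ already sits inside the boundary and the VMHS there is smooth.

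A smaller point: your description of the possible monodromy as ``factoring through the image of $\pi_1(Y)$ in $\Ga_I$'' is not quite right.  In the example immediately following the proposition, the monodromy of $F_{e,I}$ arises from the action of $\pi_1(Y)$ on $H_1(\wt C,\{p,q\})$, i.e.\ from the Abel--Jacobi extension data, and is infinite; it is not accounted for by the monodromy of the associated graded.
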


\begin{proof}  $F_{e,I}$ is filtered by $W_\bullet(N_I)\cap F_{e,I}$. For $\V_I$ the local system corresponding to $\Phi_I:Z^\ast_I\to \Ga_I\bsl D_I$, the Gauss-Manin connection $\nabla$ acting on  $\cO_{\Z^\ast_I}(\V_I)$ preserves $W_\bullet(N_I)$, and on the associated graded to $W_\bullet(N_I)\V_I$ it preserves the associated graded to $W_\bullet(N_I)\cap F_{e,I}.$  It follows that $\nabla$ preserves $F_{e,I}\subset  \cO_Y(\V)$.
\end{proof}

\begin{Exam}
On an algebraic surface $S$ suppose we have a smooth curve $\wt C$ of genus $g(\wt C)\ge 2$, and that through each pair of distinct points $p,q\in\wt C$ there is a unique rational curve meeting $\wt C$ at two points.
\[
\begin{picture}(200,90)
\put(30,10){\includegraphics[scale=.5]{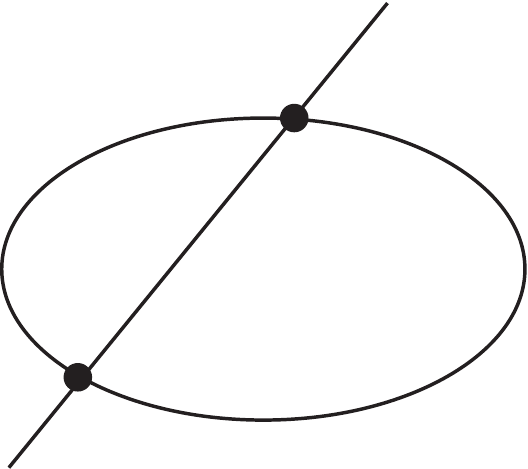}}
\put(114,30){$\wt C$}
\put(74,80){$\P^1$}
\put(42,13){$p$}
\put(72,52){$q$}
\end{picture}\]
Suppose moreover that along the diagonal $D\subset \wt C\times \wt C$ the $\P^1$ becomes simply tangent.  Then $\wt C+\P^1$ is a nodal curve, and for $m\gg 0$ the pluricanonical map given by $|m\om_{\wt C+\P^1}|$ contracts the $\P^1$ and we obtain an irreducible stable curve $C_{p,q}$ with arithmetic genus $p_a(C_{p,q})=3$.
\[
\begin{picture}(120,50)
\put(0,0){\includegraphics[scale=.7] {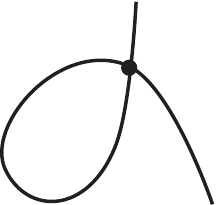}}
\put(36,25){$p=q$}
\end{picture}\]
As $\P^1$ becomes tangent  we obtain a cusp.

The extension data for the MHS is given  (cf.\  \cite{Car80}) by
\[
\AJ_{{\wt C}}(p-q)\in J(\wt C).\]
When we turn around $p=q$ we interchange $p,q$; after base change $t\to t^2$ the extension class is well defined locally.  Globally, it is another story.

The vector space $H^0(\om_{C_{p,q}})$, which is the fibre of the canonically extended Hodge bundle, has the 2-dimensional fixed subspace $H^0(\Om^1_{\wt C})$ and variable 1-dimensional quotient represented by a differential of the third kind $\vp_{p,q}\in H^0(\Om^1_{\wt C}(p+q))$ having non-zero residues at $p,q$. 
As $p\to q$ the differential $\vp_{p,q}$ tends to a differential of the second kind $\vp_{2p} \in H^0(\Om_{\wt C}(2p))$ with non-zero polar part at $p$.  For the global monodromy over $Y=\wt C\times \wt C\bsl D$, the extension class is given for $\psi\in H^0(\Om^1_{\wt C})$ by
\[
\psi \to \int^q_p \psi \hensp{(modulo periods).}\]
The action of $\pi_1(Y)$ on $H_1(\wt C,\{p,q\})$ may then   be shown to give an infinite subgroup of $H_1(\wt C,\Z)$, which implies the  assertion.

\begin{claim}
The Hodge vector bundle may detect continuous extension data.
\end{claim}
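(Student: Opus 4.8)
The plan is to exhibit the continuous variation of extension data as genuine variation of the canonically extended Hodge bundle $F_{e}$ over the base $Y=\wt C\times\wt C\setminus D$, by identifying $F_e$ concretely as a sub-bundle of a fixed bundle whose transition behaviour is governed by the differentials $\vp_{p,q}$. First I would set up the geometry precisely: the smooth curve $\wt C$ of genus $\geqq 2$, the family of nodal curves $\wt C+\P^1$ degenerating to a cuspidal curve along $D$, and the pluricanonical contraction producing the family of stable curves $C_{p,q}$ with $p_a=3$; this is the geometric picture already drawn above. The fibre of the extended Hodge bundle is $H^0(\om_{C_{p,q}})$, a $3$-dimensional space. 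I would make explicit the short exact sequence
\[
0\to H^0(\Om^1_{\wt C})\to H^0(\om_{C_{p,q}})\to \C\cdot\vp_{p,q}\to 0,
\]
where $\vp_{p,q}\in H^0(\Om^1_{\wt C}(p+q))$ is a differential of the third kind with residues $+1,-1$ at $p,q$, normalized (say) to have vanishing $a$-periods. This $3$-step family sits inside the constant bundle $H^0(\Om^1_{\wt C}(p+q))$ (whose dimension jumps correctly by Riemann-Roch since $\deg(p+q)=2$ and $g\geqq 2$), and the point is that the quotient line is spanned by a vector $\vp_{p,q}$ whose dependence on $(p,q)$ is non-trivial.

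Next I would show that this dependence is \emph{not} flat, i.e.\ that the sub-bundle $H^0(\Om^1_{\wt C})\subset F_e$ is not a flat sub-bundle of $F_e$ with respect to the Gauss-Manin connection, equivalently that the induced $\C$-quotient is a non-trivial extension of line bundles over $Y$ in the holonomic sense. The key computation is the second fundamental form: the Gauss-Manin covariant derivative of a local section of $H^0(\Om^1_{\wt C})$, projected to the quotient $\C\cdot\vp_{p,q}$, is computed by the classical formula for the variation of the mixed Hodge structure, and it is governed by the Abel-Jacobi derivative, i.e.\ by $d\big(\AJ_{\wt C}(p-q)\big)$ paired against the cotangent directions; explicitly, for $\psi\in H^0(\Om^1_{\wt C})$ the relevant pairing is (up to normalization) the residue pairing $\Res_p(\psi\cdot(\text{local coordinate variation}))$, which is non-zero for generic $(p,q)$ because $\psi$ can be chosen not to vanish at $p$. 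This shows the extension class $\AJ_{\wt C}(p-q)\in J(\wt C)$ genuinely varies with $(p,q)$ at the infinitesimal level, and since $g(\wt C)\geqq 2$ the image of the map $Y\to J(\wt C)$, $(p,q)\mapsto\AJ_{\wt C}(p-q)$, is positive-dimensional (it contains the Abel-Jacobi image of a curve, hence is not a point). Therefore $F_e$, equipped with its Hodge filtration data, is not pulled back from a point even though the associated graded of the LMHS is constant along the relevant stratum.

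The final step is to translate ``$F_e$ varies'' into the precise assertion that $F_e$ detects the continuous extension parameter. Here I would argue that on the locus $Y$ the period mapping $\Phi_I$ of the associated graded is locally constant (the graded pieces are $H^1(\wt C)$ and $\Q$, both rigid along $Y$), so by Proposition \ref{b.19} the bundle $F_{e,I}$ is flat; hence \emph{any} non-triviality it exhibits over $Y$ comes from monodromy and from the extension data, not from the graded VHS. One then checks that the flat bundle $F_{e,I}\to Y$ together with its Hodge filtration --- i.e.\ the position of the line $\C\cdot\vp_{p,q}$ inside the flat $3$-plane --- recovers $\AJ_{\wt C}(p-q)$ up to the ambiguity of periods and the $t\mapsto t^2$ base change, exactly as in the explicit formula $\psi\mapsto\int_p^q\psi$ written above; since this map has positive-dimensional image, the extension datum is continuous and is faithfully reflected in $F_e$. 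I expect the main obstacle to be the second step: carefully computing the second fundamental form / Gauss-Manin derivative in this nodal-to-cuspidal degeneration and verifying it is non-degenerate, i.e.\ confirming that no unexpected vanishing occurs (one must choose $\psi\in H^0(\Om^1_{\wt C})$ and the pair $(p,q)$ generically, using $g\geqq 2$ to guarantee the residue pairing is non-zero), together with handling the base-change subtlety that makes the extension class only locally well defined.
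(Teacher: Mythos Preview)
Your proposal has a genuine gap, and it stems from working with the wrong example. The two-parameter family $C_{p,q}$ over $Y=\wt C\times\wt C\setminus D$ that you analyze is the example the paper uses to illustrate \emph{discrete} (monodromy) detection; the claim about \emph{continuous} extension data is proved with a separate one-parameter construction that you have not reproduced.

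The specific computation you propose does not work. You want the second fundamental form of $H^0(\Om^1_{\wt C})\subset F_e$ with respect to the Gauss-Manin connection to be nonzero and to equal the Abel--Jacobi derivative. But $H^0(\Om^1_{\wt C})=W_1(N)\cap F_e$, and $\nabla$ preserves both $W_\bullet(N)$ and $F_e$ (this is exactly the content of Proposition~\ref{b.19}); hence the sub-bundle is $\nabla$-flat and its second fundamental form is identically zero. Concretely, a class $\psi\in H^0(\Om^1_{\wt C})$ is constant in $(p,q)$, so $\nabla\psi=0$. You have conflated the extension class of the limiting MHS (which is $\AJ_{\wt C}(p-q)\in J(\wt C)$ and does vary) with the second fundamental form of the sub-bundle of $F_e$ (which vanishes). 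Moreover, the ambient ``constant bundle $H^0(\Om^1_{\wt C}(p+q))$'' you invoke is not constant, since the divisor $p+q$ moves. Finally, because the first example has infinite monodromy, any nontriviality you might extract from $F_e$ there is entangled with discrete data, so it does not cleanly exhibit continuous detection.

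The paper's argument is different and much cleaner: it builds a one-parameter family $C_q$ over $\wt C$ (fix $p$, identify $p$ with a moving $q$; at $q=p$ attach a nodal elliptic tail) with \emph{trivial} monodromy. Then $F_e$ sits in a short exact sequence of holomorphic bundles
\[
0\to H^0(\Om^1_{\wt C})\otimes\cO_{\wt C}\to F_e\to\cO_{\wt C}\to 0
\]
over the compact curve $\wt C$, and the extension class is the ``identity'' element of $H^1(\cO_{\wt C})\otimes H^0(\Om^1_{\wt C})$, which is nonzero. Since monodromy is trivial, this nontrivial holomorphic extension class is purely continuous data, and that is what proves the claim. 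If you want to salvage your approach, you would need to switch to this second example and compute the extension class of the bundle sequence (equivalently, $\nabla$ applied to a local lift $\vp_{p,q}$ of the quotient, not to sections of the sub-bundle), rather than a second fundamental form that vanishes.
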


\noindent Here continuous extension data means the extension classes that arise in the induced filtration of $F_e\to Y$ where there is a variation of \lmhs\ over $Y$ with $F_e$ the \hvb.  
  \end{Exam}

 \begin{Exam} We let $\wt C$ be a smooth curve with $g(\wt C)\ge 2$ and $p\in\wt C$ a fixed point.  We construct a family $C_q$, $p\in \wt C$, of stable curves as follows.
 
 \begin{itemize}
 \item For $q\ne p$ we identify $p,q$
 \[
 \begin{picture}(350,90)
\put(0,20){\includegraphics[scale=.90]{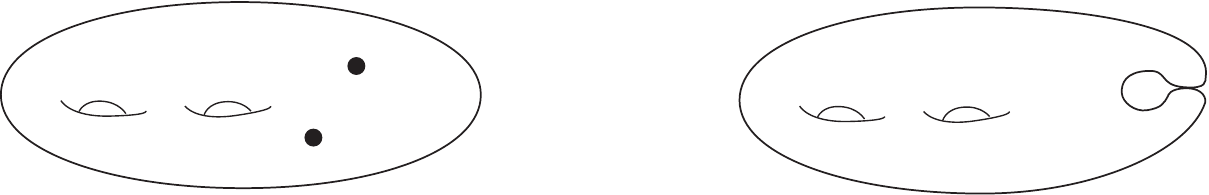}}
\put(88,30){$ p$}
\put(80,52){$q$}
\put(60,0){$\wt C$}
\put(144,40){\vector(1,0){30}}
\put(250,0){$C_q$}
\end{picture}
\]
\item For $p=q$, we obtain a curve
\begin{equation}\lab{b.21} 
\begin{split}
 \begin{picture}(150,90)
\put(0,20){\includegraphics[scale=.90]{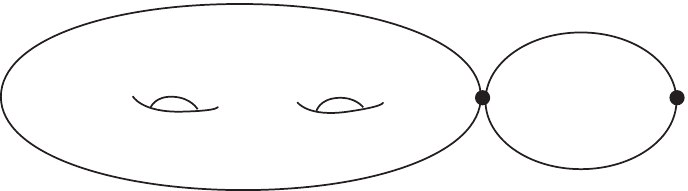}}
\put(150,10){$E$}
\put(121,30){$p$}
\put(182,42){$r$}
\put(60,5){$\wt C$}\end{picture} \end{split}
\end{equation}
 \end{itemize}
 In this way we obtain a VMHS parametrized by $\wt C$ and with trivial monodromy.  

 For the filtration on the canonically extended Hodge bundle there is a fixed part
\[
W_1(N)\cap F^1_e\cong H^0(\Om^1_{\wt C}),\]
and a variable part whose quotient is
\[
W_2(N)\cap F^1_e / W_1(N)\cap F^1_e=\bcs 
\C \vp_{p,q}& \hensp{for} q\ne p\\
\C \vp_{E,r}&\hensp{for} q=p.\ecs\]
The notation means that $\vp_{E,r}$ is the differential of the third kind on  the normalization of $E$ and with residues at  $\pm 1$ at the two points over the node.  Since there is no monodromy we may normalize the $\vp_{p,q}$ and $\vp_{E,r}$.   

Combining the above  we have over $\wt C$ an exact sequence
\[
0\to H^0(\Om^1_{\wt C})\otimes \cO_{\wt C}\to F_e\to\cO_{\wt C}\to 0\]
with non-trivial extension class
\[
e=\hbox{``Identity"} \in H^1(\cO_{\wt C})\otimes H^0(\Om^1_{\wt C}).\]
 \end{Exam}

    \subsection{The exterior differential system defined by a Chern form}
    In this section we will discuss the exterior differential system
    \begin{equation}\lab{6e1}
    \om=0\end{equation}
    defined by the Chern form of the \lb\ $\cope(1)$ where $E\to X$ is a Hermitian \vb\ whose curvature has the norm positivity property \pref{3a1}. Without assuming the norm positivity property, this type of EDS has been previously studied in \cite{BeKa77} and \cite{Som59} and also appeared in \cite{Kol87}.
    
    Here our motivation is the following question:
    \begin{equation}\lab{6e2}\bmp{5}{\em Under what conditions can one say that the Kodaira-Iitaka dimension of $E\to X$ is equal to its numerical dimension?}\end{equation}
    \begin{Prop}\lab{6e3}
    The exterior differential system \pref{6e1} defines a foliation of $\P E$ by complex analytic subvarieties $W\subset \P E$ with the properties
    \begin{enumerate}
    \item $W$ meets the fibres of $\P E\xri{\pi} X$ transversely; thus $W\to\pi(W)$ is an \'etal\'e map;
    \item the restriction $E\big|_{\pi(W)}$ is flat.\end{enumerate}\end{Prop}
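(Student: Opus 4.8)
The plan is to work locally on $X$ and to translate the equation $\om = 0$ into the vanishing of the norm $\|A(\xi \otimes e)\|_G^2$, using the norm positivity hypothesis and the curvature interpretation \pref{2c2}, \pref{2c3}. First I would recall from Section II.C that over $\P E$ the tautological line bundle $\cope(1)$ carries the metric induced from $E$, and that at a point $(x,[e]) \in \P E$ with $\|e\|=1$ the $(1,1)$ form $\om$ evaluated on a horizontal tangent vector $\xi \in H_{(x,[e])} \cong T_xX$ satisfies $\lra{\om,\xi\wedge\bar\xi} = \Theta_E(e,\xi)$, while on the vertical tangent space $\om$ is strictly positive (being the Fubini--Study form along the fibre $\P E^\ast_x$). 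Since norm positivity gives $\Theta_E(e,\xi) = \|A(\xi\otimes e)\|_G^2 \geqq 0$, the form $\om$ is semi-positive on all of $\P E$, and its null space at $(x,[e])$ is precisely the set of horizontal $\xi$ with $A(\xi\otimes e)=0$; in particular it contains no vertical directions, which immediately yields property (i): the distribution $\ker\om \subset T\P E$ projects isomorphically onto a subspace of $TX$ under $\pi_\ast$, so any integral variety $W$ meets fibres of $\pi$ transversely and $W \to \pi(W)$ is \'etale.

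Next I would establish that $\om=0$ is an integrable (Frobenius) system, so that the foliation exists. The cleanest route is to use the general fact quoted in the footnote to \pref{1a11}: for the map $\pi : \P E \to X$ one does not have that structure directly, but the relevant statement here is that a closed semi-positive $(1,1)$ form has an integrable null foliation. Since $\om$ represents $c_1(\cope(1))$ it is closed, $d\om = 0$; combined with $\om \geqq 0$ this is exactly the hypothesis under which the null distribution $\ker\om = \{v : \iota_v \om = 0\}$ is a smooth (away from the locus where its rank jumps) integrable subbundle — this is the Frobenius-type argument: if $\iota_v\om = \iota_w\om = 0$ then $\iota_{[v,w]}\om = \mathcal{L}_v(\iota_w\om) - \iota_w(\mathcal{L}_v\om) = -\iota_w(d\iota_v\om + \iota_v d\om) = 0$ using $\om$ closed. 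This gives the foliation by complex analytic subvarieties $W$; the complex analyticity follows because $\ker\om$ is a complex (indeed $J$-invariant) subbundle of $T\P E$, $\om$ being of type $(1,1)$.

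Finally, for property (ii) I would argue as follows. Along a leaf $W$, by (i) we may identify $W$ \'etale-locally with an open set $U \subset \pi(W) \subset X$, and the condition $\om|_W = 0$ says, via the norm positivity formula, that $A(\xi \otimes e(x)) = 0$ for every $\xi \in T_xU$, where $(x,[e(x)])$ traces out $W$. Choosing a local holomorphic section $s$ of $E$ over $U$ lifting $W$ (so $[s(x)] = [e(x)]$), we get $A(\xi \otimes s(x)) = 0$ identically, i.e.\ $A$ vanishes on $TU \otimes \langle s\rangle$. Then from the identity \pref{2c4}, $-\part\ol\part\log\|s\|^2 = \om(x,[s(x)])$ restricted to horizontal directions, together with the curvature formula \pref{3a3}, one sees that the line subbundle $\langle s \rangle \subset E|_U$ has flat induced connection; more to the point, one propagates this: the curvature $\Theta_E(\cdot,\xi)$ on $E|_U$ has the form $\|A(\xi\otimes \cdot)\|^2$, and the vanishing of $A(\xi\otimes s)$ for the distinguished section forces, by differentiating and using the second fundamental form relation \pref{2b7} applied to $\langle s\rangle \subset E|_U$, the vanishing of the full curvature operator of $E|_U$ — hence $E|_{\pi(W)}$ is flat.

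\textbf{The main obstacle.} I expect the delicate step to be the last one: passing from "$A$ kills the tautological direction along $W$" to "$E|_{\pi(W)}$ is flat" as a \emph{vector bundle}, not merely the line subbundle $\cope(1)$. Norm positivity says $\Theta_E(e,\xi) = \|A(\xi\otimes e)\|^2_G$ for \emph{all} $e$, while the leaf condition only gives information for $e$ in the one-dimensional subspace determined by the leaf point; extracting flatness of the whole restricted bundle will require either an additional hypothesis (the paper may intend $E|_{\pi(W)}$ to be flat in a weaker, e.g.\ projectively flat or unitary-flat, sense, or the statement may be read at the level of $\cope(1)$ and pushed forward) or a more careful argument exploiting that $W$ sweeps out all of $\pi(W)$ as $[e]$ varies, so that the family of leaves through a fixed $x$ covers a subvariety of $\P E_x$ large enough to conclude $A(\xi\otimes\cdot) \equiv 0$ on $E_x$ whenever $\xi \in T_x\pi(W)$. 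Sorting out exactly which leaves through $x$ lie over $\pi(W)$, and whether they span enough of the fibre, is where the real content lies; I would handle it by first treating the generic stratum where $\ker\om$ has locally constant rank and the foliation is a genuine fibration, proving flatness there, and then arguing by continuity/closedness of the flatness locus.
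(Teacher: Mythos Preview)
Your treatment of (i) and of the integrability of the null distribution is correct and matches the paper, though you are considerably more explicit: the paper's proof just observes that $\om>0$ on fibres forces null vectors to be horizontal and identifies their projection to $T_xX$ as $\{\xi : A(e\otimes\xi)=0\}$; the Frobenius-type integrability argument you spell out (via $d\om=0$ and $\om\geqq 0$) is only alluded to in the remark immediately following the proof.

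For (ii) you have put your finger precisely on the issue. The paper's entire argument for (ii) is the clause: this subspace equals $\{\xi : \Theta_E(e\otimes\xi)=0\}$, ``which implies that $E\big|_{\pi(W)}$ is flat.'' No further justification is offered. So the obstacle you flag --- that vanishing of $A(e\otimes\xi)$ for the single $e$ determined by the leaf does not force $A(\,\cdot\,\otimes\xi)\equiv 0$ on all of $E_x$, hence does not force the full curvature operator $\Theta_E\big|_{\pi(W)}$ to vanish --- is not addressed by the paper either. In light of the paper's own closing example in this section ($E=A'\oplus Q$ over a curve, with $A'$ ample and $Q$ unitary-flat, where the leaves through $\P Q^\ast$ project onto the whole base), the literal reading of (ii) seems too strong; what does follow immediately from $\om\big|_W=0$ is that $\cope(1)\big|_W$ is flat, equivalently that the line in $E\big|_{\pi(W)}$ determined by $W$ is flat. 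Your proposed route via the second fundamental form \pref{2b7} and sweeping out the fibre would not recover more than this, and the paper does not attempt to.
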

    
    \begin{proof}
    Since $\om >0$ on the fibres of $\P E\to X$, the vectors $\xi\in T_{(x,[e])}\P E$ that satisfy $\om(\xi)=0$ project isomorphically to $T X$.  The image of these vectors is the subspace (here identifying $\xi$ with $\pi_\ast(\xi)$)
    \begin{equation} \lab{6e4}
    \{\xi\in T_xX:A(e\otimes \xi)=0\}.\end{equation}
    This is the same as the subspace of $T_xX$ defined by
    \[
    \Theta_E(e\otimes \xi)=0,\]
    which implies that $E\big|_{\pi(w)}$ is flat.
    \end{proof}
    
    \begin{rem}
    Given any holomorphic bundle map
    \begin{equation}\lab{6e5}
    A:T X\otimes E\to G,\end{equation}
    if we have a  metric in $E\to X$   we may use it  to identify $E\cong E^\ast$ and then define the horizontal sub-bundle $H\subset T\cope(1)$.  It follows that  \pref{6e4} defines a $C^\infty$ distribution (with jumping fibre dimensions) in $T\cope(1)$, and when the map \pref{6e5} arises from the curvature of the metric connection as in \pref{3a3} this distribution is integrable and the maximal leaves of the corresponding foliation of $\P E$ by complex analytic subvarieties are described by Proposition \ref{6e3}.
       \end{rem}
       
    The restrictions $E\big|_{\pi (W)}$ being flat, the monodromy is discrete.  Heuristic arguments   suggest that the maximal leaves $W\subset \P E$ are   \emph{closed} analytic subvarieties.  
   \begin{Conj} \lab{6e6} Finite monodromy provides the necessary and sufficient condition to have the equality
   \[
   \kappa(E) =n(E)\]
   of Kodaira-Iitaka and numerical dimensions of a holomorphic \vb\ having a Hermitian metric whose curvature satisfies the norm positivity condition.
   \end{Conj}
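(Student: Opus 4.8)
The plan is to study the null foliation $\mathcal F$ of Proposition \ref{6e3} on $\P E$ and to identify the equality $\kappa(E)=n(E)$ with the algebraicity of its leaves. Write $L=\cope(1)$, $V=\P E$, and let $\om=\om_E$ be the Chern form of $L$, so $\om\geqq 0$ and $d\om=0$. Over the Zariski-open set where $\om$ has its generic rank the distribution $\ker\om\subset TV$ is integrable, and its maximal integral varieties are the leaves $W$ of the exterior differential system \pref{6e1}; by the definition of the numerical dimension recalled in Section II.E this generic rank is exactly $n(E)$, so a general leaf has dimension $\dim V-n(E)$. I take the ``monodromy'' in the statement to be the holonomy of $\mathcal F$, equivalently the monodromy of the étale maps $W\to\pi(W)$ of Proposition \ref{6e3}\,(i): over the image $\pi(W)$ of a general leaf the Chern connection on $E$ is flat, $W$ is cut out by a connection-parallel line field in $E|_{\pi(W)}$, and finite monodromy means that this line field has finite holonomy orbit, i.e.\ that $W\to\pi(W)$ is a finite covering. (If the metric is smooth, $\om$ is a smooth form on $V$; if one allows metrics with the mild singularities of Section IV.B, the restriction and multiplication properties of Theorem \ref{4b8} are what legitimise the manipulations with $\om$ below.)

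For sufficiency, assume $\mathcal F$ has finite holonomy, so a general leaf $W$ is a finite cover of its image $\pi(W)\subset X$; taking closures, the leaves are closed algebraic subvarieties of $V$ and are the fibres of a dominant rational map $q:V\dashrightarrow Q$ with $\dim Q=n(E)$. Since $L$ restricted to a leaf is the parallel line field dualised, $L$ is unitary-flat along the leaves, so after a finite étale cover of $V$ that trivialises the holonomy a power $L^{\otimes m}$ descends to a line bundle $L_Q$ on a smooth model of $Q$. The Chern form $\om$ descends, by the criterion recalled after \pref{1a11}, to a closed semi-positive representative $\om_Q$ of a positive multiple of $c_1(L_Q)$ which is strictly positive on a dense open set; hence by \pref{2f2} $L_Q$ is big, $\kappa(L_Q)=\dim Q$, and pulling sections back gives $\kappa(L)\geqq\dim Q=n(L)$. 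With \pref{2f9} (and the invariance of $\kappa$ and $n$ under finite étale covers) this is the asserted equality $\kappa(E)=n(E)$. Descending $\om$ and $L$ past the rank-drop locus is where the Kodaira-type descent used for Theorem \ref{1a7}, and in the singular case the mild-singularity estimates behind Theorem \ref{4b8}, enter.

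For necessity, suppose $\kappa(E)=n(E)$ and let $\vp:V\dashrightarrow Q$ be the Iitaka fibration of $L$, so $\dim Q=\kappa(L)=n(L)$. On a smooth model $\tilde V$ the pull-back $\tilde L$ is, modulo an effective divisor supported on the exceptional and stable base loci, numerically a pull-back from $Q$; hence $\tilde L$ is numerically trivial on a general fibre $Q_z$ of the induced morphism. A closed semi-positive representative of a numerically trivial class on a compact K\"ahler manifold vanishes identically there — the Stokes-theorem argument of Proposition \ref{2g12} together with the $\partial\bar\partial$-lemma — so $\om|_{Q_z}\equiv 0$, i.e.\ $TQ_z\subset\ker\om$. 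By the dimension count of the first paragraph $Q_z$ is then open and dense in a leaf of $\mathcal F$; thus the leaves are the closures of the fibres of $\vp$, in particular algebraic, and an algebraic leaf $W$ is a finite multisection of the flat projective bundle $\P(E|_{\pi(W)})$, so the parallel line field defining it has finite holonomy orbit; that is, $\mathcal F$ has finite monodromy.

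The main obstacle is the foliation-theoretic input which the text itself flags as only heuristic: proving that under finite monodromy the leaves of $\mathcal F$ are genuinely \emph{closed} subvarieties of $\P E$ rather than merely immersed, and controlling the locus where $\om$ drops rank and the leaf dimension jumps. Making this precise appears to require descending the exterior differential system \pref{6e1} through the finite cover and the necessary blow-ups and then invoking a Kodaira-type theorem on the singular quotient $Q$, exactly in the spirit of the construction of $\ol M$ in Section VI.A, the regularity needed along the rank-drop locus being the analogue here of the estimates behind Theorem \ref{4b8}. On the necessity side the delicate point is the abundance-type assertion that, once $\kappa(L)=n(L)$ for $L=\cope(1)$, the Iitaka fibration coincides with the null foliation of $\om$; this is known for line bundles in favourable situations but its validity in the present generality is not automatic and would have to be established directly.
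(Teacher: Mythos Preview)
The statement you are attempting to prove is labeled a \emph{Conjecture} in the paper, and the paper does not give a proof. What follows the conjecture is only a heuristic paragraph: the authors sketch the idea that the quotient $\P E/{\sim}$ by the foliation \pref{6e1} should exist as an analytic variety of dimension $n(E)$, that there should be a meromorphic map $\P E\dashrightarrow \P E/{\sim}$, and that an ample line bundle on the quotient should pull back to $\cope(1)$. They note the conjecture holds when $E$ is globally generated with the Grassmannian-induced metric, and they point to the Catanese--Dettweiler examples and Mok's work as evidence that leaves need not be closed in general.

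Your proposal follows exactly this heuristic line, fleshing it out into a two-direction argument. The sufficiency direction (finite monodromy $\Rightarrow$ $\kappa=n$) is the paper's sketch made more explicit: quotient by leaves, descend a power of $L$, apply \pref{2f2}. The necessity direction (via the Iitaka fibration) is your own addition; the paper does not discuss it. But you yourself correctly identify, in your final paragraph, that neither direction is actually complete: closedness of leaves under finite monodromy is only asserted, not proved, and the claim that the Iitaka fibres coincide with the null-foliation leaves is an abundance-type statement whose validity here is unclear. These are not minor technicalities --- they are precisely why the paper states this as a conjecture rather than a theorem. Your write-up is an honest outline of a strategy, with the gaps named; it is not a proof, and the paper does not claim one either.
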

   
   The idea is that the quotient $\P E/\sim$, where $\sim$ is the equivalence relation given by the connected components of the foliation defined by \pref{6e1}, exists as a complex analytic variety of dimension equal to $n(E)$, and there is a meromorphic mapping
   \[
   \P E\dashrightarrow \P E/\sim\]
   together with an ample line bundle on $\P E/\sim$ that pulls back to $\cope(1)$.  The rather simple guiding model here is the dual of the  universal sub-bundle  over the Grassmannian that was discussed above.  In fact, the conjecture holds if $E\to X$ is globally generated with metrics induced from the corresponding mapping to a Grassmannian.
 
 We note    that the foliation defined by the null space of the holomorphic bi-sectional curvature on quotients of bounded symmetric domains has been studied in \cite{Mok87}.  In this case the leaves are generally not closed.

 Finally we point out the very interesting papers \cite{CD17a} and \cite{CD17b}.  In these papers the authors construct examples of   smooth fibrations
 \[
 f:X\to B\]
 of a surface over a curve such that for $E=f_\ast \om_{X/B}$ one has
 \[
 E=A\oplus Q\]
 where $A$ is an ample vector bundle and $Q$ is a flat $\cU(m,\C)$-bundle with infinite monodromy group.\footnote{We note that $Q\subset f_\ast\om_{X/B}\subset R^1_f\C_X$ is \emph{not} flat relative to the Gauss-Manin convention on $R^1_f\C_X$.}
 In this case the leaves of the EDS \pref{6e1} may be described as follows: For each $b\in B$ we have
 \[
 \P Q^\ast_b \subset \P E^\ast_b\]
 and using the flat connection on $Q^\ast$ the parallel translate of any point in $\P Q^\ast_b$ defines an integral curve of  the EDS.
\providecommand{\bysame}{\leavevmode\hbox to3em{\hrulefill}\thinspace}
\providecommand{\noopsort}[1]{}
\providecommand{\mr}[1]{\href{http://www.ams.org/mathscinet-getitem?mr=#1}{MR~#1}}
\providecommand{\zbl}[1]{\href{http://www.zentralblatt-math.org/zmath/en/search/?q=an:#1}{Zbl~#1}}
\providecommand{\jfm}[1]{\href{http://www.emis.de/cgi-bin/JFM-item?#1}{JFM~#1}}
\providecommand{\arxiv}[1]{\href{http://www.arxiv.org/abs/#1}{arXiv~#1}}
\providecommand{\MR}{\relax\ifhmode\unskip\space\fi MR }
\providecommand{\MRhref}[2]{%
  \href{http://www.ams.org/mathscinet-getitem?mr=#1}{#2}
}
\providecommand{\href}[2]{#2}

     \end{document}